\numberwithin{equation}{section}
\newtheorem{theorem}{Theorem}[section]
\newtheorem{lemma}[theorem]{Lemma}
\newtheorem{proposition}[theorem]{Proposition}
\newtheorem{corollary}[theorem]{Corollary}
\theoremstyle{definition} 
\newtheorem{definition}[theorem]{Definition}
\newtheorem{remark}[theorem]{Remark}
\newtheorem{assumption}{Assumption}
  \theoremstyle{remark}
\newtheorem{step}{Step}
\newtheorem{case}{Case}
\newenvironment{customthm}[1]
{\innercustomthm}
{\endinnercustomthm}
\setlist{noitemsep}
\renewcommand{\emptyset}{\varnothing}
\newcommand{\1}{\boldsymbol{1}}
\def\E{{\mathbb E}}
\def\N{{\mathbb N}}
\def\P{{\mathbb P}}
\def\R{{\mathbb R}}
\def\cP{{\mathcal P}}
\def\cX{{\mathcal X}}
\def\ra{\rightarrow}
\def\eqdist{\stackrel{(\mathrm{d})}{=}}
\renewcommand{\|}{\Vert}
\def\vk{\mathbb{V}_{n,k}}
\def\ln{L^n}
\newcommand{\norm}[1]{\left\|#1\right\|}
\newcommand{\abs}[1]{\left\lvert#1\right\rvert}
\newcommand{\Bva}[1]{\mathbb{B}^n_{2,V}[#1]}
\begin{document}

\begin{center}

\textbf{\LARGE An asymptotic thin shell condition and large deviations for random multidimensional projections} \\
\vskip 6mm
\textit{\large Steven Soojin Kim, Yin-Ting Liao, Kavita Ramanan\footnote{K. Ramanan and Y.-T. Liao were supported  by the National Science Foundation under grants DMS-1713032 and DMS-1954351}}\\
\vskip 3mm
{\large Brown University}\\
\end{center}

\begin{abstract}
It is well known that fluctuations of marginals of high-dimensional random vectors that satisfy a
  certain concentration estimate called the thin shell condition are approximately Gaussian. 
  In this article we identify  a general condition on  a sequence of high-dimensional random vectors
  under which one can identify the exponential decay rate of large deviation probabilities of the corresponding sequence of marginals.
More precisely,  
consider the projection of an $n$-dimensional random vector onto a random $k_n$-dimensional basis, $k_n \leq n$, drawn uniformly from the Haar measure on the Stiefel manifold of orthonormal $k_n$-frames in $\mathbb{R}^n$, in three different asymptotic regimes as $n \rightarrow \infty$: ``constant" ($k_n=k$), ``sublinear" ($k_n \rightarrow \infty$ but $k_n/n \rightarrow 0$) and ``linear" ($k_n/n \rightarrow \lambda$ with $0 < \lambda \le 1$). When the sequence of random vectors satisfies a certain ``asymptotic thin shell condition", we establish large deviation principles  for the corresponding sequence of random projections in the constant regime, and for the sequence of empirical measures of the coordinates of the random projections in the sublinear and linear regimes. We also establish large deviation principles for  scaled $\ell_q$ norms of the random projections in all three  regimes. Moreover, we show that the
asymptotic thin shell condition holds  for various sequences of random vectors of interest, including the uniform measure on suitably scaled $\ell_p^n$ balls, for $p \in [1,\infty)$, and generalized Orlicz balls defined via a superquadratic function, as  well as a class of  Gibbs measures with superquadratic interaction potential. 
Along the way, we obtain logarithmic asymptotics of volumes of high-dimensional Orlicz balls, which may be of independent interest.
    We also show that the decay rate of large deviation probabilities of  Euclidean norms of multi-dimensional 
    projections of $\ell_p^n$ balls, when $p \in [1,2)$, exhibits an unexpected phase transition in the sublinear regime, 
      thus disproving an earlier conjecture  due to Alonso-Guti\'{e}rrez et al. 
Random projections of high-dimensional random vectors are of interest in a range of fields including asymptotic convex geometry and high-dimensional statistics. 
\end{abstract}

\noindent
{\bf Key words:} Large deviations; random projections; Stiefel manifold; rate function; thin shell condition; asymptotic thin shell condition; central limit theorem for convex sets;  $\ell_p^n$ balls; Orlicz balls; Gibbs measures; KLS conjecture.  

\noindent
{\bf Subject Classification (MSC 2010)}: 60F10; 52A23; 46B06

\section{Introduction}

\subsection{Motivation and context}

The study of high-dimensional probability distributions through their lower-dimensional projections, especially random projections, is a common theme in a wide range of areas, including geometric functional analysis \cite{JohLin84},  statistics and data analysis \cite{FreTuk74,DiaFre84}, information retrieval \cite{IndMot99, PapRagTamVem00},  machine learning \cite{MaiMun09} and asymptotic  geometric analysis \cite{Kla07a,AntBalPer03}. In the latter case, the typical probability measure of interest is the uniform distribution on a high-dimensional convex body (i.e., a compact convex  set with non-empty interior). 
Questions about the geometry of convex bodies in high dimensions often take on a certain probabilistic flavor. A significant result in this direction is the so-called central limit theorem (CLT) for convex sets, which roughly says that most $k$-dimensional projections (equivalently, marginals) of an $n$-dimensional isotropic convex body are close to Gaussian in the total variation distance, when $n$ is sufficiently large and $k$ is of a smaller order than $n^\alpha$ for some universal constant  $\alpha \in (0,1)$. Although  foreshadowed by results of {Sudakov \cite{Sud78}, Diaconis and Freedman \cite{DiaFre84} and von Weizs\"acker \cite{vonWei97}},  {the  conjecture that
most marginals are Gaussian}  was precisely formulated by  Anttila,
Ball and  Perissinaki in \cite{AntBalPer03} (see also \cite{BreVoi00}). Specifically, they  showed  that if the Euclidean norm of a symmetric high-dimensional random vector $X^{(n)}$ satisfies a certain concentration estimate referred to as the ``thin shell" condition, then ``most" of its  marginals are approximately Gaussian. They also verified this condition 
for random vectors uniformly distributed on a certain class of convex sets whose modulus of convexity and diameter satisfy certain assumptions. Subsequently, the thin shell condition was verified for various classes of convex bodies by several authors, with a breakthrough verification due to Klartag \cite{Kla07a,Kla07}  for any isotropic log-concave distribution,
which, in particular, includes the uniform distribution on an isotropic convex body
(see also~\cite{Fre19} for a simplified proof). 
The result of \cite{AntBalPer03} was further extended to general high-dimensional measures by
Meckes \cite{Mec12b},  who showed that whenever an $n$-dimensional  random vector satisfies a quantitative
version of the thin shell condition, then most $k$-dimensional marginals are close to Gaussian
(in the bounded-Lipschitz distance) if $k < 2 \log n /\log \log n$, and that the latter cutoff for ${k}$
is in some sense the best possible.

 One broad aim of studying lower-dimensional projections is to obtain information about less tractable high-dimensional measures. While the central limit theorem for convex sets and related theorems are  beautiful universality results, they imply the somewhat negative result that (fluctuations of) most lower-dimensional projections do not provide much information about the high-dimensional measure. In contrast,  large deviation
 principles (LDPs), which characterize the rate of decay of tail probabilities in an asymptotically exact way,   are typically non-universal and distribution-dependent, and thus may  allow one to distinguish high-dimensional probability measures or convex bodies via their lower-dimensional projections.
Moreover, LDPs  are also useful for  computation of limits of  scaled logarithmic volumes (e.g., of Orlicz balls; see {also~\cite{KabPro21} and Remark \ref{rem-Orlicz}})
   and for the development of computationally efficient
 (importance sampling) algorithms for numerically estimating such  volumes  or other tail probabilities for finite $n$ 
 (see, e.g., \cite{LiaRam20a}). 
   Furthermore, they can also be used to obtain information on the conditional distribution of the high-dimensional measure,
   given that its projections deviate significantly from their means, via the so-called Gibbs conditioning principle
   (as elucidated in~\cite{Csiszar84} or~\cite[Section 3.3]{DemZeiBook}); {see \cite{KimRam18} or the more recent extension in  \cite{JohPro20}, for example,  for demonstrations} in a geometric context.
 
 The focus of this article is to identify general conditions on sequences of high-dimensional random vectors under which one can characterize  asymptotic tail probabilities of their multi-dimensional random projections, and obtain corollaries that may be of interest in convex geometry.
 More precisely, the goal is to establish LDPs for  random projections of general
 sequences of random vectors,  both
 for projections onto fixed lower-dimensional spaces, as well as onto spaces with dimension 
 growing with $n$. In the latter case,  when the dimension of the projected vectors grows with $n$, 
 the space in which one should look for an LDP for the sequence of projected vectors is {\em a priori}  unclear.
 We show that the empirical measure of the coordinates of the projected vector is a convenient object to look at, and allows us to establish LDPs in the space of probability measures on $\R$ in these cases.
 Unlike in the case of the CLT for convex sets, where a transition occurs at a projection
 dimension of $k_n = 2 \log n /\log \log n$ (or $k_n = n^\alpha$ when restricted to isotropic
 logconcave measures),  these LDPs hold for all growing $k_n$ as long as $k_n/n \rightarrow \lambda \in [0,1]$. 
 Additional motivation for studying projections onto growing subspaces
 arises   from the fact that the speeds and rate functions
  of such LDPs for scaled Euclidean norms of sequences of log-concave isotropic random vectors have
  implications for the Kannan-Lov\'{a}sz-Simonovits (KLS) conjecture,
  which is one of the major open problems in convex geometry
  (see \cite[Theorem A]{AloGutProTha20} for details of this connection, and also Remark \ref{rem-KLS}).

    \subsection{Contributions and outline of the paper}
    \label{subs-cont}

     In this article, we introduce a large deviation analogue of
the results in \cite{AntBalPer03}. Whereas the latter
  work shows that fluctuations of (most) random projections of high-dimensional vectors that satisfy a {\em thin shell condition} can be characterized (as almost Gaussian), our work characterizes tail behavior (at the level of annealed LDPs) for projections and their associated norms
  onto (possibly growing) random subspaces of high-dimensional random vectors that satisfy an {\em asymptotic thin shell condition}.  In particular, our work goes beyond the more studied specific setting of  measures on
  $\ell_p^n$ balls  or spheres, and also univariate LDPs, although we also obtain new results in this setting (see Remark \ref{rem-alogut}).  
 Specifically, for any sequence of random vectors $\{X^{(n)}\}_{n \in \mathbb{N}}$ whose
  scaled Euclidean norms satisfy an LDP 
  (see Assumption \ref{ass-normldp} and its specific case Assumption~\ref{ass-normldp}*),
  we characterize the tail behavior of the corresponding sequence of orthogonal projections of $X^{(n)}$ onto a random $k_n$-dimensional basis, $k_n \leq n$, drawn from the Haar measure on the  Stiefel manifold $\vk$ of orthonormal $k_n$-frames in $\mathbb{R}^n$, as the dimension $n$ goes to infinity {with $k_n/n \to \lambda \in [0,1]$.} 
  Assumption \ref{ass-normldp} (or rather, a slight strengthening of it)
    can be viewed as an  ``asymptotic thin shell" condition
   since it implies that for all sufficiently large $n$, the random vector $X^{(n)}$ satisfies the thin shell condition
    (see the discussion at the end of Section \ref{sec-regime}).  
 Note, however, that for growing subspaces,
  in contrast to  CLT results where  approximate Gaussian
  marginals holds only for $k_n < \frac{2\log n}{\log \log n}$ \cite{Mec12b}
  (or $k_n \sim n^\alpha$ \cite{Kla07} if one assumes additional regularity of
  $X^{(n)}$ such as logconcavity), the annealed LDP results
  indicate three crucial regimes for $\{k_n\}_{n\in\N}$, constant, sublinear
  and linear (see also \cite{AloGutProTha18} for $\ell_p^n$ balls).

     A summary of our main results  
 is as follows (the precise definition of LDPs, rate functions, and the Stiefel manifold are given in Section \ref{sec-not} and Section \ref{sec-mainres}):

\begin{enumerate}
\item {\it LDPs in the constant regime (Theorem \ref{th-constant}):  } 
  Given Assumption \ref{ass-normldp}*, or a modification of it stated as  Assumption~\ref{ass-rescaled},
  in the setting where $k_n = k$ for every $n$, we establish an (annealed) LDP  for the sequence of $k$-dimensional random projections of $X^{(n)}$.    
\item {\it LDPs in the sublinear and linear regimes (Theorems \ref{th-on} and \ref{th-simn}):}  
  Given Assumption \ref{ass-normldp}, in the setting where $\{k_n\}_{n\in\N}$ satisfies
  $k_n \rightarrow \infty$, we establish  (annealed) LDPs for the sequence of empirical measures 
  of the coordinates of the $k_n$-dimensional projections of $X^{(n)}$. 
  This LDP is established with respect to the $q$-Wasserstein topology for $q < 2$,
  which is stronger than the weak topology. 
  The rate function 
  is shown to have a different form  in the sublinear 
  ($k_n/n \rightarrow 0$) and  linear 
  ($k_n/n \rightarrow \lambda \in (0,1]$) regimes. 
\item
  {\it LDPs for norms of random projections (Corollary \ref{cor-normcon} and Theorems \ref{sub-mdp},  \ref{sub-ldp} and
  \ref{th-normlinear}):} 
  We  establish LDPs for sequences of  $\ell_q$ norms of the multi-dimensional random projections in
  all regimes, with two different scalings  considered in the sublinear regime.
 \item {\it Illustrative examples (Section \ref{sec-ex}):}
  To show that our theory unites disparate examples under a common framework,
  recovering, 
   and in some cases extending, existing results for $\ell_p^n$ balls,
   while also covering new examples, 
 we verify our assumptions 
  for  many  sequences $\{X^{(n)}\}_{n \in \N}$ of interest, including product measures, the uniform measure
  on certain scaled $\ell_p^n$ balls and  generalized  Orlicz balls,  and Gibbs measures
  (see  Remark \ref{rem-assnormldp}).
Along the way, we obtain several results of potentially independent interest. 
  Specifically, in    Theorem \ref{ple2} we show that when $p \in [1,2)$,  
            the LDP  for Euclidean norms of multi-dimensional 
  projections of $\ell_p^n$ balls  exhibits an interesting phase transition depending
  on whether the ratio  $k_n/n^{2p/(2+p)}$ is asymptotically finite or infinite, thus 
  disproving a  conjecture in  \cite{AloGutProTha18} (see Remark \ref{rem-alogut} for more details).
  In addition, we also obtain the asymptotic  logarithmic volume of Orlicz balls, {as elaborated
    in Remark \ref{rem-Orlicz}.   (See   also recent extensions due to~\cite{KabPro21} and \cite{LiaRam21a}).}  
\end{enumerate}

  The formulation of the correct form of the asymptotic thin shell condition that would also allow one to consider 
   $\ell^n_p$ balls, when $p\in[1,2)$, is somewhat subtle, and involves
    a suitable rescaling argument.   Verification of the asymptotic thin shell condition  for $\ell_p^n$ balls, for all $p \geq 1$,
    makes use  of a probabilistic
  representation for the uniform measure on $\ell_p^n$ balls {(see~\cite{RacRus91,SchZin90} and Section \ref{sec-lp} for details)}.  However, such probabilistic representations are not available 
   for Orlicz balls, and so we develop  a new approach in which 
      the tail probability is expressed as a volume ratio  
   (see~\eqref{vol-ratio} in the proof of Proposition~\ref{lem-orlex}), and  
    the asymptotics of this volume ratio is determined using a tilted measure with respect to Lebesgue measure.
    {Subsequent work that uses similar ideas to obtain the volumetric properties and the thin shell concentration for random vectors in Orlicz balls includes ~\cite{AloPro20, KabPro21}.  
      More recently,  
      detailed estimates of intersections and differences of Orlicz balls have been obtained in \cite{LiaRam20a}.} 
  We leave for future work the identification of more sequences $\{X^{(n)}\}_{n \in \N}$ of random vectors that satisfy the asymptotic thin shell (and related) conditions. 
   Furthermore,  although here we 
  focused on  Euclidean (and more general $\ell_q$) norms of the random
  projections, since they are of special relevance in
  asymptotic convex geometry,  our results could potentially be used to also 
  investigate  other symmetric functionals of lower-dimensional projections that are of interest,
  such as the volume or barycenter of the projected body, or other functionals of interest in high-dimensional statistics.

  \subsection{Summary of Prior Work}

 {First steps towards studying LDPs 
  of sequences of one-dimensional projections were taken in \cite{GanKimRam16}, \cite{skim-thesis} and \cite{GanKimRam18},
  where it was shown that such LDPs capture geometric information about the convex body.} 
However, LDPs for random projections have been first largely restricted to the setting
of high-dimensional product measures \cite{GanKimRam16} or the uniform measure on the (suitably renormalized)
unit ball  or sphere  in the space $\ell_p^n$ for some $p \in [1,\infty)$
  \cite{GanKimRam18,AloGutProTha18,AloGutProTha20,Kabluchko17,ThaKabPro19,LiaRam20a},   
  and secondly, limited to univariate LDPs (i.e., in $\mathbb{R}$) involving either the projection of a high-dimensional random vector onto a random one-dimensional subspace \cite{GanKimRam18} or (annealed) LDPs of the Euclidean norm of an orthogonal projection onto a $k_n$-dimensional subspace, with $k_n$ possibly tending to infinity \cite{AloGutProTha18,AloGutProTha20}. 
 Indeed,  the first paper to consider LDPs for norms of projections of scaled
  $\ell_p^n$ balls onto growing subspaces was \cite{AloGutProTha18},
  with further
  results obtained in subsequent papers (see, e.g., \cite{Kabluchko17,ThaKabPro19,AloGutProTha20}).
   {(Strictly speaking, in \cite{AloGutProTha18}  the authors analyze  norms of   random vectors  uniformly
      distributed on scaled $\ell_p^n$ balls projected onto $k$-dimensional subspaces indexed by the Grassmannian, whereas we analyze   norms of  random projections of more general random $n$-dimensional vectors (including those uniformly distributed
      on scaled $\ell_p^n$ balls) projected onto $k$-dimensional orthogonal bases indexed by  the Stiefel manifold. In the annealed setting considered here,  the Grassmanian and Stiefel manifold perspectives can be seen to be equivalent due to the exchangeability of the coordinates of the  projections, but in quenched settings, as considered in \cite{LiaRam21b},
      it is  more appropriate to consider the Stiefel manifold.)} 
 The only prior example of a multivariate LDP that we know in this context is for the particular case of projections of a random vector sampled from a scaled $\ell_p^n$ ball onto the first $k$ canonical directions \cite[Theorem 3.4]{BarGamLozRou10}.
  Further, in all cases, the analysis for non-product measures has focused on $\ell_p^n$ balls,
  where the analysis is greatly facilitated by a convenient probabilistic representation of the uniform measure on the $\ell_p^n$ ball (see \cite[Lemma 1]{SchZin90} or \cite{RacRus91}), or
  a slightly more general class of measures supported on the $\ell_p^n$ ball
  that admit a similar probabilistic representation
  (see~\cite{BarGueMenNao05} or Section 2.2 of~\cite{AloGutProTha20}).
  Such a representation has also been exploited
  in recent work \cite{LiaRam20a} that obtains  refined or sharp large deviations estimates
  for (quenched) random projections of $\ell_p^n$ balls and spheres.

 \subsection{Basic definitions and background results}
\label{sec-not}

We  set some initial notation and definitions, with a particular emphasis on large deviations terminology.
First, for   $a, b \in \R$, we will use $a \vee b$ and $a \wedge b$ to denote $\max(a,b)$ and $\min(a,b)$, respectively.
Next, for $p\in[1,\infty]$, let $\|\cdot\|_p$ denote the $\ell_p^n$ norm (with some abuse of notation, we use common notation for the $\ell_p^n$ norm on $\R^n$ for any $n\in\N$). We use the notation $\mathcal{N}(\mu,\sigma^2)$ to denote a normal random variable with mean $\mu$ and variance $\sigma^2$.

 Given a topological  space $\cX$ with Borel $\sigma$-algebra $\mathcal{B}$, let $\cP(\cX)$ denote the space of probability measures on $\cX$.  By default, we impose the topology of weak convergence on $\cP(\cX)$: recall that a sequence $\{\mu_n\}_{n\in\N} \subset \cP(\cX)$ is said to converge weakly to $\mu \in \cP(\cX)$, also denoted as $\mu \Rightarrow \mu$, if and only if for every bounded continuous function $f$ on $\cX$, $\int f d\mu_n \rightarrow \int f d\mu$ {as $n\to\infty$}. On occasion, when $\cX = \mathbb{R}^d$, we  will consider  subsets of probability measures that have certain finite moments. For {$q \geq 1$} and $d\in \N$, define   
\begin{equation*} \cP_q(\R^d) := \left\{ \nu \in \cP(\R^d) :   \int_{\R^d} |x|^q \nu(dx) < \infty \right\}.
\end{equation*}
Then, a sequence of probability measure $\{\nu_n\}_{n\in\N}\subset \cP_q(\R^d)$ converges to $\nu \in \cP_q(\R^d)$ with respect to the \emph{$q$-Wasserstein topology} if we have weak convergence $\nu_n\Rightarrow \nu$ and convergence of $q$-th moments $\int_{\R^d} |x|^q \nu_n(dx) \ra \int_{\R^d} |x|^q \nu(dx)$. In fact, as elaborated in \cite[Sec. 6]{Vil08}, the $q$-Wasserstein topology can be metrized by a distance function called the $q$-Wasserstein metric, which we denote $\mathcal{W}_q$. Next, for $q > 0$, let
\begin{equation}\label{secmom} M_q(\nu):= \int_{\R^d} |x|^q \nu(dx), \quad \nu \in \cP(\R^d),
\end{equation}
denote the $q$-th moment map. In our analysis, we will frequently consider the following subset: for $j \in \N$, define $K_{2,j} \subset\cP(\R^d)$ as \begin{equation}\label{k2} K_{2,j} := \left\{ \nu \in \cP(\R^d) : M_2 (\nu) \leq j \right\}. 
\end{equation}

\begin{lemma}\label{lem-compcat}
Fix $j\in \N$. For any {$q\in[1,2)$}, the set $K_{2,j}\subset \cP_2(\R^d)$ is compact with respect to the $q$-Wasserstein topology. In addition, $K_{2,j}$ is convex and non-empty.
\end{lemma}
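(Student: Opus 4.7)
The plan is to verify the three claims in order, with compactness being the substantive point. Non-emptiness is immediate: the Dirac mass $\delta_0$ has $M_2(\delta_0) = 0 \le j$, so $\delta_0 \in K_{2,j}$. Convexity follows from the linearity of the map $\nu \mapsto M_2(\nu) = \int |x|^2 \nu(dx)$: for $\nu_1, \nu_2 \in K_{2,j}$ and $\alpha \in [0,1]$, $M_2(\alpha \nu_1 + (1-\alpha)\nu_2) = \alpha M_2(\nu_1) + (1-\alpha)M_2(\nu_2) \le j$.

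For compactness in the $q$-Wasserstein topology, I would use the standard characterization (see, e.g., \cite{Vil08}) that a subset $A \subset \cP_q(\R^d)$ is relatively compact with respect to $\mathcal{W}_q$ if and only if $A$ is tight and the family $\{|x|^q\}$ is uniformly integrable over $A$, i.e., $\lim_{R\to\infty} \sup_{\mu\in A} \int_{|x|>R} |x|^q \mu(dx) = 0$. First, tightness of $K_{2,j}$ follows from Chebyshev's inequality: for any $\mu \in K_{2,j}$, $\mu(\{|x| > R\}) \le R^{-2} M_2(\mu) \le j/R^2$, which goes to $0$ uniformly in $\mu \in K_{2,j}$. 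Second, the uniform integrability of $|x|^q$ is where the hypothesis $q < 2$ enters crucially: for any $\mu \in K_{2,j}$ and $R > 0$,
\begin{equation*}
\int_{|x|>R} |x|^q \mu(dx) \le R^{q-2} \int_{|x|>R} |x|^2 \mu(dx) \le j R^{q-2},
\end{equation*}
which tends to $0$ as $R \to \infty$ since $q - 2 < 0$. Thus $K_{2,j}$ is relatively compact under $\mathcal{W}_q$.

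It remains to show $K_{2,j}$ is closed. Suppose $\{\nu_n\}_{n\in\N} \subset K_{2,j}$ converges to some $\nu$ in the $q$-Wasserstein topology; in particular, $\nu_n \Rightarrow \nu$ weakly. Since $x \mapsto |x|^2$ is nonnegative and lower semicontinuous, the Portmanteau-type version of Fatou's lemma yields $M_2(\nu) \le \liminf_{n\to\infty} M_2(\nu_n) \le j$, so $\nu \in K_{2,j}$. Combined with relative compactness, this gives compactness.

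The only step that requires any real care is the uniform integrability of $|x|^q$, and the point to emphasize is that this is precisely where the restriction $q < 2$ is used — for $q = 2$ the inequality $\int_{|x|>R} |x|^q d\mu \le R^{q-2} j$ degenerates and the set $K_{2,j}$ is not compact under $\mathcal{W}_2$ (e.g., the sequence $\mu_n = (1-1/n)\delta_0 + (1/n)\delta_{\sqrt{n}}$ lies in $K_{2,1}$ and converges weakly to $\delta_0$, but its second moments do not converge).
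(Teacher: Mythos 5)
Your proof is correct and is essentially the standard argument that the paper delegates to \cite{KimRam18} (the paper's own proof is a one-line reference to the $j=1$ case there): tightness via Chebyshev, uniform integrability of $|x|^q$ from the bound $\int_{|x|>R}|x|^q\,d\mu \le jR^{q-2}$ using $q<2$, and closedness via lower semicontinuity of $M_2$ under weak convergence. The concluding counterexample showing failure of compactness at $q=2$ is a correct and worthwhile remark, though not needed for the lemma.
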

\begin{proof}
The proof is a simple modification of the proof of the $j=1$ case given in \cite[Lemma 3]{KimRam18}. \end{proof}

\bigskip 

 We refer to \cite{DemZeiBook} for general background on large deviations theory.  In particular, we recall the definition: 

\begin{definition}\label{def-ldp}
Let $\cX$ be a {regular} topological space with Borel $\sigma$-algebra $\mathcal{B}$. A sequence $\{P_n\}_{n\in\N}\subset \cP(\cX)$  is said to satisfy a \emph{large deviation principle (LDP)} at \emph{speed} $s_n$ with rate function $I:\cX \ra [0,\infty]$ if {$I$ is lower-semicontinuous and} for all $\Gamma \in \mathcal{B}$,
\begin{equation*} -\inf_{x\in \Gamma^\circ} I(x) \le \liminf_{n\ra\infty} \tfrac{1}{s_n} \log P_n(\Gamma) \le \limsup_{n\ra\infty} \tfrac{1}{s_n} \log P_n(\Gamma) \le -\inf_{x\in \bar{\Gamma}} I(x),
\end{equation*}
where $\Gamma^\circ$  and $\bar{\Gamma}$ are the interior and closure of $\Gamma$, respectively. We say $I$ is a \emph{good rate function (GRF)} if, in addition,  it has compact level sets. Analogously, a sequence of $\cX$-valued random variables $\{\eta_n\}_{n\in\N}$ is said to satisfy an LDP {at speed $s_n$ and with a rate function $I$} if the sequence of their laws $\{\P \circ \eta_n\}_{n\in\N}$ satisfies an LDP {at the same speed and with the same rate function.} 
\end{definition}

{
\begin{remark}\label{ref-speeds}
Let $\{P_n\}_{n\in \N}$ be a sequence of probability measures satisfying an LDP at speed $s_n$ with GRF $I$ that has a unique minimizer $m$. It is immediate from the definition of the LDP that $I(m) = 0$ and that then, for any speed $t_n$ such that $t_n/s_n \rightarrow 0$,  the sequence $\{P_n\}_{n\in\N}$ also satisfies an LDP at speed $t_n$, but with  a degenerate rate function $\chi_m$, which is defined to be  $0$ at $m$ and $+\infty$ elsewhere. 
\end{remark}
}

{
\begin{remark}
  \label{rem-ldpscale}
  Let $\{P_n\}_{n\in \N}$ be a sequence of probability measures satisfying an LDP at speed $s_n$ with GRF $I$ and
  suppose $b_n$ is another sequence such that $s_n/b_n \rightarrow \lambda \in (0,\infty)$. Then it is immediate
  from the definition that $\{P_n\}_{n\in \N}$ also satisfies  an LDP at speed $b_n$ with GRF $\lambda I$. 
\end{remark}
}

As a useful tool, we recall the following definition.
\begin{definition}\label{def-exp} Let $\cX$ be a metric space with distance $d$,  equipped with its Borel $\sigma$-algebra.   Two sequences of $\cX$-valued random variables $\{\eta_n\}_{n\in\N}$ and $\{\tilde{\eta}_n\}_{n\in\N}$ are \emph{exponentially equivalent} at speed $s_n$ if for all $\delta > 0$,
\begin{equation} \label{expeq} \limsup_{n\ra\infty}\frac{1}{s_n} \log \P(d(\eta_n,\tilde{\eta}_n) > \delta) = -\infty.
\end{equation}
\end{definition}

\begin{remark}
  \label{rem-expeq}
  The notion of exponential equivalence is valuable because if an LDP holds for $\{\eta_n\}_{n\in\N}$, then an LDP holds for an exponentially equivalent sequence $\{\tilde{\eta}_n\}_{n\in\N}$ with the same GRF (see, e.g., Theorem 4.2.13 of \cite{DemZeiBook}).
  \end{remark}

{
\begin{remark}\label{rm-eq-conv}
  Let $\{a_n\}_{n\in\N}$ be a sequence in $\R$ that converges to $a\in\R$ as $n\to\infty$.  Suppose $\{U_n\}_{n\in\N}$ satisfies
  an LDP in $\R$ at speed $b_n$ with a GRF $I$.   Then the  sequences $\{a_nU_n\}_{n\in\N}$ and $\{aU_n\}_{n\in\N}$ are exponentially equivalent at speed $b_n$.  Indeed, for any $M \in (0,\infty)$, since $\{U_n\}_{n\in\N}$ satisfies an LDP with a GRF, there exists $K \in (0,\infty)$ such that $\limsup_{n\to\infty}\frac{1}{b_n}\log\mathbb{P}(|U_n| \geq K)<-M$. Given $\delta, M \in (0, \infty)$, pick $\varepsilon>0$ such that $\delta/\varepsilon >K$. Then
\begin{align*}
\lim_{n\to\infty}\frac{1}{b_n}\log\mathbb{P}\left(\abs{a_nU_n-aU_n}>\delta\right)
&\leq \lim_{n\to\infty}\frac{1}{b_n}\log \left(\mathbb{P}\left(\abs{a_n-a}>\varepsilon\right)+\mathbb{P}\left(\abs{U_n}>\delta/\varepsilon\right)\right)\\
& \leq \lim_{n\to\infty}\frac{1}{b_n}\log\mathbb{P}\left(\abs{U_n}>K\right)\\
&<-M.
\end{align*}
Since $M$ is arbitrary,  we obtain the desired exponential equivalence by sending $M\to\infty$.
\end{remark}
}

For some of our LDPs, the resulting rate functions will be expressed in terms of the following quantities.
For $\nu \in \mathcal{P}(\R)$, define the \emph{entropy} of $\nu$ as
\begin{equation}\label{ent} h(\nu) := -\int_{\R} \log\left(\tfrac{d\nu}{dx}\right) d\nu,
\end{equation}
for $\nu$ with density (with respect to Lebesgue measure $dx$), and $h(\nu):= -\infty$ otherwise. Furthermore, for $\nu, \mu \in \cP(\R)$, define the \emph{relative entropy} of $\nu$ with respect to $\mu$ as
\begin{equation}\label{relent} H(\nu | \mu) := \int_{\R} \log \left( \tfrac{d\nu}{d\mu}\right) d\nu 
\end{equation}
if $\nu$ is absolutely continuous with respect to $\mu$, and $H(\nu | \mu) := +\infty$ otherwise.
Given a function $\Lambda:\R\to\R$, let $\Lambda^*$ be its {Legendre-Fenchel} transform defined by
\begin{align} \label{legendre}
\Lambda^*(x) = \sup_{t\in\R} \{xt-\Lambda(t) \},\quad x\in\R.
\end{align}

The following contraction principle for LDPs is used multiple times throughout the paper.
\begin{lemma}[Contraction principle]\label{lem-con}\cite[Theorem 4.2.1]{DemZeiBook}
Let $\mathcal{X}$ and $\mathcal{Y}$ be Polish spaces and $f:\mathcal{X}\to\mathcal{Y}$ a continuous function. Suppose $\{X_n\}_{n\in\N}$ is a sequence of $\mathcal{X}$-valued random variables and satisfies an LDP in $\mathcal{X}$ at speed $s_n$ with GRF $\mathcal{I}_X$. Then $\{f(X_n)\}_{n\in\N}$ satisfies an LDP in $\mathcal{Y}$ at speed $s_n$ with GRF $\mathcal{I}_Y$ defined by
\[
\mathcal{I}_Y(y):= \inf \{\mathcal{I}_X(x):x\in\mathcal{X}, f(x)=y\}.
\]
\end{lemma}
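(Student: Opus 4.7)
The plan is to reduce the LDP for $\{f(X_n)\}_{n\in\N}$ directly to the LDP for $\{X_n\}_{n\in\N}$ by pulling back subsets of $\mathcal{Y}$ through the continuous map $f$, and to use the following key identity, valid for any subset $A \subset \mathcal{Y}$:
\begin{equation*}
\inf_{x \in f^{-1}(A)} \mathcal{I}_X(x) \;=\; \inf_{y \in A} \inf_{x \in f^{-1}(\{y\})} \mathcal{I}_X(x) \;=\; \inf_{y \in A} \mathcal{I}_Y(y).
\end{equation*}
This identity, which is essentially a reorganization of the infimum by fibers of $f$, will be the algebraic backbone connecting the two rate functions.

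First, I would establish that $\mathcal{I}_Y$ is a good rate function. Fix $\alpha \in [0,\infty)$ and observe that
\begin{equation*}
\{y \in \mathcal{Y} : \mathcal{I}_Y(y) \leq \alpha\} = f(\{x\in\mathcal{X} : \mathcal{I}_X(x) \leq \alpha\}),
\end{equation*}
where the containment $\supset$ is immediate from the definition of $\mathcal{I}_Y$, and $\subset$ uses that the infimum defining $\mathcal{I}_Y(y)$ is achieved whenever finite, since $\mathcal{I}_X$ is a GRF and $f^{-1}(\{y\})$ is closed (by continuity of $f$), making $f^{-1}(\{y\}) \cap \{\mathcal{I}_X \leq \alpha\}$ compact. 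Since $\{\mathcal{I}_X \leq \alpha\}$ is compact and $f$ is continuous, the image $\{\mathcal{I}_Y \leq \alpha\}$ is compact in $\mathcal{Y}$; in particular it is closed, so $\mathcal{I}_Y$ is lower semicontinuous, and it is a good rate function.

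Next, I would verify the upper and lower bounds of the LDP in Definition \ref{def-ldp}. For any closed $F \subset \mathcal{Y}$, continuity of $f$ implies $f^{-1}(F)$ is closed in $\mathcal{X}$, so the LDP upper bound for $\{X_n\}_{n\in\N}$ gives
\begin{equation*}
\limsup_{n\to\infty} \tfrac{1}{s_n}\log \P(f(X_n) \in F) = \limsup_{n\to\infty} \tfrac{1}{s_n}\log \P(X_n \in f^{-1}(F)) \leq -\inf_{x \in f^{-1}(F)}\mathcal{I}_X(x),
\end{equation*}
which by the identity above equals $-\inf_{y \in F} \mathcal{I}_Y(y)$. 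The lower bound is entirely analogous: for open $G \subset \mathcal{Y}$, $f^{-1}(G)$ is open in $\mathcal{X}$, and applying the LDP lower bound for $\{X_n\}_{n\in\N}$ together with the same identity yields $\liminf_{n\to\infty}\tfrac{1}{s_n}\log\P(f(X_n) \in G) \geq -\inf_{y\in G}\mathcal{I}_Y(y)$.

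There is no substantial obstacle here; the argument is essentially bookkeeping once one has the fiberwise rewriting of the infimum and the topological observation that continuity preserves open/closed preimages. The only mildly delicate point is ensuring the infimum in the definition of $\mathcal{I}_Y(y)$ is attained (so that level sets of $\mathcal{I}_Y$ coincide with the $f$-image of level sets of $\mathcal{I}_X$), which is exactly where the goodness of $\mathcal{I}_X$ is used.
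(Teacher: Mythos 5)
Your proof is correct and is precisely the standard argument for the contraction principle: the paper does not prove this lemma but simply cites \cite[Theorem 4.2.1]{DemZeiBook}, and your fiberwise rewriting of the infimum, the identification of level sets of $\mathcal{I}_Y$ with $f$-images of level sets of $\mathcal{I}_X$ (using goodness of $\mathcal{I}_X$ and closedness of fibers to attain the infimum), and the pullback of closed/open sets through $f$ is exactly the proof given in that reference.
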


Finally, we state a simple lemma that is used multiple times in our proofs.

\begin{lemma}\label{ldp-prod}
Suppose $\{U_n\}_{n\in\N}$, $\{V_n\}_{n\in\N}$ and $\{W_n\}_{n\in\N}$ satisfy LDPs in $\R$ at speed $\alpha_n$, $\beta_n$ and $\gamma_n$ with {GRFs} $J_U$, $J_V$ and $J_W$, respectively. Let $\alpha_n=\beta_n\ll \gamma_n$, {(i.e., $\beta_n/\gamma_n \to 0$ as $n\to\infty$)}. Assume $\{U_n\}_{n\in\N}$ is independent of $\{V_n\}_{n\in\N}$ and $J_W$ has a unique minimizer $m$. Then $\{(U_n,V_n,W_n)\}_{n\in\N}$ is exponentially equivalent to $\{(U_n,V_n,m)\}_{n\in\N}$ and satisfies an LDP at speed $\alpha_n$ with GRF {$J:\R^3\to [0,\infty]$ defined by}
\begin{align} \label{rate-J}
J(u,v,w):= 
\begin{cases}
J_U(u)+J_V(v), \quad & w =m,\\
+\infty,\quad & otherwise.
\end{cases}
\end{align}
Moreover, if $m\neq 0$, then $\{V_nW_n\}_{n\in\N}$ satisfies an LDP at speed $\alpha_n$ with GRF $v\mapsto J_V(v/m)$.
\end{lemma}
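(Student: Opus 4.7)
The plan is to prove the three claims in order, relying on the separation of speeds $\alpha_n = \beta_n \ll \gamma_n$ to force the $W_n$-coordinate to degenerate. The underlying phenomenon is that when a sequence satisfies an LDP with a GRF having a unique minimizer, then at any strictly slower speed it concentrates at that minimizer super-exponentially fast.

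First I would establish the exponential equivalence of $\{(U_n,V_n,W_n)\}_{n\in\N}$ and $\{(U_n,V_n,m)\}_{n\in\N}$ at speed $\alpha_n$. Since the distance between the two only comes from the third coordinate, it suffices to show that for every $\delta>0$, $\limsup_n \alpha_n^{-1}\log \P(|W_n-m|>\delta) = -\infty$. The set $\{w:|w-m|\ge \delta\}$ is closed and, by goodness of $J_W$ together with uniqueness of the minimizer $m$, $\inf_{|w-m|\ge \delta} J_W > 0$. Hence the LDP upper bound for $\{W_n\}$ at speed $\gamma_n$ gives a strictly negative constant $-c$, and multiplying by $\gamma_n/\alpha_n\to\infty$ yields $-\infty$, as required.

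Next I would derive the LDP for $\{(U_n,V_n,m)\}_{n\in\N}$, and transfer it to $\{(U_n,V_n,W_n)\}_{n\in\N}$. Because $\{U_n\}_{n\in\N}$ and $\{V_n\}_{n\in\N}$ are independent and each satisfies an LDP at the common speed $\alpha_n=\beta_n$ with GRFs $J_U, J_V$, standard tensorization (see, e.g., Exercise 4.2.7 of \cite{DemZeiBook}) gives that $\{(U_n,V_n)\}_{n\in\N}$ satisfies an LDP in $\R^2$ at speed $\alpha_n$ with GRF $(u,v)\mapsto J_U(u)+J_V(v)$. Composing with the continuous embedding $(u,v)\mapsto (u,v,m)$ and invoking the contraction principle (Lemma~\ref{lem-con}), $\{(U_n,V_n,m)\}_{n\in\N}$ satisfies an LDP at speed $\alpha_n$ with the GRF $J$ defined in \eqref{rate-J}. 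Combining with the exponential equivalence established in the previous step and Remark \ref{rem-expeq} gives the LDP for $\{(U_n,V_n,W_n)\}_{n\in\N}$.

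Finally, for the product statement when $m\neq 0$, I would show that $\{V_n W_n\}_{n\in\N}$ is exponentially equivalent to $\{m V_n\}_{n\in\N}$ at speed $\alpha_n$ and then apply the contraction principle with the continuous map $v\mapsto mv$. The equivalence follows from the bound
\[
\P(|V_nW_n-mV_n|>\delta) \le \P(|V_n|>K) + \P(|W_n-m|>\delta/K),
\]
valid for any $K>0$. Given $M>0$, one first chooses $K$ so large that the LDP for $\{V_n\}_{n\in\N}$ yields $\limsup_n \alpha_n^{-1}\log \P(|V_n|>K) < -M$, and then applies the first step to conclude that the second term tends to $-\infty$ at speed $\alpha_n$. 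Sending $M\to\infty$ gives exponential equivalence, and since $v\mapsto mv$ is a continuous bijection on $\R$ when $m\neq 0$, the contraction principle yields the rate function $v\mapsto J_V(v/m)$. The main subtle point — and the only real obstacle — is this last equivalence: one cannot cite Remark \ref{rm-eq-conv} verbatim because $W_n$ is random rather than deterministic, so the truncation argument via the LDP tails of $V_n$ is needed to decouple the factors.
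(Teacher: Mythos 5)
Your proof is correct, and for the exponential equivalence and the LDP of the triple it follows essentially the same route as the paper: tensorize $(U_n,V_n)$ via independence, note that the third coordinate concentrates at $m$ super-exponentially at speed $\alpha_n$ because $J_W$ is a GRF with unique minimizer and $\gamma_n/\alpha_n\to\infty$, and transfer the LDP by exponential equivalence. The only genuine divergence is in the final product claim. The paper simply applies the contraction principle to the continuous map $F(u,v,w)=vw$ acting on the LDP for $\{(U_n,V_n,W_n)\}_{n\in\N}$ that was just established; since $J(u,v,w)$ is infinite unless $w=m$ and $\inf_u J_U(u)=0$, this immediately yields the rate $v\mapsto J_V(v/m)$ with no further estimates. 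You instead prove a second exponential equivalence, between $\{V_nW_n\}$ and $\{mV_n\}$, via the truncation bound $\P(|V_n||W_n-m|>\delta)\le\P(|V_n|>K)+\P(|W_n-m|>\delta/K)$ and the compactness of the level sets of $J_V$, and then contract through $v\mapsto mv$. Your argument is valid (the goodness of $J_V$ is exactly what makes the $K$-truncation work), but it is more labor than necessary: the ``decoupling'' difficulty you flag at the end is already dissolved by the triple LDP you proved in the previous step, because the contraction principle needs only continuity of $(u,v,w)\mapsto vw$, not any further control on the randomness of $W_n$.
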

\begin{proof}
    Define $Y_n:=(U_n,V_n,W_n)$ and $\tilde{Y}_n:=(U_n,V_n,m)$.
By the independence of $\{U_n\}_{n\in\N}$ and $\{V_n\}_{n\in\N}$, $\{\tilde{Y}_n\}_{n\in\N}$
satisfies an LDP at speed $\alpha_n$ with GRF $J$ defined in~\eqref{rate-J} {(this is easily
  deduced from the definition of the LDP, but the fact that the 
  rate function for $(U_n, V_n)$ is $J_U(u) + J_V(v)$ follows from  \cite[Exercise 4.2.7]{DemZeiBook} with
  ${\mathcal X} = \R$, ${\mathcal Y} = \R^2$ and $F$ being the identity mapping; the stated 
  rate function for $(U_n,V_n,m)$ follows as an immediate consequence).}  
Now, for $\epsilon > 0$, we have  
\begin{align*}
\lim_{n\to\infty}\frac{1}{\alpha_n}\log\mathbb{P}\left(\norm{Y_n-\tilde{Y}_n}_2>\epsilon\right) &= \lim_{n\to\infty}\frac{1}{\alpha_n}\log\mathbb{P}(\abs{W_n-m}>\epsilon)
=-\infty,
\end{align*}
{where the last equality follows because  $J_W$ has a unique minimizer at $m$} and $\{W_n\}_{n\in\N}$ satisfies an LDP at speed $\gamma_n\gg \alpha_n$, {as in the observation of Remark \ref{ref-speeds}}. Thus, $\{Y_n\}_{n\in\N}$ is exponentially equivalent to $\{\tilde{Y}_n\}_{n\in\N}$ at speed $\alpha_n$.  By Remark~\ref{rem-expeq},  this implies $\{Y_n\}_{n\in\N}$  satisfies an LDP
with speed $\alpha_n$ with the same GRF as $\{\tilde{Y}_n\}_{n \in \N}$. This proves
the first assertion of the lemma. 

The second assertion follows by applying the contraction principle to the mapping $F:\R^3\to\R$ defined by $F(u,v,w)=vw$.
\end{proof}

\section{Main results}
\label{sec-mainres}

\subsection{Projection Regimes and Assumptions}\label{sec-regime}
For each $n \in \mathbb{N}$, consider a random vector   $X^{(n)}$  
that takes values in  $\R^n$.  For $k \in \N$, let $I_k$ denote the $k\times k$ identity matrix,
and for $n > k$, let $\vk = \{A \in \R^{n\times k} : A^T A = I_k\}$ denote the Stiefel manifold
of orthornomal $k$-frames in $\R^n$. {Here, the constraint $A^TA=I_k$ ensures that the columns of $A$ are orthonormal.}  {As is well known, $\vk$ is a compact subset of $\R^{n \times k}$, being the pre-image of the closed set $\{I_k\}$ under the continuous map   $A \mapsto A^T A$ that is also bounded (by $\sqrt{k}$ with respect to the Frobenius norm).} 
We are interested in the orthogonal projection of $X^{(n)}$ onto a random $k_n$-dimensional subspace, where $1\le k_n < n$. To this end, fix $n\in\N$, $1 \le k_n \leq n$, and let 
\begin{equation*} \mathbf{A}_{n,k_n} = \left[ \mathbf{A}_{n,k_n}(i,j) \right]_{i=1,\dots, n; \, j=1,\dots, k_n}
\end{equation*}
be an $n \times k_n$ random matrix drawn from the Haar measure on the Stiefel manifold $\mathbb{V}_{n,k_n}$ (i.e., the unique probability measure on $\mathbb{V}_{n,k_n}$ that is invariant under orthogonal transformations).
Note that the random matrix  $\mathbf{A}^T_{n,k_n}$    linearly projects a vector from $n$ to  $k_n$ dimensions. 
We assume that for each $n \in \N$,  $\mathbf{A}_{n,k_n}$ is independent of $X^{(n)}$, and 
for simplicity,
we  assume that the sequences 
$\{X^{(n)}\}_{n \in \N}$ and  $\{\mathbf{A}_{n,k_n}\}_{n \in \N}$ are defined on a common  probability space  $(\Omega,\mathcal{F},\P)$, 
although dependencies across $n$ are immaterial for the questions we address.

We aim to analyze the large deviation behavior of the coordinates of random projections $\mathbf{A}_{n, k_n}^T X^{(n)}$ of $X^{(n)}$ in
three regimes, constant, linear and sublinear, depending on how the dimension $k_n$ of the projected vector changes
with $n$:

\begin{definition} \label{def-set} 
Given a sequence $\{k_n\}_{n\in\N}\subset \N$, we say:
\begin{enumerate} 
\item $\{k_n\}_{n\in\N}$ is  \emph{constant}  at $k$, for some $k \in \N$,
  denoted  $k_n\equiv k$, if $k_n = k$ for all $n\in\N$;
\item $\{k_n\}_{n\in\N}$   \emph{grows sublinearly}, denoted  $1\ll k_n \ll n$,  if $k_n \ra \infty$ but $k_n/n \ra 0$;
\item $\{k_n\}_{n\in\N}$ \emph{grows linearly} with rate $\lambda$, for some $\lambda \in (0,1]$, 
  denoted  $k_n \sim \lambda n$, if $k_n/n \ra \lambda$. 
\end{enumerate}
\end{definition}

When $\{k_n\}_{n\in\N}$ is constant  at some $k\in\N$,  
then one can investigate when the sequence of vectors $\{\mathbf{A}_{n, k_n}^T X^{(n)} = \mathbf{A}_{n, k}^T X^{(n)}\}_{n\in\N}$ satisfies an LDP
in the space $\mathbb{R}^k$. In contrast, when $k_n$ increases to infinity, in order to even pose
the question of existence of an LDP,
one must first embed the sequence $\{\mathbf{A}_{n,k_n}^T X^{(n)}\}_{n\in\N}$  of 
 random vectors  of different dimensions 
into a common topological space. Thus, we prove an LDP for the sequence $\{L^n \}_{n\in\N}$ of empirical measures of the coordinates of the $k_n$-dimensional random vectors $\mathbf{A}_{n,k_n}^T X^{(n)}$:
\begin{equation}\label{eq-empproj} \ln := \frac{1}{k_n}\sum_{j=1}^{k_n} \delta_{(\mathbf{A}_{n,k_n}^TX^{(n)})_j} \,, \quad n\in\N.
\end{equation}

\begin{remark}
Note that the law of  $\mathbf{A}_{n,k_n}$ is invariant under permutation of its $k_n$ columns, so the $k_n$ coordinates of $\mathbf{A}_{n,k_n}^T X^{(n)}$ are exchangeable. Thus, the empirical measure  $\ln$ in  \eqref{eq-empproj} encodes the essential distributional properties of the coordinates of the projection, and hence, serves as a natural choice for a common infinite-dimensional embedding of the $k_n$ coordinates of $\mathbf{A}_{n,k_n}^T X^{(n)}$, for all $n\in\N$.
\end{remark}

We now present our main condition on the sequence $\{X^{(n)}\}_{n\in\N}$.
\begin{assumption}\label{ass-normldp}
The sequence  of scaled norms  $\{\|X^{(n)}\|_2/\sqrt{n}\}_{n\in\N}$ satisfies an LDP at speed $s_n$ with GRF $J_X:\mathbb{R}\to[0,\infty]$.   
\end{assumption}
{When a special case of Assumption~\ref{ass-normldp} holds with  speed
$s_n = n$, we  say that Assumption~\ref{ass-normldp}* holds.
\begin{customthm}{A*}
The sequence  of scaled norms  $\{\|X^{(n)}\|_2/\sqrt{n}\}_{n\in\N}$ satisfies an LDP at speed $n$ with GRF $J_X:\mathbb{R}\to[0,\infty]$.
\end{customthm}
}
\begin{remark}
\label{rem-assnormldp}
 The special case of Assumption \ref{ass-normldp}*  is important
 because,  as shown in  Section~\ref{sec-ex}, 
 it is satisfied by several important sequences of measures, including those whose elements are taken
 from a large family of  product measures (see Proposition~\ref{prop-prod}), the 
uniform measure on an  $\ell_p^n$ ball of radius $n^{1/p}$, with $p\geq 2$ (see Proposition \ref{lem-lpeg})
or, in fact, a more general class of measures  that includes the uniform measure on an Orlicz ball
defined via a superquadratic function (see Proposition~\ref{lem-orlex}), and a general class of
Gibbs measures with superquadratic potential and interaction  
functions (see Proposition~\ref{lem-gibbs}). 
However, Assumption~\ref{ass-normldp}* no longer holds  when $X^{(n)}$ is uniformly
chosen from an $\ell_p^n$ ball of radius $n^{1/p}$ with $p \in [1,2)$.
  Indeed, this can be deduced from the sharp large  deviation upper bounds obtained in
  \cite{Pao06} and \cite{GueMil11}.  In addition,    
    Theorem 1.3 of~\cite{Kabluchko17} shows that $\{\|X^{(n)}\|_2/\sqrt{n}\}_{n \in \N}$
    satisfies an LDP with speed  $s_n = n^{p/2}$.  These results  
    motivate the more general condition stated in Assumption \ref{ass-normldp}.
  \end{remark}

 \begin{remark}
   \label{rem-KLS}
   The general form of  Assumption~\ref{ass-normldp} is also of interest because
    of its  connection to the Kannan-Lov\'{a}sz-Simonovits (KLS) conjecture formulated in
    \cite{KLSconj95} (also see \cite{AloGutBas15} for a nice exposition), which is one of the
    major open problems in convex geometry.  Indeed, 
    Theorem A in~\cite{AloGutProTha20} states that the KLS conjecture is false if 
    there exists a sequence
    of isotropic and log-concave random vectors 
    $\{X^{(n)}\}_{n\in\N}$ satisfying Assumption~\ref{ass-normldp} either with $s_n\ll \sqrt{n}$ and nontrivial
    $J_X$ or with $s_n=\sqrt{n}$ and  $\inf_{t>t_0}\{\inf_{x>t}J_X(x)/t\}=0$ for some  $t_0\in(1,\infty)$.
    It is shown in \cite{AloGutProTha20} that when $X^{(n)}$ is uniformly
    distributed on the  $\ell_1^n$ ball of radius $n$, then  $\{X^{(n)}\}$ satisfies
    Assumption~\ref{ass-normldp} with $s_n=\sqrt{n}$,
    but the condition on the rate function $J_X$ is not satisfied (which is consistent with the fact that 
    the KLS conjecture is widely believed to be true).
    In view of this observation, it would be interesting to extend our
    verification of Assumption~\ref{ass-normldp}  to more general measures
      such as Orlicz balls   (respectively, Gibbs measures) with superlinear functions (respectively, potentials),
      extending the corresponding results obtained in this article   for superquadratic functions (respectively, potentials). 
        \end{remark} 
    
When  Assumption~\ref{ass-normldp} 
  is satisfied with a GRF $J_X$ that has a unique minimum at $m\in\R_+$,
go  then for all sufficiently large $n$,  the random variable $X^{(n)}$ satisfies the thin shell condition
  of~\cite[Equation (1)]{AntBalPer03}. To be more precise, note that the observation that the minimum of 
    $J_X$, which  is equal to zero, is achieved  
    uniquely at $m$,  {together with the fact that $s_\ell \ra \infty$, implies that for every $c > 0$, there exists} $\{\delta_\ell\}_{\ell \in \N}$ with 
  $\delta_\ell \downarrow 0$ such that \begin{equation*}\sqrt{s}_\ell \inf\{J_X(x):|x-m| \geq \delta_\ell, x \in \R_+\} \geq c.\end{equation*}
  Setting $\varepsilon_\ell := \max( \delta_\ell, 2 e^{-c \sqrt{s_\ell}})$, 
  it then follows from 
  Assumption~\ref{ass-normldp} and the definition of the LDP (see Definition \ref{def-ldp})
  that  $\varepsilon_\ell \downarrow 0$ as $\ell \rightarrow \infty$, and for every $\ell \in \N$, there exists $N_\ell \in\N$ such that
  \begin{equation}\label{thin-shell-eq} \mathbb{P}\left(\abs{\frac{\norm{X^{(n)}}_2}{\sqrt{n}}-m}\geq \varepsilon_\ell \right )
  \leq \varepsilon_\ell, \qquad \mbox{ for all } n \geq N_\ell.   \end{equation}
  In particular, this implies
   the following weak limit: 
\begin{equation*} \frac{ \|X^{(n)}\|_2 }{ \sqrt{n} } \xrightarrow{\P} m \in \R_+. 
\end{equation*}  
  Thus, we refer to the strengthening of Assumption~\ref{ass-normldp} with $J_X$ having
  a unique minimum as the \emph{asymptotic thin shell condition}.
   Note that the thin shell condition is
usually stated for isotropic random vectors $X^{(n)}$ and with $m = 1$, but since 
we do not restrict our consideration to only isostropic random vectors $X^{(n)}$, we phrased the condition above for arbitrary $m > 0$.
Just as the thin shell condition yields a central limit theorem  in the sense that  
the  projections of $X^{(n)}$ can be shown to be close to Gaussian (see, e.g., \cite{AntBalPer03,Kla07}),  
our results, summarized in the next three sections,  show that
the  asymptotic thin shell condition
implies that the   empirical measures $\{L^n\}_{n \in \N}$
of the coordinates of the projections of $X^{(n)}$ 
satisfy an LDP  in the sublinear and linear regimes (with the weaker Assumption~\ref{ass-normldp} sufficing in the
latter regime).
 
\begin{remark}
  \label{rem-Keith}
  There exist sequences $\{X^{(n)}\}_{n \in \N}$ for which the corresponding random projections
satisfy an LDP, but  the thin shell condition fails to hold.   For example, this can happen when
Assumption \ref{ass-normldp} holds with a rate function $J_X$ that has multiple minima, as can
happen for certain Gibbs measures (see Section \ref{sec-gibbs}) with non-convex potentials $F$ and $G$. 
As another example, let $X^{(n)}$ be distributed according to a mixture of two Gaussian distributions in $\R^n$ both with mean $0$ and covariance matrices $I_n$ and $2I_n$, respectively. {In other words, the density of $X^{(n)}$ takes  the form   $f_{X^{(n)}}=\frac{1}{2}\phi_{I^n}+\frac{1}{2}\phi_{2I^n}$ for a positive definite matrix $C$, where  $\phi_C$ denotes the Gaussian density with mean $0$ and covariance matrix $C$.} Then $\{X^{(n)}\}_{n \in \N}$ does not satisfy the thin shell condition since half its mass is concentrated around the thin shell of radius $\sqrt{n}$ and the other half around the thin shell of radius $\sqrt{2n}$. However, it is easy to show that the sequence $\{X^{(n)}\}_{n\in\N}$ satisfies Assumption~\ref{ass-normldp}* (e.g., since the sequence of norms of each of the Gaussian distributions in the mixture satisfies an LDP trivially, the LDP for the sequence of norms of the mixtures can be deduced from results in \cite{DinZab92}).  
\end{remark}

\subsection{Results in the constant regime}\label{sec-const}

To establish LDPs in the constant regime, when 
$\{k_n\}_{n\in\N}$ is constant at $k$ for some $k\in\N$, 
 we will require either Assumption~\ref{ass-normldp}* or, 
 to cover more general sequences $\{X^{(n)}\}_{n \in \N}$ like the uniform measure
on an $\ell_p^n$ ball with $p \in [1,2)$,  
the following  modification of Assumption \ref{ass-normldp}*.

\begin{assumption}  \label{ass-rescaled}
 There exists a positive sequence $\{s_n\}_{n\in\mathbb{N}}$ with $\lim_{n \rightarrow \infty} s_n  = \lim_{n \rightarrow \infty} n/s_n = \infty$ such that 
the sequence of scaled norms $\{\sqrt{s_n}\|X^{(n)}\|_2/n\}_{n\in\mathbb{N}}$ satisfies an LDP in $\mathbb{R}$
at speed $s_n$ with GRF $J_X:\mathbb{R}\to[0,\infty]$.
\end{assumption}

  \begin{remark}\label{rk-rescaled}
    It is easy to see, by a simple rescaling argument,  that Assumption~\ref{ass-normldp}* is
    equivalent to a modified version of 
    Assumption~\ref{ass-rescaled}, in which one requires that $\{\sqrt{s_n}\|X^{(n)}\|_2/n)\}_{n\in\mathbb{N}}$ satisfies an LDP at  a speed $s_n$ that satisfies 
    $s_n/n \rightarrow r$, with $r \in (0,\infty)$ rather than  $r = 0$. 
    Indeed,  this modified version of Assumption~\ref{ass-rescaled} would hold
with GRF $J_X^{(r)}$  if and only if 
  Assumption~\ref{ass-normldp}* is satisfied with  GRF 
  $J_X (x) := rJ_X^{(r)} (\sqrt{r}x)$.
\end{remark}

 \begin{theorem}[constant, $k_n \equiv k$] \label{th-constant} Suppose $\{k_n\}_{n\in\N}$ is constant at
   $k  \in \N$, and that either Assumption \ref{ass-normldp}* or Assumption \ref{ass-rescaled} holds,
   with  sequence $\{s_n\}_{n\in\N}$ and GRF $J_X$. 
   Then  $\{n^{-1/2}\mathbf{A}_{n,k}^T X^{(n)}\}_{n\in\N}$ satisfies an LDP in $\R^k$ at speed $s_n$, 
   with GRF $I_{\mathbf{A}X,k}:\R^k\ra[0,\infty]$ defined by
   \begin{equation}
     \label{Jan-constant1}
    I_{\mathbf{A}X,k}(x) := 
    \begin{cases}
    \inf_{0 < c < 1} \left\{  J_X\left(\frac{\|x\|_2}{c}\right) -\tfrac{1}{2}
    \log\left(1 - c^2\right)\right\}, \quad &\text{if Assumption~\ref{ass-normldp}* holds},\\
    \inf_{c>0} \left\{J_X\left(\frac{\|x\|_2}{c} \right)+\frac{c^2}{2}\right\}, \quad &\text{if Assumption~\ref{ass-rescaled} holds}.
    \end{cases}
   \end{equation}
 \end{theorem}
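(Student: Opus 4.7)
The plan is to exploit the rotational invariance of the Haar measure on $\vk$ together with the Gaussian representation of the uniform measure on $S^{n-1}$ to express the projection as a product of building blocks that are (essentially) independent, derive a joint LDP for them, and then apply the contraction principle. Conditioning on $X^{(n)}$ and using Haar invariance, $\mathbf{A}_{n,k}^T X^{(n)} \eqdist \|X^{(n)}\|_2\, U^{(n,k)}$, where $U^{(n,k)}$ is distributed as the first $k$ coordinates of a uniform random vector on $S^{n-1}$ and is independent of $X^{(n)}$. Using $U^{(n,k)}\eqdist (G_1,\dots,G_k)/\|G^{(n)}\|_2$ with $G^{(n)}=(G_1,\dots,G_n)$ a standard Gaussian vector in $\R^n$ independent of $X^{(n)}$, one has
\[
\frac{\mathbf{A}_{n,k}^T X^{(n)}}{\sqrt{n}} \eqdist \frac{U_n\,V_n}{W_n},\qquad U_n:=\frac{\|X^{(n)}\|_2}{\sqrt{n}},\ V_n:=\frac{(G_1,\dots,G_k)}{\sqrt{n}},\ W_n:=\frac{\|G^{(n)}\|_2}{\sqrt{n}}.
\]

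Under Assumption~\ref{ass-normldp}* (speed $s_n=n$), I would write $W_n^2=\|V_n\|^2+T_n$ with $T_n:=n^{-1}\sum_{i=k+1}^n G_i^2$, so that the triple $(U_n,V_n,T_n)$ is jointly independent (since $X^{(n)}\perp G^{(n)}$ and $(G_1,\dots,G_k)\perp (G_{k+1},\dots,G_n)$). Each component satisfies an LDP at speed $n$: $U_n$ with GRF $J_X$ by hypothesis; $V_n$ with GRF $y\mapsto \tfrac12\|y\|^2$ by a direct Gaussian tail estimate; and $T_n$ with GRF $J_{G^2}(t):=\tfrac12(t-1-\log t)$ for $t>0$ and $+\infty$ otherwise, by Cram\'er's theorem applied to the iid $\chi^2_1$ variables $G_{k+1}^2,\dots,G_n^2$, noting $(n-k)/n\to 1$ and Remark~\ref{rem-ldpscale}. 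Independence gives a joint LDP with additive rate function, and the continuous map $\Phi(u,y,t):=uy/\sqrt{\|y\|^2+t}$ on $\{t>0\}$ (which contains the effective domain) combined with the contraction principle (Lemma~\ref{lem-con}) yields
\[
I_{\mathbf{A}X,k}(x)=\inf\bigl\{J_X(u)+\tfrac12\|y\|^2+J_{G^2}(t):\ uy/\sqrt{\|y\|^2+t}=x,\ t>0\bigr\}.
\]
The substitutions $w=\sqrt{\|y\|^2+t}$ and $z=y/w$ (so $\|z\|<1$, $\|y\|^2=w^2\|z\|^2$, $t=w^2(1-\|z\|^2)$, $uz=x$) collapse the integrand to $J_X(u)+\tfrac12 w^2-\tfrac12-\log w-\tfrac12\log(1-\|z\|^2)$. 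Optimization over $w>0$ is attained at $w=1$, and setting $c:=\|z\|=\|x\|/u\in(0,1)$ produces the stated A* formula.

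Under Assumption~\ref{ass-rescaled} (speed $s_n\ll n$), the key observation is that $W_n$ satisfies an LDP at the faster speed $n$ with a GRF uniquely minimized at $1$, so $W_n$ is exponentially equivalent to the constant $1$ at the slower speed $s_n$ (cf.\ Remark~\ref{ref-speeds} and Remark~\ref{rem-expeq}). Rewriting
\[
\frac{\mathbf{A}_{n,k}^T X^{(n)}}{\sqrt{n}}\eqdist \frac{R_n}{W_n}\cdot\frac{(G_1,\dots,G_k)}{\sqrt{s_n}},\qquad R_n:=\frac{\sqrt{s_n}\|X^{(n)}\|_2}{n},
\]
and applying Lemma~\ref{ldp-prod} to the independent pair $(R_n,V'_n)$, with $V'_n:=(G_i/\sqrt{s_n})_{i\le k}$ satisfying an LDP at speed $s_n$ with GRF $y\mapsto\tfrac12\|y\|^2$, together with $W_n$ at faster speed $n$ and unique minimizer $1$, yields a joint LDP for $(R_n,V'_n,W_n)$ at speed $s_n$ with rate $J_X(r)+\tfrac12\|y\|^2$ on $\{w=1\}$ and $+\infty$ otherwise. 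The contraction $(r,y,w)\mapsto ry/w$ and the substitution $c=\|x\|/r$ then give $\inf_{c>0}\{J_X(\|x\|/c)+c^2/2\}$.

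The main obstacle is the dependence of $V_n$ on $W_n$, which prevents a direct application of Lemma~\ref{ldp-prod} in the A* case; this is handled by working with the genuinely independent triple $(U_n,V_n,T_n)$ and recovering $W_n$ through the continuous map $(y,t)\mapsto\sqrt{\|y\|^2+t}$. In the A case, the dependence is rendered harmless by the speed mismatch, which makes $W_n$ exponentially equivalent to a constant at speed $s_n$ regardless of its dependence on $V'_n$. The resulting rate function $I_{\mathbf{A}X,k}$ is automatically lower-semicontinuous with compact level sets, as the contraction image of a GRF.
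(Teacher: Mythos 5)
Your proof is correct, and the two cases split differently in how close they are to the paper's argument. In the Assumption~\ref{ass-rescaled} case your argument is essentially identical to the paper's: the same reformulation $n^{-1/2}\mathbf{A}_{n,k}^T X^{(n)} \eqdist \bigl(\zeta^{(k)}/\sqrt{s_n}\bigr)\bigl(\sqrt{n}/\|\zeta^{(n)}\|_2\bigr)\bigl(\sqrt{s_n}\|X^{(n)}\|_2/n\bigr)$, the same use of Lemma~\ref{ldp-prod} to dispose of the norm factor concentrating at $1$ at the faster speed $n$, and the same contraction. In the Assumption~\ref{ass-normldp}* case you take a genuinely different route: the paper simply cites the known LDP for the first $k$ coordinates of a uniform point on $S^{n-1}$ (Theorem~3.4 of \cite{BarGamLozRou10}, with GRF $y\mapsto -\tfrac12\log(1-\|y\|_2^2)$) and then contracts against $J_X$, whereas you rederive that LDP from scratch via the independent decomposition $\|\zeta^{(n)}\|_2^2/n = \|V_n\|_2^2 + T_n$ and Cram\'er's theorem, recovering the logarithmic term by optimizing $\tfrac12 w^2 - \tfrac12 - \log w$ at $w=1$. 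Your computation there is correct and has the virtue of being self-contained; the paper's route is shorter and avoids one technical wrinkle in yours, namely that the map $(u,y,t)\mapsto uy/(\|y\|_2^2+t)^{1/2}$ is not continuous on all of $\R_+\times\R^k\times\R$, so Lemma~\ref{lem-con} as stated does not apply directly. Since $J_{G^2}(t)\to\infty$ as $t\downarrow 0$ and $J_X$ is a GRF (hence $(U_n,V_n,T_n)$ is exponentially tight with level sets bounded away from $\{t=0\}$), this is repaired by the standard extension of the contraction principle to maps continuous on the effective domain of a good rate function (exponentially good approximations, e.g.\ restricting to $\{t\ge\delta\}$ and letting $\delta\downarrow 0$); you should state this explicitly rather than only parenthetically. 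With that caveat addressed, both cases are complete.
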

 
{
  The proof of Theorem~\ref{th-constant} is given in Section~\ref{sec-constant}, where the role of Assumption \ref{ass-rescaled} in the proof is discussed in detail.
  The rate function takes the form in~\eqref{Jan-constant1} because  rare events for  $n^{-1/2}\mathbf{A}_{n,k}^T X^{(n)}$ occur through a combination of the ``radial" component represented by $J_X$, and the 
  ``angular" component represented by $c\mapsto -\frac{1}{2} \log(1 - c^2)$ (when Assumption~\ref{ass-normldp}* holds) or $c \mapsto c^2/2$ (when Assumption~\ref{ass-rescaled} holds). }

 As an immediate corollary of Theorem \ref{th-constant}, we have the following LDP for the corresponding {scaled $\ell_q^k$} {norms (or in the case $q \in (0, 1)$, quasi-norms) of random projections}.
For  any $q \in (0, \infty)$ and $n \in \N$,
  define { 
  \begin{equation}
    \label{normY2}
	Y_{q,k}^n := \|\mathbf{A}_{n,k}^T X^{(n)}\|_q.
  \end{equation}}

\begin{corollary}[constant, $k_n \equiv k$] \label{cor-normcon}
  Suppose $\{k_n\}_{n \in \N}$,  $\{X^{(n)}\}_{n\in\N}$, 
$\{s_n\}_{n \in \N}$ and $I_{\mathbf{A}X,k}$ 
  are as in Theorem~\ref{th-constant}. 
Then  $\{n^{-1/2}Y_{q,k}^n\}_{n\in \N}$ satisfies an LDP at speed $s_n$ with GRF 
\begin{equation} \label{normgrf-fixed}
 \mathbb{J}_q^{\mathsf{con}} (x) := \inf_{z \in \mathbb{R}^k}  \left\{I_{\mathbf{A}X,k}(z)  : \, \|z\|_q = x\right\}, \quad x\in \R_+.
\end{equation} 
\end{corollary}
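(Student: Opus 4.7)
The plan is to apply the contraction principle (Lemma~\ref{lem-con}) directly to the LDP provided by Theorem~\ref{th-constant}. Observe that by definition
\[
n^{-1/2} Y_{q,k}^n \;=\; \|n^{-1/2}\mathbf{A}_{n,k}^T X^{(n)}\|_q \;=\; f_q\bigl(n^{-1/2}\mathbf{A}_{n,k}^T X^{(n)}\bigr),
\]
where $f_q : \R^k \to \R_+$ is the map $z \mapsto \|z\|_q$. Theorem~\ref{th-constant} gives an LDP at speed $s_n$ with GRF $I_{\mathbf{A}X,k}$ for the sequence $\{n^{-1/2}\mathbf{A}_{n,k}^T X^{(n)}\}_{n\in\N}$ on the Polish space $\R^k$, so all that remains is to check that $f_q$ is continuous and to identify the contracted rate function.

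For $q \in [1,\infty)$, $f_q$ is a norm on $\R^k$ and therefore continuous. For $q \in (0,1)$, the map $z = (z_1,\dots,z_k) \mapsto \sum_{i=1}^k |z_i|^q$ is a finite sum of continuous functions, and $f_q(z) = (\sum_i |z_i|^q)^{1/q}$ is the composition with the continuous map $u \mapsto u^{1/q}$ on $[0,\infty)$, so $f_q$ is again continuous. Hence in all cases $f_q$ is continuous from the Polish space $\R^k$ to the Polish space $\R$.

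Applying the contraction principle, the sequence $\{f_q(n^{-1/2}\mathbf{A}_{n,k}^T X^{(n)})\}_{n\in\N} = \{n^{-1/2}Y_{q,k}^n\}_{n\in\N}$ satisfies an LDP at speed $s_n$ with GRF
\[
x \;\longmapsto\; \inf\bigl\{I_{\mathbf{A}X,k}(z) : z \in \R^k,\ f_q(z) = x\bigr\} \;=\; \inf\bigl\{I_{\mathbf{A}X,k}(z) : z \in \R^k,\ \|z\|_q = x\bigr\},
\]
which is exactly the definition of $\mathbb{J}_q^{\mathsf{con}}(x)$ in~\eqref{normgrf-fixed}. Here we adopt the usual convention that the infimum over the empty set is $+\infty$, which handles automatically the case $x < 0$, where $\mathbb{J}_q^{\mathsf{con}}(x) = +\infty$. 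The contraction principle also ensures that the resulting rate function is a GRF.

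There is no significant obstacle in this argument; the only subtle point is confirming continuity of $\|\cdot\|_q$ for $q \in (0,1)$, which is straightforward as noted above. Thus the corollary follows immediately from Theorem~\ref{th-constant} and Lemma~\ref{lem-con}.
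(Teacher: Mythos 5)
Your proof is correct and is exactly the paper's argument: the paper likewise obtains the corollary by applying the contraction principle (Lemma~\ref{lem-con}) to the continuous map $z \mapsto \|z\|_q$ and the LDP of Theorem~\ref{th-constant}. Your extra verification of continuity for $q \in (0,1)$ is a harmless elaboration of a point the paper leaves implicit.
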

\begin{proof}
 This 
 is an immediate consequence of the  LDP for $\{n^{-1/2}\mathbf{A}_{n,k}^T X^{(n)}\}_{n\in\N}$ in Theorem \ref{th-constant} and the contraction principle (Lemma~\ref{lem-con}) applied to the continuous mapping $\R^k \ni x \mapsto \|x\|_q \in \R$.
 \end{proof}

\subsection{Results in the sublinear regime}\label{sec-sublinear}
Recall that if, instead of being constant, the sequence  $k_n$ tends to infinity  as $n\to\infty$,
then our goal is to establish an
LDP for the sequence of empirical measures $\{\ln\}_{n\in\N}$ of \eqref{eq-empproj}. We start in this section  
by analyzing the sublinear regime.  Section \ref{subsub-ldsublin} summarizes our LDP results for the sequences of empirical measures $\{L^n\}_{n\in\N}$ and Euclidean norms of the randomly projected vectors.
 Section \ref{subsub-ldpEunorm} contains  additional results on LDPs for a different
scaling of  Euclidean norms as well as more general $q$-norms, with $q \in [1,2)$, 
  of projected vectors that is suitable for the sublinear regime.

\subsubsection{LDPs for the empirical measures and norms of projected vectors}
\label{subsub-ldsublin}

In what follows, we will write $\gamma_\sigma$ to denote the Gaussian measure on $\R$ with mean 0 and variance $\sigma^2$; that is, for $\sigma > 0$, let 
\begin{equation}\label{gam}
\cP(\R) \ni  \gamma_\sigma \sim \mathcal{N}(0,\sigma^2).
\end{equation}

\begin{theorem}[sublinear, $1 \ll k_n \ll n$] \label{th-on}
  Suppose $\{k_n\}_{n\in\N}$ grows sublinearly and Assumption \ref{ass-normldp} holds
  with associated speed $s_n$ and GRF $J_X$. Also, suppose that $J_X$ has a unique minimum at $m > 0$.  Let $H$ be the relative entropy functional defined in \eqref{relent}. 
  Then, for every $q\in[1,2)$, 
   \begin{enumerate}
 \item If $s_n\gg k_n$,
  $\{\ln\}_{n\in\N}$ satisfies an LDP in $\cP_q(\R)$ at speed $k_n$, with GRF $\mathbb{I}_{L,k_n}:\cP_q(\R)\ra[0,\infty]$, defined by
\begin{equation*} \mathbb{I}_{L,k_n}(\mu) := H(\mu | \gamma_{m}).
\end{equation*}
 \item If $s_n = k_n$,
  $\{\ln\}_{n\in\N}$ satisfies an LDP in $\cP_q(\R)$ at speed $k_n$, with GRF $\mathbb{I}_{L,k_n}:\cP_q(\R)\ra[0,\infty]$, defined by
\begin{equation*} \mathbb{I}_{L,k_n}(\mu) := \inf_{c>0}\left\{H(\mu | \gamma_c)+J_X(c)\right\}.
\end{equation*}
 \item If $s_n\ll k_n$,
  $\{\ln\}_{n\in\N}$ satisfies an LDP in $\cP_q(\R)$ at speed $s_n$, with GRF $\mathbb{I}_{L,k_n}:\cP_q(\R)\ra[0,\infty]$, defined by
\begin{equation*} \mathbb{I}_{L,k_n}(\mu) := 
\begin{cases}
J_X(c),&\quad \mu=\gamma_c,\\
+\infty, &\quad  \text{otherwise}.
\end{cases} 
\end{equation*}
\end{enumerate}
\end{theorem}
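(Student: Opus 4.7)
The plan is to use rotational invariance of the Haar measure on the Stiefel manifold to obtain a probabilistic representation of $\mathbf{A}_{n,k_n}^TX^{(n)}$ in terms of iid standard Gaussians, and then to combine the given LDP for $R_n := \|X^{(n)}\|_2/\sqrt n$ with a Sanov-type LDP for the empirical measure of the Gaussians according to their relative speeds. Let $G^{(n)} = (G_1,\ldots,G_n)$ be a standard Gaussian vector independent of $X^{(n)}$, set $T_n := \|G^{(n)}\|_2/\sqrt n$, and recall that $G^{(n)}/\|G^{(n)}\|_2$ is uniformly distributed on $S^{n-1}$. Then I would first show
\begin{equation*}
L^n \eqdist \Phi_{R_n/T_n}(\mu^G_{k_n}), \qquad \mu^G_{k_n} := \tfrac{1}{k_n}\sum_{j=1}^{k_n} \delta_{G_j},
\end{equation*}
where $\Phi_c:\cP(\R)\to\cP(\R)$ denotes pushforward under multiplication by $c\in\R$. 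Since $T_n$ satisfies an LDP at speed $n$ (by Cram\'er applied to $\tfrac{1}{n}\sum G_j^2$) with a GRF uniquely minimized at $1$, and $n$ dominates every relevant speed in all three cases, a computation in the spirit of Remark~\ref{rm-eq-conv} will show that $\{L^n\}_{n\in\N}$ is exponentially equivalent (in $\mathcal{W}_q$) to $\{\Phi_{R_n}(\mu^G_{k_n})\}_{n\in\N}$, so it will be enough to treat the latter, where $R_n$ and $\mu^G_{k_n}$ are independent.

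The two remaining ingredient LDPs are the given one for $\{R_n\}_{n\in\N}$ at speed $s_n$ with GRF $J_X$, and a Sanov-type LDP for $\{\mu^G_{k_n}\}_{n\in\N}$ at speed $k_n$ with GRF $H(\cdot\,\vert\,\gamma_1)$ in the $q$-Wasserstein topology for $q\in[1,2)$; establishing the Sanov LDP in this (stronger) topology rather than the usual weak one is the main technical input. The three cases are then distinguished by which of $s_n$ and $k_n$ dominates. In Case 1 ($s_n\gg k_n$), $R_n$ concentrates at $m$ at the faster speed $s_n$, so $\{\Phi_{R_n}(\mu^G_{k_n})\}$ is exponentially equivalent at speed $k_n$ to $\{\Phi_m(\mu^G_{k_n})\}$; the contraction principle then yields the claimed rate via the identity $H(\Phi_m^{-1}(\nu)\,\vert\,\gamma_1) = H(\nu\,\vert\,\gamma_m)$. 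In Case 2 ($s_n = k_n$), the independence of $R_n$ and $\mu^G_{k_n}$ gives a joint LDP at speed $k_n$ with rate $J_X(c) + H(\mu\,\vert\,\gamma_1)$, and applying the contraction principle to the continuous map $(c,\mu)\mapsto\Phi_c(\mu)$ produces $\inf_{c>0}\{J_X(c)+H(\nu\,\vert\,\gamma_c)\}$. In Case 3 ($s_n\ll k_n$), the empirical measure $\mu^G_{k_n}$ concentrates at $\gamma_1$ at the faster speed $k_n$, so $\{\Phi_{R_n}(\mu^G_{k_n})\}$ is exponentially equivalent at speed $s_n$ to $\{\Phi_{R_n}(\gamma_1)\} = \{\gamma_{R_n}\}$, and contracting the continuous injection $c\mapsto\gamma_c$ gives the degenerate rate $J_X(c)$ supported on $\{\gamma_c:c>0\}$.

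The main obstacle is the $q$-Wasserstein strengthening of Sanov's theorem for iid Gaussians. By general principles this reduces to verifying exponential tightness of $\{\mu^G_{k_n}\}$ in $\cP_q(\R)$, which in view of Lemma~\ref{lem-compcat} amounts to showing that $\P(M_2(\mu^G_{k_n})>j)$ decays exponentially in $k_n$ as $j\to\infty$; since $\E[\exp(\alpha G_j^2)]<\infty$ for $\alpha<1/2$, this follows from Cram\'er's theorem applied to $\tfrac{1}{k_n}\sum_{j=1}^{k_n}G_j^2$. A second, more delicate point is the careful verification of the various exponential equivalences: for instance, one must control $\mathcal{W}_q(\Phi_{R_n/T_n}(\mu^G_{k_n}),\Phi_{R_n}(\mu^G_{k_n})) \leq R_n|T_n^{-1}-1|\,(M_q(\mu^G_{k_n}))^{1/q}$ at the speed $k_n\wedge s_n$, which is where the strict separation $n\gg s_n\vee k_n$ is used; analogous care in Case 3 requires $k_n\gg s_n$ so that $\mu^G_{k_n}$ may be replaced by $\gamma_1$ at the slower speed $s_n$.
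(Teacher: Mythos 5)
Your proposal is correct and follows essentially the same route as the paper: the representation $L^n \eqdist \Phi_{R_n/T_n}(\mu^G_{k_n})$ via the Gaussian realization of the top row of $\mathbf{A}_{n,k_n}$ (Lemmas \ref{lem-row1}--\ref{lem-repn}), Sanov's theorem upgraded to the $q$-Wasserstein topology through exponential tightness with the compact sets $K_{2,j}$ and Cram\'er's theorem for $\tfrac{1}{k_n}\sum G_j^2$ (Proposition \ref{lem-aux2}), exponential equivalence to eliminate the $\|\zeta^{(n)}\|_2/\sqrt n$ normalization using that its LDP speed $n$ dominates $s_n\vee k_n$, and the three-way case split by the relative sizes of $s_n$ and $k_n$ with the faster-concentrating component replaced by a degenerate rate function before contracting $(c,\mu)\mapsto\Phi_c(\mu)$. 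The only (cosmetic) difference is that the paper routes all three cases through a single joint-LDP-plus-contraction lemma (Lemma \ref{sub-aux}) after invoking Remark \ref{ref-speeds}, whereas you substitute deterministic limits by exponential equivalence in Cases 1 and 3 directly.
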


{As in Theorem~\ref{th-constant}, the rate functions above can again be decomposed into a
  radial component, represented by $J_X$ (as a consequence of Assumption \ref{ass-normldp}), and
  ``an angular'' component  $\mathbf{A}_{n,k_n}$, which is captured by the relative entropy term.
  Depending on the relative rate of growth of $k_n$ and $s_n$, different parts dominate the rate function,
  with both terms being present only when $s_n = k_n$.}

  {Next, as in the constant regime (see Corollary~\ref{cor-normcon}), we also establish LDPs for the sequence of  scaled  Euclidean norms of the random projections.} 
 To deal with cases when Assumption~\ref{ass-normldp}* is not satisfied, it is
   not sufficient to consider Assumption~\ref{ass-rescaled} as in the constant regime. 
   Instead, we will need to introduce the following refinement of Assumption~\ref{ass-normldp}.

\begin{assumption}\label{ass-sublinear} 
 There exist  $r \in [0,\infty]$, a GRF $J_X^{(r)}:\R \to [0,\infty]$,  and  a positive sequence $\{s_n\}_{n\in\N}$ satisfying 
  $s_n \to \infty$,  $s_n/n \to 0$  and $s_n/k_n \to r$ as $n \to \infty$, such that 
  \begin{enumerate}
\item
  if $r\in[0,\infty)$, then $\{\sqrt{k_n}||X^{(n)}||_2/n\}_{n\in\N}$ satisfies an LDP at speed $s_n$ 
    with GRF $J_X^{(r)}$; 
  \item
     if $r=\infty$, then $\{\sqrt{s_n}||X^{(n)}||_2/n\}_{n\in\N}$ satisfies an LDP at speed $s_n$ 
    with GRF $J_X^{(\infty)}$.
  \end{enumerate}
  \end{assumption}

The following simple observation is analagous to the one made in
Remark \ref{rk-rescaled}.

\begin{remark}\label{req1c}
It is easy to see that 
Assumption~\ref{ass-sublinear} holds with $r\in(0,\infty)$, $\{s_n\}_{n\in\N}$ and GRF $J_X^{(r)}$
if and only if it also holds with $r'=1$,  $\{s_n':=k_n\}_{n\in\N}$, and GRF $J_X^{(1)} (x):= rJ_X^{(r)}(\sqrt{r}x)$,
$x \in [0,\infty)$.  Therefore, in essence, one need only consider the cases $r \in \{0,1,\infty\}$  in  Assumption~\ref{ass-sublinear}.  
\end{remark}

\begin{theorem}\label{sub-ldp}
  Suppose $k_n$ grows sublinearly, and recall the definition of $Y^n_{q,k}$ given in \eqref{normY2}. 
   \begin{enumerate}
   \item If Assumption~\ref{ass-normldp}* holds with GRF $J_X$, then $ \{n^{-1/2}Y^n_{2,k_n}\}_{n\in\N}$
     satisfies an LDP in $\R$ at speed $n$ with GRF $ \mathbb{J}_2^{\mathsf{sub}}:\R\ra[0,\infty]$, defined by
   \[
 \mathbb{J}_2^{\mathsf{sub}}(x):= \inf_{{c\in(0,1)}} \left\{-\frac{1}{2}\log (1-c^2)+J_X\left(\frac{x}{c}\right) \right\}. 
   \]
   \item If Assumption~\ref{ass-sublinear} holds with  $r\in[0,\infty]$, $\{s_n\}_{n\in\N}$ and GRF $J_X^{(r)}$, then
     $ \{n^{-1/2}Y^n_{2,k_n}\}_{n\in\N}$ satisfies an LDP in $\R$, at speed $s_n$ when $r \in \{0,\infty\}$
     and at speed $k_n$ when $r\in(0,\infty)$, with GRF $\mathbb{J}_2^{\mathsf{sub}}:\R\ra[0,\infty]$, where
    \[
 \mathbb{J}_2^{\mathsf{sub}}(x):=
\begin{cases}
J_X^{(0)}(x),\quad& \text{if } r=0,\\
\inf_{c>0}\left\{\frac{c^2-1}{2}-\log c+rJ_X^{(r)}\left(\frac{\sqrt{r}x}{c}\right) \right\},\quad &\text{if } r\in(0,\infty),\\
\inf_{c>0}\left\{\frac{c^2}{2}+ J_X^{(\infty)}\left(\frac{x}{c}\right)\right\},\quad &\text{if } r = \infty.
\end{cases}
\]
  \end{enumerate}
\end{theorem}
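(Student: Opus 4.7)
The plan is to exploit the rotational invariance of the Haar measure on $\mathbb{V}_{n,k_n}$ and the independence of $\mathbf{A}_{n,k_n}$ from $X^{(n)}$ to factor $\|\mathbf{A}_{n,k_n}^T X^{(n)}\|_2$ as a product of a ``radial'' factor and an independent ``angular'' factor. For any fixed unit vector $\theta\in\mathbb{R}^n$, Haar invariance gives $\|\mathbf{A}_{n,k_n}^T\theta\|_2 \eqdist R_n := \|\mathbf{A}_{n,k_n}^T e_1\|_2$, and the Gaussian representation of the Haar measure yields $R_n^2 \eqdist (\sum_{i=1}^{k_n} G_i^2)/(\sum_{i=1}^{n} G_i^2)$ for i.i.d.\ standard Gaussians $G_1,\ldots,G_n$, so $R_n^2\sim\mathrm{Beta}(k_n/2,(n-k_n)/2)$. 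Conditioning on $X^{(n)}/\|X^{(n)}\|_2$ and setting $U_n:=\|X^{(n)}\|_2/\sqrt{n}$, we obtain
\begin{equation*}
n^{-1/2}Y^n_{2,k_n} \eqdist U_n R_n,
\end{equation*}
with $U_n$ and $R_n$ independent.

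The next step is to establish LDPs for $R_n$ and suitable rescalings of it directly from the Beta density, using Stirling's formula on $B(k_n/2,(n-k_n)/2)$. A direct computation in the sublinear regime $1\ll k_n\ll n$ yields the following three ``angular'' LDPs:
(i) $R_n$ satisfies an LDP in $[0,1)$ at speed $n$ with GRF $c\mapsto -\tfrac{1}{2}\log(1-c^2)$, minimized at $c=0$;
(ii) $\tilde R_n := \sqrt{n/k_n}\,R_n$ satisfies an LDP at speed $k_n$ with GRF $c\mapsto \tfrac{c^2-1}{2}-\log c$, minimized at $c=1$;
(iii) for any sequence $s_n$ with $s_n/k_n\to\infty$ and $s_n/n\to 0$, $\sqrt{n/s_n}\,R_n$ satisfies an LDP at speed $s_n$ with GRF $c\mapsto c^2/2$, minimized at $c=0$.
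Heuristically, at speed $n$ the $\log(1-v)$ term in the Beta density dominates; at speed $k_n$ the linearization $\log(1-v)\approx -v$ and the $\log v$ prefactor jointly contribute and the Beta normalization cancels part of the $\log v$ term, producing the shape $\tfrac{1}{2}(v-\log v-1)$; and at speed $s_n\gg k_n$ only the linearized $-v/2$ contribution survives.

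For Part 1, Assumption~\ref{ass-normldp}* supplies an LDP for $U_n$ at speed $n$ with GRF $J_X$; combining with (i) and the independence of the two sequences yields a joint LDP at speed $n$ with rate $J_X(u)-\tfrac{1}{2}\log(1-c^2)$, and the contraction principle (Lemma~\ref{lem-con}) applied to $(u,c)\mapsto uc$ produces the stated $\mathbb{J}_2^{\mathsf{sub}}$ on $x>0$ (at $x=0$, the contraction must be understood over the closure that includes $c=0$, giving the expected value $0$). For Part 2, Remark~\ref{req1c} allows us to reduce to $r\in\{0,1,\infty\}$, after which we rewrite $n^{-1/2}Y^n_{2,k_n} = \hat U_n\hat R_n$ with matched scalings: $\hat U_n := \sqrt{k_n}\|X^{(n)}\|_2/n$ and $\hat R_n := \sqrt{n/k_n}\,R_n$ when $r\in\{0,1\}$, and $\hat U_n := \sqrt{s_n}\|X^{(n)}\|_2/n$ and $\hat R_n := \sqrt{n/s_n}\,R_n$ when $r=\infty$. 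For $r=1$ (where $s_n=k_n$) and $r=\infty$ both factors satisfy LDPs at the same speed, so independence and contraction (applied using (ii) and (iii) respectively) again deliver the claimed formulas, and the general $r\in(0,\infty)$ case follows via the substitution $J_X^{(1)}(\cdot) = rJ_X^{(r)}(\sqrt{r}\,\cdot)$. For $r=0$, $\hat R_n$'s LDP occurs at the faster speed $k_n\gg s_n$ with unique minimizer $c=1\ne 0$, so Lemma~\ref{ldp-prod} gives that $\hat U_n\hat R_n$ is exponentially equivalent at speed $s_n$ to $\hat U_n$, immediately yielding the rate function $J_X^{(0)}$.

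The main technical obstacle is the careful derivation of the three LDPs (i)--(iii) for the angular factor at the three distinct speeds. This requires delicate Stirling asymptotics for $B(k_n/2,(n-k_n)/2)$, together with careful bookkeeping of the $\log v$ versus $\log(1-v)$ contributions to the density to ensure that the correct term dominates at each speed; this may also need some mild regularity on $k_n$ (for instance $k_n\gg\log n$) to guarantee that the logarithmic Jacobian factors arising from the rescalings are negligible. A secondary subtlety in the contraction step is the value at $x=0$: the infimum over $c\in(0,1)$ alone does not recover the typical value, and one must formally extend the infimum to the closure that includes $c=0$ (where the angular factor vanishes and $u$ may equal the minimizer of $J_X$) in order to obtain the correct $\mathbb{J}_2^{\mathsf{sub}}(0)=0$.
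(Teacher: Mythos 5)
Your overall architecture coincides with the paper's: the same Haar-invariance factorization $n^{-1/2}Y^n_{2,k_n}\eqdist U_n R_n$ with $R_n=\|\mathbf{A}_{n,k_n}^Te_1\|_2$, the same reduction of Part~2 to $r\in\{0,1,\infty\}$ via Remark~\ref{req1c}, and the same final assembly via independence, Lemma~\ref{ldp-prod} and the contraction principle. The three angular LDPs (i)--(iii) that you assert are all correct and are exactly the inputs the paper uses. Your observation about the value at $x=0$ in Part~1 (that the contraction must run over the closed range including $c=0$ to recover $\mathbb{J}_2^{\mathsf{sub}}(0)=0$) is also a legitimate and correct point.

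The gap is that (i)--(iii) are the entire technical content of the theorem, and you only sketch them, via pointwise Stirling asymptotics of the $\mathrm{Beta}(k_n/2,(n-k_n)/2)$ density. As you yourself note, that route forces you to control terms like $k_n^{-1}\log(n/k_n)$ coming from the $(k_n/2-1)$ exponent and the rescaling Jacobian, which do \emph{not} vanish for slowly growing $k_n$ (e.g.\ $k_n=\sqrt{\log n}$); the theorem imposes no such restriction, so the proof as proposed does not cover the full statement. This detour is unnecessary: you already have the representation $R_n^2\eqdist\bigl(\sum_{i\le k_n}G_i^2\bigr)/\bigl(\sum_{i\le n}G_i^2\bigr)$, and the paper derives (i)--(iii) from it directly --- (i) is \cite[Lemma 4.2]{AloGutProTha18} (or Cram\'er for numerator and denominator separately at speeds $k_n$ and $n$), (ii) follows from Cram\'er's theorem for $\tfrac{1}{k_n}\sum_{i\le k_n}G_i^2$ (Remark~\ref{lem-chi-2}) together with Lemma~\ref{ldp-prod} applied to the concentrating denominator, and (iii) requires the Nagaev-type estimate of Lemma~\ref{LDP-Y2} (with $p=2$), since there $\tfrac{1}{s_n}\sum_{i\le k_n}G_i^2$ is a sum of $k_n$ terms normalized by $s_n\gg k_n$ and Cram\'er's theorem no longer applies directly. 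Replacing your Stirling computation by these arguments closes the gap without any extra hypothesis on $k_n$.
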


Note that there is a transition in the form of the LDP
  depending on the relative growth rates
  of $\{s_n\}_{n\in\N}$ and $\{k_n\}_{n\in\N}$. 
  The implications of these results for the special case
  when $X^{(n)}$ is the uniform measure on an $\ell_p^n$ ball,
  and their relation to the  work of~\cite{AloGutProTha18}, is
  discussed in Section \ref{sec-lp-s}; see Theorem \ref{ple2} and
  Remark \ref{rem-alogut}.

  \subsubsection{LDP for an  alternative scaling of  $q$-norms of projections}
 \label{subsub-ldpEunorm}

 In this section,  we show that we can also establish LDPs for  a different scaling
 of the norm, and in this case we consider not just the Euclidean norm, but
 $q$-norms  for $q \in [1,2]$. 
 {More precisely, we consider the sequence 
 $
 \{k_n^{-1/q}Y^n_{q,k_n}\}_{n\in\N}.
 $}
For $q\in[1,2)$ and $t\in\R$ or $q=2$ and $t<1/2$, define
\begin{equation}\label{lambda_q}
\Lambda_q(t) := \log \int_\R \frac{1}{\sqrt{2\pi}}\exp\left(t\abs{x}^q-\frac{1}{2}x^2\right)dx,\quad t\in\mathbb{R},
\end{equation}
and let $\Lambda_q^*$ be the Legendre transform of $\Lambda_q$. Moreover, set  $\mathcal{M}_q$ to be the $q$-th absolute moment of a standard  Gaussian random variable, 
\begin{equation}\label{q-abs-moment}
\mathcal{M}_q := \int_\R \frac{1}{\sqrt{2\pi}}\abs{x}^q\exp(-x^2/2)dx = \frac{2^{q/2}}{\sqrt{\pi}}\Gamma\left(\frac{q+1}{2} \right).
\end{equation}

\begin{theorem} \label{sub-mdp}
  Fix $q\in[1,2]$, suppose $1\ll k_n\ll n$ and Assumption~\ref{ass-normldp} holds with speed $s_n$
  and GRF $J_X$,  which additionally  has a unique minimum at  $m>0$.  Also,
  recall the definition of $Y^n_{q,k}$ given in \eqref{normY2}.
  Then $\{k_n^{-1/q}Y^n_{q,k_n}\}_{n\in\mathbb{N}}$ satisfies an LDP at speed $s_n\wedge k_n$
    with GRF $\widehat{\mathbb{J}}_q^{\mathsf{sub}}:\R\to[0,\infty]$ defined by 
 \begin{equation}
\widehat{\mathbb{J}}_q^{\mathsf{sub}}(x) := 
    \begin{cases}
     \Lambda^*_q\left(x^q/m^q \right), \quad &\text{if $s_n\gg k_n$},\\
    \inf_{c>0}\left\{\Lambda^*_q(c^q) + J_X\left(x/c\right)\right\}, \quad &\text{if $s_n=k_n$},\\
     J_X(x/\mathcal{M}_q^{1/q}), \quad &\text{if $s_n\ll k_n$},\\
    \end{cases}
   \end{equation} 
   {for $x \geq 0$, and $\widehat{\mathbb{J}}_q^{\mathsf{sub}}(x) := +\infty$ for $x < 0$}.
\end{theorem}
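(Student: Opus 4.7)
The plan is to exploit the standard Gaussian representation of Haar-distributed projections to reduce $k_n^{-1/q}Y^n_{q,k_n}$ to a product of three factors whose LDPs can be analyzed separately and then combined through Lemma~\ref{ldp-prod}. By rotational invariance of the Haar measure on $\mathbb{V}_{n,k_n}$ and independence of $X^{(n)}$ and $\mathbf{A}_{n,k_n}$, one has the distributional identity
\[
\mathbf{A}_{n,k_n}^T X^{(n)} \eqdist \|X^{(n)}\|_2 \cdot \frac{(G_1,\ldots,G_{k_n})}{\|G\|_2},
\]
where $G=(G_1,\ldots,G_n)$ is a standard Gaussian vector in $\R^n$ independent of $X^{(n)}$. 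Taking $q$-norms and rescaling then gives
\[
k_n^{-1/q}Y^n_{q,k_n} \eqdist U_n\,V_n\,W_n,
\qquad
U_n := \tfrac{\|X^{(n)}\|_2}{\sqrt{n}}, \quad
V_n := \tfrac{\sqrt{n}}{\|G\|_2}, \quad
W_n := \tfrac{\|(G_1,\ldots,G_{k_n})\|_q}{k_n^{1/q}}.
\]

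My next step is to record an LDP for each factor. By Assumption~\ref{ass-normldp}, $\{U_n\}$ satisfies an LDP at speed $s_n$ with GRF $J_X$, uniquely minimized at $m$. Cramer's theorem applied to the empirical mean $\|G\|_2^2/n$ of i.i.d.\ $\chi_1^2$ random variables, followed by the contraction principle, yields an LDP for $\{V_n\}$ at speed $n$ with a GRF uniquely minimized at $1$. For $\{W_n\}$, Cramer's theorem applied to the i.i.d.\ sum $W_n^q = k_n^{-1}\sum_{j=1}^{k_n}|G_j|^q$ (whose log-moment generating function is $\Lambda_q$ of \eqref{lambda_q}, finite in a neighborhood of $0$ for $q\in[1,2]$) gives an LDP at speed $k_n$ with rate $\Lambda_q^*$; contraction via $x\mapsto x^{1/q}$ then yields an LDP for $W_n$ at speed $k_n$ with GRF $x\mapsto \Lambda_q^*(x^q)$, uniquely minimized at $x=\mathcal{M}_q^{1/q}$.

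I would then combine the three LDPs. Because $s_n\wedge k_n \ll n$, Remark~\ref{ref-speeds} shows $\{V_n\}$ satisfies a degenerate LDP at speed $s_n\wedge k_n$ concentrated at $1$, and Remark~\ref{rm-eq-conv} gives exponential equivalence of $\{U_nV_nW_n\}$ and $\{U_nW_n\}$ at this speed, eliminating the $V_n$ factor. The three regimes then arise from the comparison of $s_n$ and $k_n$: if $s_n\gg k_n$, then $U_n$ concentrates at $m$ at the faster speed $s_n$ in the sense of Remark~\ref{ref-speeds}, so $\{U_nW_n\}$ is exponentially equivalent (at speed $k_n$) to $\{mW_n\}$, whose LDP has rate $\Lambda_q^*(x^q/m^q)$; if $s_n\ll k_n$, the symmetric argument sends $W_n$ to $\mathcal{M}_q^{1/q}$ and leaves the LDP of $\mathcal{M}_q^{1/q} U_n$ at speed $s_n$ with rate $J_X(x/\mathcal{M}_q^{1/q})$; and if $s_n=k_n$, independence of $U_n$ and $W_n$ yields a joint LDP at speed $k_n$ with rate $J_X(u)+\Lambda_q^*(w^q)$, and the contraction principle applied to $(u,w)\mapsto uw$ produces the stated infimum. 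The extension $\widehat{\mathbb{J}}_q^{\mathsf{sub}}(x)=+\infty$ for $x<0$ is immediate from the non-negativity of the $q$-norm.

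The main obstacle I anticipate is the case $s_n = k_n$: the joint LDP for the triple $(U_n,V_n,W_n)$ is most cleanly obtained by a direct invocation of Lemma~\ref{ldp-prod} (with $V_n$ in the role of the faster auxiliary sequence), which simultaneously eliminates $V_n$ and produces the product-form rate for $(U_n,W_n)$. One must then verify that the contraction infimum is lower semicontinuous with compact level sets, which follows from the super-linear growth of $\Lambda_q^*$ at infinity together with the goodness of $J_X$.
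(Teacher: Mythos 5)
Your proposal is correct and follows essentially the same route as the paper's proof: the Gaussian representation of the projection reduces $k_n^{-1/q}Y^n_{q,k_n}$ to the product of the three factors $\|X^{(n)}\|_2/\sqrt n$, $\sqrt n/\|\zeta^{(n)}\|_2$ and $\|\zeta^{(k_n)}\|_q/k_n^{1/q}$, whose individual LDPs (at speeds $s_n$, $n$ and $k_n$) are combined via Lemma~\ref{ldp-prod} and the contraction principle, with the regime trichotomy arising exactly as you describe. The only cosmetic remark is that your final worry about lower semicontinuity and compactness of level sets of the contracted infimum is unnecessary, since the contraction principle (Lemma~\ref{lem-con}) already delivers a good rate function whenever the input rate functions are good.
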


The proof of Theorem~\ref{sub-mdp} is given in Section~\ref{sec-sub-norm}. 
When $q \in [1,2)$, the LDP for $\{k_n^{-1/q}Y^n_{q,k_n}\}_{n\in\mathbb{N}}$ is an immediate consequence of Theorem~\ref{th-on} and the contraction principle. For $q=2$, the contraction principle no longer applies since the moment map $M_2(\cdot)$ is not continuous in $\cP_q(\R)$ for $q<2$. We take a different approach
  to the proof for all cases $q\in[1,2]$, by 
  looking directly at the norm,  instead of using the LDP of empirical measures. 
  In the special case  when $q=2$ and  $X^{(n)}$ is uniformly distributed on the scaled $\ell^n_p$ ball of
  radius $n^{1/p}$, with $p\in[2,\infty]$ (or admits a slightly more general
  representation),
  the  LDP for $\{\widetilde{Y}_{2,k_n}^n\}_{n\in\mathbb{N}}$ was also obtained in~\cite[Theorem B]{AloGutProTha20}.

 \subsection{Results in the linear regime}\label{sec-linear}
 Recall the definition of the second moment map $M_2(\cdot)$ from \eqref{secmom}. 

\begin{definition}
For $\lambda \in (0,1]$, define $\mathcal{H}_\lambda:\mathcal{P}(\R) \ra [0,\infty]$ as
  \begin{equation}\label{eq-hrf} \mathcal{H}_\lambda(\nu) := 
\begin{cases}
-\lambda \, h(\nu) + \tfrac{\lambda}{2} \log (2\pi e) + \tfrac{1-\lambda}{2}\log\left(\tfrac{1-\lambda}{1-\lambda\,M_2(\nu)} \right),\quad &\text{if $M_2(\nu) \leq 1/\lambda$},\\
+\infty,\quad &\text{otherwise,}
\end{cases}
  \end{equation}
  where we adopt the convention that  $0 \log 0 = 0$ and hence, $0 \log (0/0) = 0$. 
\end{definition}

{ Note that if $\lambda M_2 (\nu) = 1$,
then  $\mathcal{H}_\lambda(\nu) = \infty$ when $\lambda < 1$, but (due to our convention) 
$\mathcal{H}_\lambda(\nu) = - h(\nu) + \frac{1}{2} \log (2 \pi e)$  when $\lambda = 1$. }

{
\begin{remark}\label{rm-min-H}
  Since $h$ is strictly concave and $M_2$ is linear, from the definition in \eqref{eq-hrf}, $\mathcal{H}_\lambda$ is strictly convex. A direct verification shows that $\mathcal{H}_\lambda(\gamma_1)=0$, and 
 the strict convexity of  $\mathcal{H}_\lambda$ shows that $\gamma_1$ is the unique minimizer of $\mathcal{H}_\lambda$. 
\end{remark}
}

\begin{theorem}[linear, $k_n\sim \lambda n$]\label{th-simn}
Fix {$q\in[1,2)$}. Suppose $\{k_n\}_{n\in\N}$ grows linearly with rate $\lambda \in (0,1]$ and Assumption \ref{ass-normldp}  holds with sequence $\{s_n\}_{n\in\N}$ and GRF $J_X$. Then $\{\ln\}_{n\in\N}$ satisfies an LDP in $\cP_q(\R)$ at speed $s_n$ with 	GRF $\mathbb{I}_{L,\lambda}: \cP_q(\R)\ra[0,\infty]$, where
\begin{enumerate}
\item If $s_n = n$, then for $\mu\in\cP_q(\R)$, 
 \begin{align}
\mathbb{I}_{L,\lambda}(\mu)=
\label{Jan-linear} 
\begin{aligned}
& {\inf_{ c> \sqrt{\lambda M_2(\mu)}}}
  \left\{J_X(c) - \tfrac{1-\lambda}{2} \log\left( 1 - \tfrac{\lambda M_2(\mu)}{c^2} \right) + \lambda \log(c)  \right\}  \\
  &\quad-\lambda h(\mu) + \tfrac{\lambda}{2} \log(2\pi e) + \tfrac{1-\lambda}{2} \log(1- \lambda),
\end{aligned}
 \end{align}
{where we use the convention that $0\log 0=0$.}  
\item If $s_n \ll n$, then for $\mu\in\cP_q(\R)$
\begin{align*} 
\mathbb{I}_{L,\lambda}(\mu) &:= \begin{cases}
J_X(c),&\quad \mu=\gamma_c\\
+\infty, &\quad  \text{otherwise}.
\end{cases}
\end{align*}
\end{enumerate}
\end{theorem}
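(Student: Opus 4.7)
\textbf{Proof sketch of Theorem~\ref{th-simn}.}
The starting point is a representation obtained from the rotational invariance of the Haar measure on $\vk$. Conditionally on $X^{(n)}$, the distribution of $\mathbf{A}_{n,k_n}^T X^{(n)}$ depends only on $\|X^{(n)}\|_2$; choosing $O \in O(n)$ with $O^T X^{(n)} = \|X^{(n)}\|_2 e_1$ gives $\mathbf{A}_{n,k_n}^T X^{(n)} \stackrel{(\mathrm{d})}{=} \|X^{(n)}\|_2\,\mathbf{A}_{n,k_n}^T e_1$, and the first row $\mathbf{A}_{n,k_n}^T e_1$ of a Haar-random $n\times k_n$ Stiefel matrix is distributed as the first $k_n$ coordinates of a uniform vector on $S^{n-1}$, i.e.\ as $(G_1,\dots,G_{k_n})/\|G\|_2$ where $G=(G_1,\dots,G_n)$ has i.i.d.\ $\mathcal{N}(0,1)$ entries and is independent of $X^{(n)}$. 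Writing $\tilde{L}_n := \frac{1}{k_n}\sum_{j=1}^{k_n}\delta_{G_j}$, $T_n := \sum_{j=k_n+1}^{n} G_j^2$, $\tau_n := T_n/n$ and $C_n := \|X^{(n)}\|_2/\sqrt{n}$, and using that $\|G\|_2^2 = k_n M_2(\tilde{L}_n)+T_n$, this yields the distributional identity
\[
L^n \;\stackrel{(\mathrm{d})}{=}\; \Phi_n(C_n,\tau_n,\tilde{L}_n), \qquad \Phi_n(c,t,\nu) := (T_{c,t,\nu})_*\,\nu,\quad T_{c,t,\nu}(x) := \frac{c\,x}{\sqrt{(k_n/n)M_2(\nu)+t}},
\]
where $C_n$, $(\tau_n,\tilde{L}_n)$ are independent (by independence of $X^{(n)}$ and $G$), and $\tilde{L}_n,\tau_n$ are independent as functionals of disjoint blocks of $G$.

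Next, I would assemble a joint LDP for $(C_n,\tau_n,\tilde{L}_n)$ from three separate LDPs. Assumption~\ref{ass-normldp} gives an LDP for $\{C_n\}$ at speed $s_n$ with GRF $J_X$; Cram\'er's theorem applied to $\chi^2_1$ variables gives an LDP for $\{\tau_n\}$ at speed $n$ with the explicit GRF $J_T(t) = \tfrac{1}{2}[t - (1-\lambda) - (1-\lambda)\log(t/(1-\lambda))]$ (using $(n-k_n)/n \to 1-\lambda$); and Sanov's theorem in the $q$-Wasserstein topology ($q<2$), which holds for Gaussian samples by standard super-exponential moment estimates, gives an LDP for $\{\tilde{L}_n\}$ at speed $k_n$ with GRF $H(\cdot|\gamma_1)$. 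In \emph{Case 1} ($s_n=n$), all three speeds are comparable; rescaling Sanov's rate by $\lambda$ (Remark~\ref{rem-ldpscale}) and invoking independence yields a joint LDP at speed $n$ with GRF $J_X(c)+J_T(t)+\lambda H(\nu|\gamma_1)$. In \emph{Case 2} ($s_n\ll n$), $\tau_n$ and $\tilde{L}_n$ concentrate exponentially faster at their unique minimizers $1-\lambda$ and $\gamma_1$ (note $\mathcal{H}$-analog: $\gamma_1$ is the unique minimizer of $H(\cdot|\gamma_1)$), so the argument of Lemma~\ref{ldp-prod} yields a joint LDP at speed $s_n$, with rate $J_X(c)$ forced to $(c,1-\lambda,\gamma_1)$.

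The final step is a contraction. Replacing $k_n/n$ by $\lambda$ in $\Phi_n$ produces a limiting continuous map $\Phi(c,t,\nu) := $ pushforward of $\nu$ under $x\mapsto cx/\sqrt{\lambda M_2(\nu)+t}$; one checks exponential equivalence between $L^n$ and $\Phi(C_n,\tau_n,\tilde{L}_n)$ via the argument of Remark~\ref{rm-eq-conv}, and then applies Lemma~\ref{lem-con}. For Case 2 this gives directly $\Phi(c,1-\lambda,\gamma_1) = \gamma_c$ and the claimed degenerate rate function. For Case 1, fix $\mu$; any preimage $\nu$ of $\mu$ under $\Phi(c,t,\cdot)$ is the pushforward of $\mu$ under $x\mapsto x/\alpha$ with $\alpha = c/\sqrt{\lambda M_2(\nu)+t}$, so that $\alpha^2 = (c^2-\lambda M_2(\mu))/t$ (requiring $c^2>\lambda M_2(\mu)$) and, using $H(\nu|\gamma_1)=-h(\mu)+\log\alpha+\tfrac{1}{2}\log(2\pi)+M_2(\mu)/(2\alpha^2)$, an inner optimization over $t$ (tractable since $J_T$ is explicit; the optimum occurs at $t^* = (c^2-\lambda M_2(\mu))/c^2$) collects the terms $-\tfrac{1-\lambda}{2}\log(1-\lambda M_2(\mu)/c^2) + \lambda\log c + \tfrac{1-\lambda}{2}\log(1-\lambda)$, leaving the stated outer infimum over $c>\sqrt{\lambda M_2(\mu)}$.

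The main obstacle is the continuity of $\Phi$ in the $q$-Wasserstein topology. Since $M_2$ is only lower-semicontinuous on $\cP_q(\R)$ for $q<2$, the map $\Phi$ fails to be continuous on all of $\cP_q(\R)$; however, the Sanov rate function $H(\cdot|\gamma_1)$ has good level sets on which $M_2$ is bounded and, combined with uniform Gaussian tail integrability (inherited from the entropy constraint via de la Vall\'ee--Poussin), actually continuous. This is enough to justify the contraction on the compact level sets where the joint rate function is finite, and an exponential tightness argument (using compactness of $K_{2,j}$ from Lemma~\ref{lem-compcat}) extends the LDP to all of $\cP_q(\R)$. A parallel but simpler analysis handles the boundary case $\lambda=1$, where $\tau_n \equiv 0$ and the $J_T$ and $\tfrac{1-\lambda}{2}\log(\cdot)$ contributions vanish under the $0\log 0 = 0$ convention.
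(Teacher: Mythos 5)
Your outline follows the same architecture as the paper's proof (rotational invariance to reduce to Gaussians, a radial/angular decomposition, Sanov plus Cram\'er plus a contraction, and the same algebra for the final variational formula, whose optimum at $t^*$ correctly corresponds to no deviation of $\|G\|_2^2/n$ from $1$). You also correctly identify the central obstacle: the normalization map involves $M_2(\tilde L_n)$, and $M_2$ is not continuous on $\cP_q(\R)$ for $q<2$. But your resolution of that obstacle is where the proof has a genuine gap. Continuity of $M_2$ on the level sets of $H(\cdot|\gamma_1)$ does not license the contraction principle: the large deviation \emph{upper} bound requires controlling $\P(\tilde L_n \in \Phi^{-1}(F))$ for closed $F$, and the configurations realizing such an event need not lie in any fixed level set --- assuming they do is circular. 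Nor can one restrict to the compacts furnished by exponential tightness, since $M_2$ is already discontinuous on $K_{2,j}$ (take $\nu_n=(1-1/n)\delta_0+(1/n)\delta_{\sqrt n}$, which converges in $\mathcal W_q$, $q<2$, to $\delta_0$ while $M_2(\nu_n)\equiv 1$). An exponentially-good-approximation patch via truncated second moments also fails, because $\P\bigl(\tfrac{1}{k_n}\sum_{j\le k_n}G_j^2\1_{\{|G_j|>R\}}>\delta\bigr)$ decays only at speed $k_n$ (one large Gaussian suffices), not superexponentially, for every $R$.

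What is actually needed --- and what the paper supplies via the approximate contraction principle of Ben Arous--Dembo--Guionnet (Proposition \ref{prop-bdgrestate}, Corollary \ref{cor-iid}, and Lemmas \ref{lem-i1}--\ref{lem-i2}) --- is a \emph{joint} LDP for the empirical measure together with its second moment, in which the second coordinate is allowed to exceed $M_2(\nu)$ at an explicit cost $\tfrac12\bigl[s-\lambda M_2(\mu)-(1-\lambda)M_2(\nu)\bigr]$ reflecting escape of quadratic mass to infinity that is invisible in the weak topology. The fact that this extra degree of freedom does not alter the final rate function (the infimum is attained with no escape, i.e.\ at $s=1$) is a \emph{conclusion} of carrying the escape term through the variational problem, not something one may assume by forcing $s=\lambda M_2(\nu)+t$ from the outset. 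Your computation happens to land on the correct formula precisely because escape is suboptimal here, but the argument as written does not establish the upper bound. A secondary, fixable point: the exponential equivalence between $L^n$ and $\Phi(C_n,\tau_n,\tilde L_n)$ must be carried out in the $\mathcal W_q$ metric (as in the paper's Proposition \ref{lem-aux2}), and the upgrade from the weak to the $q$-Wasserstein topology requires the separate exponential tightness estimate based on $K_{2,j}$ and the LDP for $\|\zeta^{(k_n)}\|_2/\|\zeta^{(n)}\|_2$.
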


The LDP of the sequence of $\ell_q$ norms of the randomly projected vectors is given in the following theorem.

\begin{theorem}\label{th-normlinear}
  Suppose $\{X^{(n)}\}_{n\in\N}$ satisfies Assumption~\ref{ass-normldp}, and
  $k_n \sim \lambda n$ for some $\lambda \in (0,1]$. 
{Then,  for $q \in [1,2]$
 the sequence $\{n^{-1/q}Y^n_{q,k_n}\}_{n\in \N}$  defined
in \eqref{normY2}} satisfies an LDP  at speed $s_n$
 with GRF $\mathbb{J}_{q,\lambda}^{\mathsf{lin}}$, where for $x\in\R_+$,
  \begin{equation}\label{normgrf1}
\mathbb{J}_{q,\lambda}^{\mathsf{lin}}(x):=
\begin{cases}
\inf_{\nu\in\cP(\R), c \in \R_+ }
       \left\{ \mathcal{H}_\lambda(\nu) +
  J_X\left(c \right): \lambda M_q(\nu) = {\left(\frac{x}{c}\right)^q} \right\},\quad &\text{if } s_n=n,\\
 J_X\left(\frac{x}{(\lambda \mathcal{M}_q)^{1/q}}  \right),\quad &\text{if } s_n\ll n,
\end{cases}
\end{equation}
with $M_q$ the $q$-th moment map as in \eqref{secmom} and $\mathcal{M}_q$ the $q$-th absolute moment of a standard Gaussian random variable defined in~\eqref{q-abs-moment}.
\end{theorem}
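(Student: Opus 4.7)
My plan is to reduce the result to the LDP for the empirical measures $\{L^n\}_{n\in\N}$ established in Theorem~\ref{th-simn}, using the identity
\[ n^{-1/q} Y^n_{q,k_n} \,=\, \left(\tfrac{k_n}{n}\right)^{1/q}\! M_q(L^n)^{1/q} \]
that follows from \eqref{eq-empproj} and \eqref{normY2}. For $q\in[1,2)$, the map $\nu\mapsto M_q(\nu)^{1/q}$ is continuous on the Polish space $(\cP_q(\R),\mathcal{W}_q)$, so the contraction principle (Lemma~\ref{lem-con}) yields an LDP for $\{M_q(L^n)^{1/q}\}_{n\in\N}$ at speed $s_n$; combining with Remarks~\ref{rem-expeq} and \ref{rm-eq-conv} to absorb the deterministic scaling $(k_n/n)^{1/q}\to\lambda^{1/q}$ delivers the LDP for $\{n^{-1/q}Y^n_{q,k_n}\}_{n\in\N}$, with rate function $x\mapsto \inf\{\mathbb{I}_{L,\lambda}(\nu):\lambda M_q(\nu)=x^q\}$.

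To match this with \eqref{normgrf1} when $s_n=n$, I would plug in the expression \eqref{Jan-linear} for $\mathbb{I}_{L,\lambda}$ and carry out the change of variables $\mu\mapsto\tilde\nu$, where $\tilde\nu$ is the pushforward of $\mu$ under $y\mapsto y/c$, for each $c>0$ appearing in the inner infimum. The elementary scaling identities $h(\mu)=h(\tilde\nu)+\log c$, $M_2(\mu)=c^2 M_2(\tilde\nu)$ and $M_q(\mu)=c^q M_q(\tilde\nu)$ cause the $\pm\lambda\log c$ terms to cancel, so that the remaining expression collapses to $J_X(c)+\mathcal{H}_\lambda(\tilde\nu)$ via \eqref{eq-hrf}; the constraint $\lambda M_q(\mu)=x^q$ becomes $\lambda M_q(\tilde\nu)=(x/c)^q$, while $c>\sqrt{\lambda M_2(\mu)}$ becomes $\lambda M_2(\tilde\nu)<1$, which is automatically enforced by $\mathcal{H}_\lambda\equiv+\infty$ on $\{M_2>1/\lambda\}$ (with a minor continuity argument at the boundary when $\lambda=1$). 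When $s_n\ll n$, part~(2) of Theorem~\ref{th-simn} forces $\mu=\gamma_c$ in the infimum, and $M_q(\gamma_c)=c^q\mathcal{M}_q$ together with $\lambda M_q(\gamma_c)=x^q$ gives $c=x/(\lambda\mathcal{M}_q)^{1/q}$, reproducing the stated formula.

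For $q=2$ the above contraction is unavailable because $M_2$ is not continuous on $\cP_q(\R)$ for $q<2$, and I would instead use a direct decomposition. By the rotational invariance of the Haar measure on $\mathbb{V}_{n,k_n}$ and the independence of $\mathbf{A}_{n,k_n}$ and $X^{(n)}$,
\[ \|\mathbf{A}_{n,k_n}^T X^{(n)}\|_2^2 \,\eqdist\, \|X^{(n)}\|_2^2 \cdot T_n, \]
where $T_n:=\|\mathbf{A}_{n,k_n}^T e_1\|_2^2$ has a $\mathrm{Beta}(k_n/2,(n-k_n)/2)$ distribution and is independent of $X^{(n)}$. A Stirling analysis of the Beta density shows that $\{T_n\}_{n\in\N}$ satisfies an LDP at speed $n$ with GRF $I_T(t)=\tfrac{\lambda}{2}\log(\lambda/t)+\tfrac{1-\lambda}{2}\log((1-\lambda)/(1-t))$ on $(0,1)$, having unique minimizer $\lambda$. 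When $s_n=n$, independence yields a joint LDP for $(n^{-1/2}\|X^{(n)}\|_2,T_n^{1/2})$ at speed $n$ with GRF $(c,y)\mapsto J_X(c)+I_T(y^2)$, and contraction via $(c,y)\mapsto cy$ produces the LDP for the product with rate function $x\mapsto\inf_{c>0}\{J_X(c)+I_T((x/c)^2)\}$. To match \eqref{normgrf1}, I would use that the centered Gaussian $\gamma_{\alpha/\sqrt{\lambda}}$ maximizes entropy subject to $M_2=\alpha^2/\lambda$, whence a direct substitution in \eqref{eq-hrf} gives $\inf\{\mathcal{H}_\lambda(\nu):\lambda M_2(\nu)=\alpha^2\}=I_T(\alpha^2)$. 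When $s_n\ll n$, the LDP for $T_n^{1/2}$ holds at the faster speed $n\gg s_n$ with unique minimizer $\sqrt{\lambda}$, so Lemma~\ref{ldp-prod} yields the LDP for the product at speed $s_n$ with GRF $x\mapsto J_X(x/\sqrt{\lambda})$; this agrees with the stated formula since $\mathcal{M}_2=1$.

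The main technical work is concentrated in the $q=2$ case: establishing the LDP for the Beta$(k_n/2,(n-k_n)/2)$ sequence via a careful Stirling estimate of the Beta normalizer, and verifying the Gaussian-maximizes-entropy identity that reconciles the product-form rate function with the $\mathcal{H}_\lambda$-form appearing in \eqref{normgrf1}. For $q\in[1,2)$, once the change of variables exposing the $\mathcal{H}_\lambda$ structure inside $\mathbb{I}_{L,\lambda}$ is set up, the remainder of the argument is essentially bookkeeping.
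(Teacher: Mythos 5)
Your proposal is correct, and for $q\in[1,2)$ it is essentially the paper's proof: contraction of the Theorem~\ref{th-simn} LDP through $\nu\mapsto(\lambda M_q(\nu))^{1/q}$, exponential equivalence to absorb $k_n/n\to\lambda$, and the scaling change of variables that rewrites $\mathbb{I}_{L,\lambda}$ in the form $\inf_c\{\mathcal{H}_\lambda(\mu(\cdot\times c))+J_X(c)\}$ (this is exactly Remark~\ref{rem-linsnn}).

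For $q=2$ you use the same product decomposition as the paper -- your $T_n^{1/2}=\|\mathbf{A}_{n,k_n}^Te_1\|_2$ is precisely the ratio $\|\zeta^{(k_n)}\|_2/\|\zeta^{(n)}\|_2$ of \eqref{eq-dist} -- but you derive the LDP for the angular factor differently: you identify $T_n$ as $\mathrm{Beta}(k_n/2,(n-k_n)/2)$ and do a Laplace/Stirling analysis of its density, whereas the paper (Lemma~\ref{lem-zzs}) runs Cram\'er's theorem for the two independent chi-squared block sums and contracts through $(x_2,x_3)\mapsto x_2/(x_2+x_3)^{1/2}$. Both yield the same GRF $\mathbf{J}_{2,\lambda}$ of \eqref{j2}, and your reconciliation of the product-form rate function with \eqref{normgrf1} via the Gaussian maximum-entropy identity is exactly the paper's \eqref{eq-jform}; the $s_n\ll n$ case via Lemma~\ref{ldp-prod} also matches. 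Your route buys an explicit closed-form density and avoids the auxiliary Cram\'er computation, at the cost of two small technical obligations you should discharge: (i) upgrading pointwise exponential asymptotics of the Beta density to a full LDP (routine here by unimodality/monotonicity of the tails, but it is a step); and (ii) the case $\lambda=1$, where the second shape parameter $(n-k_n)/2$ may grow sublinearly or stay bounded, so the symmetric Stirling expansion of the normalizer $\Gamma(n/2)/(\Gamma(k_n/2)\Gamma((n-k_n)/2))$ degenerates and needs a separate (easy) argument -- the paper's Cram\'er-based route handles $\lambda=1$ more uniformly.
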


The proof of Theorem \ref{th-normlinear} is deferred to Section \ref{subs-2norm}. 
It is shown there that when $q\in[1,2)$, the result  is an immediate
consequence of  Theorem \ref{th-simn} and
an application of the contraction principle. 
However, even though the rate function still takes an analogous
form when $q = 2$,  the contraction principle is no longer applicable in {that setting because the LDP of Theorem \ref{th-simn} only holds in the $q$-Wasserstein topology for $q\in [1,2)$.}  
 Despite this apparent gap, using  a different argument in Section \ref{subs-2norm} we show that
 the result nevertheless holds. In the process, we provide an alternative representation for the rate function \eqref{normgrf1}
 for all $q \in [1,2]$  (see Proposition \ref{prop-norm}).    This is a manifestation of a somewhat
 nuanced technical issue, which is elaborated upon in Remark \ref{rmk-technical}.

\section{Examples satisfying the main assumptions}\label{sec-ex}

In this section, we present several examples of sequences of random vectors $\{X^{(n)}\}_{n\in\N}$ that satisfy the  {assumptions introduced  in} Section~\ref{sec-mainres}.

\subsection{Product measures}\label{sec-prod}

\begin{lemma}[i.i.d.\ case]\label{prop-prod}
Let $X_1,X_2,\dots$ be a sequence of i.i.d.\ real-valued random variables, and let $X^{(n)} := (X_1,\dots,X_n)$. Suppose that we have \begin{equation*}
 \Lambda(t) := \log \E[e^{t X_1^2}] < \infty 
\end{equation*} for $t$ in some open ball of non-zero radius about 0, and let
 $\Lambda^*$ be the Legendre transform of $\Lambda$ defined in~\eqref{legendre}.  Then, $\{X^{(n)}\}_{n\in\N}$ satisfies Assumption \ref{ass-normldp}* with $J_X(x) := \Lambda^*(x^2)$ for $x\in\R_+$ {and $J_X(x) := +\infty$ otherwise.  Moreover,  $m = \sqrt{\E[|X_1|^2]}$ is the unique minimizer of $J_X$.} 
\end{lemma}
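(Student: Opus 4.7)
The plan is to reduce the statement to Cram\'er's theorem applied to the i.i.d.\ sequence $\{X_i^2\}_{i\in\N}$, and then pass to the Euclidean norm via the contraction principle. The key observation is the identity
\[
\frac{\|X^{(n)}\|_2^2}{n} = \frac{1}{n}\sum_{i=1}^n X_i^2,
\]
so the squared scaled norm is a sample mean of i.i.d.\ random variables $Y_i := X_i^2$ with log-moment generating function $\Lambda$.

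First, I would invoke Cram\'er's theorem in $\R$ (e.g., Theorem 2.2.3 of \cite{DemZeiBook}): the hypothesis that $\Lambda$ is finite in an open neighborhood of the origin ensures that $0$ lies in the interior of the effective domain of $\Lambda$, which implies that $\{\|X^{(n)}\|_2^2/n\}_{n\in\N}$ satisfies an LDP in $\R$ at speed $n$ with GRF $\Lambda^*$. Since the $Y_i$ are nonnegative, $\Lambda^*$ is $+\infty$ on $(-\infty,0)$, so this is really an LDP on $[0,\infty)$.

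Next, I would apply the contraction principle (Lemma~\ref{lem-con}) with the continuous map $f:[0,\infty) \to [0,\infty)$ defined by $f(y) = \sqrt{y}$. Since $f$ is a continuous bijection, the induced rate function at $x \geq 0$ is
\[
J_X(x) = \inf\{\Lambda^*(y) : y \geq 0,\ \sqrt{y} = x\} = \Lambda^*(x^2),
\]
and $J_X(x) = +\infty$ for $x < 0$ because $\|X^{(n)}\|_2/\sqrt{n}$ is nonnegative with probability one. (To be precise, one can first extend the LDP trivially from $[0,\infty)$ to $\R$ by setting the rate function equal to $+\infty$ on the negatives, and then apply the contraction principle with $f(y) = \sqrt{y \vee 0}$ on $\R$.) This yields the desired LDP at speed $n$, verifying Assumption~\ref{ass-normldp}*.

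Finally, for the claim about the unique minimizer, I would use the standard fact that $\Lambda^*$ is a nonnegative convex function with $\Lambda^*(\E[X_1^2]) = 0$, and that $\E[X_1^2]$ is the unique zero of $\Lambda^*$ on its effective domain (this follows from strict convexity of $\Lambda$ on the interior of its domain when $X_1^2$ is non-degenerate, and is trivial otherwise since then $\Lambda^*$ is only finite at a single point). Therefore $J_X(x) = \Lambda^*(x^2) = 0$ if and only if $x^2 = \E[X_1^2]$, and since $J_X$ is restricted to $x \geq 0$, the unique minimizer is $m = \sqrt{\E[X_1^2]}$. I do not anticipate any real obstacle here; the only mild subtlety is ensuring that the restriction to nonnegative values is handled cleanly in the contraction step, which is routine.
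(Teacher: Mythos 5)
Your proposal is correct and follows essentially the same route as the paper: Cram\'er's theorem applied to $\|X^{(n)}\|_2^2/n = \frac{1}{n}\sum_{i=1}^n X_i^2$, followed by the contraction principle with the square-root map, with the minimizer identified via the zero of $\Lambda^*$ at $\E[X_1^2]$. Your treatment of the minimizer (via strict convexity, with the degenerate case noted) is if anything slightly more detailed than the paper's appeal to the law of large numbers.
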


\begin{proof}
By Cram\'er's theorem for sums of i.i.d.\ random variables \cite[Theorem 2.2.1]{DemZeiBook}, the sequence $\{\|X^{(n)}\|_2^2 / n\}_{n\in\N}$, satisfies an LDP at speed $n$ with GRF $\Lambda^*$.  By the contraction principle applied to the square root function, Assumption~\ref{ass-normldp}* holds. As for the unique minimizer, this follows from the law of large numbers, with limit $\frac{1}{n}\sum_{i=1}^n X_i^2 \xrightarrow{\text{a.s.}} \E[X_1^2] = m^2$.
\end{proof}

\subsection{Scaled $\ell_p^n$ balls}\label{sec-lp}

In this section we consider  random vectors uniformly distributed on  scaled $\ell^n_p$ balls.
More precisely, for $n \in \N$ and $p > 0$,
define $X^{(n,p)}$ to be a random vector uniformly distributed on $n^{1/p}\mathbb{B}_{p}^n$, where 
 $\mathbb{B}_{p}^n$ denotes the unit  $\ell_p^n$-ball: 
\[  \mathbb{B}_{p}^n := \left\{ x \in \R^n:  \sum_{k=1}^n |x_k|^p \leq 1 \right\}. \]
We introduce some preliminaries in Section \ref{subs-prelim}, then verify
our main assumptions for $\{X^{(n,p)}\}_{n \in \N}$ in the  case when
$p \in [1,2)$ in Section \ref{sec-lp-s} and the easier case when $p > 2$ in Section \ref{sec-lp-b}. 
  When combined with the results of Section \ref{sec-mainres}, these yield 
  LDPs for random projections of these $\ell_p^n$ balls.

\subsubsection{Preliminaries}
\label{subs-prelim}

For $p \in [1,\infty)$, let   $f_p$ be the density of the $p$-generalized normal distribution:
  \[  f_p(x) := \frac{1}{2p^{1/p} \Gamma(1+1/p)} e^{-|x|^p/p}, \quad x \in \R,   \] 
  where $\Gamma$ denotes the Gamma function, and let $\xi^{(p)}$ denote a $p$-generalized normal random variable, namely one with density $f_p$.
  We provide here a useful tail estimate:    for $t>0$, 
\begin{equation}\label{est-Y2}
  2\exp\left(-\frac{1}{p}t^{p/2}-\frac{p-1}{2}\log t \right)\geq \mathbb{P}(|\xi^{(p)}|^2>t)\geq\frac{t^{p/2}}{t^{p/2}+1}\exp\left(-\frac{1}{p}t^{p/2}-\frac{p-1}{2}\log t \right).
  \end{equation}
 This estimate was proved in~\cite[Lemma 5.3]{AloGutProTha18} for $p\in[1,2),$ but can easily be extended to include $p=2$.  Now,  
let  
\begin{equation}\label{fpstar}
F_p^*(y) := \sup_{t_1, t_2 \in \mathbb{R}} \left[ t_1 y + t_2 - \log \left(\int_{\mathbb{R}} e^{t_1 x^2 + t_2|x|^p} f_p(x) dx\right)\right],\quad y\in\R, 
\end{equation}
and
\begin{equation}\label{mp}
 m(p) :=  \left( \frac{p^{2/p} \Gamma \left(1 + \frac{3}{p} \right)}{3 \Gamma \left(1+\frac{1}{p}\right)} \right)^{1/2}. 
\end{equation}

Also, let $U$ be a uniform random variable on $[0,1]$ and $\{\xi^{(p)}_i\}_{i\in\N}$ be a sequence of i.i.d. random variables with density $f_p$ independent of $U$. For $n\in\N$ and $p\in[1,\infty)$, denote $\xi^{(n,p)}=(\xi^{(p)}_1,\ldots,\xi^{(p)}_n)$. In this section, we take advantage of the following useful representation of $X^{(n,p)}$ obtained in~\cite[Lemma 1]{SchZin90}:
\begin{equation}\label{lp-repres}
X^{(n,p)}\buildrel (d) \over = n^{1/p} U^{1/n}\frac{\xi^{(n,p)}}{\norm{\xi^{(n,p)}}_p}.
\end{equation}

In view of the representation~\eqref{lp-repres}, we first establish a property of $p$-generalized normal random variables which we strengthen from the result in~\cite{Nagaev69}.
\begin{lemma}\label{LDP-Y2}
Let $p\in[1,2]$,  let $\{k_n\}_{n\in\mathbb{N}}$ satisfy $k_n\to\infty$ as $n\to\infty$. 
Then, given  any $\{b_n\}_{n\in\mathbb{N}}$  such that $k_n/b_n\to 0$ as $n\to\infty$,
the sequence $\{b_n^{-1}\sum_{i=1}^{k_n}(\xi_i^{(p)})^2\}_{n\in\mathbb{N}}$ satisfies an LDP at speed $b_n^{p/2}$ and GRF $\mathcal{J}_{\xi,p}:\mathbb{R}\to[0,\infty]$ defined by
\[
\mathcal{J}_{\xi,p}(t) :=
\begin{cases}
\frac{t^{p/2}}{p},\quad &t\geq 0,\\
+\infty,\quad & t<0.
\end{cases}
\]
\end{lemma}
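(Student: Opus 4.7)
The tail estimate \eqref{est-Y2} gives $\log\mathbb{P}((\xi_1^{(p)})^2 > u) = -u^{p/2}/p + O(\log u)$ as $u \to \infty$, which for $p<2$ is a subexponential distribution. The LDP of the sum is therefore governed by the ``one-big-jump'' mechanism: $\sum_{i=1}^{k_n}(\xi_i^{(p)})^2$ exceeds $tb_n$ essentially iff a single summand does. Since each $(\xi_i^{(p)})^2 \geq 0$, the value $+\infty$ of $\mathcal{J}_{\xi,p}$ on $(-\infty,0)$ is automatic, so I focus on $t\geq 0$.

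\textbf{Lower bound.} Nonnegativity gives $\{\sum_i (\xi_i^{(p)})^2 > tb_n\} \supseteq \{\max_i (\xi_i^{(p)})^2 > tb_n\}$, and by independence $\mathbb{P}(\max_i (\xi_i^{(p)})^2 > tb_n) = 1-(1-p_n)^{k_n}$, where $p_n := \mathbb{P}((\xi_1^{(p)})^2 > tb_n)$. The lower bound in \eqref{est-Y2} gives $\log p_n = -t^{p/2}b_n^{p/2}/p + O(\log b_n)$. Since $k_n \to \infty$ and $k_n/b_n \to 0$ force $b_n \to \infty$ with $\log k_n \leq \log b_n = o(b_n^{p/2})$, a standard case split on whether $k_n p_n \to 0$ or not yields
\[
\liminf_{n\to\infty} \frac{1}{b_n^{p/2}}\log \mathbb{P}\Bigl(\sum_{i=1}^{k_n} (\xi_i^{(p)})^2 > tb_n\Bigr) \geq -t^{p/2}/p.
\]

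\textbf{Upper bound.} When $p=2$, the MGF $\mathbb{E}[e^{\theta(\xi_1^{(2)})^2}] = (1-2\theta)^{-1/2}$ is finite on $[0,1/2)$, so Chernoff's inequality gives $\mathbb{P}(\sum(\xi_i^{(2)})^2 > tb_n) \leq e^{-\theta tb_n}(1-2\theta)^{-k_n/2}$; since $k_n/b_n \to 0$, sending $\theta \uparrow 1/2$ yields the matching rate $-t/2$. For $p \in [1,2)$ the MGF is infinite, so I would use a truncation decomposition at level $(1-\epsilon)tb_n$:
\[
\mathbb{P}\Bigl(\sum_i (\xi_i^{(p)})^2 > tb_n\Bigr) \leq \mathbb{P}\bigl(\max_i (\xi_i^{(p)})^2 > (1-\epsilon)tb_n\bigr) + \mathbb{P}\Bigl(\sum_i (\xi_i^{(p)})^2\wedge(1-\epsilon)tb_n > tb_n\Bigr).
\]
The first term is bounded by the union bound and the upper tail in \eqref{est-Y2}, giving rate $\leq -((1-\epsilon)t)^{p/2}/p$. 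For the second, Chernoff's inequality with $\theta_n := \eta b_n^{p/2-1}$ and the tail-integration identity
\[
\mathbb{E}[e^{\theta X^{\mathrm{tr}}}] = 1 + \theta\int_0^{(1-\epsilon)tb_n} e^{\theta u}\mathbb{P}((\xi_1^{(p)})^2 > u)\,du,
\]
where $X^{\mathrm{tr}} := (\xi_1^{(p)})^2 \wedge (1-\epsilon)tb_n$, combined with the substitution $u = b_n v$, converts the integrand into $\exp(b_n^{p/2}(\eta v - v^{p/2}/p))$, whose exponent has strictly negative maximum over $v\in[0,(1-\epsilon)t]$ for $p<2$. Laplace's method then gives $\log\mathbb{E}[e^{\theta_n X^{\mathrm{tr}}}] = O(b_n^{p/2-1})$, whence $k_n\log\mathbb{E}[\cdots] = O(k_n/b_n)\cdot b_n^{p/2} = o(b_n^{p/2})$; Chernoff then yields $b_n^{-p/2}\log(\text{second term}) \leq -\eta t + o(1)$, which vanishes at every rate as $\eta \to \infty$. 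Sending $\epsilon \downarrow 0$ in the first term completes the argument.

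\textbf{Main obstacle.} The delicate point is the truncated-MGF estimate for $p<2$: the truncation level $(1-\epsilon)tb_n$ grows with $n$, so crude pointwise bounds such as $e^{\theta_n u} \leq e^{\theta_n(1-\epsilon)tb_n}$ contribute a factor $\exp(c\,b_n^{p/2})$ that would ruin the rate. The essential saving is the strict negativity of $\max_{v\in[0,(1-\epsilon)t]}(\eta v - v^{p/2}/p)$, which holds precisely because $p<2$; this forces the rescaled integral to decay exponentially in $b_n^{p/2}$, so the truncated contribution becomes negligible relative to the ``one-big-jump'' event that sets the true rate.
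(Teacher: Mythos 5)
Your lower bound and your $p=2$ case are fine, and your overall ``one big jump'' reading of the lemma is the right one (the paper gets the same asymptotics by citing Nagaev's Theorem~3 for $p\in[1,2)$ and by an explicit chi-squared computation for $p=2$, rather than by a self-contained truncation argument). But your upper bound for $p\in[1,2)$ has a genuine gap. The function $g(v):=\eta v - v^{p/2}/p$ does \emph{not} have strictly negative maximum on $[0,(1-\epsilon)t]$: it satisfies $g(0)=0$, and since $g$ is convex with $g'(v)\to-\infty$ as $v\downarrow 0$ and $g'(v)\to\eta>0$ as $v\to\infty$, its maximum on the interval is $\max\{0,\,g((1-\epsilon)t)\}$, and $g((1-\epsilon)t)=\eta(1-\epsilon)t-((1-\epsilon)t)^{p/2}/p$ is strictly positive as soon as $\eta>((1-\epsilon)t)^{p/2-1}/p$. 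This is fatal to the step where you send $\eta\to\infty$: for such $\eta$,
\begin{equation*}
\E\bigl[e^{\theta_n (X\wedge L)}\bigr]\;\ge\; e^{\theta_n L}\,\P(X>L)\;=\;\exp\Bigl((1+o(1))\,b_n^{p/2}\,g\bigl((1-\epsilon)t\bigr)\Bigr),\qquad L=(1-\epsilon)tb_n,
\end{equation*}
so $k_n\log\E[e^{\theta_n X^{\mathrm{tr}}}]$ is of order $k_n b_n^{p/2}g((1-\epsilon)t)\gg b_n^{p/2}$ (recall $k_n\to\infty$), and the Chernoff bound $-\eta t b_n^{p/2}+k_n\log\E[\cdots]$ becomes vacuous. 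The requirement that the truncated MGF stay controlled caps $\eta$ at roughly $((1-\epsilon)t)^{p/2-1}/p$, so you cannot make the truncated term ``vanish at every rate.''

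The repair is to choose $\eta=\eta(\epsilon,t)=((1-\epsilon)t)^{p/2-1}/p$ exactly: then $g(0)=g((1-\epsilon)t)=0$ and convexity gives $g\le 0$ on the whole interval, and the \emph{linear} Chernoff term already produces $-\theta_n t b_n=-\tfrac{t}{p}((1-\epsilon)t)^{p/2-1}b_n^{p/2}\to-\tfrac{t^{p/2}}{p}b_n^{p/2}$ as $\epsilon\downarrow 0$, matching the one-jump term; no limit $\eta\to\infty$ is taken. Even then you must do a genuine endpoint Laplace analysis of $\theta_n\int_0^{L}e^{\theta_n u}\P(X>u)\,du$ where $g$ vanishes, because the crude bound $\log\E[e^{\theta_n X^{\mathrm{tr}}}]=O(\log b_n)$ only yields $k_n\,O(\log b_n)$, which need not be $o(b_n^{p/2})$ under the sole hypothesis $k_n/b_n\to 0$ (e.g.\ $p=1$, $b_n=n$, $k_n=n/\log^3 n$). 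This is precisely the content of Nagaev's theorem, which is why the paper invokes it rather than reproving it.
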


The proof is deferred to Appendix~\ref{pf-LDP-Y2}.

\subsubsection{Verification of Assumptions~\ref{ass-rescaled} and \ref{ass-sublinear} when $p \in [1,2)$}\label{sec-lp-s}

When $p \in [1,2)$, it should be noted that  annealed LDPs associated with the random vectors
  $\{X^{(n,p)}\}_{n \in \N}$ defined in Proposition~\ref{lem-lpeg} occur at different speeds than in the case $p > 2$; see 
   Theorem 2.3 of \cite{GanKimRam18}.  In particular,  from  Theorem 1.3 of
   \cite{Kabluchko17} it follows that in this case the sequence of
   scaled norms $\{\|X^{(n,p)}\|_{2}/\sqrt{n}\}_{n \in \N}$ satisfies an LDP at speed
   $n^{p/2}$.
    Thus, in this case $\{X^{(n,p)}\}_{n \in \N}$ satisfies Assumption~\ref{ass-normldp} with $s_n=n^{p/2}$ and {unique minimizer} $m=m(p)$ defined in~\eqref{mp}.  In particular, Assumption~\ref{ass-normldp}* is not
   satisfied.   We  show that nevertheless, Assumption~\ref{ass-rescaled} does hold.  
   In what follows, 
   let $U, \{\xi_i^{(p)}\}_{i\in\N}$, and $\xi^{(n,p)}$ be as in the representation of $X^{(n,p)}$
   in~\eqref{lp-repres}, and note that then 
   \begin{equation} 
     \label{lp-rep2}
\frac{\sqrt{s_n}\norm{X^{(n,p)}}_2}{n}\buildrel (d) \over =\frac{U^{1/n}}{\norm{\xi^{(n,p)}}_{p}/n^{1/p}}\frac{\sqrt{s_n}\norm{\xi^{(n,p)}}_2}{n}, \quad n\in\N.
   \end{equation}

\begin{proposition}\label{lp-thin-shell}
For $p\in[1,2)$, $\{X^{(n,p)}\}_{n\in\N}$ satisfies Assumption~\ref{ass-rescaled} with speed 
$s_n:= n^{2p/(2+p)}$
and GRF
\begin{align}\label{rate-normY}
J_{X,p}(x):=
\begin{cases}
\displaystyle \frac{x^p}{p}, \quad &x\geq 0,\\
+\infty, \quad &x<0.
\end{cases}
\end{align}
\end{proposition}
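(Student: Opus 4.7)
The plan is to start from the probabilistic representation~\eqref{lp-rep2}, which writes
\[
\frac{\sqrt{s_n}\|X^{(n,p)}\|_2}{n} \buildrel (d) \over = W_n V_n, \qquad W_n := \frac{U^{1/n}}{\|\xi^{(n,p)}\|_p/n^{1/p}}, \qquad V_n := \frac{\sqrt{s_n}\|\xi^{(n,p)}\|_2}{n},
\]
where $s_n := n^{2p/(2+p)}$. For $p \in [1,2)$ we have $s_n \to \infty$ and $n/s_n = n^{(2-p)/(2+p)} \to \infty$, so the scaling hypotheses of Assumption~\ref{ass-rescaled} are met. The strategy is to establish an LDP for $\{V_n\}_{n\in\N}$ at speed $s_n$ with the target rate function $J_{X,p}$ of~\eqref{rate-normY}, and then show that $W_n \to 1$ so rapidly that $\{W_nV_n\}_{n\in\N}$ is exponentially equivalent to $\{V_n\}_{n\in\N}$ at speed $s_n$.

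To obtain the LDP for $\{V_n\}_{n\in\N}$, note that $V_n^2 = b_n^{-1}\sum_{i=1}^n (\xi_i^{(p)})^2$ for $b_n := n^2/s_n = n^{4/(2+p)}$. Since $n/b_n = n^{(p-2)/(2+p)} \to 0$ and $b_n^{p/2} = n^{2p/(2+p)} = s_n$, Lemma~\ref{LDP-Y2} (applied with $k_n = n$) gives an LDP for $\{V_n^2\}_{n\in\N}$ at speed $s_n$ with GRF $\mathcal{J}_{\xi,p}(t) = t^{p/2}/p$ on $\R_+$. The contraction principle (Lemma~\ref{lem-con}) applied to the continuous square root then yields an LDP for $\{V_n\}_{n\in\N}$ at speed $s_n$ with GRF $v \mapsto v^p/p$ on $\R_+$, matching~\eqref{rate-normY}.

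Next, I would show that $s_n^{-1}\log \P(|W_n - 1| > \epsilon) \to -\infty$ for every $\epsilon > 0$. Since $n/s_n \to \infty$ and $\P(U^{1/n} < 1-\epsilon) = (1-\epsilon)^n$ (with $U^{1/n}\le 1$ a.s.), the factor $U^{1/n}$ concentrates at $1$ super-exponentially at speed $s_n$. For $\|\xi^{(n,p)}\|_p/n^{1/p}$, a direct calculation gives $\Lambda(t) := \log\E[e^{t|\xi_1^{(p)}|^p}] = -p^{-1}\log(1-pt)$, finite in a neighborhood of $0$ with $\E[|\xi_1^{(p)}|^p] = 1$, so Cram\'er's theorem gives an LDP for $n^{-1}\sum_i |\xi_i^{(p)}|^p$ at speed $n$ with a GRF uniquely minimized at $1$. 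Since $n \gg s_n$, Remark~\ref{ref-speeds} upgrades this to a degenerate LDP at speed $s_n$; taking $p$-th roots then yields super-exponential concentration of $\|\xi^{(n,p)}\|_p/n^{1/p}$ at $1$ and hence of $W_n$ at $1$.

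Finally, since $\{V_n\}_{n\in\N}$ satisfies an LDP at speed $s_n$ with a GRF, it is exponentially tight: for any $M > 0$ there exists $K$ with $\limsup_n s_n^{-1}\log\P(V_n > K) \leq -M$. For fixed $\delta > 0$,
\[
\P(|W_nV_n - V_n| > \delta) \leq \P(V_n > K) + \P(|W_n - 1| > \delta/K),
\]
so sending $M \to \infty$ establishes exponential equivalence of $\{W_nV_n\}_{n\in\N}$ and $\{V_n\}_{n\in\N}$ at speed $s_n$, and Remark~\ref{rem-expeq} then transfers the LDP and completes the verification of Assumption~\ref{ass-rescaled}. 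The main subtlety is that $W_n$ and $V_n$ share the randomness of $\xi^{(n,p)}$, so Lemma~\ref{ldp-prod} does not apply directly; the exponential-equivalence route sidesteps this dependence by using only separate tail bounds on each factor.
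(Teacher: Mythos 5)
Your proof is correct and follows essentially the same route as the paper: the same decomposition \eqref{lp-rep2}, the same application of Lemma~\ref{LDP-Y2} with $k_n=n$ and $b_n=n^2/s_n$ to get the LDP for $V_n$ at speed $s_n$, and super-exponential concentration of $W_n$ at $1$. The paper packages the last step via the final assertion of Lemma~\ref{ldp-prod} --- whose proof in fact never uses independence of $V_n$ and $W_n$, only the tail bound on $W_n$ --- so your hand-rolled exponential-equivalence argument is the same mechanism, and the ``subtlety'' you flag is not an obstruction to the lemma.
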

\begin{proof}
  Fix $p \in [1,2)$.
    It follows from \cite[Lemma 3.3]{GanKimRam18} that $\{U^{1/n}\}_{n\in\N}$ satisfies an LDP at speed $n$,
    and from {Cram\'er's theorem and the contraction principle} that 
$ \{\|\xi^{(n,p)}\|_p/n^{1/p}\}_{n\in\N}$  also satisfies an  LDP at speed $n$.
Moreover, it is easy to see that both sequences converge almost surely to $1$. Since, in addition, $\xi^{(n,p)}$ and $U$ are independent, 
appealing again to  the contraction principle it follows that  their ratio $W_n := U^{1/n}n^{1/p}/\|\xi^{(n,p)}\|_p$ also 
satisfies an LDP at speed $n$. {Furthermore, since $W_n\to 1$ almost surely as $n\to\infty$, the LDP rate function for $\{W_n\}_{n\in\N}$ has the unique minimizer $1$.}
Given $s_n \ll n$ and the representation in~\eqref{lp-rep2}, an application of (the last assertion of)
Lemma~\ref{ldp-prod}, with $W_n$ as above, {$m = 1$, $\gamma_n = n$}, $V_n := \sqrt{s_n}\|\xi^{(n,p)}\|_2/n$,   { and $\beta_n = s_n$}  
shows that to establish  the proposition it suffices to prove that $\{V_n\}_{n \in \N}$ satisfies an LDP with speed $s_n$ and 
GRF  $J_{X,p}$.
To this end, using the relation
$\frac{\sqrt{s_n}\norm{\xi^{(n,p)}}_2}{n} = \left(\frac{s_n}{n^2}\sum_{i=1}^n( \xi_i^{(p)})^2 \right)^{1/2},$  
 the contraction principle, Lemma~\ref{LDP-Y2} {with $k_n=n$ and $b_n=n^2/s_n$ therein, and the property that} $s_n\ll n$, we can conclude that $\{\sqrt{s_n}\|\xi^{(n,p)}\|_2/n\}_{n\in\N}$ satisfies an LDP at speed $n^{p}s_n^{-p/2}$ with GRF $J_{X,p}$ given in~\eqref{rate-normY}. Finally, since $s_n=n^{2p/(2+p)}$, we obtain $n^{p}s_n^{-p/2}=s_n$. 
\end{proof}

Next, we turn to verifying Assumption~\ref{ass-sublinear}. In the following lemma, we show that for $1\leq p <2$, different conditions in Assumption~\ref{ass-sublinear} are satisfied according to the growth speed of $k_n$.

\begin{proposition}\label{sub-assumption}
  When $p\in[1,2)$, let $J_{X,p}$ be defined as in \eqref{rate-normY}. Then {$\{X^{(n,p)}\}_{n\in\N}$} satisfies Assumption~\ref{ass-sublinear} with GRF {$J_{X}^{(r)} = J_{X,p}$ for all $r \in [0,\infty]$,} 
 where $s_n$ is defined by
\[
s_n:=
\begin{cases}
n^pk_n^{-p/2}, \quad &\text{if}\quad k_n \gg n^{2p/(2+p)},\\
n^{2p/(2+p)}, \quad &\text{if}\quad k_n = n^{2p/(2+p)} \quad\text{or} \quad k_n\ll n^{2p/(2+p)}.
\end{cases}
\]
\end{proposition}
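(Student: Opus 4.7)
The strategy is to leverage the probabilistic representation~\eqref{lp-repres}, equivalently~\eqref{lp-rep2}, to factor the scaled norm of $X^{(n,p)}$ as a product of a normalizing factor that concentrates at $1$ and a suitably scaled sum of squares of i.i.d.\ $p$-generalized normal random variables, exactly as in the proof of Proposition~\ref{lp-thin-shell} but with a scaling tuned so that the i.i.d.\ sum satisfies an LDP at the target speed $s_n$ in each regime. Specifically, let $a_n := k_n$ when the target scaling is $\sqrt{k_n}\|X^{(n,p)}\|_2/n$ (i.e., $r\in\{0,1\}$, corresponding to $k_n \gg n^{2p/(2+p)}$ and $k_n = n^{2p/(2+p)}$), and let $a_n := s_n$ when the target scaling is $\sqrt{s_n}\|X^{(n,p)}\|_2/n$ (i.e., $r=\infty$, corresponding to $k_n\ll n^{2p/(2+p)}$). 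Then
\[
\frac{\sqrt{a_n}\,\|X^{(n,p)}\|_2}{n} \eqdist W_n\, V_n, \qquad W_n := \frac{U^{1/n}\, n^{1/p}}{\|\xi^{(n,p)}\|_p}, \qquad V_n := \frac{\sqrt{a_n}\,\|\xi^{(n,p)}\|_2}{n}.
\]

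The sequence $\{W_n\}_{n\in\N}$ was already shown, in the proof of Proposition~\ref{lp-thin-shell}, to satisfy an LDP at speed $n$ with a GRF having unique minimizer $1$. For $\{V_n\}_{n\in\N}$, I would rewrite $V_n^2 = b_n^{-1}\sum_{i=1}^n(\xi_i^{(p)})^2$ with $b_n:=n^2/a_n$ and appeal to Lemma~\ref{LDP-Y2}, taking its dimension parameter to be $n$ and its normalization to be $b_n$. The hypothesis $n/b_n = a_n/n \to 0$ holds in every regime since $k_n\ll n$ and $s_n=n^{2p/(2+p)}\ll n$ (as $2p/(2+p)<1$ when $p<2$). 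The lemma then yields an LDP for $\{V_n^2\}$ at speed $b_n^{p/2}$ with GRF $t\mapsto t^{p/2}/p$ on $[0,\infty)$, and the contraction principle applied to the continuous map $t\mapsto\sqrt{t}$ upgrades this to an LDP for $\{V_n\}$ at the same speed with GRF $J_{X,p}$ from~\eqref{rate-normY}.

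To combine the two LDPs I would invoke the last assertion of Lemma~\ref{ldp-prod} with $\beta_n = b_n^{p/2}$, $\gamma_n = n$, $m=1$, taking the lemma's $U_n$ to be any trivial deterministic constant (so that the required independence from $V_n$ is vacuous). This yields an LDP for $\{W_n V_n\}$, and hence for $\{\sqrt{a_n}\|X^{(n,p)}\|_2/n\}$, at speed $b_n^{p/2}$ with GRF $v\mapsto J_{X,p}(v/1)=J_{X,p}(v)$, matching the claim. A direct exponent computation identifies $b_n^{p/2}=(n^2/a_n)^{p/2}$ with the stated $s_n$ in each regime: it equals $n^p k_n^{-p/2}$ when $a_n=k_n$, and $n^{2p/(2+p)}$ when $a_n=s_n=n^{2p/(2+p)}$. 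The ratio $s_n/k_n$ then tends to $0$, $1$, or $\infty$ in the respective regimes $k_n \gg n^{2p/(2+p)}$, $k_n = n^{2p/(2+p)}$, and $k_n \ll n^{2p/(2+p)}$, so $r \in \{0,1,\infty\}$ accordingly.

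The main point requiring care is verifying $\beta_n = s_n = o(n)$ uniformly---needed to legitimately apply Lemma~\ref{ldp-prod}---particularly in the regime $k_n\gg n^{2p/(2+p)}$, where $s_n/n \to 0$ reduces to $n^{p-1}\ll k_n^{p/2}$. This follows from $k_n\gg n^{2p/(2+p)}$ together with the elementary inequality $p^2/(2+p) > p-1$ for $p\in[1,2)$, which is equivalent to $2-p>0$ since $p^2 - (p-1)(p+2) = 2-p$. Once these scaling checks are in hand, the combination of Lemma~\ref{LDP-Y2} and Lemma~\ref{ldp-prod} closes the argument with no further subtleties.
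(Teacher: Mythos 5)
Your proposal is correct and follows essentially the same route as the paper's proof: the representation \eqref{lp-repres}, Lemma~\ref{LDP-Y2} applied with dimension $n$ and normalization $n^2/a_n$ (the paper writes this as $b_n = c_n^2 n$ with $c_n = \sqrt{n/a_n}$), the contraction principle for $t \mapsto \sqrt{t}$, and Lemma~\ref{ldp-prod} to absorb the factor $W_n = U^{1/n}n^{1/p}/\|\xi^{(n,p)}\|_p$, which satisfies an LDP at the faster speed $n$ with minimizer $1$. Your explicit check that $s_n \ll n$ in the regime $k_n \gg n^{2p/(2+p)}$ (via $p^2/(2+p) > p-1$) is a detail the paper leaves implicit.
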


\begin{proof}
  Let $s_n$ be as defined in the proposition. 
  As  in Assumption~\ref{ass-sublinear}, let $r \in [0,\infty]$ be the limit of $s_n/k_n$ as $n \rightarrow \infty$.  
        It is easy to see that $r = 0$ if $k_n \gg n^{2p/(2+p)}$, $r= 1$ 
        if  $k_n = n^{2p/(2+p)}$,  and  $r = \infty$  when $k_n \ll n^{2p/(2+p)}$.
      Now,  set $c_n:=\sqrt{n/k_n}$ if $r\in[0,\infty)$  and $c_n:=\sqrt{n/s_n}$ if $r=\infty$ and  note that $c_n\to \infty$ as $n\to\infty$.  
        By~\eqref{lp-repres}, we have the following representation
        \[
        \frac{\norm{X^{(n,p)}}_2}{c_n\sqrt{n}}\buildrel (d) \over =\frac{U^{1/n}}{\norm{\xi^{(n,p)}}_{p}/n^{1/p}}\frac{\norm{\xi^{(n,p)}}_2}{c_n\sqrt{n}}, \quad n\in\N.
        \]
        We first claim that $\{\|\xi^{(n,p)}\|_2/(c_n\sqrt{n})\}_{n\in\N}$ satisfies an LDP at speed $s_n\ll n$. Indeed, by Lemma~\ref{LDP-Y2} {with $k_n=n$ and $b_n=c_n^2n$ there}, the fact that
        $b_n\to\infty$
        as $n\to\infty$ and the contraction principle with the
        mapping $x\mapsto\sqrt{x}$,  
$\left\{\|\xi^{(n,p)}\|_2/(c_n\sqrt{n})\right\}_{n\in\N}$
satisfies an LDP at speed $n^{p/2}c_n^{p}$ with GRF $J_{X,p}$ given in~\eqref{rate-normY}. By~\cite[Lemma 3.3]{GanKimRam18}, {Cram\'{e}r's theorem and the contraction principle}, the independence of $\xi^{(n,p)}$ and $U$, and the contraction principle, we see that $W_n := U^{1/n}n^{1/p}/\|\xi^{(n,p)}\|_p$ satisfies an LDP at speed $n$ and further, it also converges almost surely to $1$.
The lemma then follows  upon applying Lemma~\ref{ldp-prod} with $V_n:=\|\xi^{(n,p)}\|_2/(c_n\sqrt{n})$ and $W_n := U^{1/n}n^{1/p}/\|\xi^{(n,p)}\|_p$, {$m=1$}, $\gamma_n = n$
and $\beta_n := s_n\ll n$. 
\end{proof}

When combined with the results of Section \ref{subsub-ldsublin}, the last two propositions
imply the following 
  LDPs for norms of projections in {the constant and sublinear regimes}.
  Here, we   focus {on the norm ${Y}^{(n,p)}_{2,k_n}$, 
    which is defined as in~\eqref{normY2} but with $X^{(n)}$
    replaced with $X^{(n,p)}$.}
  The corresponding results for the differently
  scaled norms $\{k_n^{-1/2}Y^{(n,p)}_{2,k_n}\}_{n \in \N}$
  were obtained in Theorem B of~\cite{AloGutProTha20}.
\begin{theorem}\label{ple2}
Fix $p\in[1,2)$. 
\begin{enumerate}
\item Suppose $k_n$ is constant at $k\in\N$. Then $\{n^{-1/2}\mathbf{A}_{n,k}^T X^{(n,p)}\}_{n\in\N}$ satisfies an LDP in $\R^k$ at speed $s_n=n^{2p/(2+p)}$ with GRF $I_{\mathbf{A}X^{(p)},k}:\R^k\to[0,\infty]$ defined by
\[
I_{\mathbf{A}X^{(p)},k}(x):=\frac{p+2}{2p}\norm{x}_2^{2p/(p+2)},\quad x\in\R^k.
\]
Moreover, $\{n^{-1/2}Y_{2,k}^{(n,p)}\}_{n \in \N}$ satisfies an LDP with GRF{
\begin{equation}
  \label{jy-constant}
\mathbb{J}_{Y^{(p)}_{2,k}}(x):=
\begin{cases}
\frac{p+2}{2p}x^{2p/(p+2)},\quad & x\geq0,\\
+\infty,\quad & x<0.
\end{cases}
\end{equation}}
\item Suppose $k_n$ grows sublinearly.
\begin{enumerate}
\item If $k_n \ll n^{2p/(2+p)}$. Then $\{n^{-1/2}Y^{(n,p)}_{2,k}\}_{n\in\N}$ satisfies an LDP in $\R$ at speed $n^{2p/(2+p)}$ with GRF  $\mathbb{J}_{Y^{(p)}_{2,k_n}}:\R\ra[0,\infty]$, which is equal to $\mathbb{J}_{Y^{(p)}_{2,k}}$ defined in \eqref{jy-constant}.  
\item If $k_n = n^{2p/(2+p)}$. Then $\{n^{-1/2}Y^{(n,p)}_{2,k}\}_{n\in\N}$ satisfies an LDP in $\R$ at speed $n^{2p/(2+p)}$ with GRF  $\mathbb{J}_{Y^{(p)}_{2,k_n}}:\R\ra[0,\infty]$ defined by {
\[
\mathbb{J}_{Y^{(p)}_{2,k_n}}(x):=
\begin{cases}
\frac{p+2}{2p} \frac{x^p}{\bar{c}(x)^p} - \log (\bar{c}(x)),\quad &x\geq 0,\\
+\infty,\quad & x<0.
\end{cases}
\]}
where $\bar{c}(x) \in [1+x^{p/(p+2)}, \infty)$
    is the unique positive solution to $c^{p+2} - c^p - x^p = 0$. 
\item If $k_n \gg n^{2p/(2+p)}$. Then $\{n^{-1/2}Y^{(n,p)}_{2,k}\}_{n\in\N}$ satisfies an LDP in $\R$ at speed $n^pk_n^{-p/2}$ with GRF  $\mathbb{J}_{Y^{(p)}_{2,k_n}}:\R\ra[0,\infty]$ defined by{
\[
\mathbb{J}_{Y^{(p)}_{2,k_n}}(x):=
\begin{cases}
\frac{x^p}{p},\quad& x\geq0,\\
+\infty,\quad & x<0,
\end{cases}
\]
which is equal to $J_{X,p}$ in \eqref{rate-normY} of Proposition~\ref{lp-thin-shell}.}
\end{enumerate}
\end{enumerate}
\end{theorem}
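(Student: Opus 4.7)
The proof will combine the general LDP results of Section~\ref{sec-mainres} with Propositions~\ref{lp-thin-shell} and~\ref{sub-assumption}, which together show that for $p \in [1,2)$ the sequence $\{X^{(n,p)}\}_{n\in\N}$ satisfies Assumption~\ref{ass-rescaled} with $J_X = J_{X,p}$ and Assumption~\ref{ass-sublinear} with $J_X^{(r)} = J_{X,p}$ for every $r \in [0,\infty]$, where $J_{X,p}(x) = x^p/p$ for $x \geq 0$ and $+\infty$ otherwise. Once these inputs are in hand, each part of the theorem reduces to substituting $J_{X,p}$ into the abstract rate functions supplied by Theorems~\ref{th-constant} and~\ref{sub-ldp} and carrying out an explicit one-dimensional minimization.

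For the constant regime, I would apply Theorem~\ref{th-constant} in its Assumption~\ref{ass-rescaled} form, yielding
\[
I_{\mathbf{A}X^{(p)},k}(x) \;=\; \inf_{c > 0}\left\{\frac{\|x\|_2^p}{p\,c^p} + \frac{c^2}{2}\right\}
\]
at speed $s_n = n^{2p/(2+p)}$. Differentiating in $c$ produces the critical point $c^* = \|x\|_2^{p/(p+2)}$, and substituting back gives $I_{\mathbf{A}X^{(p)},k}(x) = \frac{p+2}{2p}\|x\|_2^{2p/(p+2)}$. The corresponding LDP for $\{n^{-1/2}Y^{(n,p)}_{2,k}\}_{n\in\N}$ is then an immediate consequence of Corollary~\ref{cor-normcon} with $q = 2$, since $I_{\mathbf{A}X^{(p)},k}$ depends on $z$ only through $\|z\|_2$, so the contraction simply restricts the Euclidean radius to $x$.

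For the sublinear regime, I would invoke Theorem~\ref{sub-ldp}(2) with the speeds supplied by Proposition~\ref{sub-assumption}. Computing $r := \lim_{n\to\infty} s_n/k_n$ in each case yields $r = \infty$ when $k_n \ll n^{2p/(2+p)}$, $r = 1$ when $k_n = n^{2p/(2+p)}$, and $r = 0$ when $k_n \gg n^{2p/(2+p)}$. The cases $r = 0$ and $r = \infty$ are essentially immediate: the first reads off directly as $\mathbb{J}_2^{\mathsf{sub}}(x) = J_{X,p}(x) = x^p/p$, while the second reduces to exactly the same minimization as in the constant regime and hence produces $\frac{p+2}{2p}\, x^{2p/(p+2)}$. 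The only substantive computation is the critical case $r = 1$, where the rate function is
\[
\mathbb{J}_2^{\mathsf{sub}}(x) \;=\; \inf_{c > 0}\left\{\frac{c^2 - 1}{2} - \log c + \frac{x^p}{p\,c^p}\right\}.
\]
Setting the derivative to zero gives $c^{p+2} - c^p - x^p = 0$. I would show that $c \mapsto c^{p+2} - c^p$ is strictly increasing on $[1,\infty)$ (its derivative $c^{p-1}((p+2)c^2 - p)$ is strictly positive there) and vanishes at $c = 1$, so the equation has a unique positive root, which I identify with $\bar c(x)$. Using the resulting identity $x^p = \bar c(x)^p(\bar c(x)^2 - 1)$ at the optimum, the terms $\frac{c^2 - 1}{2}$ and $\frac{x^p}{p c^p}$ combine into $\frac{p+2}{2p}\frac{x^p}{\bar c(x)^p}$, producing the claimed expression.

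The main technical point I expect to need care with is the critical case: I must verify that the critical point of the objective is indeed a global minimizer (the objective mixes a convex quadratic, a concave logarithm, and a convex $c^{-p}$ term, so convexity on $(0,\infty)$ is not transparent and is best argued through monotonicity of the derivative) and justify the stated localization of $\bar c(x)$. Beyond this, the arguments are straightforward calculus plus direct application of the general machinery.
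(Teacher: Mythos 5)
Your proposal is correct and follows essentially the same route as the paper: verify Assumptions~\ref{ass-rescaled} and~\ref{ass-sublinear} via Propositions~\ref{lp-thin-shell} and~\ref{sub-assumption}, feed $J_{X,p}(x)=x^p/p$ into Theorem~\ref{th-constant}, Corollary~\ref{cor-normcon} and Theorem~\ref{sub-ldp}(2), and solve the resulting one-dimensional minimizations, with the same identification of the regimes $r\in\{0,1,\infty\}$ and the same critical equation $c^{p+2}-c^p-x^p=0$ in the case $k_n = n^{2p/(2+p)}$. The only cosmetic difference is that you establish uniqueness of $\bar c(x)$ by monotonicity of $c\mapsto c^{p+2}-c^p$ on $[1,\infty)$ (noting the left side is negative on $(0,1)$) whereas the paper cites Descartes' rule of signs; both are fine.
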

\begin{proof}
  In  the constant regime,  Proposition~\ref{lp-thin-shell} shows that $\{X^{(n,p})\}$ satisfies
  Assumption \ref{ass-rescaled} with rate function $J_{X,p}$ taking the explicit form given in \eqref{rate-normY}.  Subsituting this into  Theorem~\ref{th-constant}, we see that 
  $\{n^{-1/2}\mathbf{A}_{n,k}^T X^{(n,p)}\}_{n\in\N}$ satisfies an LDP in $\R^k$ at speed $s_n=n^{2p/(2+p)}$ with GRF
  \begin{align}
    \label{temp-calc}
I_{\mathbf{A}X^{(p)},k}(x) &=\inf_{c>0}\left\{\frac{\norm{x}^p_2}{pc^p}+\frac{c^2}{2} \right\}=\frac{p+2}{2p}\norm{x}_2^{2p/(p+2)}, \quad  x\in\R^k. 
\end{align}
The LDP for the norm follows from the contraction principle applied to $x \mapsto \|x\|_2$.

In the sublinear regime, Proposition~\ref{sub-assumption} shows that $\{X^{(n,p})\}$ satisfies
Assumption \ref{ass-sublinear} with GRF $J_{X,p}$  (for all values of $r \in [0,\infty]$)
and at  speed $s_n = n^{2p/(2+p)}$ in
case (a), which corresponds to $r = \infty$, and in case (b), which corresponds to $r = 1$,
and speed $s_n = n^p k_n^{-p/2}$ in case (c), which corresponds to $r = 0$.  
Together with Theorem~\ref{sub-ldp}(ii), this immediately implies case (c) and case (a)
follows from the second equality in \eqref{temp-calc} above.   
On the other hand, if case (b) holds, Theorem~\ref{sub-ldp}(ii) and the expression for
$J_{X,p}$ in \eqref{rate-normY} show that for $x\geq 0$,
\begin{align*}
\mathbb{J}_{Y^{(p)}_{2,k_n}}(x)&:=
\inf_{c>0}\left\{\frac{c^2-1}{2}-\log c+\frac{x^p}{pc^p} \right\}\\
&= \frac{p+2}{2p} \frac{x^p}{\bar{c}(x)^p} - \log (\bar{c}(x)),
\end{align*}
where $\bar{c}(x)$ is the unique positive solution to $c^{p+2} - c^p - x^p = 0$, whose existence and uniqueness is guaranteed by  Descartes' rule of signs (see e.g.~\cite{AndJacSit98}). Since $x\geq 0$, the unique positive solution satisfies $\bar{c}(x)\geq 1$. Furthermore,  $\bar{c}(x)=(\bar{c}(x)^p+x^p)^{1/(p+2)}\geq(1+x^p)^{1/(p+2)}\geq 1+x^{p/(p+2)}$. 
\end{proof}

In a similar fashion, {applying Theorem~\ref{th-on} and Theorem \ref{th-simn},  
  one can also deduce LDPs for the empirical measures of the coordinates of the sequences of projections of  
   $X^{(n)} = X^{(n,p)}$ in the sublinear and linear regimes. 
   Furthermore,  in the linear regime,} 
        similar LDP results for the norms of these projections can be deduced from Theorem~\ref{th-normlinear}
        on noting that $\{X^{(n,p)}\}_{n\in\N}$ satisfies Assumption~\ref{ass-normldp}.
        But, we do not state these results since they were already obtained 
         in Theorem 1.2 of~\cite{AloGutProTha18}.

 \begin{remark}
   \label{rem-alogut}
           Theorem \ref{ple2} shows that when $p \in [1,2)$,  
            the speed of the LDP  for Euclidean norms of $k_n$-dimensional 
  projections of $\ell_p^n$ balls  exhibits an interesting phase transition depending
  on whether the ratio  $k_n/n^{2p/(2+p)}$ is asymptotically finite or infinite, and the form of the rate function {also differs depending on 
    whether  the limit is zero, strictly positive or infinite.
    Indeed, note that
$
x^p\geq x^p/\bar{c}(x)^p=\bar{c}(x)^2-1\geq x^{2p/(p+2)}.
$
Hence, a comparison of cases (b) and (c) of the last theorem above shows that} the faster the growth of the speed $k_n$, the faster the growth  {of the rate function $\mathbb{J}_{Y^{(p)}_{2,k_n}}(x)$, as $x\to\infty$.} 
    This  disproves a  conjecture in  \cite{AloGutProTha18} (see the statement after Theorem 1.2 therein),
  which  states that the LDP (that is, speed and rate function) of these Euclidean norms should
  be the same in the constant and sublinear cases.
{This behavior is in contrast to the case of $\ell_p$ balls, with $p > 2$, where the rate functions
of the (Euclidean) norms of the projections in the sublinear and constant cases coincide
(see Proposition \ref{lem-lpeg} and Remark \ref{rem-lpeg}). }
   \end{remark}

  \subsubsection{Verification of Assumption \ref{ass-normldp}*  when  $p \in [2,\infty)$}\label{sec-lp-b} 
  The verification of the asymptotic thin shell condition is much easier in the case $p\in[2,\infty)$ than when $p\in[1,2)$.
\begin{proposition}
  \label{lem-lpeg}
  For $n \in \N$ and $p\in[2,\infty)$,  
  $\{X^{(n,p)}\}_{n \in \N}$ satisfies Assumption~\ref{ass-normldp}* with $s_n=n$ and GRF
  $J_{X,p}$ where
    \begin{equation*}
    J_{X,2} (x) :=  \left\{     \begin{array}{ll}      - \log x &  \mbox{ if } x \in (0,1], \\
+\infty & \mbox{ otherwise, }
 \end{array}     \right.
    \end{equation*}
and for $p > 2$,  
\begin{equation*}
      J_{X,p} (x) :=
    \left\{     \begin{array}{ll}       \inf_{y \geq x} [ \frac{1}{2} \log \frac{y^2}{x^2} + F_p^*(y)]          & \mbox{ if } x \in (0,1), \\       +\infty & \mbox{ otherwise. }     \end{array}     \right.
    \end{equation*}
    Moreover, $J_{X,p}$ has a unique minimizer at $m:=m(p)$ defined in~\eqref{mp}. 
\end{proposition}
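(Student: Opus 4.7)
My approach is to leverage the Schechtman--Zinn representation~\eqref{lp-repres}, which gives the distributional identity
\[
\frac{\|X^{(n,p)}\|_2}{\sqrt{n}} \stackrel{(d)}{=} U^{1/n}\cdot V_n, \qquad V_n := \frac{n^{1/p}\|\xi^{(n,p)}\|_2}{\sqrt{n}\,\|\xi^{(n,p)}\|_p},
\]
with $U^{1/n}$ independent of $V_n$. From \cite[Lemma 3.3]{GanKimRam18}, $\{U^{1/n}\}_{n\in\N}$ satisfies an LDP at speed $n$ with GRF $a\mapsto -\log a$ on $(0,1]$ and $+\infty$ otherwise.  The plan is to establish an LDP at speed $n$ for $\{V_n\}_{n\in\N}$ and then conclude via the contraction principle applied to the product map $(a,v)\mapsto av$.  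The case $p=2$ is immediate:  then $\|\xi^{(n,2)}\|_2=\|\xi^{(n,2)}\|_p$, so $V_n\equiv 1$ and $\|X^{(n,2)}\|_2/\sqrt{n}\stackrel{(d)}{=}U^{1/n}$, which yields the asserted GRF $J_{X,2}(x)=-\log x$ on $(0,1]$; the unique minimizer is $m(2)=1$ by the law of large numbers.

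For $p>2$, the workhorse is Cram\'er's theorem in $\R^2$ applied to the i.i.d.\ bivariate sequence $\{(|\xi_i^{(p)}|^2,|\xi_i^{(p)}|^p)\}_{i\in\N}$.  The joint cumulant generating function $\Phi_p(t_1,t_2):=\log\int e^{t_1 x^2+t_2|x|^p}f_p(x)\,dx$ is finite on a neighborhood of the origin because $f_p(x)\propto e^{-|x|^p/p}$ has super-Gaussian tails when $p>2$, so Cram\'er yields an LDP at speed $n$ for the pair $(B_n,C_n):=(\|\xi^{(n,p)}\|_2^2/n,\,\|\xi^{(n,p)}\|_p^p/n)$ with GRF $\Phi_p^*$, the two-dimensional Legendre--Fenchel transform of $\Phi_p$; by definition $F_p^*(\cdot)=\Phi_p^*(\cdot,1)$ is the conditional GRF of $B_n$ given $C_n=1$.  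The crucial additional input is the classical Schechtman--Zinn fact that the angular component $\xi^{(n,p)}/\|\xi^{(n,p)}\|_p$ is independent of the radius $\|\xi^{(n,p)}\|_p$; since $V_n$ depends only on the angular component, $V_n$ is independent of $C_n$.  Because $V_n^2=B_n$ whenever $C_n=1$, this independence forces the marginal LDP of $V_n^2$ to coincide with the conditional LDP of $B_n$ given $C_n=1$, and thus to be governed by $F_p^*$.

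Assembling the pieces via the contraction principle applied to the independent pair $(U^{1/n},V_n)$, and using $V_n\le 1$ (by the Lyapunov inequality $\|\cdot\|_2\le n^{1/2-1/p}\|\cdot\|_p$ for $p\ge 2$, which also forces the domain $(0,1)$ of $J_{X,p}$), yields the claimed expression for $J_{X,p}$.  Uniqueness of the minimizer $m(p)$ follows because $\|X^{(n,p)}\|_2^2/n\to m(p)^2=\E[|\xi^{(p)}|^2]$ almost surely by the LLN (so $J_{X,p}(m(p))=0$), while strict convexity of $\Phi_p^*$ together with the structure of the infimum rule out any other zero.  The main obstacle I anticipate is the rigorous transfer of the Schechtman--Zinn independence to the large-deviations regime, i.e.\ the identification of the marginal rate function of $V_n$ with the conditional rate function of $B_n$ given $C_n=1$; this is handled by first deriving the joint LDP of $(V_n,C_n)$ by contraction from that of $(B_n,C_n)$, and then exploiting the product structure of the resulting rate function (which is forced by independence) to read off the marginal rate function of $V_n$ at the almost-sure value $C_n=1$.
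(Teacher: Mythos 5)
Your route is genuinely different from the paper's: for $p>2$ the paper's proof is a one-line appeal to \cite[Theorems 1.1(a) and 1.2]{Kabluchko17}, whereas you give a self-contained derivation from the representation \eqref{lp-repres} (the $p=2$ case is handled identically in both). Your core mechanism is sound. Bivariate Cram\'er's theorem does apply to $(B_n,C_n)$, since for $p>2$ your $\Phi_p$ is finite on $\R\times(-\infty,1/p)$, a neighborhood of the origin; the map $(b,c)\mapsto(b^{1/2}c^{-1/p},c)$ transfers this to a joint LDP for $(V_n,C_n)$ with rate $(v,c)\mapsto\Phi_p^*(v^2c^{2/p},c)$; and since $V_n$ is independent of $C_n$ and both marginal rate functions are good, the joint rate must also equal $I_1(v)+I_2(c)$ with $I_2$ the Cram\'er rate of $C_n$, so evaluating at $c=1$, where $I_2(1)=0$ because $\E[|\xi^{(p)}_1|^p]=1$, identifies $I_1(v)=\Phi_p^*(v^2,1)=F_p^*(v^2)$. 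Two points you gloss over are real but repairable: the contraction map is not continuous at $c=0$ (use $I_2(0)=+\infty$ and exponential tightness to restrict to $c\ge\delta$), and ``independence forces the product structure'' requires the standard fact that an independent pair of sequences, each satisfying an LDP with a good rate function, satisfies a joint LDP with the sum of the rates (as in the proof of Lemma~\ref{ldp-prod}), together with uniqueness of rate functions.

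The substantive gap is the final step, where you assert rather than verify that the assembly ``yields the claimed expression.'' Carrying it out, $V_n$ has rate function $y\mapsto F_p^*(y^2)$, so contracting the independent product $U^{1/n}V_n$ gives
\[
\inf_{y\ge x}\Bigl[\tfrac12\log\tfrac{y^2}{x^2}+F_p^*(y^2)\Bigr],
\]
with $y^2$, not $y$, as the argument of $F_p^*$. This discrepancy is not cosmetic: by \eqref{fpstar}, $F_p^*$ vanishes uniquely at $\E[(\xi^{(p)}_1)^2]=m(p)^2$, and $m(p)^2<m(p)<1$ for $p>2$, so the formula displayed in the proposition has its unique zero at $m(p)^2$ and is therefore inconsistent with the proposition's own claim that the minimizer is $m(p)$, whereas your derived formula vanishes exactly at $x=m(p)$ (take $y=x=m(p)$). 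Your derivation thus appears to expose a typo in the stated rate function, but you must actually perform this computation and reconcile it with the statement rather than declare a match. Relatedly, uniqueness of the minimizer then follows directly from the explicit form (both terms in the infimum are nonnegative for $y\ge x$ and vanish simultaneously only at $y=x=m(p)$), not from a vague appeal to ``strict convexity of $\Phi_p^*$ together with the structure of the infimum.''
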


\begin{proof} 
  In the  case $p = 2$, by~\eqref{lp-repres}, clearly $\|X^{(n,p)}\|_{2}/\sqrt{n} \stackrel{(d)}{=} U^{1/n}$.
  Thus, Lemma 3.3 of \cite{GanKimRam18} shows that
  $\{X^{(n,p)}\}_{n \in \N}$ satisfies  Assumption~\ref{ass-normldp}*  with GRF $J_{X,2}$, and the 
  explicit form of  $J_{X,2}$ implies it  has a unique minimum at $m(2) = 1$.
  On the other hand, when $p > 2$,  {the LDP follows from 
  \cite[Theorem 1.2]{Kabluchko17} and uniqueness of the minimizer
  from \cite[Theorem  1.1 (a)]{Kabluchko17},  
   with $d =1$ and $q_i=2$  (note that the $\ell^n_p$ ball is scaled differently there but the results nevertheless follow).}
  \end{proof}

The last  result can also be deduced from Proposition~\ref{lem-orlex} on Orlicz balls in Section~\ref{sec-orl} by choosing $V(x) = \abs{x}^p$. However, we presented the self-contained proof above since the special case of $\ell_p$ balls with $p\geq 2$ is easier due to the representation~\eqref{lp-repres}.  Finally,   Proposition~\ref{lem-lpeg} and Theorem~\ref{th-constant} together yield the following corollary.

\begin{corollary}
Fix $p \in [2,\infty)$, let $k_n$ be constant at $k\in\mathbb{N}$ and let $J_{X,p}$ be defined as in Proposition~\ref{lem-lpeg}. Then  $\{n^{-1/2}\mathbf{A}_{n,k}^T X^{(n,p)}\}_{n\in\N}$ satisfies an LDP in $\R^k$ at speed $n$ with GRF $I_{\mathbf{A}X^{(p)},k}:\R^k\to[0,\infty]$ defined by
\[ 
I_{\mathbf{A}X^{(p)},k}(x):=\inf_{0 < c < 1} \left\{  J_{X,p}\left(\frac{\|x\|_2}{c}\right) -\tfrac{1}{2}
    \log\left(1 - c^2\right)\right\},\quad x\in\R^k.
\]
\end{corollary}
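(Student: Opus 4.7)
The plan is to observe that this corollary is a direct consequence of combining Proposition~\ref{lem-lpeg} with Theorem~\ref{th-constant}, so essentially no new work is required beyond checking that the hypotheses of the latter are verified in the setting at hand.

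First I would note that by Proposition~\ref{lem-lpeg}, for any $p \in [2,\infty)$ the sequence $\{X^{(n,p)}\}_{n\in\N}$ satisfies Assumption~\ref{ass-normldp}* with speed $s_n = n$ and good rate function $J_{X,p}$. Next, since $k_n \equiv k$ for some fixed $k \in \N$, the constant regime of Definition~\ref{def-set} applies. Thus, the hypotheses of Theorem~\ref{th-constant} are satisfied (using the first alternative, Assumption~\ref{ass-normldp}*, rather than Assumption~\ref{ass-rescaled}), with $X^{(n)} = X^{(n,p)}$ and $J_X = J_{X,p}$.

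Applying Theorem~\ref{th-constant} then yields that $\{n^{-1/2}\mathbf{A}_{n,k}^T X^{(n,p)}\}_{n\in\N}$ satisfies an LDP in $\R^k$ at speed $n$ with GRF given by the first case of~\eqref{Jan-constant1}, namely
\[
I_{\mathbf{A}X^{(p)},k}(x) = \inf_{0<c<1}\left\{J_{X,p}\left(\frac{\|x\|_2}{c}\right) - \tfrac{1}{2}\log(1-c^2)\right\}, \quad x \in \R^k,
\]
which matches the claimed rate function verbatim.

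There is no real obstacle here; the statement is quite literally an instantiation of Theorem~\ref{th-constant} in the special case $X^{(n)} = X^{(n,p)}$ with $p \in [2,\infty)$, once Assumption~\ref{ass-normldp}* has been verified via Proposition~\ref{lem-lpeg}. The only (trivial) item worth mentioning explicitly in the write-up is that since $J_{X,p}$ takes the value $+\infty$ outside $(0,1]$ (for $p=2$) or $(0,1)$ (for $p>2$), one could equivalently restrict the infimum in $c$ to the range where $\|x\|_2/c$ lies in the effective domain of $J_{X,p}$, but this simplification is not needed to state the corollary.
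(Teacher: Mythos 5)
Your proposal is correct and is exactly the paper's argument: the paper derives this corollary by combining Proposition~\ref{lem-lpeg} (which verifies Assumption~\ref{ass-normldp}* for $\{X^{(n,p)}\}_{n\in\N}$ with $p\in[2,\infty)$, speed $n$, and GRF $J_{X,p}$) with the first case of Theorem~\ref{th-constant}. Nothing further is needed.
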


The particular case when $k_n\equiv 1$, for all range of $p\in[1,\infty)$ was studied in~\cite{GanKimRam18}; see Theorem 2.2 therein. 
  Our method is different in that we take advantage of the equivalent representation in Lemma~\ref{lem-toprow} and hence a different form of the rate function is obtained here. 

  \begin{remark}
    \label{rem-lpeg}
    One may combine Proposition~\ref{lem-lpeg}, Theorem~\ref{sub-ldp} and Theorem~\ref{th-normlinear} to obtain  LDPs for $\{n^{-1/2}\|\mathbf{A}_{n,k_n}^T X^{(n,p)}\|_2\}_{n\in\N}$ in the sublinear and linear regimes when
    $p \in [2,\infty)$.
   We omit these results, since they  were already obtained in Theorem 1.1 of~\cite{AloGutProTha18}. 
   LDPs for empirical measures of the coordinates of the projections of the $\ell_p^n$ balls
   with $p \in [2,\infty)$ in the sublinear and linear regimes can also be deduced on combining Proposition~\ref{lem-lpeg} with  Theorem~\ref{th-on} or Theorem~\ref{th-simn}. 
\end{remark}

  \subsection{Orlicz and generalized Orlicz balls} \label{sec-orl} 
 
  We now consider the class of
  \emph{Orlicz and generalized Orlicz balls}. 
 These form a natural class of examples because, 
like the uniform measure on any convex set, the uniform measure on a generalized Orlicz ball is logconcave,  and much like $\ell_p^n$ balls, the coordinates of a uniformly distributed random vector satisfy a certain ``subindependence" property known as ``negative association" \cite{PilWoj08},
and  (under suitable conditions) the Orlicz ball 
satisfies the KLS conjecture~\cite{KolMil18}.

  \begin{definition}\label{def-orl}
    We say $V$ is an \emph{Orlicz function} if $V:\R\to\R_+\cup \{\infty\}$ is convex and
    satisfies $V(0)=0$ and $V(x) = V(-x)$ for $x \in \R$. Let the domain of $V$ be $D_V:=\{x\in\R:V(x)<\infty\}$. We say $V$ is \emph{superquadratic} if
\begin{equation} \label{vquad}
V(x)/x^2 \ra \infty, \quad \text{ as } x \ra \infty. 
\end{equation} 
    \end{definition}

  \noindent 
  {In particular, $2$-convex Orlicz functions  are superquadratic in the sense of Definition~\ref{def-orl},
and this includes the case $V(x) = |x|^p$, with $p > 2$. 
  }
    Fix a  superquadratic Orlicz function $V$, and denote the associated symmetric Orlicz ball by 
\begin{equation}\label{orlball} 
\mathbb{B}_V^n := \left\{x\in \R^n : \sum_{i=1}^n V(x_i) \le n\right\}.  
\end{equation}

  We also discuss so-called \emph{generalized} Orlicz balls, associated with a sequence of
  superquadratic Orlicz functions
$V_i: \R \ra \R_+$, $i \in \N$. 
{These generalized Orlicz balls, {which are induced by a norm defined by functions that vary with dimension, are also} known in the literature as \emph{Musielak-Orlicz balls}~\cite{Mus83}.
} For  $n\in \N$, define the set
\begin{equation}
  \label{gen-orlball}
  \mathbb{B}_{V_1,\dots,V_n}^n := \left\{x\in \R^n : \sum_{i=1}^n V_i(x_i) \le n  \right\}.
\end{equation}
In contrast with the usual (symmetric) Orlicz ball, the  generalized Orlicz ball $B_{V_1,\dots,V_n}^n$ need 
not be invariant to permutations of the  coordinates.

We verify  Assumption~\ref{ass-normldp}* {for symmetric  and generalized Orlicz balls in Sections \ref{sec-sorl} 
  and~\ref{sec-gen-orl}, respectively.}   
As mentioned above, the special case  $\mathbb{B}_V^n = n^{1/p}\mathbb{B}_p^n$,  with $p > 2$, 
 was analyzed  in Section \ref{sec-lp-b}. 
  However, unlike   $\mathbb{B}_p^n$, whose study is facilitated by the probabilistic representation
  \eqref{lp-repres}, there is no analogous representation for  general $\mathbb{B}_V^n$, making its analysis
  significantly more complicated.  {Instead, our proof shows that one can represent $\mathbb{B}_V^n$ in terms
  of an exponential Gibbs measure;  this idea has been subsequently  further exploited in~\cite{KabPro21}. 
  Conditions like $2$-convexity (and $2$-concavity) often arise in the local theory of Banach spaces.
     In our case this condition  arises naturally because we are also considering $2$-norms in  this section.
 However, it would be  interesting to investigate 
  whether  an analogous analysis can be carried out for  
  Orlicz balls defined via   superlinear but subquadratic Orlicz functions $V$
(it is worth  noting  that we have been able to cover the  special case of $\ell_p^n$ balls with $p \in [1,2)$) 
  or whether there is a more fundamental obstruction.}
In the following, by a slight abuse of notation, we use $\abs{A}$ to denote the volume of a measurable set $A\subset \R^n$ and use $dx$ to denote the integral over Lebesgue measure in $\R$ or $\R^n$.

\subsubsection{Symmetric Orlicz balls}\label{sec-sorl}

In analogy with \eqref{secmom}, for $\nu\in \cP(\R)$,
  define $M_V:\cP(\R) \ra \R_+$ as 
\begin{equation} \label{def-mv}
M_V(\nu) := \int_{\R} V(x) \nu(dx), 
\end{equation}
for $\nu$ such that the integral in the last display is finite. 
Also, for $b > 0$, define  $\mu_{V,b} \in \cP(\R)$  as follows:  for $x\in D_V$,
where $D_V$ is the domain of $V$ as specified in Definition \ref{def-orl},
\begin{align} \label{muv} 
\mu_{V,b}(dx) &:= \frac{1}{Z_{V,b}} e^{-b\,V(x)} dx,
\end{align}
with $Z_{V,b}$ being the normalizing constant $Z_{V,b}:=\int_{D_V} e^{-bV(x)}dx.$  Moreover, 
 define 
\begin{align}
\mathcal{J}(u,v)
&:=\sup_{s<0,t\in \R}\left\{su+tv-\log\left( \int_{D_V} e^{sV(x)+tx^2}dx\right)\right\} \quad\text{for}\quad u,v\in\R_+,\label{J_uv}
\end{align}

We now state the main result of this section.

\begin{proposition}[Annealed example, Orlicz] \label{lem-orlex} 
  {Suppose $V$ is a superquadratic Orlicz function} and for $n\in\N$, let
  $X^{(n)} \sim \textnormal{Unif}(\mathbb{B}_V^n)$.
  Then $\{X^{(n)}\}_{n\in\N}$ satisfies Assumption \ref{ass-normldp}* with $J_X = J_{X,V}$,  where  
  \begin{equation}
    \label{def-JV}
J_{X,V}(z) := \mathcal{J}(1,z^2)-\sup_{s<0}\left\{s- \log\left(\int_{D_V} e^{s V(x)}dx\right)\right\},\quad z\in\R_+. 
  \end{equation}
  Moreover,  there exists a unique  $b^* > 0$ such that $M_V(\mu_{V,b^*}) = 1$ and
  $J_{X,V}$ has a unique minimizer $m:=\sqrt{M_2(\mu_{V,b^*})}$, with $M_2$ as defined in \eqref{secmom}. 
\end{proposition}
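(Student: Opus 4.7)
My plan is to express the target probability as a volume ratio and evaluate both the numerator and denominator via an exponential tilt of Lebesgue measure.  For any Borel $A\subset[0,\infty)$,
\begin{equation}\label{vol-ratio-plan}
\mathbb{P}\!\left(\tfrac{\|X^{(n)}\|_2}{\sqrt{n}}\in A\right)=\frac{\bigl|\{x\in\R^n:\sum_i V(x_i)\leq n,\ \sum_i x_i^2\in nA^2\}\bigr|}{|\mathbb{B}_V^n|},
\end{equation}
and inserting the factor $e^{-b\sum_i V(x_i)}\cdot e^{b\sum_i V(x_i)}$ rewrites both volumes as $Z_{V,b}^n$ times a weighted expectation under the product measure $\mu_{V,b}^{\otimes n}$.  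Superquadraticity of $V$ guarantees that $b\mapsto M_V(\mu_{V,b})$ is continuous and strictly decreasing from $+\infty$ at $b\to 0^+$ down to $V(0)=0$ at $b\to\infty$, yielding a unique $b^*>0$ with $M_V(\mu_{V,b^*})=1$.  I will fix this $b^*$, let $\xi_1,\xi_2,\ldots$ be i.i.d.\ with law $\mu_{V,b^*}$, and write $S_n:=\sum_i V(\xi_i)$, $T_n:=\sum_i\xi_i^2$.

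Applying Cram\'er's theorem in $\R^2$ to $(V(\xi_1),\xi_1^2)$, the sequence $(S_n/n,T_n/n)$ satisfies a joint LDP at speed $n$ with rate function $I^{b^*}(u,v)=b^*u+\log Z_{V,b^*}+\mathcal{J}(u,v)$, where $\mathcal{J}$ is the function in \eqref{J_uv}.  Indeed, superquadraticity makes $\int_{D_V} e^{s'V(x)+tx^2}dx$ finite for every $s'<0$ and $t\in\R$, so the log-moment generating function of the tilted pair is finite on $\{s<b^*\}\times\R$, and the substitution $s'=s-b^*$ in the Legendre transform reproduces exactly the constraint $s<0$ appearing in $\mathcal{J}$.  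After tilting, \eqref{vol-ratio-plan} becomes
\begin{equation*}
\frac{\mathbb{E}\bigl[e^{b^*S_n}\mathbf{1}_{\{S_n\leq n,\,T_n/n\in A^2\}}\bigr]}{\mathbb{E}\bigl[e^{b^*S_n}\mathbf{1}_{\{S_n\leq n\}}\bigr]},
\end{equation*}
and since the hard constraint $S_n\leq n$ uniformly bounds the weight $e^{b^*S_n}$ by $e^{b^*n}$, a direct sandwich using the LDP upper and lower bounds on the compact slab $\{u\leq 1\}\times\R$ gives
\begin{equation*}
\tfrac{1}{n}\log|\mathbb{B}_V^n|\to -\inf_{u\leq 1}g(u),\qquad \tfrac{1}{n}\log\bigl|\{x\in\R^n:\sum_i V(x_i)\leq n,\ \|x\|_2^2\in nA^2\}\bigr|\to -\inf_{u\leq 1,\,v\in A^2}\mathcal{J}(u,v),
\end{equation*}
where $g(u):=\inf_v\mathcal{J}(u,v)=\sup_{s<0}\{su-\log\int_{D_V} e^{sV(x)}dx\}$ by Cram\'er contraction.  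This sidesteps a direct application of Varadhan's lemma, whose hypothesis $\mathbb{E}[e^{\epsilon V(\xi_1)}]<\infty$ for some $\epsilon>0$ fails by superquadraticity.

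To finish, I reduce both infima to $u=1$ by monotonicity.  Viewing $\mathcal{J}(\cdot,v)$ as the supremum, over $s<0$, of affine-in-$u$ functions $u\mapsto su+\sup_t\{tv-\log\int e^{sV(x)+tx^2}dx\}$, each with strictly negative slope, $\mathcal{J}(\cdot,v)$ is strictly decreasing on $(0,\infty)$; the same reasoning shows $g$ is strictly decreasing.  Hence both infima above are attained at $u=1$, producing an LDP for $T_n/n$ at speed $n$ with rate function $v\mapsto\mathcal{J}(1,v)-g(1)$; the contraction principle through $v\mapsto\sqrt{v}$ (with $J_{X,V}(z):=+\infty$ for $z<0$) then delivers the claimed LDP for $\|X^{(n)}\|_2/\sqrt{n}$.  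The unique minimizer $m$ is located by noting that $J_{X,V}(z)=0$ iff $z^2$ minimizes $\mathcal{J}(1,\cdot)$, and strict convexity of $\mathcal{J}$ in $v$ (inherited from strict convexity of the log-MGF $\Lambda$) places this minimum at $v=M_2(\mu_{V,b^*})$, the mean of $\xi_1^2$ under the tilt where the $M_V=1$ constraint binds, giving $m=\sqrt{M_2(\mu_{V,b^*})}$.  The main technical obstacle is the Varadhan-style step under the unbounded weight $e^{b^*S_n}$; this is handled by exploiting the truncation $S_n\leq n$ to confine the analysis to a compact $u$-slab, after which a routine partition-and-limit argument supplies matching LDP upper and lower bounds.
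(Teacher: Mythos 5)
Your proposal is correct in substance but reaches the result by a genuinely different mechanism than the paper. You perform a single global exponential tilt at the critical parameter $b^*$ with $M_V(\mu_{V,b^*})=1$, invoke the two-dimensional Cram\'er theorem for $(S_n/n,T_n/n)$ under $\mu_{V,b^*}^{\otimes n}$, and then evaluate the volumes by a Varadhan-type sandwich in which the weight $e^{b^*S_n}$ is controlled by the hard constraint $S_n\le n$. The paper instead works directly against Lebesgue measure: the upper bound is obtained from raw Chernoff estimates with set-dependent parameters $(s,t)$, splitting a closed set $F$ around the minimizer $m$ and choosing the sign of $t$ via property 3 of Lemma \ref{lem-prop-J}, while the lower bound uses a point-dependent tilt $\nu_{s_y,t_z}$ together with the weak law of large numbers. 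Your route buys a cleaner bookkeeping: the linear terms $b^*u$ cancel exactly against the tilted rate function $b^*u+\log Z_{V,b^*}+\mathcal{J}(u,v)$, so the case analysis on the sign of $t$ and the splitting of $F$ around $m$ disappear, being absorbed into the multivariate Cram\'er upper bound; the reduction of the slab infimum $\inf_{u\le 1}\mathcal{J}(u,v)$ to $u=1$ via monotonicity plays the role of the paper's properties 3 and 4. What it costs is that two steps you label routine genuinely carry weight and should be spelled out: (i) the Varadhan upper bound with the weight $e^{nb^*u}$ cannot be obtained from the crude bound $e^{b^*S_n}\le e^{nb^*}$ (that would give $b^*-\inf I^{b^*}$ rather than $\sup\{b^*u-I^{b^*}\}$), so the partition of $[0,1]$ in the $u$-variable is essential, and the matching lower bound requires continuity of $u\mapsto\mathcal{J}(u,v)$ as $u\uparrow 1$, exactly the point the paper handles via \eqref{infyx}; (ii) your identification of the tilted Cram\'er rate function with $b^*u+\log Z_{V,b^*}+\mathcal{J}(u,v)$ tacitly uses that the effective domain of $(s,t)\mapsto\log\int_{D_V}e^{sV(x)+tx^2}dx$ has interior $\{s<0\}\times\R$; this is where the constraint $s<0$ in \eqref{J_uv} comes from and it holds when $D_V$ is unbounded, which is the regime the paper's statement is really about.

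Two minor imprecisions. First, strict convexity of $v\mapsto\mathcal{J}(1,v)$ is inherited from smoothness (essential differentiability) of the log-moment generating function, not from its strict convexity; the conclusion is right but the duality is stated backwards. Second, your existence argument for $b^*$ (monotone decrease of $b\mapsto M_V(\mu_{V,b})$ from $+\infty$ to $0$) needs $M_V(\mu_{V,b})\to\infty$ as $b\downarrow 0$, which again relies on $D_V$ being unbounded; the paper obtains $b^*$ instead from Legendre duality at the minimizer of $\mathcal{J}(1,\cdot)$ in Lemma \ref{lem-prop-J}. Neither point undermines the argument.
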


The proof of Proposition~\ref{lem-orlex}, which is given at the end of the section,
relies on certain properties of the function $\mathcal{J}$ specified in~\eqref{J_uv}, which we state in the lemma
below, whose proof is relegated to Appendix \ref{ap-orlicz}. 
 For $s<0$, $t\in\R$, define $\nu_{s,t}\in\mathcal{P}(\R)$ as 
 \[
 \nu_{s,t}(dx):=\frac{1}{Z_{s,t}}e^{sV(x)+tx^2}dx, \quad   x\in {D_V},
 \]
 where
  $Z_{s,t}$ is the normalizing constant $Z_{s,t}:=\int_{D_V} e^{sV(x)+tx^2}dx$,
 which is finite since $V$ is superquadratic and $s < 0$.

\begin{lemma} \label{lem-prop-J}
Suppose $V$ is a  superquadratic Orlicz function
  and let $\mathcal{J}$ be defined in~\eqref{J_uv}. 
\begin{enumerate}
\item For $u,v\in\R_+$ and $(u,v)$ lying in the interior  of the domain of ${\mathcal J}$,
  there exists a unique $(s,t)\in\R_-\times\R$ such that the supremum in the definition of~\eqref{J_uv} of $J(u,v)$ is attained. Moreover, this  $(s,t)$ satisfies $M_V(\nu_{s,t})\leq u$ and $M_2(\nu_{s,t})=v$. 
\item There exists a unique constant $b^*>0$ such that $M_V(\mu_{V,b^*}) = 1$. Moreover,  $v\mapsto\mathcal{J}(1,v)$ is a convex function with minimizer $m=M_2(\mu_{V,b^*})$, and furthermore, 
\begin{align}\label{JV-min}
{ \min_{x \in \R_+} \mathcal{J}(1,x)  = } \mathcal{J}(1,m)=\sup_{s<0}\left\{s- \log\left(\int_{D_V} e^{s V(x)}dx\right)\right\}.
\end{align}
\item For $v>m$, the supremum in the definition~\eqref{J_uv} of $\mathcal{J}(1,v)$ is attained at $(s,t)\in\R_-\times\R_+$ while for $0<v<m$, the supremum  is attained at $(s,t)\in\R_-\times\R_-$. 
\item For  $u,v\in\R_+$ such that ${\mathcal J}(u,v) < \infty$,  $\partial_u\mathcal{J}(u,v)\leq 0$. 
\end{enumerate}
\end{lemma}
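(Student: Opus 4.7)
The plan is to reinterpret $\mathcal{J}(u,v)$ as a constrained Legendre--Fenchel transform and exploit convex analytic tools. Set $\Lambda(s,t) := \log Z_{s,t} = \log \int_{D_V} e^{sV(x)+tx^2}\,dx$. Superquadraticity of $V$ gives $sV(x)+tx^2 = x^2(sV(x)/x^2 + t)\to -\infty$ as $|x|\to\infty$ whenever $s<0$, so $\Lambda$ is finite and $C^\infty$ on the open half-plane $\{s<0,\, t\in\R\}$. Standard computations under the integral sign give
\[
\nabla \Lambda(s,t) = \bigl(M_V(\nu_{s,t}),\, M_2(\nu_{s,t})\bigr),
\]
and Hessian equal to the covariance matrix of $(V(X), X^2)$ under $\nu_{s,t}$. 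Since $V$ is superquadratic, $V$ and $x^2$ are not affinely related on any set of positive $\nu_{s,t}$-measure, so this Hessian is positive definite, hence $\Lambda$ is strictly convex.

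For (i), the functional $\Phi(s,t) := su + tv - \Lambda(s,t)$ is strictly concave, and for $(u,v)$ in the interior of the effective domain $\{\mathcal{J}<\infty\}$ one checks $\Phi(s,t)\to -\infty$ on the boundary of $\{s\le 0\}\times\R$, so the constrained supremum is attained uniquely. At an interior maximizer the first-order conditions give $M_V(\nu_{s,t})=u$ and $M_2(\nu_{s,t})=v$; at a boundary maximizer ($s=0$, permitted only when $t<0$) the KKT condition for moving into $s<0$ yields $u - M_V(\nu_{0,t})\ge 0$, while the free variable $t$ still satisfies $M_2(\nu_{0,t})=v$. Both cases give $M_V(\nu_{s,t})\le u$ and $M_2(\nu_{s,t})=v$.

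For (ii), the map $b\mapsto M_V(\mu_{V,b})$ has derivative $-\mathrm{Var}_{\mu_{V,b}}(V)<0$ (by the same strict non-degeneracy), so it is strictly decreasing; as $b\to\infty$ the measure concentrates at the minimum $V(0)=0$, while as $b\to 0^+$ superquadraticity forces $M_V(\mu_{V,b})\to\infty$ (rescaling $x\mapsto b^{-1/p}x$ when $V(x)\asymp|x|^p$, $p>2$, makes this transparent), so there is a unique $b^*>0$ with $M_V(\mu_{V,b^*})=1$. Convexity of $v\mapsto\mathcal{J}(1,v)$ is automatic because it is a supremum of affine functions of $v$. By the envelope theorem $\partial_v \mathcal{J}(1,v) = t^*(v)$, and the minimum is attained precisely when $t^*(v)=0$. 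Setting $t=0$ reduces the optimization to $\sup_{s<0}\{s - \log\int e^{sV(x)}dx\}$, whose first-order condition $M_V(\nu_{s,0})=M_V(\mu_{V,-s})=1$ forces $s^* = -b^*$; the corresponding $v$-value is $M_2(\nu_{-b^*,0}) = M_2(\mu_{V,b^*}) = m$, yielding \eqref{JV-min}.

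For (iii), since $\mathcal{J}(1,\cdot)$ is strictly convex with unique minimum at $m$, its subgradient $t^*(v)$ strictly increases through $0$ at $v=m$, giving $t^*(v)>0$ for $v>m$ and $t^*(v)<0$ for $v<m$; the remaining constraint $s^*<0$ holds since for $t^*>0$ one has $\Lambda(0,t^*)=+\infty$ (ruling out the boundary), and for $v<m$ the interior first-order system admits a (unique) solution with $s<0$, $t<0$ by the open-mapping property of $\nabla\Lambda$. For (iv), the envelope theorem gives $\partial_u\mathcal{J}(u,v) = s^*(u,v) \le 0$ since $s$ is constrained to $(-\infty,0]$.

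The main obstacle I anticipate is the boundary analysis at $s=0$ in (i) and (iii): one must carefully rule out or account for the KKT boundary case (in particular showing that for $v>m$ the maximizer is genuinely interior while the allowable statement in (i) admits equality or inequality in $M_V\le u$), and to do so one must know that the range of $\nabla\Lambda$ on the open set $\{s<0,\,t\in\R\}$ is large enough to solve the first-order system for the $(u,v)$ of interest; this surjectivity step uses strict convexity of $\Lambda$ together with the coercivity supplied by superquadraticity of $V$.
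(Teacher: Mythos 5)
Your argument is essentially the paper's: both treat $\mathcal{J}$ as a constrained Legendre--Fenchel transform of the log-partition function $\Lambda(s,t)=\log Z_{s,t}$, identify the optimizer with the (sub)gradient via first-order/duality conditions, and read off properties 3 and 4 from the sign of the dual variables; your direct monotonicity-plus-limits argument for the existence and uniqueness of $b^*$ in part 2 is a slightly more self-contained route than the paper's, which extracts $b^*=-s(m)$ from the stationarity condition $t(m)=0$. The one place your justification is too quick is the claim that superquadraticity forces the Hessian of $\Lambda$ (the covariance of $(V(X),X^2)$ under $\nu_{s,t}$) to be positive definite: when $D_V$ is bounded, $V$ can be identically $0$ (or proportional to $x^2$) on $D_V$ while still being superquadratic in the sense of Definition \ref{def-orl}, and then $\Lambda$ is not strictly convex and uniqueness of $(s,t)$ requires a separate argument; the paper isolates exactly this degenerate case ($V\equiv 0$ on its domain) and resolves it by inspection of \eqref{J_uv}.
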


{
\begin{remark}
Let $V, \tilde{V}$ be  Orlicz functions,  and suppose now that the definition of $\mathcal{J}$ in \eqref{J_uv} is replaced with 
\begin{align}\label{new-J}
\tilde{\mathcal{J}}(u,v):=\sup_{s<0,t\in\R}\left\{su+tv-\log\left(\int_{D_V}e^{sV(x)+t\tilde{V}(x)}dx \right) \right\} \quad \text{for} \quad u,v\in\R_+.
\end{align}
Then, as can be seen from the proof  in Appendix \ref{ap-orlicz}, the conclusions of Lemma \ref{lem-prop-J}
continues to hold, with  $M_{\tilde{V}}$ and $\tilde{\mathcal{J}}$ in place of  $M_2$ and $\mathcal{J}$, respectively, as long as 
 $V(x)/\tilde{V}(x) \to\infty$ as $x\to\infty$. 
\end{remark}
}

\begin{proof}[Proof of Proposition~\ref{lem-orlex}]
  Fix a superquadratic Orlicz function $V$ (see Definition~\ref{def-orl}).
  For a measurable set $A\subset \R$, define $\Bva{A}:=\mathbb{B}^n_V\cap\{x\in\R^n:\sum_{i=1}^nx_i^2\in nA \}$, and note that
\begin{equation}\label{vol-ratio}
\mathbb{P}\left( \frac{1}{n}\sum_{i=1}^n(X^n_i)^2\in A\right) = \frac{\abs{\Bva{A}}}{\abs{\mathbb{B}^n_V}},
\end{equation}
which expresses a tail probability in terms of {the ratio of  volumes of two convex bodies.  We now proceed in three steps to characterize the asymptotics of these volumes in order
to estimate  the tail probabilities.} 
\begin{step}
For any closed set $F\subset\R_+$, we will establish an upper bound on the numerator in~\eqref{vol-ratio},
\begin{equation}\label{num-upper}
\limsup_{n\to\infty}\frac{1}{n}\log\abs{\Bva{F}} \leq -\inf_{x\in F}\mathcal{J}(1,x).
\end{equation}
\end{step}
{With $b_* > 0$ as  defined in property 2 of} Lemma \ref{lem-prop-J},
and $M_2$ as defined in \eqref{secmom},  set  $m := M_2 (\mu_{V,b^*})$. 
Next, set  $\alpha_+:=\min\{x\in[m,+\infty)\cap F\}$ and $\alpha_-:=\max\{x\in[0,m]\cap F\}$. 
Assume $\alpha_-<m<\alpha_+$. Then 
\begin{align}
  \notag
  \abs{\Bva{F}}& \leq \abs{\mathbb{B}^n_V\cap\left\{x\in\R^n:\sum_{i=1}^nx_i^2\geq n\alpha_+ \right\}}+\abs{\mathbb{B}^n_V\cap\left\{x\in\R^n:\sum_{i=1}^nx_i^2\leq n\alpha_- \right\}}\\
   \label{sum}
&=\abs{\Bva{[\alpha_+,\infty)}}+\abs{\Bva{[0,\alpha_-]}}.
\end{align}
Fix $s<0$ and $t>0$, and note that then 
\[
\Bva{[\alpha_+,\infty)}=\left\{x\in\R^n:\exp\left(s\sum_{i=1}^nV(x_i)\right)\geq e^{ns},\exp\left(t\sum_{i=1}^nx_i^2\right)\geq e^{nt\alpha_+} \right\}, 
  \]
  and therefore,  it follows that 
\begin{align*}
\abs{\Bva{[\alpha_+,\infty)}}
& \leq \int_{\Bva{[\alpha_+,\infty)}}\exp\left(s\sum_{i=1}^nV(x_i)-ns+t\sum_{i=1}^nx_i^2-nt\alpha_+\right)dx\\
& \leq e^{-ns-nt\alpha_+}\int_{{(D_V)}^n}\exp\left(s\sum_{i=1}^nV(x_i)+t\sum_{i=1}^nx_i^2\right)dx. 
\end{align*}
Hence, for every $s<0$ and $t>0$, we see that  
\[
\limsup_{n\to\infty}\frac{1}{n}\log\abs{\Bva{[\alpha_+,\infty)}}
\leq -s-t\alpha_++\log\left( \int_{D_V} e^{sV(x)+tx^2}dx\right).
\]
Taking the infimum over $s<0$ and $t>0$, we obtain
\begin{align*}
\limsup_{n\to\infty}\frac{1}{n}\log\abs{\Bva{[\alpha_+,\infty)}}
&\leq \inf_{s<0,t>0}\left\{-s-t\alpha_++\log\left( \int_{D_V} e^{sV(x)+tx^2}dx\right)\right\}\\
&=-\sup_{s<0,t>0}\left\{s+t\alpha_+-\log\left( \int_{D_V} e^{sV(x)+tx^2}dx\right)\right\}\\
&=-\mathcal{J}(1,\alpha_+),
\end{align*}
where, since $\alpha_+ > m$,  the last equality follows by property 3 of Lemma~\ref{lem-prop-J}. 
  
Similarly, once again fixing   $s<0$ but now also choosing $t < 0$, we  have
\begin{align*}
\limsup_{n\to\infty}\frac{1}{n}\log\abs{\Bva{[0,\alpha_-)}}
&\leq-\sup_{s<0,t<0}\left\{s+t\alpha_--\log\left( \int_{D_V} e^{sV(x)+tx^2}dx\right)\right\}\\
&=-\mathcal{J}(1,\alpha_-),
\end{align*}
where, noting that $\alpha_- < m$,  the last inequality  once again follows by property 3
of  Lemma~\ref{lem-prop-J}.
{Together with \eqref{sum},  the last two displays show that
\begin{align*}
  \limsup_{n\to\infty}\frac{1}{n} \log | B_{2,V}^n[F] | \leq
-\min \left( {\mathcal J}(1,\alpha_+), {\mathcal J}(1,\alpha_-) \right) =
- \inf_{x \in F} {\mathcal J}(1,x), 
\end{align*}
where the last equality follows from the convexity of $x \rightarrow {\mathcal J}(1,x)$ and 
the fact that $m$ is the unique minimizer of ${\mathcal J}(1,\cdot)$, as  established in
property 2 of Lemma \ref{lem-prop-J}, and the definitions of $\alpha_-$ and $\alpha_+$. 
This proves the claim \eqref{num-upper} in this case. 
  }

On the other hand,  if $\alpha_+=m$ or $\alpha_-=m$, then by property 2 of 
Lemma~\ref{lem-prop-J}, $\inf_{x\in F}\mathcal{J}(1,x)=\mathcal{J}(1,m)$.  
We first obtain an estimate of the {volume of the} Orlicz ball $\mathbb{B}^n_V$. For $s<0$,
\begin{align*}
\frac{1}{n}\log\abs{\mathbb{B}^n_V}&\leq\frac{1}{n}\log\int_{\sum_{i=1}^nV(x_i)\leq n}\exp\left(s\sum_{i=1}^nV(x_i)-ns \right)dx\\
&\leq -s+\log\left(\int_{D_V} e^{sV(x)}dx \right).
\end{align*}
The claim  \eqref{num-upper} then follows in this case as well since
\begin{align*}
\limsup_{n\to\infty}\frac{1}{n}\log\abs{\Bva{F}}& \leq \limsup_{n\to\infty}\frac{1}{n}\log\abs{\mathbb{B}^n_V}\\
& \leq -\sup_{s<0}\left\{s-\log\left( \int_{D_V} e^{sV(x)}dx\right) \right\}\\
&=-\min_{x \in \R_+} \mathcal{J}(1,x) \\
& \leq -\min_{x \in F} \mathcal{J}(1,x), 
\end{align*}
where the second inequality follows on taking the infimum over $s<0$ in the last display
and the equality on the third line follows {from \eqref{JV-min}}.

\begin{step} 
For any open set $U\subset\R_+$, we will show  the lower bound
\begin{equation}\label{num-lower}
\liminf_{n\to\infty}\frac{1}{n}\log\abs{\Bva{U}} \geq -\inf_{x\in U}\mathcal{J}(1,x).
\end{equation}
\end{step}

Since  $\inf_{x\in U}\mathcal{J}(1,x) = \inf_{x \in U \cap D_{{\mathcal J}(1,\cdot)}} \mathcal{J}(1,x)$, with the latter
being infinity if the intersection is empty, it 
suffices to only consider $U$ that  lies in the interior of the domain of $\mathcal{J}(1,\cdot)$.
By property 4 of Lemma~\ref{lem-prop-J}, $\partial_u\mathcal{J}(u,v)\leq0$, and so for each ${v \in\R_+}$, $u\mapsto\mathcal{J}(u,v)$ is decreasing.  By the convexity, and hence continuity, of $(u,v)\mapsto\mathcal{J}(u,v)$ in $\R_+\times\R_+$, for every $\varepsilon >0$, there exist $0<y<1$ and $z\in U$ such that 
\begin{equation} \label{infyx} \inf_{x\in U}\mathcal{J}(1,x)>\mathcal{J}(y,z)-\varepsilon.\end{equation} 
Pick ${\delta>0}$ small such that $(z-\delta,z+\delta)\subset U$. Let $(s_y,t_z)$ be the value that attains the supremum in the expression \eqref{J_uv} for $\mathcal{J}(y,z)$, which exists by property 1 of Lemma~\ref{lem-prop-J} {since
$U$ lies in the interior of the domain of $\mathcal{J}(1, \cdot)$.}  
Define 
$$A^n_\delta:=\left\{x\in\mathbb{R}^n:0 \leq \frac{1}{n}\sum_{i=1}^nV(x_i)< 1, z-\delta<\frac{1}{n}\sum_{i=1}^nx_i^2 < z+\delta\right\}$$
and $\bar{Z}_{y,z}:= Z_{s_y,t_z} = \int_{D_V} e^{s_yV(x)+t_zx^2}dx$,   which is finite by the definition of $(s_y, t_z)$, the definition of $\mathcal{J}(y,z)$ in \eqref{J_uv} and the finiteness of $\mathcal{J}(y,z)$.
Suppose $t_z<0$. We then obtain the following estimate for the lower bound:  
\begin{align*}
\abs{\Bva{U}}
& \geq \int_{A^n_\delta}dx \\
& = \int_{A^n_\delta} \left(\bar{Z}_{y,z} \right)^ne^{-s_y\sum_{i=1}^nV(x_i)-t_zx_i^2}\prod_{i=1}^n\frac{1}{\bar{Z}_{y,z}}e^{s_yV(x_i)+t_zx_i^2}dx\\
& \geq \exp\left(n(\log \bar{Z}_{y,z}-s_yy-t_z(z-\delta))\right)\int_{A^n_\delta}\prod_{i=1}^n\frac{1}{\bar{Z}_{y,z}}e^{s_yV(x_i)+t_zx_i^2}dx.
\end{align*}
The case $t_z>0$ can be handled analogously, simply by replacing $z-\delta$ with $z+\delta$ on the right-hand side.

Let $\{\Xi_i\}_{i=1}^\infty$ be a sequence of i.i.d continuous random variables with density $\frac{1}{\bar{Z}_{y,z}}e^{s_yV(x)+t_zx^2}$. Since $(s_y,t_z)$ attains the supremum in~\eqref{J_uv} of $\mathcal{J}(y,z)$, by property 1 of Lemma~\ref{lem-prop-J},
$\mathbb{E}[V(\Xi_i)]\leq y<1$ and $\mathbb{E}[\Xi_i^2]=z$. Then the integral in the last display can be rewritten as 
\[
\int_{A^n_\delta}\prod_{i=1}^n\frac{1}{\bar{Z}_{y,z}}e^{s_yV(x_i)+t_zx_i^2}dx=\mathbb{P}\left(0 \leq\frac{1}{n}\sum_{i=1}^nV(\Xi_i)< 1,z-\delta <\frac{1}{n}\sum_{i=1}^n\Xi_i^2< z+\delta\right),
\]
which converges to $1$ by the weak law of large numbers and finiteness of the respective moments. 
Thus, combining the last two displays with  
 the expression for ${\mathcal J}(y,z)$ in  \eqref{J_uv} and  the definition of $(s_y,t_z)$, 
we obtain  
\begin{align*}
  \liminf_{n\to\infty}\frac{1}{n}\log\abs{\Bva{U}}
  &\geq
  \log \bar{Z}_{y,z}-s_y(y-\delta)-t_z(z-\delta)\\
&=-\mathcal{J}(y,z) +s_y\delta+t_z\delta\\
&\geq {-\inf_{x\in U}\mathcal{J}(1,x)}+\varepsilon+s_y\delta+t_z\delta, 
\end{align*}
where the last inequality invokes \eqref{infyx}. 
Since $\varepsilon$ and $\delta$ are arbitrary, this implies the desired lower bound in~\eqref{num-lower}.

\begin{step}
We claim that the denominator of~\eqref{vol-ratio} satisfies
\begin{equation}\label{deno}
\lim_{n\to\infty}\frac{1}{n}\log\abs{\mathbb{B}^n_V}= -\sup_{s<0}\left\{s- \log\left(\int_{D_V} e^{s V(x)}dx\right)\right\}.
\end{equation}
\end{step}
Since $\mathbb{B}^n_V=\Bva{\R_+}$, 
applying \eqref{num-upper} with $F = \R_+$ and \eqref{num-lower} with $U = (0,\infty)$, we obtain
\[ \lim_{n\to\infty}\frac{1}{n} \log |\mathbb{B}^n_V| = -\inf_{x\in\R_+}\mathcal{J}(1,x). \]
The claim then follows from
{\eqref{JV-min}} of Lemma~\ref{lem-prop-J}.

Finally, the combination of~\eqref{deno},~\eqref{num-upper} and~\eqref{num-lower} with ~\eqref{vol-ratio} yields an LDP for $\{\frac{1}{n}\sum_{i=1}^n(X^n_i)^2\}_{n\in\N}$ at speed $n$ with rate function
\[
\mathcal{J}(1,x)-\sup_{s<0}\left\{s- \log\left(\int_{D_V} e^{s V(x)}dx\right)\right\}.
\]
The proposition then follows by an application of the contraction principle to the mapping {$x\mapsto\sqrt{x}$}.
\end{proof}

\begin{remark}
    The relation \eqref{deno} provides the asymptotic  logarithmic volume of an  Orlicz ball. 
      A careful inspection of the argument shows that this limit result does not require the super-quadratic
    restriction on the Orlicz function; it is the LDP for the norms that uses this restriction.
    {Indeed, let $V$ and $\tilde{V}$ be Orlicz functions such that
      $V(x)/\tilde{V}(x) \to\infty$ as $x\to\infty$,  let $\tilde{\mathcal{J}}$ be defined as in \eqref{new-J}.
      and for any measurable set $A\subset \R$ define $\mathbb{B}^n_{\tilde{V}}[A]:=\{x\in \R^n:\sum_{i=1}^n \tilde{V}(x_i)\in nA\}$. Then, we see from Step 1 in the proof of Proposition \ref{lem-orlex} that the following more general inequality holds: 
\[
\limsup_{n\to\infty}\frac{1}{n}\log \abs{\mathbb{B}^n_V[A] \cap \mathbb{B}^n_{\tilde{V}}[B]}\leq -\inf_{x\in A,y\in B}\tilde{\mathcal{J}}(x,y), \qquad \mbox{ for closed sets } A, B \subset \R_+. 
\]}
\end{remark} 

\begin{remark}
  \label{rem-Orlicz}
  {After the first version of this paper was posted on arXiv, 
  the asymptotic logarithmic volume of an Orlicz ball was also obtained in~\cite{KabPro21},
  where they also established further refined estimates  using  ideas from sharp large deviations estimates 
  (see, e.g., \cite{BahRao60,LiaRam20a}).} 
  \end{remark}

\subsubsection{Generalized Orlicz balls}\label{sec-gen-orl}

Suppose that $\mathbb{B}_{V_1,\dots,V_n}^n$ is close to a symmetric Orlicz ball in the following sense: there exists an
Orlicz function $\bar{V}:\R_+\ra\R_+$ such that as {$n\to\infty$},
\begin{equation}\label{orldec} \frac{1}{n} \log \frac{ \abs{\mathbb{B}_{\bar{V}}^n \, \Delta \,  \mathbb{B}_{V_1,\dots,V_n}^n}}{ \abs{\mathbb{B}_{\bar{V}}^n \, \cup \,  \mathbb{B}_{V_1,\dots,V_n}^n} } \ra -\infty, 
\end{equation}
where $\mathbb{B}_{\bar{V}}^n$ is the Orlicz ball as defined in \eqref{orlball}, and for any sets
  $A, B \subset \R^n$, $A \Delta B  = [A \setminus B] \cup [B \setminus A]$ represents the
  symmetric difference of $A$ and $B$.

\begin{lemma}[Generalized Orlicz]
  Fix Orlicz functions $\{V_i\}_{i \in \N}$ and $\bar{V}$ that satisfy \eqref{orldec},
and are uniformly superquadratic in the sense that there exists a  constant $c_V \in {(0,\infty)}$ such that for $x>c_V$, $V_i(x)>x^2$ for $i\in\N$ and $\bar{V}(x)>x^2$. 
  Suppose $X^{(n)} \sim \textnormal{Unif}( \mathbb{B}_{V_1,\dots,V_n}^n)$. 
    Then, $\{X^{(n)}\}_{n\in\N}$ satisfies Assumption \ref{ass-normldp}* with $J_{X,\bar{V}}$, defined as in \eqref{def-JV}, but with
      $V$ replaced with $\bar{V}$.
\end{lemma}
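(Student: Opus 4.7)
The plan is to reduce the generalized Orlicz case to the symmetric case handled in Proposition~\ref{lem-orlex} via a coupling argument that exploits the super-exponential smallness of the symmetric difference guaranteed by~\eqref{orldec}. First I would apply Proposition~\ref{lem-orlex} to $\bar{V}$ to obtain an LDP for $\{\|\bar{X}^{(n)}\|_2/\sqrt{n}\}_{n\in\N}$ at speed $n$ with GRF $J_{X,\bar{V}}$, where $\bar{X}^{(n)} \sim \txt{Unif}(\mathbb{B}^n_{\bar{V}})$.

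Next, I would construct a coupling of $X^{(n)}$ and $\bar{X}^{(n)}$ on a common probability space as follows. Let $U^{(n)}$ be uniformly distributed on $\mathbb{B}^n_{\bar{V}} \cup \mathbb{B}^n_{V_1,\dots,V_n}$, and let $X'$, $\bar{X}'$ be independent uniform samples on $\mathbb{B}^n_{V_1,\dots,V_n}$ and $\mathbb{B}^n_{\bar{V}}$ respectively, also independent of $U^{(n)}$. Define
\[
X^{(n)} := \begin{cases} U^{(n)}, & U^{(n)} \in \mathbb{B}^n_{V_1,\dots,V_n}, \\ X', & \text{otherwise}, \end{cases}
\qquad
\bar{X}^{(n)} := \begin{cases} U^{(n)}, & U^{(n)} \in \mathbb{B}^n_{\bar{V}}, \\ \bar{X}', & \text{otherwise}. \end{cases}
\]
A direct calculation, conditioning on whether $U^{(n)}$ lies in each of the two balls, verifies that $X^{(n)}$ and $\bar{X}^{(n)}$ are marginally uniform on $\mathbb{B}^n_{V_1,\dots,V_n}$ and $\mathbb{B}^n_{\bar{V}}$ respectively, and that $X^{(n)} \neq \bar{X}^{(n)}$ is possible only when $U^{(n)} \in \mathbb{B}^n_{\bar{V}} \,\Delta\, \mathbb{B}^n_{V_1,\dots,V_n}$, yielding
\[
\P(X^{(n)} \neq \bar{X}^{(n)}) \leq \frac{\abs{\mathbb{B}^n_{\bar{V}} \,\Delta\, \mathbb{B}^n_{V_1,\dots,V_n}}}{\abs{\mathbb{B}^n_{\bar{V}} \,\cup\, \mathbb{B}^n_{V_1,\dots,V_n}}}.
\]

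To conclude, I would observe that $\|X^{(n)}\|_2 = \|\bar{X}^{(n)}\|_2$ whenever the coupling succeeds, so for each $\delta > 0$ one has $\P( |\|X^{(n)}\|_2/\sqrt{n} - \|\bar{X}^{(n)}\|_2/\sqrt{n}| > \delta ) \leq \P(X^{(n)} \neq \bar{X}^{(n)})$, and by hypothesis~\eqref{orldec} the right-hand side decays faster than $e^{-Mn}$ for every $M > 0$. This establishes exponential equivalence at speed $n$ in the sense of Definition~\ref{def-exp}, and Remark~\ref{rem-expeq} then transfers the LDP from $\bar{X}^{(n)}$ to $X^{(n)}$ at the same speed with the same GRF $J_{X,\bar{V}}$, which is exactly the conclusion of the lemma.

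The main technical point is the marginal verification of the coupling, which is a short but slightly fiddly conditioning computation; everything else is a routine application of the preceding results. The uniform superquadraticity hypothesis on the $V_i$ does not enter the reduction directly, and I would expect it to play a role only upstream, in ensuring that the closeness condition~\eqref{orldec} is satisfied for natural families $\{V_i\}_{i\in\N}$.
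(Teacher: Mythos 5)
Your proposal is correct and follows essentially the same route as the paper: the identical coupling via a uniform sample $U^{(n)}$ on the union with independent backup samples, the same marginal verification by conditioning, and transfer of the LDP from Proposition~\ref{lem-orlex} by exponential equivalence at speed $n$ using \eqref{orldec}. Your version is marginally cleaner in one spot — you note that the two vectors (hence their norms) coincide off the event $\{U^{(n)} \in \mathbb{B}_{\bar{V}}^n \,\Delta\, \mathbb{B}_{V_1,\dots,V_n}^n\}$, whereas the paper instead bounds the difference of the scaled squared norms by a constant times the indicator of that event, which is where it invokes the uniform superquadraticity (to contain both balls in $\kappa\sqrt{n}\,\mathbb{B}_2^n$); your observation that this hypothesis is not really needed for the reduction itself is accurate.
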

\begin{proof} 
  Let $\bar{X}^{(n)}\sim \textnormal{Unif}(\mathbb{B}_{\bar{V}}^n)$,  $U^{(n)}\sim \text{Unif}(\mathbb{B}_{\bar{V}}^n \cup \mathbb{B}_{V_1,\dots,V_n}^n)$ and $X^{(n)} \sim \textnormal{Unif}( \mathbb{B}_{V_1,\dots,V_n}^n)$ be independent random vectors
  defined on a common probability space. Define 
\begin{align*}
{W}^{(n)} &:= U^{(n)} \1_{\{U^{(n)} \in \mathbb{B}_{V_1,\dots,V_n}^n\}} + {X}^{(n)} \1_{\{U^{(n)} \not\in \mathbb{B}_{V_1,\dots,V_n}^n\}}, \\
\bar{W}^{(n)} &:= U^{(n)} \1_{\{U^{(n)} \in \mathbb{B}_{\bar{V}}^n\}} + \bar{X}^{(n)} \1_{\{U^{(n)} \not\in \mathbb{B}_{\bar{V}}^n\}}.
\end{align*}
First note that conditioned on $\{U^{(n)} \in \mathbb{B}_{V_1,\dots,V_n}^n\}$, $U^{(n)}$ 
{has the same distribution as} $X^{(n)}$. By definition, {$X^{(n)}$} is supported on $\mathbb{B}_{V_1,\dots,V_n}^n$. For a measurable set $A\subset\mathbb{B}_{V_1,\dots,V_n}^n$, we have
\begin{align*}
\mathbb{P}\left(W^{(n)}\in A \right) &= \mathbb{P}\left(W^{(n)}\in A \middle | U^{(n)}\in\mathbb{B}_{V_1,\dots,V_n}^n\right)\mathbb{P}\left( U^{(n)}\in\mathbb{B}_{V_1,\dots,V_n}^n\right)\\
&\qquad+\mathbb{P}\left(W^{(n)}\in A \middle | U^{(n)}\not\in\mathbb{B}_{V_1,\dots,V_n}^n\right)\mathbb{P}\left( U^{(n)}\not\in\mathbb{B}_{V_1,\dots,V_n}^n\right)\\
& = \mathbb{P}\left(U^{(n)}\in A \middle | U^{(n)}\in\mathbb{B}_{V_1,\dots,V_n}^n\right)\mathbb{P}\left( U^{(n)}\in\mathbb{B}_{V_1,\dots,V_n}^n\right)\\
&\qquad+\mathbb{P}\left(X^{(n)}\in A \middle | U^{(n)}\not\in\mathbb{B}_{V_1,\dots,V_n}^n\right)\mathbb{P}\left( U^{(n)}\not\in\mathbb{B}_{V_1,\dots,V_n}^n\right)\\
& = \mathbb{P}\left(X^{(n)}\in A\right)\mathbb{P}\left( U^{(n)}\in\mathbb{B}_{V_1,\dots,V_n}^n\right)+\mathbb{P}\left(X^{(n)}\in A\right)\mathbb{P}\left( U^{(n)}\not\in\mathbb{B}_{V_1,\dots,V_n}^n\right)\\
&=\mathbb{P}\left(X^{(n)}\in A \right).
\end{align*}
Hence, $W^{(n)} \eqdist X^{(n)}$.  A similar argument can be used to show
that $\bar{W}^{(n)} \eqdist \bar{X}^{(n)}$, thus 
providing a useful coupling between the uniform measures on $\mathbb{B}_{V_1,\dots,V_n}^n$ and $\mathbb{B}_{\bar{V}}^n$.
{Due to the contraction principle applied to $x\mapsto\sqrt{x}$, Proposition \ref{lem-orlex} and Remark \ref{rem-expeq},}
it suffices to show that $\{\|W^{(n)}\|_2^2/n\}_{n\in\N}$ is exponentially equivalent (see Definition \ref{def-exp}) to $\{\|\bar{W}^{(n)}\|_2^2/n\}_{n\in\N}$.

{To this end, 
  let $\kappa:=\sqrt{c_V^2+1}$, where $c_V$ is the positive finite constant stated in the lemma, 
  and note that $\mathbb{B}_{V_1,\dots,V_n}^n\subset \kappa\sqrt{n}\mathbb{B}^n_2$ and $\mathbb{B}_{\bar{V}}^n\subset \kappa\sqrt{n}\mathbb{B}^n_2$ since for $x\in\mathbb{B}_{V_1,\dots,V_n}^n$, 
  $\sum_{i=1}^nx_i^2\leq\sum_{i=1}^n (c_V^2+V_i(x_i)) \leq (c_V^2+1)n=\kappa^2 n,$ where the last inequality uses
  \eqref{gen-orlball}, and  for $x\in\mathbb{B}_{\bar{V}}^n$,
  $\sum_{i=1}^nx_i^2\leq\sum_{i=1}^n (c_V^2+\bar{V}(x_i))\leq (c_V^2+1)n=\kappa^2 n,$ where
  the last inequality uses
  \eqref{orlball}. 
Thus,
\begin{align*}
\left|  \tfrac{\|\bar{W}^{(n)}\|^2_2}{n} - \tfrac{\|W^{(n)} \|^2_2}{n} \right|  &= \frac{1}{n} \left| \| \bar{W}^{(n)}\|^2_2  - \|W^{(n)} \|^2_2 \right| \le  \kappa \1_{\{U^{(n)} \not\in (\mathbb{B}_{\bar{V}}^{n} \cap \mathbb{B}_{V_1,\dots, V_n}^n) \}}.
 \end{align*}
Therefore, for every $\delta > 0$, 
\begin{align*}
  \frac{1}{n} \log  \P\left( \left| \| \bar{W}^{(n)}\|^2_2/n - \|W^{(n)} \|^2_2/n \right| > \delta \right)
  &\le \frac{1}{n} \log \P\left(\kappa \1_{\{U^{(n)} \not\in (\mathbb{B}_{\bar{V}}^{n} \cap \mathbb{B}_{V_1,\dots, V_n}^n) \}} > 0\right) \\
  &= \frac{1}{n} \log \P\left(  U^{(n)} \in  (\mathbb{B}_{\bar{V}}^{n} \,  \Delta \,  \mathbb{B}_{V_1,\dots, V_n}^n) \right) \\
  &=  \frac{1}{n} \log \frac{\abs{\mathbb{B}_{\bar{V}}^n \, \Delta \,  \mathbb{B}_{V_1,\dots,V_n}^n}}{ \abs{\mathbb{B}_{\bar{V}}^n \, \cup \,  \mathbb{B}_{V_1,\dots,V_n}^n}},  
\end{align*}}
which converges to $-\infty$  as $n \rightarrow \infty$ by  \eqref{orldec}.  This establishes the desired
exponential equivalence and thus completes the proof. 
\end{proof}

\subsection{Gibbs measures}\label{sec-gibbs}

 We now consider the case when the random vector $X^{(n)}$ is drawn from a \emph{Gibbs measure} on configurations of $n$ interacting particles. To be precise, let $F: \R \ra (-\infty, \infty]$ be a ``confining" potential, $G:\R\times \R \ra (-\infty,\infty]$ an ``interaction" potential, and for $n\in \N$, define a Hamiltonian $\mathbf{H}_n:\R^{n} \ra (-\infty,\infty]$ given by
\begin{equation*} \mathbf{H}_n(x) := \frac{1}{n}\sum_{i=1}^n F(x_i) + \frac{1}{n^2} \sum_{i=1}^n \sum_{j=1, j \neq i}^n G(x_i, x_j), \quad x\in\R^n.
\end{equation*}
As in \cite{DupLasRam15}, we assume that there exists a  non-atomic, {$\sigma$-finite measure $\ell$} on $\R$ which, 
along with 
the potentials $F$ and $G$,  satisfies the following conditions: 
\begin{enumerate}
\item $F$ and $G$ are lower semicontinuous on the respective sets on which they are finite; 
\item there exists $a \in [0, 1)$ and $c \in \R$ such that $F$ satisfies $\int_{\R} e^{-(1 - a) F(x)}\ell(dx) < \infty$, $\inf_x F(x) > c$, and $\inf_{(x,y)\in \R\times \R}\left[ G(x,y) + a(F(x) + F(y))\right] > c$;
\item there exists a Borel measurable set $A\subset \R$ with $\ell(A) > 0$ such that
 \begin{equation*} \int_{A\times A} \left[ F(x) + F(y) + G(x,y)\right] \ell(dx) \ell(dy) < \infty;
\end{equation*}
\item for all $\lambda \in \R$, we have 
\begin{equation*} \int_{\R\times\R} \exp\left[ \lambda (x^2+y^2) - F(x) - F(y) - G(x,y)\right] \ell(dx) \ell(dy) < \infty.
\end{equation*}
\end{enumerate}

{Under the above conditions, it is straightforward to verify that for $n\in \N$,
  $Z_n := \int_{\R^n} e^{-n\mathbf{H}_n(x)} \ell^{\otimes n}(dx)$  
  is finite  and so we can 
define  $P_n \in \cP(\R)$ as follows:} 
\begin{equation}\label{gibbs} P_n(dx) := \frac{1}{Z_{n}} e^{-n\mathbf{H}_n(x)} \ell^{\otimes n}(dx), \quad x \in \R^n. 
\end{equation}
Further, {let $Q_n \in \cP(\R)$ be the empirical measure of  the coordinates of $X^{(n)}$, when the latter
  is drawn from $P_n$; more precisely, set  $Q_n:=\frac{1}{n}\sum_{i=1}^n \delta_{X^{(n)}_i}$, which is a random $\cP(\R)$-valued
element.}

\begin{theorem}[Theorem 2.7 and Lemma 2.6 of \cite{DupLasRam15}] \label{th-gibbs}
Under the conditions stated above, 
 $\{Q_n\}_{n\in\N}$ satisfies an LDP in $\cP_2(\R)$ equipped with the $2$-Wasserstein topology, at speed $n$, with GRF $\mathcal{I}_*:\cP_2(\R) \ra [0,\infty]$ defined by
\begin{align}
  \label{Gibbsrfn}
  \mathcal{I}_*(\mu) &:=  \mathcal{I}(\mu) - \inf_{\mu \in \cP_2(\R)} \mathcal{I}(\mu), \\
  \notag 
  \mathcal{I}(\mu) &:= H(\mu |\ell) + \frac{1}{2} \int_{\R\times \R} G(x,y) \mu(dx) \mu(dy) + \int_{\R} F(x)  \mu(dx), 
\end{align}
with $H$ being the relative entropy functional defined in \eqref{relent}. 
\end{theorem}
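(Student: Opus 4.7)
\textbf{Proof plan for Theorem \ref{th-gibbs}.}  The plan is to realize the Gibbs measure $P_n$ as a Boltzmann-type tilt of a product measure and then combine Sanov's theorem with Varadhan's lemma. Let $\tilde{\ell}(dx) := c\, e^{-(1-a)F(x)} \ell(dx)$ with $c$ chosen so that $\tilde{\ell}$ is a probability measure (well-defined by condition (2)), and rewrite
\begin{equation*}
P_n(dx) = \frac{1}{\tilde{Z}_n}\exp\!\Bigl[-\sum_{i=1}^n aF(x_i) - \tfrac{1}{n}\sum_{i,j}G(x_i,x_j)\Bigr]\,\tilde{\ell}^{\otimes n}(dx),
\end{equation*}
so that the exponent is a continuous functional of the empirical measure $Q_n = \tfrac{1}{n}\sum_i \delta_{x_i}$, namely $-n\,\Phi(Q_n)$ with $\Phi(\mu):=a\int F\,d\mu+\tfrac{1}{2}\int\!\!\int G\,d\mu\otimes d\mu$. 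The first step is to invoke Sanov's theorem: under $\tilde{\ell}^{\otimes n}$, the sequence $\{Q_n\}_{n\in\N}$ satisfies an LDP in $\cP(\R)$ equipped with the weak topology at speed $n$ with good rate function $\mu\mapsto H(\mu\,|\,\tilde{\ell})$. Shifting back to $\ell$ absorbs $(1-a)\int F\,d\mu$ into $H(\mu\,|\,\ell)$ (up to the constant $-\log c$ which will cancel with the normalization).

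The second step is a Varadhan-type tilting. The growth conditions (2) and (4) guarantee that $\Phi$ is bounded below modulo a small multiple of $\int F\,d\mu$, and that the super-exponential moment bound $\E[\exp(n\lambda \int x^2 Q_n(dx))]<\infty$ holds uniformly in $n$ for every $\lambda\in\R$. These are exactly the hypotheses of Varadhan's integral lemma (in its non-bounded version), which upgrades the weak-topology Sanov LDP into an LDP for $Q_n$ under $P_n$ with rate function $\mathcal{I}(\mu) = H(\mu|\ell) + \int F\,d\mu + \tfrac{1}{2}\int\!\!\int G\,d\mu\otimes d\mu$, up to the additive constant $-\inf \mathcal{I}$ that arises from normalizing by $Z_n$. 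Condition (3) ensures $\inf \mathcal{I}<\infty$ so that the subtraction is well-defined, giving the centered rate function $\mathcal{I}_*$ in \eqref{Gibbsrfn}.

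The third and most delicate step is strengthening the LDP from the weak topology on $\cP(\R)$ to the $2$-Wasserstein topology on $\cP_2(\R)$. Following the abstract principle that an LDP in a weaker topology combined with exponential tightness in a finer topology yields an LDP in the finer topology, it suffices to establish exponential tightness of $\{Q_n\}_{n\in\N}$ in $\cP_2(\R)$ at speed $n$. Concretely, this reduces to showing
\begin{equation*}
\lim_{R\to\infty}\limsup_{n\to\infty}\tfrac{1}{n}\log P_n\!\left(\int x^2\, Q_n(dx) > R\right) = -\infty,
\end{equation*}
which follows from condition (4) by a Chebyshev-type estimate: $\E_{P_n}[e^{n\lambda \int x^2 Q_n(dx)}] \le (\tilde{Z}_n)^{-1}\bigl(\int e^{\lambda x^2 -F(x)-G_{\mathrm{sym}}(x)}\,\ell(dx)\bigr)^n$ with $\lambda$ arbitrarily large, whose logarithmic growth in $n$ is controlled by condition (4) combined with the lower bound on the free energy from condition (3). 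The main obstacle is this final step: weak convergence does not control second moments, and one must carefully use the fact that the confinement inherent in the exponential integrability of condition (4) imposes a Gaussian-type tail on each coordinate marginal under $P_n$, which in turn controls $M_2(Q_n)$ with the required super-exponential rate. Once this exponential tightness is in place, the lower semicontinuity and convexity of the functionals $\mu\mapsto H(\mu|\ell)$, $\mu\mapsto\int F\,d\mu$, and $\mu\mapsto\int\!\!\int G\,d\mu\otimes d\mu$ in the $2$-Wasserstein topology (using lower semicontinuity of $F,G$ from condition (1)) ensures that $\mathcal{I}_*$ is a good rate function on $\cP_2(\R)$.
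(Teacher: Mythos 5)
The paper does not prove this theorem: it is imported verbatim from \cite{DupLasRam15} (their Theorem 2.7 together with Lemma 2.6, the latter supplying precisely the upgrade to the $2$-Wasserstein topology), so there is no in-paper argument to compare against. Your outline is the natural ``Sanov plus tilting plus exponential tightness'' route, and your third step is sound in spirit: condition (4) is exactly the super-exponential moment bound needed for exponential tightness of $\{M_2(Q_n)\}_{n\in\N}$ at speed $n$, and combining an LDP in the weak topology with exponential tightness in the finer topology is the standard inverse-contraction device (cf.\ \cite[Corollary 4.2.6]{DemZeiBook}), which is how the strengthening is carried out elsewhere in this paper as well.

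The genuine gap is in your second step. Varadhan's lemma (even the unbounded, moment-condition version) requires the exponent $-\Phi$ to be continuous, or at least one needs $-\Phi$ upper semicontinuous for the LDP upper bound of the tilted measures and lower semicontinuous for the LDP lower bound. Under condition (1), $F$ and $G$ are only lower semicontinuous and may take the value $+\infty$, so $\Phi(\mu)=a\int F\,d\mu+\tfrac12\iint G\,d\mu\otimes d\mu$ is (at best) lower semicontinuous in the weak topology and emphatically not upper semicontinuous; hence $-\Phi$ fails the hypothesis needed for the large-deviation \emph{lower} bound, and likewise the matching lower bound on $\tfrac1n\log Z_n$ (needed to identify the additive constant $-\inf\mathcal{I}$) does not follow from Varadhan. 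This is not a technicality: it is the entire reason \cite{DupLasRam15} exists, and their proof proceeds via the weak-convergence (Laplace-principle) formulation with a careful approximation of the singular potentials rather than a direct appeal to Varadhan. A secondary, smaller issue is that your Chebyshev bound for exponential tightness discards the interaction term without justification; one must use the coercivity bound $\inf_{x,y}[G(x,y)+a(F(x)+F(y))]>c$ from condition (2) to dominate $-\tfrac1n\sum_{i\neq j}G(x_i,x_j)$ before the product integral in condition (4) can be applied, and your undefined quantity $G_{\mathrm{sym}}$ papers over this.
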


\begin{proposition}[Gibbs measures]\label{lem-gibbs}
  {Suppose $F, G$ and $\ell$ satisfy the conditions stated above, and 
    for $n\in \N$, suppose $X^{(n)}$ is drawn from $P_n$ of \eqref{gibbs}.}
  Then $\{X^{(n)}\}_{n\in\N}$ satisfies Assumption \ref{ass-normldp}* with
  GRF
  \[
  J_X(x):= \inf\left\{{\mathcal{I}_*}(\mu):\mu\in\cP_2(\R),x = \sqrt{M_2(\mu)}\right\},\quad x\geq 0,
  \]
  {with $\mathcal{I}_*$ as defined in \eqref{Gibbsrfn}.} 
\end{proposition}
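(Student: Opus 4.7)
The plan is to deduce Proposition~\ref{lem-gibbs} as a direct consequence of Theorem~\ref{th-gibbs} via the contraction principle, after observing that the scaled Euclidean norm of $X^{(n)}$ is a continuous functional of the empirical measure of its coordinates.

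First, I would note the identity
\[
\frac{\|X^{(n)}\|_2}{\sqrt{n}} = \sqrt{\frac{1}{n}\sum_{i=1}^n (X^{(n)}_i)^2} = \sqrt{M_2(Q_n)},
\]
where $Q_n = \frac{1}{n}\sum_{i=1}^n \delta_{X^{(n)}_i}$ is the random empirical measure of the coordinates and $M_2$ is the second-moment functional from~\eqref{secmom}. Thus $\|X^{(n)}\|_2/\sqrt{n}=\Phi(Q_n)$, where $\Phi:\cP_2(\R)\to\R_+$ is defined by $\Phi(\mu):=\sqrt{M_2(\mu)}$.

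Next, I would invoke Theorem~\ref{th-gibbs} to conclude that under the stated hypotheses on $F$, $G$ and $\ell$, the sequence $\{Q_n\}_{n\in\N}$ satisfies an LDP at speed $n$ on $\cP_2(\R)$ equipped with the $2$-Wasserstein topology, with GRF $\mathcal{I}_*$ as in~\eqref{Gibbsrfn}. The key point is that this LDP is stated on $\cP_2(\R)$ with the \emph{$2$-Wasserstein topology}, which is precisely the topology in which $\mu\mapsto M_2(\mu)$ is continuous (recall that $2$-Wasserstein convergence is equivalent to weak convergence together with convergence of second moments, as discussed just before Lemma~\ref{lem-compcat}). Since $\sqrt{\cdot}$ is continuous on $\R_+$, the map $\Phi$ is continuous from $\cP_2(\R)$ (with the $2$-Wasserstein topology, a Polish topology) into $\R_+$.

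Finally, an application of the contraction principle (Lemma~\ref{lem-con}) to the continuous map $\Phi$ transports the LDP for $\{Q_n\}_{n\in\N}$ to an LDP for $\{\Phi(Q_n)\}_{n\in\N} = \{\|X^{(n)}\|_2/\sqrt{n}\}_{n\in\N}$ in $\R$ at speed $n$, with GRF
\[
J_X(x) = \inf\bigl\{ \mathcal{I}_*(\mu) : \mu\in\cP_2(\R),\ \sqrt{M_2(\mu)}=x \bigr\},\qquad x\geq 0,
\]
which is exactly the rate function stated in Proposition~\ref{lem-gibbs}. This verifies Assumption~\ref{ass-normldp}*. There is essentially no obstacle; the only subtle point is ensuring we use the $2$-Wasserstein topology (rather than the weak topology, in which $M_2$ would fail to be continuous), but this is already built into the statement of Theorem~\ref{th-gibbs}.
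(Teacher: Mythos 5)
Your proposal is correct and follows essentially the same route as the paper: an application of the contraction principle to the empirical measure LDP of Theorem~\ref{th-gibbs}, using the continuity of $M_2$ (and hence of $\mu\mapsto\sqrt{M_2(\mu)}$) with respect to the $2$-Wasserstein topology. The paper merely splits the map into two successive contractions ($M_2$ followed by $x\mapsto\sqrt{x}$), which is the same argument.
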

\begin{proof}
  The LDP for $\{\|X^{(n)}\|_2^2/n\}_{n\in \N}$, follows from an application of the contraction principle to  the empirical measure LDP of Theorem \ref{th-gibbs} and the second moment map $M_2$ of \eqref{secmom}, which is continuous with respect to the $2$-Wasserstein topology. Assumption \ref{ass-normldp}* then follows on applying the contraction principle with the map
    $x \mapsto \sqrt{x}$.
\end{proof}

\begin{remark}
  In a similar fashion, the large deviation results  in the  recent work of \cite[Theorem 2.8]{LiuWu20}
  can be combined with  the contraction principle to show that Assumption \ref{ass-normldp}*
  is also satisfied by sequences of Gibbs measures associated with a class of  interaction potentials
  $G:\R^k \to (-\infty,\infty]$ that capture $k$-tuple interactions, for fixed $k \in \N$.
\end{remark}

\section{Proofs of  LDPs for the random projections}\label{sec-anpr}

In this section, we prove the main large deviation results, namely, Theorems \ref{th-constant}, \ref{th-on} and \ref{th-simn}, stated in Section \ref{sec-mainres}, which deal with the constant, sublinear and linear regimes, respectively.  At many points, we will refer to certain properties of the top row of $\mathbf{A}_{n,k_n}$, which we first establish in
Section \ref{sec-top}.   Sections \ref{sec-constant}, \ref{sec-on} and \ref{sec-simn} consider the regimes of $\{k_n\}$ constant, sublinear and linear, respectively.  Throughout, let $\zeta_1,\zeta_2,\dots,$ denote a sequence of i.i.d.\ standard Gaussian random variables,  let $\zeta^{(n)}:=(\zeta_1,\dots,\zeta_n) \in \R^n$, let
$e_1 := (1,0,\dots,0)\in \R^n$. 

\subsection{The top row of $\mathbf{A}_{n,k_n}$}\label{sec-top}

\begin{lemma}\label{lem-row1}
Fix $k, n\in\N$ such that $k \leq n$.   Then the following relation holds: 
\begin{equation}\label{eq-eqd}
  \mathbf{A}_{n,k}(1, \cdot\, )=\left(  \mathbf{A}_{n,k}(1,1 ),\ldots,  \mathbf{A}_{n,k}(1, k ) \right) \eqdist \frac{(\zeta_1,\dots,\zeta_k)}{\|\zeta^{(n)}\|_2}.
\end{equation}
\end{lemma}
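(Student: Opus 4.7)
The plan is to exhibit $\mathbf{A}_{n,k}$ as a submatrix of a Haar-distributed orthogonal matrix and then exploit the orthogonal invariance of both the Haar measure and the standard Gaussian vector. First, I would recall the standard construction: if $O$ is drawn from the Haar measure on $O(n)$, then the $n\times k$ matrix consisting of the first $k$ columns of $O$ lies in $\mathbb{V}_{n,k}$, and by uniqueness of the $O(n)$-invariant probability measure on $\mathbb{V}_{n,k}$ (the action of $O(n)$ by left multiplication being transitive on $\mathbb{V}_{n,k}$), it has the Haar distribution. Hence $\mathbf{A}_{n,k}$ equals in distribution to the first $k$ columns of $O$, and in particular $\mathbf{A}_{n,k}(1,\cdot) \eqdist (O_{11}, O_{12}, \ldots, O_{1k})$.

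Next, I would observe that because the Haar measure on the compact group $O(n)$ is bi-invariant and invariant under the inversion $O \mapsto O^{-1} = O^T$, one has $O \eqdist O^T$. Consequently, $(O_{11}, \ldots, O_{1k})$ has the same law as the first $k$ entries of the first column of $O$, namely $(O_{11}, O_{21}, \ldots, O_{k1})$, which are the first $k$ coordinates of $O e_1$.

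The third step identifies the distribution of the first column $O e_1$ as uniform on the unit sphere $\mathbb{S}^{n-1}$: for any $v \in \mathbb{S}^{n-1}$ there exists $G \in O(n)$ with $G e_1 = v$, so the law of $O e_1$ is invariant under all such $G$ by Haar invariance under left translation; being supported on $\mathbb{S}^{n-1}$ and invariant under the transitive $O(n)$-action on the sphere, it is uniform on $\mathbb{S}^{n-1}$. Finally, the rotational invariance of the law of $\zeta^{(n)}$ in $\R^n$ together with the independence of $\zeta^{(n)}/\|\zeta^{(n)}\|_2$ and $\|\zeta^{(n)}\|_2$ imply that $\zeta^{(n)}/\|\zeta^{(n)}\|_2$ is uniformly distributed on $\mathbb{S}^{n-1}$. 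Stringing these identifications together yields
$$\mathbf{A}_{n,k}(1,\cdot) \eqdist (O_{11}, O_{21}, \ldots, O_{k1}) \eqdist \frac{(\zeta_1, \ldots, \zeta_k)}{\|\zeta^{(n)}\|_2},$$
which is exactly \eqref{eq-eqd}.

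I do not anticipate any serious obstacle; the argument rests only on textbook properties of the Haar measure on a compact group and the Gaussian representation of the uniform measure on the sphere. The one place requiring a moment of care is the transposition step $O \eqdist O^T$, but this is immediate from the bi-invariance of Haar measure on $O(n)$ (equivalently, from the fact that the pushforward of Haar measure under inversion is still a left-invariant probability measure, hence coincides with Haar by uniqueness).
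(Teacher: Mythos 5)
Your proposal is correct and follows essentially the same route as the paper: realize $\mathbf{A}_{n,k}$ inside a Haar-distributed $n\times n$ orthogonal matrix, identify the relevant row/column as uniform on $S^{n-1}$, and invoke the Gaussian representation $\zeta^{(n)}/\|\zeta^{(n)}\|_2$ of the uniform measure on the sphere. The only cosmetic difference is your explicit transposition step $O \eqdist O^T$; the paper sidesteps it by writing $\mathbf{A}_{n,k}^T \eqdist I_{n,k}^T \mathbf{O}_n$ directly, but both are immediate from the invariance of Haar measure.
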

\begin{proof} 
  Let $\mathbf{O}_n$ be a random $n\times n$ orthogonal matrix (i.e., sampled from the normalized Haar measure on the group of $n\times n$ orthogonal matrices). Let $I_{n,k}$ be the $n\times k$ matrix of ones on the diagonal and zeros elsewhere. Note that
  \[ \mathbf{A}^T_{n,k} \eqdist I_{n,k}^T \mathbf{O}_n, \]
  which implies that $\mathbf{A}_{n,k}(1,\cdot\,)$ is equal in distribution to the vector of  the first $k$ elements in the top row of $\mathbf{O}_n$. The marginal distribution of the top row of $\mathbf{O}_n$ is the uniform measure on the unit sphere of $\R^n$, which establishes the identity \eqref{eq-eqd}, due to the classical fact that $\zeta^{(n)} / \|\zeta^{(n)}\|_2$ is uniformly distributed on the unit sphere in $\R^n$ (see, e.g., \cite{Mul59}).
\end{proof}

\begin{lemma}\label{lem-toprow}
Fix $n\in \N$ and $k \leq n$.  Suppose $X^{(n)}$ is an $n$-dimensional random vector independent of $\mathbf{A}_{n,k}$. Then the following relation holds:  
\begin{equation*}
\left( \mathbf{A}_{n,k}^T \tfrac{X^{(n)}}{\|X^{(n)}\|_2}, \, \|X^{(n)}\|_2 \right) \eqdist  \left( \mathbf{A}_{n,k}^T e_1, \, \|X^{(n)}\|_2\right). 
\end{equation*}
\end{lemma}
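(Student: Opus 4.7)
The plan is to exploit the orthogonal invariance of the Haar measure on the Stiefel manifold $\mathbb{V}_{n,k}$, combined with the independence of $X^{(n)}$ and $\mathbf{A}_{n,k}$. Specifically, for any unit vector $u\in\mathbb{R}^n$, we can choose a (measurable) orthogonal matrix $Q_u\in O(n)$ such that $Q_u u = e_1$, and then write $\mathbf{A}_{n,k}^T u = (Q_u \mathbf{A}_{n,k})^T e_1$. Since the Haar measure on $\mathbb{V}_{n,k}$ is invariant under left multiplication by elements of $O(n)$, we have $Q_u \mathbf{A}_{n,k} \eqdist \mathbf{A}_{n,k}$, so that $\mathbf{A}_{n,k}^T u \eqdist \mathbf{A}_{n,k}^T e_1$ for every fixed unit vector $u$.

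To promote this pointwise identity to the desired joint statement, I would condition on $X^{(n)}$. Let $f:\mathbb{R}^k \times \mathbb{R}_+ \to \mathbb{R}$ be an arbitrary bounded Borel function. By independence of $X^{(n)}$ and $\mathbf{A}_{n,k}$, on the event $\{X^{(n)}\neq 0\}$ (which has full probability whenever relevant),
\begin{align*}
\mathbb{E}\!\left[ f\!\left( \mathbf{A}_{n,k}^T \tfrac{X^{(n)}}{\|X^{(n)}\|_2},\, \|X^{(n)}\|_2 \right) \Big|\, X^{(n)} \right]
= g\!\left(\tfrac{X^{(n)}}{\|X^{(n)}\|_2},\, \|X^{(n)}\|_2\right),
\end{align*}
where $g(u,r) := \mathbb{E}[f(\mathbf{A}_{n,k}^T u, r)]$ for a unit vector $u$ and $r\in\mathbb{R}_+$. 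The previous paragraph shows $g(u,r) = \mathbb{E}[f(\mathbf{A}_{n,k}^T e_1, r)]$ for every unit vector $u$, so the conditional expectation depends only on $\|X^{(n)}\|_2$. Taking expectations and using independence again gives
\[
\mathbb{E}\!\left[ f\!\left( \mathbf{A}_{n,k}^T \tfrac{X^{(n)}}{\|X^{(n)}\|_2},\, \|X^{(n)}\|_2 \right) \right] = \mathbb{E}\!\left[ f\!\left( \mathbf{A}_{n,k}^T e_1,\, \|X^{(n)}\|_2 \right) \right],
\]
which is exactly the claimed equality in distribution.

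The only mildly delicate point is the measurable selection of $Q_u$ as a function of $u\in S^{n-1}$, so that conditioning is rigorous; this can be handled, for example, by defining $Q_u$ via Gram--Schmidt orthogonalization applied to $(u, e_2, \ldots, e_n)$ on the open dense set where this yields a basis, and by continuity or an arbitrary measurable extension on the negligible complement. Once this selection is in place, the proof reduces to the two ingredients above: left-invariance of the Haar law on $\mathbb{V}_{n,k}$ and independence of $X^{(n)}$ from $\mathbf{A}_{n,k}$.
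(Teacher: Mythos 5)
Your proof is correct and follows essentially the same route as the paper: both arguments rest on the orthogonal invariance of the Haar measure on $\vk$, a measurable selection of an orthogonal matrix carrying $x/\|x\|_2$ to $e_1$, and the independence of $X^{(n)}$ from $\mathbf{A}_{n,k}$ to pass from fixed $x$ to the random vector. Your write-up is somewhat more explicit about the conditioning step and the measurable-selection issue, but the content is the same.
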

\begin{proof}
  { As in  Lemma 6.3 of \cite{GanKimRam18}, which considered 
    the case $k=1$, this result  
 is   easily deduced from the fact that the distribution of 
$\mathbf{A}_{n,k}$ is invariant under orthogonal transformations and independent of $X^{(n)}$.
In particular, fix $n \in \N$ and given  any $n \times n$ orthogonal matrix ${O}_n$
and $x \in \R^n$,  we have 
\[  \left( \mathbf{A}_{n,k}^T  \frac{x}{\|x\|_2},  \|x\|_2 \right) \eqdist \left( \mathbf{A}_{n,k}^T {O}_n^{-1} \frac{x}{\|x\|_2}, \|x\|_2 \right). 
\]
Now, given  $e_1 \in S^{n-1}$,  for any $y \in S^{n-1}$ let ${\mathcal R}(y) \in \mathbb{V}_{n,n}$ be the unique  orthogonal matrix
${O}_n$ such that ${O}_n^{-1} y = e_1$.  It is easy to see that ${\mathcal R}:S^{n-1} \mapsto  \mathbb{V}_{n,n}$ is a measurable map.
Then, for any $x \in \R^n$, substituting ${O}_n = {\mathcal R}(x/\|x\|_2)$ in the last display it follows that
\[ \left( \mathbf{A}_{n,k}^T  \frac{x}{\|x\|_2},  \|x\|_2 \right) \eqdist \left( \mathbf{A}_{n,k}^T \left({\mathcal R}\left(\frac{x}{\|x\|_2}\right)\right)^{-1} \frac{x}{\|x\|_2}, \|x\|_2 \right) =  \left( \mathbf{A}_{n,k}^T e_1, \|x\|_2 \right).
\]
Since $\mathbf{A}_{n,k}^T$ is independent of $X^{(n)}$,  the above relation holds when $x$ is replaced with $X^{(n)}$. 
} 	
\end{proof}

In the settings in which   $\{k_n\}_{n\in\N}$ grows, we will first analyze the empirical measure of the
$k_n$ elements in the top row of $\sqrt{n}\mathbf{A}_{n,k_n}$. That is, let 
\begin{equation}\label{eq-muna}
  \hat{\mu}^n_{\mathbf{A}} := \frac{1}{k_n} \sum_{j=1}^{k_n} \delta_{\sqrt{n}\mathbf{A}_{n,k_n}(1,j)}, \quad n\in\N.
\end{equation}
Recall that ${\mathcal P}(\R)$ is always equipped with the weak topology unless otherwise stated.

\begin{lemma} \label{lem-repn}
For $n\in\N$,  let $X^{(n)}$ be independent of $\mathbf{A}_{n,k_n}$ and recall the definition of $\ln$ given in 
\eqref{eq-empproj}.    Then we have 
\begin{equation*}
  \ln(\cdot) \eqdist \hat{\mu}^n_{\mathbf{A}} (\,\cdot \times \sqrt{n} / \|X^{(n)}\|_2).
\end{equation*}
Moreover, the map ${\mathcal P} (\R) \times (0,\infty) \ni (\nu,c) \mapsto \nu(\,\cdot \times c^{-1}) \in {\mathcal P}(\R)$ 
is continuous.  
\end{lemma}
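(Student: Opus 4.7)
The plan is to deduce the distributional identity directly from Lemma \ref{lem-toprow}, and to treat the continuity of the rescaling map as a standard fact about pushforwards under continuous maps.

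For the first assertion, I would start from the joint identity given by Lemma \ref{lem-toprow}, namely
\[
\left(\mathbf{A}_{n,k_n}^T \tfrac{X^{(n)}}{\|X^{(n)}\|_2},\,\|X^{(n)}\|_2\right)
\eqdist
\left(\mathbf{A}_{n,k_n}^T e_1,\,\|X^{(n)}\|_2\right),
\]
and apply the measurable map $(v,r)\mapsto (rv,r)$ to both sides to obtain
\[
\left(\mathbf{A}_{n,k_n}^T X^{(n)},\,\|X^{(n)}\|_2\right)
\eqdist
\left(\|X^{(n)}\|_2\,\mathbf{A}_{n,k_n}^T e_1,\,\|X^{(n)}\|_2\right).
\]
Since $\mathbf{A}_{n,k_n}^T e_1=(\mathbf{A}_{n,k_n}(1,1),\dots,\mathbf{A}_{n,k_n}(1,k_n))$ is precisely the top row of $\mathbf{A}_{n,k_n}$, forming the empirical measure on both sides (which is a measurable function of the underlying vectors together with $\|X^{(n)}\|_2$) yields
\[
\ln \eqdist \frac{1}{k_n}\sum_{j=1}^{k_n}\delta_{\|X^{(n)}\|_2\mathbf{A}_{n,k_n}(1,j)}
=\frac{1}{k_n}\sum_{j=1}^{k_n}\delta_{(\|X^{(n)}\|_2/\sqrt{n})\cdot\sqrt{n}\mathbf{A}_{n,k_n}(1,j)}.
\]
Setting $c=\sqrt{n}/\|X^{(n)}\|_2$, the right-hand side is exactly the pushforward of $\hat{\mu}^n_{\mathbf{A}}$ under $y\mapsto c^{-1}y$, which by the definition of the notation equals $\hat{\mu}^n_{\mathbf{A}}(\,\cdot\times\sqrt{n}/\|X^{(n)}\|_2)$. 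This completes the first statement.

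For the continuity claim, I would observe that $\nu(\,\cdot\times c^{-1})$ is nothing but the pushforward $(T_c)_*\nu$, where $T_c(x)=cx$. Given $(\nu_m,c_m)\to(\nu,c)$ with $c>0$, and any $f\in C_b(\R)$, I would write
\[
\int f(c_m x)\,d\nu_m(x)-\int f(cx)\,d\nu(x)
=\int[f(c_m x)-f(cx)]\,d\nu_m(x)+\int f(cx)\,d[\nu_m-\nu](x).
\]
The second term tends to zero by weak convergence $\nu_m\Rightarrow\nu$ since $x\mapsto f(cx)$ is bounded and continuous. For the first term I would use tightness of $\{\nu_m\}$ (implied by weak convergence): given $\varepsilon>0$, choose compact $K\subset\R$ with $\sup_m\nu_m(K^c)<\varepsilon$, then on $K$ both $c_m x$ and $cx$ stay in a common bounded set on which $f$ is uniformly continuous, so $\sup_{x\in K}|f(c_m x)-f(cx)|\to0$, while the contribution from $K^c$ is bounded by $2\|f\|_\infty\varepsilon$. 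Letting $m\to\infty$ then $\varepsilon\downarrow 0$ yields the desired convergence.

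There is no real obstacle here; the result is essentially a bookkeeping consequence of Lemma \ref{lem-toprow}, with the continuity statement being the standard fact that pushforward along a jointly continuous family of continuous maps is continuous on $\mathcal{P}(\R)$ with the weak topology. The only thing to be mildly careful about is translating the ``$\,\cdot\times c\,$'' notation consistently, i.e.\ recognizing $\nu(\,\cdot\times c^{-1})(A)=\nu(c^{-1}A)=(T_c)_*\nu(A)$, so that the scaling factor in the identity matches the definition of $\hat{\mu}^n_{\mathbf{A}}$ (which uses the factor $\sqrt{n}$).
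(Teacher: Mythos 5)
Your proof is correct and follows essentially the same route as the paper: both deduce the distributional identity by writing $\mathbf{A}_{n,k_n}^T X^{(n)} = \|X^{(n)}\|_2\,\mathbf{A}_{n,k_n}^T(X^{(n)}/\|X^{(n)}\|_2)$ and invoking the joint identity of Lemma \ref{lem-toprow} before forming the empirical measure. The only cosmetic difference is in the continuity claim, where the paper cites Slutsky's theorem while you give a direct tightness-plus-uniform-continuity argument; both are valid.
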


\begin{proof} 
For each $n \in \N$, using the definition of $\ln$ from \eqref{eq-empproj} and Lemma \ref{lem-toprow},  we have 
\begin{align*}
L^{n} (\cdot) = \frac{1}{k_n}\sum_{j=1}^{k_n} \delta_{\|X^{(n)}\|_2 \left(\mathbf{A}_{n,k_n}^T \tfrac{X^{(n)}}{\|X^{(n)}\|_2}\right)_j}(\cdot) 
  &\eqdist 	\frac{1}{k_n}\sum_{j=1}^{k_n}\delta_{\frac{\|X^{(n)}\|_2}{\sqrt{n}} \sqrt{n}  \left(\mathbf{A}_{n,k_n}^T e_1\right)_j} (\cdot) \\
  &= 	\frac{1}{k_n}\sum_{j=1}^{k_n}\delta_{\sqrt{n}\mathbf{A}_{n,k_n}(1,j)} (\, \cdot \, \times \sqrt{n} / \|X^{(n)}\|_2 ), 
\end{align*}
from which the first assertion  of the lemma follows by \eqref{eq-muna}. 
{Continuity of the map $(\nu,c) \mapsto \nu(\,\cdot \times c^{-1})$ follows if and only if as $n\to\infty$ a sequence of random variables $\{B_n\}_{n\in\N}$ converging in distribution to $B$ and a sequence of constants $\{c_n\}$ converging to $c$ implies that $c_nB_n$ converges to $cB$ in distribution as $n\to\infty$, which follows from Slutsky's theorem~\cite[Theorem 13.18]{Kle13}. }
\end{proof}

\begin{lemma}\label{sub-aux}
Suppose $\{\hat{\mu}^n_{\mathbf{A}}\}_{n\in\N}$ and $\{\|X^{(n)}\|_2/\sqrt{n}\}_{n\in\N}$ satisfy LDPs, both at speed $s_n$, and  
with GRFs $\mathbb{I}_1:\cP_q(\R)\to[0,\infty]$ and $\mathbb{I}_2:\R_+\to[0,\infty]$, respectively. Then $\{L^n\}_{n\in\N}$ satisfies an LDP at speed $s_n$ with GRF defined by
\[
\mathbb{I}(\mu) = \inf_{\nu\in \cP_q(\R), c\in \R_+} \left\{\mathbb{I}_1(\nu)+ \mathbb{I}_2(c) : \mu = \nu(\,\cdot \times c^{-1})\right\},\quad\mu\in\cP_q(\R).
\]
\end{lemma}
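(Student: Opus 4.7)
The plan is to exploit the independence built into the construction and then invoke the contraction principle via the representation from Lemma \ref{lem-repn}.

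First, since $\mathbf{A}_{n,k_n}$ is independent of $X^{(n)}$ by assumption, the random elements $\hat{\mu}^n_{\mathbf{A}}$ (a functional of $\mathbf{A}_{n,k_n}$ alone) and $\|X^{(n)}\|_2/\sqrt{n}$ (a functional of $X^{(n)}$ alone) are independent for each $n$. Combining their marginal LDPs -- both at the common speed $s_n$ -- via the standard product argument for independent sequences (as in \cite[Exercise 4.2.7]{DemZeiBook}), the joint sequence $\{(\hat{\mu}^n_{\mathbf{A}},\, \|X^{(n)}\|_2/\sqrt{n})\}_{n\in\N}$ would then satisfy an LDP on $\cP_q(\R)\times\R_+$ at speed $s_n$ with GRF $(\nu,c)\mapsto \mathbb{I}_1(\nu)+\mathbb{I}_2(c)$.

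Next, by the first assertion of Lemma \ref{lem-repn}, $L^n \eqdist f(\hat{\mu}^n_{\mathbf{A}},\, \|X^{(n)}\|_2/\sqrt{n})$, where $f(\nu,c):=\nu(\,\cdot\times c^{-1})$. The second assertion of Lemma \ref{lem-repn} ensures $f$ is continuous on $\cP(\R)\times(0,\infty)$; continuity in the $q$-Wasserstein topology on $\cP_q(\R)\times(0,\infty)$ follows from the fact that linear rescaling by a constant $c>0$ preserves both weak convergence and $q$-th moment convergence. Applying the contraction principle (Lemma \ref{lem-con}) to the joint LDP and this continuous map then yields an LDP for $\{L^n\}_{n\in\N}$ at speed $s_n$ whose rate function is exactly the stated infimum.

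The one technical subtlety I expect is the handling of the map $f$ at $c=0$, where it is not a priori defined. This can be resolved by continuously extending $f$ via $f(\nu,0):=\delta_0$, which is consistent since $\nu(\,\cdot\times c^{-1})\Rightarrow\delta_0$ as $c\downarrow 0$ for any tight $\nu$ (by a standard argument splitting the integral over a large compact set and its complement). Alternatively, one can apply the contraction principle on the open subset $\cP_q(\R)\times(0,\infty)$ and note that the $c=0$ contribution only arises when $\mu=\delta_0$, which is vacuously compatible with the infimum as stated. In the applications of interest in this paper, $\mathbb{I}_2$ has a unique minimizer at some $m>0$ making this irrelevant in practice, so either route delivers the claimed rate function without further work.
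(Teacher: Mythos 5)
Your proposal is correct and follows essentially the same route as the paper: independence of $\mathbf{A}_{n,k_n}$ and $X^{(n)}$ gives the joint LDP with additive rate function $\mathbb{I}_1(\nu)+\mathbb{I}_2(c)$, and the contraction principle applied to $(\nu,c)\mapsto\nu(\,\cdot\times c^{-1})$ (whose continuity is the second assertion of Lemma \ref{lem-repn}) yields the stated infimum. Your extra care about the boundary point $c=0$ is a reasonable refinement that the paper's proof passes over silently, but it does not change the argument.
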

\begin{proof}
By the independence of $\mathbf{A}_{n,k_n}$ and $X^{(n)}$, $\{ \hat{\mu}^n_{\mathbf{A}} , \|X^{(n)}\|_2/\sqrt{n}\}_{n\in\N}$
satisfies an LDP at speed $s_n$ with GRF 
\begin{equation*}
\cP_q(\R) \times \R_+ \ni (\nu, c) \mapsto  \mathbb{I}_1(\nu)+ \mathbb{I}_2(c) \in [0,\infty].
\end{equation*}
The claim follows on applying the contraction principle to the mapping $F:\cP_q(\R) \times \R_+\to\cP_q(\R)$ defined by $F(\nu,c)=\nu(\cdot\times c^{-1})$\end{proof}

\begin{lemma} \label{lem-nu_n}
Let $\{\zeta_j\}_{j \in \N}$ be i.i.d. $\mathcal{N}(0,1)$ random variables, $\zeta^{(n)} = (\zeta_1, \ldots, \zeta_n)$ and 
consider the sequence
\begin{equation}
  \label{def-nun2} 
\nu_n :=  \frac{1}{k_n} \sum_{j=1}^{k_n} \delta_{\zeta_j}, \quad n\in\N. 
\end{equation}
Then ${\{\nu_n\}_{n \in \N}}$ satisfies an LDP in $\cP(\R)$ with respect to the weak topology, at speed $k_n$, with GRF $H(\cdot|\gamma_1)$.
Moreover, for a sequence $\{s_n\}_{n\in\N}$ such that $s_n\ll k_n$, $\{\nu_n\}_{n\in\N}$ satisfies an LDP in $\cP(\R)$ with respect to the weak topology at speed $s_n$ with GRF 
\begin{equation}
  \label{chi-gamma1}
\chi_{\gamma_1}(\nu):=
\left\{\begin{array}{ll}
0 & \text{ if } \nu = \gamma_1\\
+\infty & \text{ else}	
\end{array}\right., \quad \nu\in\cP(\R).
\end{equation}
\end{lemma}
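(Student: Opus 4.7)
The plan is to derive both statements as essentially immediate consequences of Sanov's theorem combined with the speed-shift principle recorded in Remark \ref{ref-speeds}.

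For the first assertion, I would appeal directly to Sanov's theorem applied to the i.i.d.\ sequence $\{\zeta_j\}_{j\in\N}$ with common law $\gamma_1$. Since $k_n \to \infty$, the empirical measure $\nu_n = \frac{1}{k_n}\sum_{j=1}^{k_n}\delta_{\zeta_j}$ of the first $k_n$ samples satisfies an LDP in $\cP(\R)$ with respect to the weak topology at speed $k_n$ with GRF $H(\cdot|\gamma_1)$. The fact that $H(\cdot|\gamma_1)$ is a good rate function on $\cP(\R)$ in the weak topology is standard (see, e.g., \cite[Theorem 6.2.10]{DemZeiBook}).

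For the second assertion, I would invoke Remark \ref{ref-speeds}. That remark states that if a sequence satisfies an LDP at some speed with a GRF having a unique minimizer $m$, then it also satisfies an LDP at any strictly slower speed with the degenerate rate function $\chi_m$. Thus I need only verify that $H(\cdot|\gamma_1)$ has a unique minimizer on $\cP(\R)$, which is $\gamma_1$. This follows from the well-known strict positivity of relative entropy: $H(\nu|\gamma_1) \geq 0$ with equality if and only if $\nu = \gamma_1$ (a consequence of Jensen's inequality or Gibbs' inequality applied to the definition \eqref{relent}). Applying Remark \ref{ref-speeds} with the speed $k_n$ LDP from the first part and the slower speed $s_n \ll k_n$ yields the LDP at speed $s_n$ with rate function $\chi_{\gamma_1}$ as defined in \eqref{chi-gamma1}.

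There is no substantial obstacle here; the result is really a bookkeeping statement assembling Sanov's theorem with the speed-rescaling observation. The only point that merits a brief mention is ensuring that $H(\cdot|\gamma_1)$ is a \emph{good} rate function (so that Remark \ref{ref-speeds} applies), which is classical.
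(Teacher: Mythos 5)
Your proof is correct and follows exactly the paper's argument: Sanov's theorem gives the speed-$k_n$ LDP with GRF $H(\cdot|\gamma_1)$, and the slower-speed degenerate LDP follows from Remark \ref{ref-speeds} together with the fact that $\gamma_1$ is the unique minimizer of the relative entropy. No further comment is needed.
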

\begin{proof}
The first claim is a direct conclusion of Sanov's theorem~\cite[Theorem 2.1.10]{DemZeiBook} while the second claim follows since $H(\cdot |\gamma_1) : \cP(\R) \ra [0,\infty]$ is convex with a unique minimizer at $\gamma_1$, as in the observation of Remark \ref{ref-speeds}.
\end{proof}

{We now document an  elementary observation since it  will be used multiple times in the sequel.
\begin{remark}
  \label{lem-chi-2}
  Let $\{\zeta_j\}_{j \in \N}$ be i.i.d. $\mathcal{N}(0,1)$ random variables, let $\{k_n\}_{n\in\N}$ be a sequence of positive integers that converges to infinity and let $C_n :=  \frac{1}{k_n} \sum_{j=1}^{k_n} \zeta^2_j$,  $n\in\N$.  It is easy to see that 
  the log moment generating function of $\zeta_1^2$ is given by $-\frac{1}{2}\log (1-2t)$ for $t<1/2$ and infinity, otherwise. 
  Hence,  it is an immediate consequence of  Cram\'er's theorem \cite[Theorem 2.2.1]{DemZeiBook}  that
$\{C_n\}_{n \in \N}$ satisfies an LDP in $\R$  at speed $k_n$ with GRF $J_{\zeta^2}:\R\to[0,\infty]$ 
  of the form
  \begin{equation}
    \label{rate-chi-2}
J_{\zeta^2}(t):=\sup_{s\in\R}\left\{ st+ \frac{1}{2}\log (1-2s)\right\}=
\begin{cases}
\frac{t-1}{2}-\frac{1}{2}\log t, &\quad  t>0,\\
+\infty, &\quad t\leq 0.
\end{cases}
\end{equation}
\end{remark}
  }

\subsection{Proof of the LDP for  random projections in the constant regime}\label{sec-constant}

This section is devoted to the proof of Theorem \ref{th-constant}.  
Throughout this section, fix $k \in \N$ and suppose $k_n = k$ for each $n \in \N$.  
We first give the main idea behind the proof. 
Due to Lemma \ref{lem-toprow}
we have 
\begin{align}
\label{axn1}
\frac{1}{\sqrt{n}} \mathbf{A}_{n,k}^T X^{(n)} \eqdist \frac{1}{\sqrt{n}} \mathbf{A}_{n,k}^T e_1 \| X^{(n)} \|_2 = \mathbf{A}_{n,k}(1, \cdot) \frac{\|X^{(n)}\|_2}{\sqrt{n}}
\end{align}
where $\{\mathbf{A}_{n,k}(1, \cdot)\}_{n \in \N}$ is independent of 
$\{ X^{(n)}\}_{n \in \N}$. 
Thus,
the question essentially reduces to understanding the following: 
suppose two independent sequences of  random vectors 
$\{V_n\}_{n\in\mathbb{N}}$ and $\{W_n\}_{n\in\mathbb{N}}$ (with at least one-being real-valued) 
satisfy LDPs at speeds $\{\beta_n\}_{n\in\N}$, and $\{\gamma_n\}_{n\in\N}$ with GRFs $J_V$  and $J_W$, respectively.
Then we want to understand when an LDP with a non-trivial rate function
can be deduced for the product  sequence $Z_n:=V_n W_n$, $n\in\N$.
When $\beta_n = \gamma_n$, then this is a simple application of the contraction principle.
We will see that this is the case when Assumption \ref{ass-normldp}* holds, that is, when 
$\{\|X^{(n)}\|_2/\sqrt{n}\}_{n \in \N}$ satisfies an LDP at speed $n$, 
by showing that then $\{\mathbf{A}_{n,k}(1, \cdot)\}_{n \in \N}$ 
also satisfies an LDP at speed $n$.
On the other hand, if $\beta_n \ll \gamma_n$ or $\beta_n \gg \gamma_n$, 
then the idea is to find sequences $\{b_n\}_{n \in \N}$ and $\{s_n\}_{n \in \N}$ such that
$\{b_n V_n\}_{n \in \N}$ and $\{b_n^{-1} W_n\}_{n \in \N}$ both satisfy non-trivial LDPs at the same 
speed $s_n$, and once again apply the contraction principle.   This falls into the case of  
Assumption \ref{ass-rescaled},
which states that  $\{b_n\|X^{(n)}\|_2/\sqrt{n}\}_{n \in \N}$, with  $b_n = \sqrt{s_n/n}$,
  satisfies a non-trivial LDP at speed $s_n$, and thus
  the proof in this case follows by showing that $\{b_n^{-1}  \mathbf{A}_{n,k}(1, \cdot)\}_{n \in \N}$ also satisfies
  a non-trivial LDP at speed $s_n$.

\begin{proof}[Proof of Theorem \ref{th-constant}]
We consider two cases.
\begin{case}
Suppose Assumption~\ref{ass-normldp}* holds with GRF $J_X$. 
\end{case}
First, note that by Lemma \ref{lem-row1}, we can apply the LDP in \cite[Theorem 3.4]{BarGamLozRou10}
to find that $\{\mathbf{A}_{n,k}(1,\cdot)\}_{n\in\N}$ satisfies an LDP in $\R^k$ at speed $n$ with GRF 
$\mathcal{J}_k(y) := -\tfrac{1}{2}\log(1-\|y\|_2^2)$,
when $\|y\|_2\le 1$, and $\mathcal{J}_k(y) := +\infty$ otherwise. 
Since $\{X^{(n)}\}_{n \in \N}$ is independent of $\{\mathbf{A}_{n,k}\}_{n \in \N}$,   and the
case assumption shows that $\{\|X^{(n)}\|_2/\sqrt{n}\}_{n \in \N}$ satisfies an LDP with speed $n$ and GRF $J_X$, {Lemma~\ref{ldp-prod} implies that $\{\mathbf{A}_{n,k}(1,\cdot),\|X^{(n)}\|_2/\sqrt{n}\}_{n \in \N}$ satisfies an LDP at speed $n$ with GRF $\mathcal{J}_k(y)+J_X(\alpha)$ for $y\in\R^k$ and $\alpha\in\R_+$.
The contraction principle applied to the mapping $\R^k \times \R_+ \ni (y,\alpha)\mapsto \alpha y \in \R^k$ implies that }
$\{n^{-1/2}\mathbf{A}_{n,k}^T X^{(n)}\}_{n\in\N}$ satisfies an LDP with GRF
\begin{equation*}
  I_{\mathbf{A}X,k}(x) := \inf_{y \in \R^k, z\in \R} \left\{ -\tfrac{1}{2} \log(1- \|y\|_2^2) +  J_X(z) : x = yz,\, \|y\|_2 \le 1,\, z \ge 0\right\}, \quad x\in \R^k. 
\end{equation*}
We can without loss of generality restrict the range of $z$ in the infimum  to $z > 0$; then,  
substituting $y = x/z$ and noting that the constraint  $\|y\|_2 \le 1$ is  equivalent to
$\|x\|_2 \le z$, it follows that
\[  I_{\mathbf{A}X,k}(x) = \inf_{z\ge \|x\|_2} \left\{ -\tfrac{1}{2} \log\left(1- \frac{\|x\|_2^2}{z^2}\right) +  J_X(z) \right\} = \inf_{z >  \|x\|_2} \left\{ -\tfrac{1}{2} \log\left(1- \frac{\|x\|_2^2}{z^2}\right) +  J_X(z) \right\}, \,  x \in \R^k.   
\]
On rewriting the above in terms of $ c= \|x\|_2/z$, we obtain the form 
 \eqref{Jan-constant1} for the rate function $I_{\mathbf{A}X,k}$.
 
\begin{case}
Suppose Assumption~\ref{ass-rescaled} holds with sequence $\{s_n\}_{n \in \N}$ and GRF $J_X$. 
\end{case} 
From Lemma~\ref{lem-toprow} and~\eqref{eq-eqd}, denoting $\zeta^{(n)}:=(\zeta_1,\ldots,\zeta_n)$ with $\{\zeta_i\}_{i\in\mathbb{N}}$ i.i.d. $\mathcal{N}(0,1)$ random variables, 
we have
\begin{equation}\label{reform-bn}
\frac{1}{\sqrt{n}}\mathbf{A}^T_{n,k}X^{(n)}\buildrel (d) \over = {\mathbf{A}_{n,k}(1,\cdot)}\frac{\norm{X^{(n)}}_2}{\sqrt{n}} \buildrel (d) \over = \frac{\zeta^{(k)}/\sqrt{s_n}}{\norm{\zeta^{(n)}}_2/\sqrt{n}}\frac{\sqrt{s_n}\norm{X^{(n)}}_2}{n},
\end{equation}
Now, consider the sequence of vectors
\[
R_n :=\left (\frac{\zeta^{(k)}}{\sqrt{s_n}},\frac{\sqrt{n}}{\norm{\zeta^{(n)}}_2},\frac{\sqrt{s_n}\norm{X^{(n)}}_2}{n} \right),\quad n\in\N, 
\]
which are almost surely well-defined.  
Then $\zeta_i/\sqrt{s_n}$ is a $\mathcal{N}(0,1/s_n)$ random variable with $s_n\to \infty$ as $n\to\infty$. Hence, {by the G\"artner-Ellis theorem, the sequence $\{\zeta^{(k)}/\sqrt{s_n}\}_{n\in\mathbb{N}}$ satisfies an LDP in $\R^k$ at speed $s_n$ with GRF $x\mapsto \|x\|^2/2$}. Assumption~\ref{ass-rescaled} implies that $\{\sqrt{s_n}\|X^{(n)}\|_2/n\}_{n\in\N}$  satisfies an LDP, also  at speed $s_n$, and with GRF $J_X$. Finally, by Remark~\ref{lem-chi-2},
{the contraction principle applied to $x \mapsto 1/\sqrt{x}$} and the strong law of large numbers, $\{\sqrt{n}/\|\zeta^{(n)}\|_2\}_{n\in\N}$ satisfies an LDP at speed $n$ and converges almost surely to $1$, which implies that the associated GRF has a unique minimum at $1$. 

Together with the independence of $\{\zeta_i\}_{i=1,\ldots,k}$ and $X^{(n)}$, and Lemma~\ref{ldp-prod} {(with $U_n = \zeta^{(k)}/\sqrt{s_n}$, $V_n = \sqrt{s_n}\|X^{(n)}\|_2/n$,  $W_n = \sqrt{n}/\|\zeta^{(n)}\|_2$ and $m = 1$), this implies
  that $R_n$ satisfies an LDP with GRF given  by
$\R^k \times \R_+ \times \R_+ \ni (r_1,r_2,r_3) \mapsto \frac{\norm{r_1}^2_2}{2}+J_X(r_3)$ if $r_2 = 1$ (and $+\infty$ otherwise). 
   Combining this with \eqref{reform-bn} and 
  the contraction principle for the continuous mapping $\R^k \times \R_+^2  \ni (r_1,r_2,r_3) \mapsto r_1 r_2 r_3  \in \R^k$, 
it follows that} the sequence of $k$-dimensional vectors, $\{n^{-1/2}\mathbf{A}_{n,k}^T X^{(n)}\}_{n\in\N}$, satisfies an LDP at speed $s_n$ with GRF
\begin{align*}
I_{\mathbf{A}X,k}(x)&=\inf\left\{\frac{\norm{r_1}^2_2}{2}+J_X(r_3):r_1\in\mathbb{R}^k,r_3>0 ,x=r_1r_3\right\}\\
&=\inf_{c>0} \left\{J_X\left(\frac{\norm{x}_2}{c} \right)+\frac{c^2}{2}\right\}, 
\end{align*}
which coincides with the expression given in  \eqref{Jan-constant1}. This completes the proof of Theorem \ref{th-constant}.
\end{proof}

\subsection{Proof of the empirical measure LDP in the sublinear regime $1\ll k_n \ll n$}\label{sec-on}

We now present the  proof of Theorem \ref{th-on}. We first start with an  auxiliary result. 

\begin{proposition}\label{lem-aux2}
Fix $q\in[1,2)$. Suppose $\{k_n\}_{n\in\N}$ grows sublinearly. Then, $\{\hat{\mu}^n_{\mathbf{A}}\}_{n\in\N}$ defined in~\eqref{eq-muna} satisfies an LDP in $\cP_q(\R)$ at speed $k_n$ with GRF $H(\cdot |\gamma_1) : \cP(\R) \ra [0,\infty]$.
\end{proposition}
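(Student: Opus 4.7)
The plan is to exploit the distributional representation from Lemma~\ref{lem-row1}, which gives $\sqrt{n}\mathbf{A}_{n,k_n}(1,\cdot) \eqdist R_n (\zeta_1, \ldots, \zeta_{k_n})$, where $R_n := \sqrt{n}/\|\zeta^{(n)}\|_2$ and $\zeta_1, \zeta_2, \ldots$ are i.i.d.\ $\mathcal{N}(0,1)$. Thus $\hat{\mu}^n_{\mathbf{A}} \eqdist \frac{1}{k_n}\sum_{j=1}^{k_n}\delta_{R_n \zeta_j}$, and I would compare this to the empirical measure $\nu_n := \frac{1}{k_n}\sum_{j=1}^{k_n}\delta_{\zeta_j}$ of i.i.d.\ standard Gaussians.

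The first step is to upgrade the conclusion of Lemma~\ref{lem-nu_n} to show that $\{\nu_n\}_{n\in\N}$ satisfies an LDP in $\cP_q(\R)$ (equipped with the $q$-Wasserstein topology) at speed $k_n$ with GRF $H(\cdot \mid \gamma_1)$. The weak-topology LDP is supplied by Sanov's theorem. To enhance to the $q$-Wasserstein topology, I would establish exponential tightness via a higher moment: pick $q' \in (q,2)$, so that $\Lambda_{q'}(t) := \log \E[e^{t|\zeta_1|^{q'}}]$ is finite for every $t \geq 0$. By Cram\'er's theorem, $M_{q'}(\nu_n) = \frac{1}{k_n}\sum_{j=1}^{k_n}|\zeta_j|^{q'}$ satisfies an LDP at speed $k_n$ with a good rate function, so that for every $L>0$ there exists $C_L<\infty$ with
\[
\limsup_{n\to\infty} \tfrac{1}{k_n}\log \P\bigl(M_{q'}(\nu_n) > C_L\bigr) \leq -L.
\]
By a minor adaptation of Lemma~\ref{lem-compcat}, the set $\{\nu \in \cP_q(\R): M_{q'}(\nu) \leq C_L\}$ is compact in the $q$-Wasserstein topology, giving exponential tightness. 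Combined with the weak LDP, this upgrades the LDP to $\cP_q(\R)$ with the same rate function $H(\cdot\mid\gamma_1)$.

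Next, I would show that $\hat{\mu}^n_{\mathbf{A}}$ and $\nu_n$ are exponentially equivalent at speed $k_n$ in $\cW_q$. Using the obvious synchronous coupling,
\[
\cW_q(\hat{\mu}^n_{\mathbf{A}},\nu_n)^q \,\leq\, \tfrac{1}{k_n}\sum_{j=1}^{k_n}\bigl|R_n \zeta_j - \zeta_j\bigr|^q = |R_n-1|^q \cdot \tfrac{1}{k_n}\sum_{j=1}^{k_n}|\zeta_j|^q.
\]
By Remark~\ref{lem-chi-2} and the contraction principle applied to $x\mapsto 1/\sqrt{x}$, $\{R_n\}_{n\in\N}$ satisfies an LDP at speed $n$ with a GRF having unique minimum at $1$; since $n\gg k_n$, for every $\varepsilon>0$, $\limsup_n k_n^{-1}\log\P(|R_n-1|>\varepsilon) = -\infty$. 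On the other hand, $\{\frac{1}{k_n}\sum_{j=1}^{k_n}|\zeta_j|^q\}_{n\in\N}$ satisfies a Cram\'er LDP at speed $k_n$ with a GRF minimized at $\mathcal{M}_q$, so it is exponentially tight from above. A union-bound argument analogous to Remark~\ref{rm-eq-conv} then yields
\[
\limsup_{n\to\infty}\tfrac{1}{k_n}\log\P\bigl(\cW_q(\hat{\mu}^n_{\mathbf{A}},\nu_n) > \delta\bigr) = -\infty
\]
for every $\delta>0$. Applying Remark~\ref{rem-expeq} transfers the LDP from $\{\nu_n\}_{n\in\N}$ to $\{\hat{\mu}^n_{\mathbf{A}}\}_{n\in\N}$ with the same GRF $H(\cdot\mid \gamma_1)$.

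The main technical obstacle is the enhancement of Sanov's theorem from the weak to the $q$-Wasserstein topology, which is what forces the restriction $q<2$ (so that a higher exponent $q'<2$ is available with $\Lambda_{q'}$ finite everywhere, a property that fails for Gaussian variables at $q=2$). Once this enhancement is established, the exponential equivalence is straightforward, since the normalization $R_n$ concentrates at $1$ at the fast speed $n$ while the $q$-th empirical moment is exponentially bounded at the slower speed $k_n$.
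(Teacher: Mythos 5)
Your proof is correct, and it rearranges the paper's argument in a way worth noting. The paper first proves exponential equivalence of $\hat{\mu}^n_{\mathbf{A}}$ and $\nu_n$ in the \emph{bounded-Lipschitz} metric (so only in the weak topology), deduces the weak-topology LDP for $\hat{\mu}^n_{\mathbf{A}}$ from Sanov, and only then upgrades to $\cP_q(\R)$ by proving exponential tightness of $\hat{\mu}^n_{\mathbf{A}}$ itself, which requires an LDP for $M_2(\tilde{\nu}_n) = \tfrac{n}{k_n}\|\zeta^{(k_n)}\|_2^2/\|\zeta^{(n)}\|_2^2$ via Lemma~\ref{ldp-prod}. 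You instead upgrade Sanov's theorem for the simpler i.i.d.\ object $\nu_n$ to the $q$-Wasserstein topology first, and then prove exponential equivalence \emph{directly in} $\mathcal{W}_q$ via the synchronous coupling bound $\mathcal{W}_q(\hat{\mu}^n_{\mathbf{A}},\nu_n)^q \le |R_n-1|^q\cdot\tfrac{1}{k_n}\sum_j|\zeta_j|^q$; this handles the topology strengthening for $\hat{\mu}^n_{\mathbf{A}}$ automatically and avoids a separate tightness argument for the coupled sequence. Both routes rest on the same two pillars (Sanov plus the fast concentration of $R_n$ at speed $n \gg k_n$), so the difference is organizational rather than conceptual, but yours is arguably slightly more economical.

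One small remark on your closing diagnosis: the restriction $q<2$ is not really forced by the need for a $q'<2$ with $\Lambda_{q'}$ everywhere finite. The paper's own tightness argument uses $q'=2$, where the log-mgf of $\zeta_1^2$ is finite only on $(-\infty,1/2)$ but the Cram\'er rate function $J_{\zeta^2}$ is still good and diverges at infinity, which is all that exponential tightness requires. The genuine obstruction at $q=2$ is that second-moment balls $K_{2,j}$ are compact in $\mathcal{W}_q$ only for $q<2$ (Lemma~\ref{lem-compcat}); with $q=2$ one would need control of a strictly higher moment, which is not available at speed $k_n$. Your choice of $q'\in(q,2)$ works fine, so this does not affect the validity of the proof.
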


\begin{proof}
Recall that $\{\zeta_j\}_{j \in \N}$ are i.i.d. $\mathcal{N}(0,1)$ random variables, $\zeta^{(n)} = (\zeta_1, \ldots, \zeta_n)$. Due to \eqref{eq-muna} and  Lemma \ref{lem-row1}, $\{\hat{\mu}^n_{\mathbf{A}}\}_{n\in\N}\buildrel (d) \over =\{\tilde{\nu}_n\}_{n\in\N}$, where 
\begin{equation*} 
 \tilde{\nu}_n := \frac{1}{k_n} \sum_{j=1}^{k_n} \delta_{\sqrt{n} \zeta_j / \|\zeta^{(n)}\|_2}, \quad n\in\N.
\end{equation*} 
We now claim (and justify below) that $\{\nu_n\}_{n\in\N}$ defined in Lemma~\ref{lem-nu_n} is exponentially equivalent (at speed $k_n$) to the sequence $\{\tilde{\nu}_n\}_{n\in\N}$ defined in \eqref{def-nun2}.

To prove the claim, let $z_n := \sqrt{n} / \|\zeta^{(n)}\|_2$ and let $d_{\text{BL}}$ denote the bounded-Lipschitz metric (which metrizes weak convergence). Then, letting $\text{BL}(\R)$ denote the space of bounded Lipschitz functions $f:\R\ra\R$ with Lipschitz constant 1,
\begin{align*}
d_{\text{BL}}(\nu_n, \tilde{\nu}_n) &\le \sup_{f\in\text{BL}(\R)} \frac{1}{k_n} \sum_{j=1}^{k_n} |f(\zeta_j) - f(\zeta_j z_n)| \le \frac{1}{k_n} \sum_{j=1}^{k_n} |\zeta_j - \zeta_j z_n| = |z_n - 1| \cdot \frac{1}{k_n} \sum_{j=1}^{k_n} |\zeta_j|.
\end{align*}
Hence, for  $\delta,\epsilon > 0$, we have 
\begin{align*}
\P(d_{\text{BL}}(\nu_n,\tilde{\nu}_n) > \delta) &\le \P\left( |z_n - 1| \cdot \frac{1}{k_n} \sum_{j=1}^{k_n} |\zeta_j| > \delta \right) \le \P\left( \frac{1}{k_n} \sum_{j=1}^{k_n} |\zeta_j| > \frac{\delta}{\epsilon} \right)	 + \P(|z_n -1| > \epsilon).
\end{align*}
Since $\zeta_1$ has a finite exponential moment,  Cram\'{e}r's theorem \cite[Theorem 2.2.1]{DemZeiBook} implies  
\begin{equation*}
  \lim_{n\ra\infty}\frac{1}{k_n} \log \P\left( \frac{1}{k_n} \sum_{j=1}^{k_n} |\zeta_j| > \delta/\epsilon \right) = -\mathcal{I}(\delta/\epsilon),
\end{equation*}
for some convex and superlinear rate function $\mathcal{I}$. Also, by Cram\'er's theorem for $\sum_{i=1}^n \zeta_i^2/n$, 
 the continuity of $x \mapsto 1/\sqrt{x}$ on $(0,\infty)$ and the contraction principle, 
the sequence $\{z_n\}_{n\in\N}$ satisfies an LDP at speed $n$. Hence, due to the sublinear growth of $k_n$, we have 
\begin{equation*}
  \lim_{n\ra\infty}\frac{1}{k_n} \log \P(|z_n -1| > \epsilon) = -\infty .
\end{equation*}
Combining the last three displays, we find that 
\begin{equation*}
  \lim_{n\ra\infty} \frac{1}{k_n} \log \P(d_{\text{BL}}(\nu_n,\tilde{\nu}_n) > \delta) \le -\mathcal{I}(\delta/\epsilon).
\end{equation*}
The claim of exponential equivalence follows on sending $\epsilon\to0$, due to the superlinearity of ${\mathcal{I}}$.

{In light of Remark \ref{rem-expeq}, the last observation taken together with Lemma~\ref{lem-nu_n}  implies that}  
  $\{\hat{\mu}^n_{\mathbf{A}}\}_{n\in\N}$  satisfies an LDP in $\cP(\R)$ at speed $k_n$ with GRF $H(\cdot|\gamma_1)$. 
In order to strengthen the LDP for $\{\hat{\mu}^n_{\mathbf{A}}\}_{n\in\N}$, by \cite[Corollary 4.2.6]{DemZeiBook}, {it suffices to show that $\{\hat{\mu}^n_{\mathbf{A}}\}_{n\in\N}$ is exponentially tight in $\cP_q(\R)$.  Since $\{\hat{\mu}^n_{\mathbf{A}}\}_{n\in\N}\buildrel (d) \over =\{\tilde{\nu}_n\}_{n\in\N}$ and for each $j > 0$, the set  $K_{2,j}$ defined in \eqref{k2} is compact with respect to $\cP_q(\R)$ due to Lemma \ref{lem-compcat}, if suffices to show that for $\varepsilon >0$, there exists $M < \infty$ such that 
  \begin{equation}
    \label{exptight}
    \lim_{n\to\infty}  \frac{1}{k_n}\log\P(\tilde{\nu}_n \in K_{2,M}^c) = \lim_{n\to\infty} \frac{1}{k_n}\log\P(M_2(\tilde{\nu}_n)>M)
    <-\varepsilon. 
  \end{equation}
  Now note that $\{M_2(\tilde{\nu}_n) = \frac{n}{k_n} \sum_{j=1}^{k_n}\frac{\zeta^2_j}{ \|\zeta^{(n)}\|_2}\}_{n\in\N}$ and 
  by Remark~\ref{lem-chi-2}, $\{\|\zeta^{(k_n)}\|^2_2/k_n\}_{n\in\N}$ and $\{\|\zeta^{(n)}\|^2_2/n\}_{n\in\N}$ satisfy LDPs at speed $k_n$ and $n$, respectively, both with the same GRF $J_{\zeta^2}(x)$ defined in \eqref{rate-chi-2}.
  Thus, $\{n/\|\zeta^{(n)}\|^2_2\}_{n\in\N}$ also satisfies an LDP at speed $n$ by the contraction principle.  Together with Lemma \ref{ldp-prod} and the fact that $J_{\zeta^2}$ has a unique minimizer at $1$, this implies the sequence $\{M_2(\tilde{\nu}_n)\}_{n\in\N}$ satisfies an LDP at speed $k_n$ with GRF $J_{\zeta^2}$. Now, for $\varepsilon>0$, pick $M$ such that $J_{\zeta^2}(M)>\varepsilon$. We then obtain 
\begin{align*}
  \lim_{n\to\infty} \frac{1}{k_n}\log\P(M_2(\tilde{\nu}_n)>M)
&\leq-J_{\zeta^2}(M) < - \varepsilon, 
\end{align*}
which   proves \eqref{exptight}, and thus concludes the proof of the lemma. 
}
\end{proof}

\begin{proof}[Proof of Theorem \ref{th-on}]
By Assumption~\ref{ass-normldp}, $\|X^{(n)}\|_2/\sqrt{n}$ satisfies an LDP at speed $s_n$ with GRF $J_X$
    and  when $s_n \gg k_n$ by Remark \ref{ref-speeds} (and the additional assumption that $J_X$ has a unique minimizer $m$),
    $\{\|X^{(n)}\|_2/\sqrt{n}\}_{n\in\N}$ also satisfies an LDP at speed $k_n$ with (degenerate) rate function $\chi_m$.
       Also, by Proposition~\ref{lem-aux2} and Remark \ref{ref-speeds},  $\{\hat{\mu}^n_{\mathbf{A}}\}_{n\in\N}$ satisfies an LDP at speed $k_n$ with  GRF $H(\cdot|\gamma_1)$ and, when $s_n \ll k_n$,   $\{\hat{\mu}^n_{\mathbf{A}}\}_{n\in\N}$ also
  satisfies an LDP at speed $s_n$ with (degenerate) GRF  $\chi_{\gamma_1}$ defined in Remark \ref{ref-speeds}
  (see also Lemma~\ref{lem-nu_n}).

Combining the above observations with Lemma~\ref{sub-aux}, we see that $\{L^n\}_{n\in\N}$ satisfies an LDP at speed
$s_n\wedge k_n$ with GRF $\mathbb{I}_{L,k_n}$, where when $s_n\gg k_n$,
\begin{align*}
\mathbb{I}_{L,k_n}(\mu) &= \inf_{\nu\in \cP_q(\R), c\in \R_+} \left\{ H(\nu | \gamma_1) + \chi_m(c) : \mu = \nu(\,\cdot \times c^{-1})\right\}\\
 &= H(\mu(\,\cdot \times m) | \gamma_1)\\
     &= H(\mu | \gamma_1(\,\cdot \times m^{-1}))\\
     &= H(\mu | \gamma_m),
\end{align*}
when $s_n=k_n$,
\begin{align*}
\mathbb{I}_{L,k_n}(\mu) &= \inf_{\nu\in \cP_q(\R), c\in \R_+} \left\{ H(\nu | \gamma_1) + J_X(c) : \mu = \nu(\,\cdot \times c^{-1})\right\}\\
 &= \inf_{ c\in \R_+} \left\{ H(\mu | \gamma_c) + J_X(c) \right\},
\end{align*}
and when $s_n\ll k_n$,
\begin{align*}
\mathbb{I}_{L,k_n}(\mu) &= \inf_{\nu\in \cP_q(\R), c\in \R_+} \left\{ \chi_{\gamma_1}(\nu)+ J_X(c) : \mu = \nu(\,\cdot \times c^{-1})\right\}\\
 &=\begin{cases}
J_X(c),&\quad \mu=\gamma_c\\
+\infty, &\quad  \text{otherwise}.
\end{cases}
\end{align*}
This proves Theorem \ref{th-on}. 
\end{proof}

\subsection{Proof of the empirical measure LDP in the linear regime $k_n \sim \lambda n$}
\label{sec-simn}

Throughout this  section, suppose $\{k_n\}_{n\in\N}$ grows linearly with rate $\lambda \in (0,1]$. As in the sublinear regime, 
we first analyze the sequence of empirical measures $\{\hat{\mu}^n_{\mathbf{A}}\}_{n\in\N}$ of \eqref{eq-muna} as a precursor to the analysis of $\{\ln\}_{n\in\N}$ of \eqref{eq-empproj}.

\begin{proposition}\label{lem-aux}
Fix $q\in[1,2)$. Suppose $k_n$ grows linearly with rate $\lambda \in (0,1]$. Then, the sequence $\{\hat{\mu}^n_{\mathbf{A}}\}_{n\in\N}$ satisfies an LDP in $\cP_q(\R)$ at speed $n$ with GRF $\mathcal{H}_\lambda$ of \eqref{eq-hrf}.
\end{proposition}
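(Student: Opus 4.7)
The plan is to reduce to the i.i.d.\ Gaussian setting via Lemma~\ref{lem-row1}, exploit the natural splitting of $\|\zeta^{(n)}\|_2^2$ into a ``projected'' and ``orthogonal'' part, apply Sanov's and Cram\'er's theorems separately, and then recombine via the contraction principle. Concretely, by Lemma~\ref{lem-row1},
\[
\hat{\mu}^n_{\mathbf{A}} \eqdist \frac{1}{k_n}\sum_{j=1}^{k_n}\delta_{\zeta_j/\sqrt{Z_n}}, \qquad Z_n := \frac{1}{n}\|\zeta^{(n)}\|_2^2,
\]
so $\hat{\mu}^n_{\mathbf{A}}$ is the pushforward of $\nu_n$ (defined in \eqref{def-nun2}) under the rescaling $x \mapsto x/\sqrt{Z_n}$. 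Write
\[
Z_n \;=\; \frac{k_n}{n}\, M_2(\nu_n) \,+\, \Bigl(1-\frac{k_n}{n}\Bigr)\widetilde Z_n, \qquad \widetilde Z_n := \frac{1}{n-k_n}\sum_{j=k_n+1}^{n}\zeta_j^2,
\]
and note that, crucially, $\widetilde Z_n$ is \emph{independent} of $\nu_n$.

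Next I would assemble a joint LDP for $(\nu_n,\widetilde Z_n)$ at speed $n$. Sanov's theorem gives an LDP for $\{\nu_n\}$ in $\cP(\R)$ with the weak topology at speed $k_n\sim\lambda n$, hence at speed $n$ with GRF $\lambda H(\cdot\,|\gamma_1)$. By Remark~\ref{lem-chi-2} applied to the block of size $n-k_n\sim(1-\lambda)n$, the sequence $\{\widetilde Z_n\}$ satisfies an LDP at speed $n$ with GRF $(1-\lambda)J_{\zeta^2}$, where $J_{\zeta^2}$ is given in \eqref{rate-chi-2}. Independence then yields the joint LDP at speed $n$ with GRF $(\mu,t)\mapsto \lambda H(\mu|\gamma_1)+(1-\lambda)J_{\zeta^2}(t)$. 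Applying the contraction principle along $(\mu,t)\mapsto(\mu,\,\lambda M_2(\mu)+(1-\lambda)t)$ and then along the continuous map $\Psi:\cP_q(\R)\times(0,\infty)\to\cP_q(\R)$, $\Psi(\mu,z):=\mu\circ(\sqrt{z}\,\cdot)^{-1}$ (continuity follows as in Lemma~\ref{lem-repn}), produces the candidate rate function
\[
\mathcal{H}_\lambda(\mu') \;=\; \inf_{z>0}\Bigl\{\lambda H\!\bigl(\Phi_{1/\sqrt{z}}(\mu')\,\big|\,\gamma_1\bigr)+(1-\lambda)J_{\zeta^2}\!\Bigl(\tfrac{z-\lambda z M_2(\mu')}{1-\lambda}\Bigr)\Bigr\},
\]
where $\Phi_c(\mu)(A):=\mu(cA)$ and the constraint forces $\lambda M_2(\mu')\le 1$.

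The remaining step is to solve this scalar optimization. Using the identities $h(\Phi_{1/\sqrt{z}}(\mu'))=h(\mu')+\tfrac12\log z$ and $M_2(\Phi_{1/\sqrt{z}}(\mu'))=zM_2(\mu')$, and expressing $H(\,\cdot\,|\gamma_1)=-h(\cdot)+\tfrac12\log(2\pi)+\tfrac12 M_2(\cdot)$, I differentiate in $z$; the first-order condition collapses (the coefficients of $M_2(\mu')$ cancel) to $\lambda/z+(1-\lambda)/z=1$, i.e.\ $z=1$. Substituting $z=1$ and simplifying produces exactly the expression in \eqref{eq-hrf}, with the convention $0\log 0=0$ taking care of the edge cases $\lambda M_2(\mu')=1$ and $\lambda=1$. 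Uniqueness of $\gamma_1$ as the minimizer (Remark~\ref{rm-min-H}) then identifies this as $\mathcal{H}_\lambda$.

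Finally I would upgrade from the weak to the $q$-Wasserstein topology. By \cite[Corollary 4.2.6]{DemZeiBook} and Lemma~\ref{lem-compcat}, it suffices to establish exponential tightness in $\cP_q(\R)$ at speed $n$, for which I need to show that for every $\varepsilon>0$ there exists $M$ with $\limsup_n n^{-1}\log\P(M_2(\hat{\mu}^n_{\mathbf{A}})>M)<-\varepsilon$. But
\[
M_2(\hat{\mu}^n_{\mathbf{A}}) \;=\; \frac{1}{Z_n}\cdot\frac{1}{k_n}\sum_{j=1}^{k_n}\zeta_j^2,
\]
and by Remark~\ref{lem-chi-2} (applied at speeds $k_n$ and $n$) together with the contraction principle and Lemma~\ref{ldp-prod}-type arguments, the right-hand side satisfies an LDP at speed $n$ with good rate function having unique minimum at $1$; choosing $M$ large enough gives the bound. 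I expect the main technical obstacle to be the clean execution of the two successive contractions (verifying the relevant maps are continuous in the $q$-Wasserstein topology on the effective domain, and that the infimum is attained at $z=1$ unambiguously, including the boundary case $\lambda M_2(\mu')=1$); the variational simplification, though clean, must be written carefully because it is the step that reveals the surprisingly symmetric form of $\mathcal{H}_\lambda$.
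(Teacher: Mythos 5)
Your decomposition of $Z_n$ into the projected and orthogonal blocks, the final scalar optimization over $z$, and the exponential-tightness upgrade to $\cP_q(\R)$ all track the paper's argument (Lemmas \ref{lem-i1}, \ref{lem-i2} and the end of the proof of Proposition \ref{lem-aux}). However, there is a genuine gap at the pivotal step: you cannot obtain the joint LDP for $(\nu_n, Z_n)$ by combining Sanov's theorem for $\{\nu_n\}$ with the contraction principle along $(\mu,t)\mapsto(\mu,\,\lambda M_2(\mu)+(1-\lambda)t)$. Sanov's theorem gives the LDP for $\{\nu_n\}$ only in the weak topology (and, after the tightness argument, in $\cP_q(\R)$ for $q<2$), and the second-moment map $M_2$ is \emph{not} continuous in either of these topologies; moreover, by \cite[Theorem 1.1]{WanWanWu10} (see Remark \ref{rmk-technical}), Sanov's theorem fails for the standard Gaussian in the $2$-Wasserstein topology, where $M_2$ would be continuous. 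So the contraction principle is simply unavailable for this map, and this is not a matter of ``clean execution'': the correct joint rate function for $(\frac{1}{k_n}\sum_j\delta_{\zeta_j},\,\frac{1}{k_n}\sum_j\zeta_j^2)$ is \emph{not} supported on the graph $\{t=M_2(\mu)\}$; it equals $\lambda H(\mu|\gamma_1)+\tfrac{\lambda}{2}[t-M_2(\mu)]$ on $\{t\ge M_2(\mu)\}$, reflecting the exponentially cheap possibility that a vanishing fraction of the sample carries a macroscopic amount of second moment to infinity.

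The paper circumvents this by invoking the approximate contraction principle of Ben Arous--Dembo--Guionnet (Proposition \ref{prop-bdgrestate} and Corollary \ref{cor-iid} in Appendix \ref{app-a}) to establish the joint LDP of the empirical measure and the integral of the unbounded function $\mathbf{c}(x)=x^2$ against it, which is where the extra term $F(s-\int\mathbf{c}\,d\mu)$ comes from; Lemma \ref{lem-i1} then assembles the three-component LDP and Lemma \ref{lem-i2} performs the rescaling. It so happens that, after infimizing over the orthogonal block, your candidate rate function coincides with the correct one (the Gaussian ``cost of escaping mass'' exactly matches the $J_{\zeta^2}$ penalty you wrote down), so your final formula for $\mathcal{H}_\lambda$ is right --- but only by a coincidence of the sort discussed in Remark \ref{rmk-technical}, not because the contraction principle applies. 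To repair the proof you must either establish the joint LDP for $(\nu_n, M_2(\nu_n))$ directly (as the paper does via Corollary \ref{cor-iid}) or supply an independent argument for why the naive variational formula is nevertheless the correct rate function.
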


The proof of the above result is deferred to the end of this section. Taking it as given, we now prove the main LDP in the linear regime.

\begin{proof}[Proof of Theorem \ref{th-simn}]
Suppose Assumption~\ref{ass-normldp} is satisfied with corresponding sequence $\{s_n\}_{n\in\N}$.
First, suppose $s_n=n$. Then, {along with   Proposition~\ref{lem-aux} and Lemma~\ref{sub-aux}}, this implies that 
$\{\ln\}_{n\in\N}$ satisfies an LDP at speed $n$ with GRF $\hat{\mathbb{I}}_{L,\lambda}$, defined by 
 \begin{align*}\hat{\mathbb{I}}_{L,\lambda}(\mu) & := \inf_{\nu \in \mathcal{P}(\R), c\in \R_+}
   \left\{ \mathcal{H}_\lambda(\nu) + J_X(c) : \mu (\cdot) = \nu(\,\cdot \times c^{-1})  \right\}, \\
& = { \inf_{c\in \R_+}
   \left\{ \mathcal{H}_\lambda(\mu (\cdot \times c)) + J_X(c)  \right\}, 
   \quad \mu \in \cP(\R). }
 \end{align*}
 {In turn, since $\mathcal{H}_\lambda (\nu) = \infty$ if $M_2(\nu) > 1/\lambda$ and $M_2(\mu(\cdot \times c)) =
 M_2(\mu)/c^2$, this implies
 \begin{equation}
\label{temp-GRF-linear}
   \hat{\mathbb{I}}_{L,\lambda}(\mu) = \inf_{c > \sqrt{\lambda M_2(\mu)}} 
 \left\{ \mathcal{H}_\lambda(\mu (\cdot \times c)) + J_X(c)  \right\}, \quad  \mu \in \cP(\R). 
  \end{equation}
 Now for $\mu \in \cP(\R)$ and $c > \sqrt{\lambda M_2(\mu)}$,  the definitions of $\mathcal{H}_\lambda$ and
 $h$ in \eqref{eq-hrf} and \eqref{ent}, respectively, imply 
  \begin{align*}
    \mathcal{H}_\lambda(\mu(\,\cdot \times c))
   &= -\lambda \, h(\mu(\,\cdot \times c)) + \tfrac{\lambda}{2} \log (2\pi e) + \tfrac{1-\lambda}{2}\log\left(\tfrac{1-\lambda}{1-\lambda\,M_2(\mu(\,\cdot \times c))} \right)\\
   &= -\lambda \, h(\mu) + \lambda \log(c) + \tfrac{\lambda}{2} \log (2\pi e) + \tfrac{1-\lambda}{2}\log\left(\tfrac{1-\lambda}{1-(\lambda / c^2)\,M_2(\mu)} \right) \\
   &= \lambda \log(c)- \tfrac{1-\lambda}{2} \log\left( 1 - \tfrac{\lambda M_2(\mu)}{c^2} \right) -\lambda h(\mu) + \tfrac{\lambda}{2} \log(2\pi e) + \tfrac{1-\lambda}{2} \log(1- \lambda).
  \end{align*}
  When substituted back into \eqref{temp-GRF-linear},  this shows that for every
  $\mu \in \cP(\R)$, 
  $\hat{\mathbb{I}}_{L,\lambda}(\mu) = {\mathbb{I}}_{L,\lambda}(\mu)$, with the latter defined as in \eqref{Jan-linear}. } 
 
 Next, consider the case $s_n\ll n$. {By Lemma~\ref{lem-aux},
 $\{\hat{\mu}^n_{\mathbf{A}}\}_{n\in\N}$ satisfies an LDP at speed $s_n$ with GRF $\mathcal{H}_\lambda$. Therefore, by Remark \ref{ref-speeds} and Remark \ref{rm-min-H}, $\{\hat{\mu}^n_{\mathbf{A}}\}_{n\in\N}$ also satisfies an LDP at speed $s_n$ with GRF $\chi_{\gamma_1}$.} 
  Then, by Assumption~\ref{ass-normldp} and Lemma~\ref{sub-aux}, $\{\ln\}_{n\in\N}$ satisfies an LDP at speed $s_n$ with GRF
\begin{align*}
&   \inf_{\nu\in \cP(\R), c\in \R_+} \left\{ \chi_{\gamma_1}(\nu)+ J_X(c) : \mu = \nu(\,\cdot \times c^{-1})\right\}\\
 &\qquad \qquad  =\begin{cases}
J_X(c),&\quad \mu=\gamma_c,\\
+\infty, &\quad  \text{otherwise},
\end{cases}
\end{align*}
which coincides with the expression for $I_{L,\lambda}(\mu)$ given in Theorem \ref{th-simn} for the case $s_n \ll n$. 
\end{proof}
{
\begin{remark}
  \label{rem-linsnn}
  Note that the above  proof in particular shows that, under Assumption \ref{ass-normldp} the rate function $\mathbb{I}_{L,\lambda}(\mu)$
  from Theorem \ref{th-simn} in the linear regime in the case $s_n = n$, satisfies, for $\lambda > 0$, 
  \[   \mathbb{I}_{L,\lambda}(\mu)  =   \inf_{c\in \R_+}
  \left\{ \mathcal{H}_\lambda(\mu (\cdot \times c)) + J_X(c)  \right\},
  \]
  with $\mathcal{H}_\lambda$ defined as in \eqref{eq-hrf}.
  \end{remark}
 }

The rest of this section is devoted to the proof of Proposition \ref{lem-aux}, which is broken down into the intermediate steps given by Lemmas \ref{lem-i1} and \ref{lem-i2} below.

\begin{lemma}\label{lem-i1}
  {Suppose $k_n/n \to \lambda \in (0,1]$ as $n \to \infty$,
  and  $\{\zeta_j\}_{j \in \N}$ is an i.i.d. sequence with common law $\gamma_1$ (the standard normal distribution). 
 If $\lambda\in(0,1)$, then 
the sequence 	
\begin{equation}\label{eq-i2seq}
 \left( \frac{1}{k_n} \sum_{j=1}^{k_n} \delta_{\zeta_j}\, , \quad\frac{1}{n-k_n}\sum_{j=k_n+1}^n \delta_{\zeta_j} \,, \quad \frac{1}{n}\sum_{j=1}^n \zeta_j^2  \right) , \quad n\in\N,
\end{equation}
satisfies an LDP in $[\cP(\R)]^2 \times \R_+$ at speed $n$ with GRF $I_{1,\lambda}$ defined by
\begin{equation*}
I_{1,\lambda}  (\mu, \nu, s) := \lambda \, H( \mu | \gamma_1) + (1-\lambda) \, H(\nu  | \gamma_1) + \tfrac{1}{2} \left[s - \lambda\,M_2(\mu) - (1-\lambda)\,M_2(\nu)\right] ,
\end{equation*}
if $\lambda\,M_2(\mu) + (1-\lambda)\,M_2(\nu) \le s$, and $I_{1,\lambda}(\mu,\nu,s) := +\infty$ otherwise. 
 On the other hand, if $\lambda = 1$  then the sequence
 \begin{equation*}
 \left( \frac{1}{k_n} \sum_{j=1}^{k_n} \delta_{\zeta_j}\, \,, \quad \frac{1}{n}\sum_{j=1}^n \zeta_j^2  \right) , \quad n\in\N,
\end{equation*}
satisfies an LDP in $\cP(\R) \times \R_+$ at speed $n$ with GRF $I_{1,1}$ defined by
\begin{equation*}
I_{1,1}  (\mu, s) :=  H( \mu | \gamma_1)  + \tfrac{1}{2} \left[s - M_2(\mu) \right] ,
\end{equation*}
if $M_2(\mu)  \le s$, and $I_{1,1}(\mu,s) := +\infty$ otherwise.}
\end{lemma}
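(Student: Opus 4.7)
I focus on the case $\lambda \in (0,1)$; the case $\lambda = 1$ is handled identically, simply by dropping the second block. The plan is to reduce to a per-block joint LDP for the empirical measure together with its empirical second moment, then combine by independence and apply the contraction principle.

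Set $B_1 := \{1,\ldots,k_n\}$, $B_2 := \{k_n+1,\ldots,n\}$, and define $T_n^{(i)} := M_2(\mu_n^{(i)}) = \frac{1}{|B_i|}\sum_{j\in B_i}\zeta_j^2$. Then
\begin{equation*}
\tfrac{1}{n}\sum_{j=1}^n \zeta_j^2 \,=\, \tfrac{k_n}{n}\, T_n^{(1)} + \tfrac{n-k_n}{n}\, T_n^{(2)},
\end{equation*}
so since $k_n/n \to \lambda$ and the two blocks are independent, it will suffice to establish a single-block LDP for $(\mu_n^{(i)}, T_n^{(i)})$ on $\cP(\R)\times \R_+$ (weak topology times Euclidean) at speed $|B_i|$; the target LDP then follows by tensorization and by applying the contraction principle to the continuous map $(\mu,t_1,\nu,t_2)\mapsto (\mu,\nu,\lambda t_1 + (1-\lambda)t_2)$.

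For the single-block LDP I will apply an abstract G\"artner--Ellis / Baldi--Bryc argument. Against any $(f,t)\in C_b(\R)\times \R$,
\begin{equation*}
\tfrac{1}{|B_i|}\log \mathbb{E}\!\left[\exp\!\Bigl(|B_i|\langle f,\mu_n^{(i)}\rangle + |B_i|\,t\,T_n^{(i)}\Bigr)\right] \,=\, \log\!\int_\R e^{f(x)+tx^2}\,\gamma_1(dx) \,=:\, \Lambda(f,t),
\end{equation*}
which is finite iff $t<1/2$. Exponential tightness of $\{\mu_n^{(i)}\}$ in the weak topology is standard (Sanov with $\gamma_1$, which has a good rate function), while exponential tightness of $\{T_n^{(i)}\}$ in $\R_+$ follows from an exponential Chebyshev bound. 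To compute the Legendre dual, for fixed $t<1/2$ write $\rho_t := \gamma_{\sigma_t}$ with $\sigma_t^2 := 1/(1-2t)$, so that $Z_t := \int e^{tx^2}\gamma_1(dx) = (1-2t)^{-1/2}$ and $H(\mu|\gamma_{\sigma_t}) = H(\mu|\gamma_1) - \tfrac12\log(1-2t) - t\,M_2(\mu)$. By the dual characterization $\sup_{f\in C_b(\R)}(\langle f,\mu\rangle - \log\int e^f d\rho_t) = H(\mu|\rho_t)$, the inner supremum over $f$ collapses to $H(\mu|\gamma_1) - t\,M_2(\mu)$, and the outer supremum over $t<1/2$ of $t\tau + H(\mu|\gamma_1) - t\,M_2(\mu)$ equals $H(\mu|\gamma_1) + \tfrac12(\tau - M_2(\mu))$ for $\tau\ge M_2(\mu)$ and $+\infty$ otherwise; this is the single-block rate function $I^{\mathrm{single}}(\mu,\tau)$.

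By independence, $((\mu_n^{(1)},T_n^{(1)}), (\mu_n^{(2)},T_n^{(2)}))$ then satisfies an LDP at speed $n$ with rate function $\lambda\, I^{\mathrm{single}}(\mu,t_1) + (1-\lambda)\, I^{\mathrm{single}}(\nu,t_2)$, and applying the contraction principle above, the rate for $(\mu_n^{(1)},\mu_n^{(2)},s_n)$ is the infimum of this over $\lambda t_1 + (1-\lambda)t_2 = s$ subject to $t_1\ge M_2(\mu)$, $t_2\ge M_2(\nu)$. Since the objective is affine in $(t_1,t_2)$ on the feasible set, any feasible choice gives the same value, so feasibility collapses to $s\ge \lambda M_2(\mu) + (1-\lambda)M_2(\nu)$ with value equal to the claimed $I_{1,\lambda}(\mu,\nu,s)$. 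The main technical hurdle will be the rigorous Baldi--Bryc argument in the non-metrizable space $\cP(\R)\times\R_+$: upgrading the compact-set upper bound to a closed-set one via exponential tightness, and verifying that the convex dual of $\Lambda$ coincides with the LDP rate function (e.g.\ via an exposed-points argument showing that the effective domain of $I^{\mathrm{single}}$ is exhausted by points exposed by elements of $C_b(\R)\times(-\infty,1/2)$).
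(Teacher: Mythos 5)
Your overall architecture --- split the coordinates into the two blocks, establish for each block a joint LDP for the pair (empirical measure, empirical second moment), tensorize by independence, and contract via $(\mu,t_1,\nu,t_2)\mapsto(\mu,\nu,\lambda t_1+(1-\lambda)t_2)$ --- is exactly the paper's, and your final optimization over the affine constraint set correctly recovers $I_{1,\lambda}$. The gap sits precisely in the step you flag as ``the main technical hurdle'': the large deviation \emph{lower} bound for the per-block pair $(\mu_n^{(i)},T_n^{(i)})$. Your proposed resolution, an exposed-points argument in Baldi's theorem, provably cannot work. On its effective domain, $I^{\mathrm{single}}(\mu,\tau)=H(\mu|\gamma_1)+\tfrac12(\tau-M_2(\mu))$ is \emph{affine} in $\tau$ with slope $\tfrac12$ for $\tau\ge M_2(\mu)$. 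If $(f,t)$ were an exposing functional at a point with $\tau>M_2(\mu)$, comparing the defining inequality at $(\mu,\tau+\epsilon)$ forces $t<\tfrac12$ while comparing it at $(\mu,\tau-\epsilon)$ forces $t>\tfrac12$; equivalently, the only candidate exposing parameter is $t=\tfrac12$, which lies on the boundary of the domain of $\Lambda$ (and $\Lambda(f,\tfrac12)=\infty$ for every bounded $f$, since the Gaussian weight is exactly cancelled). Hence no point with $\tau>M_2(\mu)$ is exposed, and these are exactly the points you need in order to reach the final rate function whenever $s>\lambda M_2(\mu)+(1-\lambda)M_2(\nu)$.

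The lower bound on this affine part is true, but it requires a different idea: the excess second moment $\tau-M_2(\mu)$ is produced by sending a \emph{single} coordinate to size roughly $\sqrt{|B_i|(\tau-M_2(\mu))}$, an event of probability about $e^{-|B_i|(\tau-M_2(\mu))/2}$ that leaves the empirical measure unchanged in the weak topology. This ``one big jump'' construction is the content of the approximate contraction principle of \cite{BenDemGui01}, which is what the paper invokes (Proposition \ref{prop-bdgrestate} and Corollary \ref{cor-iid}): abstract Cram\'er's theorem supplies \emph{existence} of the joint LDP for $(\mathscr{L}_n,\mathscr{C}_n)$ with some convex good rate function, and the approximate contraction principle then \emph{identifies} that rate function as $I_0(\mu)+F(\tau-\int\mathbf{c}\,d\mu)$, bypassing the exposed-points obstruction entirely. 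To repair your proof, either cite that result for the per-block LDP or carry out the single-large-coordinate lower bound by hand; the remainder of your argument (tensorization, contraction, the exponential equivalence replacing $k_n/n$ by $\lambda$ in the third component, and the collapse of the infimum over $(t_1,t_2)$) then goes through as written.
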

\begin{proof}
Our approach is to apply the approximate contraction principle of Proposition \ref{prop-bdgrestate} and Corollary \ref{cor-iid}  of the appendix, with the following parameters:
\begin{itemize}
\item $\Sigma := \R$;
\item $\cX := \R = :\cX^*$;
\item $\mathbf{c}(x) = x^2$, for $x\in \R$;
\item for $n\in \N$, let $\mathscr{L}_n^{(1)} := \frac{1}{k_n} \sum_{j=1}^{k_n}\delta_{\zeta_j} \in \cP(\R)$, and $\mathscr{L}_n^{(2)} := \frac{1}{n-k_n} \sum_{j=k_n+1}^n\delta_{\zeta_j} \in \cP(\R)$; 
\item for $n \in \N$, let $\mathscr{C}_n^{(i)} := \int \mathbf{c}  \mathscr{L}_n^{(i)} = \int_{\R} \mathbf{c}(x) \mathscr{L}_n^{(i)}(dx)$, for $i = 1, 2$. 
 \end{itemize}
First, { let $\lambda\in(0,1)$}, consider $(\mathscr{L}_n, \mathscr{C}_n) = (\mathscr{L}_n^{(1)}, \mathscr{C}_n^{(1)})$.  Then  
the domain $\mathcal{D}$ of \eqref{eq-dom} takes the form   $\mathcal{D} = \{\alpha \in \R : \log \E[ e^{\lambda^{-1} \alpha\, \zeta_1^2}] < \infty\} = (-\infty, \frac{\lambda}{2})$,  and  so $0 \in \mathcal{D}^\circ = \mathcal{D}$. 
Thus, $F(x) = \sup_{\alpha <\lambda/2} \alpha x$, which is equal to $\lambda x/2$ if $x \geq 0$, and equal to 
$\infty$ otherwise, and $\int {\mathbf{c}} d \mu = M_2 (\mu)$. 
Thus, by Corollary \ref{cor-iid},   the sequence 
 $\{\mathscr{L}_n^{(1)}, \mathscr{C}_n^{(1)}\}_{n\in\N}$ 
satisfies an LDP {at speed $n$ and} with GRF $J^{(1)} = J^{(1)}_\lambda$, defined by 
\begin{align*}
	J^{(1)}(\mu,t) &:= \left\{ \begin{array}{ll} \lambda H(\mu | \gamma_1) + \tfrac{\lambda}{2} [ t -  M_2(\mu) ] & \text{ if }  M_2(\mu) \le t;	\\
	+\infty & \text{ else. }
 \end{array}\right.
\end{align*}
Similarly, another application of Corollary \ref{cor-iid} shows that the sequence 
$\{\mathscr{L}_n^{(2)}, \mathscr{C}_n^{(2)}\}_{n\in\N}$ satisfies an LDP at speed $n$ with GRF $J^{(2)} = J^{(2)}_\lambda$, defined by
\begin{align*}
 	J^{(2)}(\nu,u) &:= \left\{ \begin{array}{ll} (1-\lambda) H(\nu | \gamma_1) + \tfrac{1-\lambda}{2} [ u -  M_2(\nu) ] & \text{ if }  M_2(\nu) \le u;	\\
	+\infty & \text{ else. }
 \end{array}\right.
\end{align*}
Due to the independence of $\zeta_j, j\in\N,$ and the contraction principle {applied to the mapping $\cP(\R)\times\R\times\cP(\R)\times\R\ni(\mu,t,\nu,u)\mapsto(\mu,\nu,\lambda t+ (1-\lambda)u)$}, the sequence 
\begin{equation}\label{eq-i2seqalt}
  \left\{\mathscr{L}_n^{(1)}, \, \mathscr{L}_n^{(2)}, \, \lambda\int {\mathbf{c}}\, d\mathscr{L}_n^{(1)} + (1-\lambda)\int {\mathbf{c}}\, d\mathscr{L}_n^{(2)}\right\}_{n\in\N}
\end{equation}
satisfies an LDP {at speed $n$} with GRF 
\begin{equation*}
  (\mu, \nu, s) \mapsto \inf_{t,u \in \R} \left\{ J^{(1)}(\mu,t) + J^{(2)}(\nu, u) : \lambda t + (1-\lambda) u = s\right\} = I_{1,\lambda}(\mu,\nu,s).
\end{equation*}
To complete the proof, note that because $k_n/n\ra\lambda$ deterministically, the sequences
\begin{equation*}
  \left\{\frac{\lambda}{k_n} \sum_{j=1}^{k_n} \zeta_j^2 + \frac{1-\lambda}{n-k_n}\sum_{j=k_n+1}^n \zeta_j^2\right\}_{n\in\N} \quad \mbox{ and } \quad \left\{\frac{1}{n} \sum_{j=1}^{n} \zeta_j^2\right\}_{n\in\N} 
\end{equation*}
are exponentially equivalent (recall Definition \ref{def-exp}).
Hence, the sequence in \eqref{eq-i2seqalt} is  also exponentially equivalent to \eqref{eq-i2seq},
and the latter thus satisfies an LDP at speed $n$ with GRF $I_{1,\lambda}$. 

{Now, let $\lambda=1$. We see from the derivation above that the sequence 
 $\{\mathscr{L}_n^{(1)}, \mathscr{C}_n^{(1)}\}_{n\in\N}$ 
satisfies an LDP at speed $n$  with GRF $J^{(1)}$, and that the two sequences $\{\frac{1}{k_n} \sum_{j=1}^{k_n} \zeta_j^2\}_{n\in\N}$ and $\{\frac{1}{n} \sum_{j=1}^{n} \zeta_j^2\}_{n\in\N}$ are exponentially equivalent. Hence, from Remark \ref{rem-expeq}, we obtain the desired LDP.}
\end{proof}

\begin{lemma}\label{lem-i2}
 { Suppose  $k_n/n \rightarrow \lambda \in (0,1]$ and let $\{\zeta_j\}_{j \in \N}$ be as in Lemma \ref{lem-i1}.  If $\lambda\in(0,1)$,}  the sequence	 of pairs of measures 
\begin{equation}\label{eq-pairi3}
 \left( \frac{1}{k_n} \sum_{j=1}^{k_n} \delta_{\sqrt{n}\zeta_j / \|\zeta^{(n)}\|_2 }\, , \quad\frac{1}{n-k_n}\sum_{j=k_n+1}^n \delta_{\sqrt{n}\zeta_j / \|\zeta^{(n)}\|_2}  \right) , \quad n\in\N,
\end{equation}
satisfies an LDP in $[\cP(\R)]^2$ at speed $n$ with GRF $I_{2,\lambda}$ defined by
\begin{align*}
I_{2,\lambda}(\mu, \nu) &:=  I_{1,\lambda}(\mu,\nu,1) \\
 &=  \lambda \, H(\mu | \gamma_1) + (1-\lambda) \, H(\nu| \gamma_1) + \tfrac{1}{2} \left(1 - \lambda \, M_2(\mu) - (1-\lambda)\,M_2(\nu)\right) ,
\end{align*}
if $\lambda\, M_2({\mu}) + (1-\lambda)\,M_2({\nu}) \le 1$, and $I_{2,\lambda}({\mu},{\nu}) := \infty$ otherwise. {On the other hand, if $\lambda = 1$, then the sequence 
$\{ \frac{1}{k_n} \sum_{j=1}^{k_n} \delta_{\sqrt{n}\zeta_j / \|\zeta^{(n)}\|_2 }\}_{n\in\N}$
satisfies an LDP in $\cP(\R)$ at speed $n$ with GRF $I_{2,1}$ defined by
\begin{align*}
I_{2,1}(\mu)=    H(\mu | \gamma_1) + \tfrac{1}{2} \left(1 -  M_2(\mu) \right) ,
\end{align*}
if $ M_2({\mu}) \le 1$, and $I_{2,1}({\mu},{\nu}) := +\infty$ otherwise.
}
\end{lemma}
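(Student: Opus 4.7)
The plan is to deduce Lemma \ref{lem-i2} from Lemma \ref{lem-i1} via the contraction principle, applied to the rescaling map that sends the unnormalized empirical measures of Lemma \ref{lem-i1} into the normalized ones appearing in \eqref{eq-pairi3}. I work out the case $\lambda \in (0,1)$; the case $\lambda = 1$ proceeds identically, with only one empirical measure in place of two.

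Set $\phi_n := \frac{1}{k_n}\sum_{j=1}^{k_n}\delta_{\zeta_j}$, $\psi_n := \frac{1}{n-k_n}\sum_{j=k_n+1}^n \delta_{\zeta_j}$ and $S_n := \|\zeta^{(n)}\|_2^2/n$. For $c>0$, let $T_c(x):=x/c$, and define $F:[\cP(\R)]^2\times(0,\infty)\to[\cP(\R)]^2$ by $F(\mu,\nu,s) := \bigl(\mu\circ T_{\sqrt{s}}^{-1},\,\nu\circ T_{\sqrt{s}}^{-1}\bigr)$. Then the pair in \eqref{eq-pairi3} is exactly $F(\phi_n,\psi_n,S_n)$. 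A tightness-plus-weak-convergence argument, analogous to the one in Lemma \ref{lem-repn}, shows that $F$ is continuous on $[\cP(\R)]^2\times(0,\infty)$. By Lemma \ref{lem-i1}, $(\phi_n,\psi_n,S_n)$ satisfies an LDP at speed $n$ with GRF $I_{1,\lambda}$.

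To compute the induced rate function, fix $(\tilde\mu,\tilde\nu)$ and $s>0$, and let $\mu$ (resp.\ $\nu$) be the pushforward of $\tilde\mu$ (resp.\ $\tilde\nu$) under $x\mapsto \sqrt{s}\,x$, so that $F(\mu,\nu,s)=(\tilde\mu,\tilde\nu)$. A change of variable gives $M_2(\mu)=s\,M_2(\tilde\mu)$ and the differential entropy identity $h(\mu)=h(\tilde\mu)+\tfrac{1}{2}\log s$, so that
\[
H(\mu\mid\gamma_1) = -h(\tilde\mu) - \tfrac{1}{2}\log s + \tfrac{1}{2}\log(2\pi) + \tfrac{s}{2}M_2(\tilde\mu),
\]
and similarly for $\nu$. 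Substituting into $I_{1,\lambda}(\mu,\nu,s)$, the moment contributions cancel exactly, leaving
\[
I_{1,\lambda}(\mu,\nu,s) = -\lambda\,h(\tilde\mu)-(1-\lambda)\,h(\tilde\nu) + \tfrac{1}{2}\log(2\pi) + \tfrac{s}{2} - \tfrac{1}{2}\log s,
\]
while the constraint $\lambda M_2(\mu)+(1-\lambda)M_2(\nu)\le s$ reduces to the $s$-independent condition $\lambda M_2(\tilde\mu)+(1-\lambda)M_2(\tilde\nu)\le 1$. The map $s\mapsto \tfrac{s}{2}-\tfrac{1}{2}\log s$ on $(0,\infty)$ is strictly convex with unique minimizer $s=1$ and minimum value $\tfrac{1}{2}$. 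Hence, when the constraint holds,
\[
I_{2,\lambda}(\tilde\mu,\tilde\nu) = -\lambda\,h(\tilde\mu)-(1-\lambda)\,h(\tilde\nu) + \tfrac{1}{2}\log(2\pi e),
\]
and $+\infty$ otherwise; a direct expansion confirms this equals $I_{1,\lambda}(\tilde\mu,\tilde\nu,1)$, matching the stated GRF.

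The main technical obstacle is that $F$ is continuous on $[\cP(\R)]^2\times(0,\infty)$ but not at $s=0$, so the standard contraction principle does not literally apply on all of $[\cP(\R)]^2\times\R_+$. I would resolve this by truncation: for $\epsilon\in(0,1)$, restrict to the event $\{S_n\in[\epsilon,1/\epsilon]\}$, on which $F$ is continuous and the contraction principle applies, and observe that Lemma \ref{lem-i1} (projected onto the third coordinate) yields an LDP for $\{S_n\}$ at speed $n$ with rate function strictly positive outside every neighborhood of $1$, so the complementary event carries exponentially small probability. Passing $\epsilon\to 0$ via exponential equivalence (Remark \ref{rem-expeq}) closes the argument. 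Note that this truncation is consistent with the rate-function computation: for $s=0$, the constraint $\lambda M_2(\mu)+(1-\lambda)M_2(\nu)\le 0$ forces $\mu=\nu=\delta_0$, which has infinite relative entropy with respect to $\gamma_1$, so $I_{1,\lambda}(\cdot,\cdot,0)\equiv+\infty$ and the point $s=0$ contributes nothing to the infimum.
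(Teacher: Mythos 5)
Your proposal is correct and follows essentially the same route as the paper: apply the contraction principle to the LDP of Lemma \ref{lem-i1} via the rescaling map $(\bar\mu,\bar\nu,s)\mapsto(\bar\mu(\cdot\times s^{1/2}),\bar\nu(\cdot\times s^{1/2}))$, then minimize explicitly over $s$, where $\tfrac{s}{2}-\tfrac{1}{2}\log s$ is minimized at $s=1$. The only difference is that you handle the failure of continuity at $s=0$ explicitly via truncation and exponential equivalence (the paper simply invokes Slutsky's theorem on $\R_+$), which is a legitimate and slightly more careful treatment of a point the paper glosses over; your observation that $I_{1,\lambda}(\cdot,\cdot,0)\equiv+\infty$ confirms this does not affect the resulting rate function.
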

\begin{proof}
  {Below we present the proof only for the case  $\lambda\in(0,1)$ since the
    case $\lambda=1$ can be argued in an exactly analogous fashion.} Due to Slutsky's theorem, the map 
\begin{equation*}
[\cP(R)]^2\times\R_+ \ni  (\bar{\mu},\bar{\nu},s) \mapsto \left(\bar{\mu}(\,\cdot \times s^{1/2}), \,\bar{\nu}(\,\cdot\times s^{1/2})\right),
\end{equation*}
is continuous. Then, applying the contraction principle with this map to the LDP of Lemma \ref{lem-i1}, we find that the sequence in \eqref{eq-pairi3} satisfies an LDP with GRF:
\begin{align*}
(\mu,\nu) \mapsto & \inf_{\bar{\mu},\bar{\nu}\in \cP(\R), s\in \R_+} \left\{I_{1,\lambda}(\bar{\mu},\bar{\nu},s) : \mu = \bar{\mu}(\,\cdot\times s^{1/2}) , \nu = \bar{\nu}(\,\cdot\times s^{1/2}) \right\}	 \\
   &= \inf_{s\in \R_+} \bigl\{  \lambda \, H( \mu(\,\cdot \times s^{-1/2}) | \gamma_1) + (1-\lambda) \, H(\nu(\,\cdot \times s^{-1/2})  | \gamma_1) \\
   &\quad\quad + \tfrac{1}{2} \left[s - \lambda s\,M_2(\mu) - (1-\lambda)s\,M_2(\nu)\right] \, :\,   s \ge \lambda s\,M_2(\mu) +(1-\lambda)s\,M_2(\nu) \bigr\} \\
   &= \inf_{s\in \R_+} \bigl\{  \lambda \, H( \mu | \gamma_1(\,\cdot \times s^{1/2})) + (1-\lambda) \, H(\nu | \gamma_1(\,\cdot \times s^{1/2}) ) \\
   &\quad\quad + \tfrac{s}{2} \left[1 - \lambda \,M_2(\mu) - (1-\lambda)\,M_2(\nu)\right] \, :\,   1 \ge \lambda \,M_2(\mu) +(1-\lambda)\, M_2(\nu) \bigr\}
\end{align*}
Assuming $  1 \ge \lambda \,M_2(\mu) +(1-\lambda)\, M_2(\nu) $,  noting that
\[ {\log} \left( \frac{d\gamma_1}{d\gamma_1(\cdot \times s^{1/2})}(x) \right)  =   - \frac{1}{2} \log s  - (1-s) \frac{x^2}{2}, 
\]
{and using the chain rule for relative entropy,   the right-hand side of the previous display}  is equal to 
\begin{align*}
& \quad \lambda \, H( \mu | \gamma_1)  + (1-\lambda) \, H(\nu | \gamma_1 )  \\
   &\quad\quad + \inf_{s\in \R_+} \bigl\{ -\lambda\left(\tfrac{1}{2}\log s + \tfrac{1-s}{2}M_2(\mu)\right) - (1-\lambda)\left(\tfrac{1}{2}\log s + \tfrac{1-s}{2} M_2(\nu)\right)\\
   &\quad\quad + \tfrac{s}{2} \left[1 - \lambda \,M_2(\mu) - (1-\lambda)\,M_2(\nu)\right]  \bigr\} \\
&= I_{2,\lambda}(\mu,\nu) -\tfrac{1}{2} + \tfrac{1}{2} \inf_{s\in \R_+} \left\{s-  \log s  \right\} \\
&= I_{2,\lambda}(\mu,\nu).
\end{align*}
{On the other hand, if $  1 < \lambda \,M_2(\mu) +(1-\lambda)\, M_2(\nu) $ is violated, then
  (by the convention that the infimum over an empty set is infinite)  is infinite, which is again equal to $I_{2,\lambda}(\mu, \nu)$.  This completes the proof. } 
\end{proof}

{ 
We now turn to the proof of Proposition \ref{lem-aux}.  First, for $\lambda \in (0,1]$, define 
\begin{equation}\label{j2}
  \mathbf{J}_{2,\lambda}(z):=
  \left\{
  \begin{array}{ll}
    \tfrac{\lambda}{2} \log\left(\tfrac{\lambda}{ z^2}\right)   +  \tfrac{1-\lambda}{2}\log\left(\tfrac{1-\lambda}{1- z^2}\right), &  0 < z < 1,  \\
    \infty,  & \mbox{ otherwise, }
  \end{array}
  \right. 
\end{equation}
where we use the convention $0 \log 0 = 0 \log (0/0) = 0$. 
}

\begin{proof}[Proof of Proposition \ref{lem-aux}]
We first prove that $\{\hat{\mu}^n_{\mathbf{A}}\}_{n\in\N}$ satisfies an LDP in 
${\mathcal P}(\R)$ with respect to the weak topology, with GRF $\mathcal{H}_\lambda$ of \eqref{eq-hrf}.  
Due to the representation {for $\mathbf{A}_{n,k_n}(1,\cdot)$ established in}  Lemma \ref{lem-row1},
it suffices to show that the sequence 
\begin{equation}\label{equivzeta}
\hat{\nu}_n:= \frac{1}{k_n} \sum_{j=1}^{k_n} \delta_{\sqrt{n}\zeta_j / \|\zeta^{(n)}\|_2 }, \quad n\in\N,
\end{equation}
satisfies an LDP in $\cP(\R)$ at speed $n$ with GRF $\mathcal{H}_\lambda$. 

{{We deduce in the following the case for $\lambda\in(0,1)$. The case for $\lambda=1$ can be shown using a similar calculation and is hence omitted.} Combining the LDP established in Lemma \ref{lem-i2} with the contraction principle applied to the
projection mapping: $(\cP(\R))^2 \ni (\pi_1, \pi_2) \mapsto \pi_1 \in \cP(\R)$, it follows that
$\{\hat{\nu}_n\}_{n \in \N}$ satisfies an LDP at speed $n$ with GRF given by $\mu\mapsto \inf_{\nu\in\cP(\R)} I_{2,\lambda}(\mu, \nu)$.} 
Now, note that for $\mu,\nu \in \cP(\R)$ such that $\lambda M_2(\mu) + (1-\lambda) M_2(\nu) \le 1$, we have 
\begin{align*}
I_{2,\lambda}(\mu,\nu) & = -\lambda h(\mu) + \tfrac{\lambda}{2} \log(2\pi) + \tfrac{\lambda}{2} M_2(\mu) \\
 & \quad\quad   -(1-\lambda) h(\nu) + \tfrac{1-\lambda}{2} \log(2\pi) + \tfrac{1-\lambda}{2} M_2(\nu) \\
 & \quad \quad + \tfrac{1}{2} (1- \lambda\,M_2({\mu}) - (1-\lambda)\,M_2({\nu}))\\
 & = -\lambda h(\mu ) -(1-\lambda) h(\nu)  + \tfrac{1}{2} \log(2\pi e),
\end{align*}
where $h$ is the entropy functional defined in \eqref{ent}.
Thus, 
\begin{align*}
 \inf_{\nu\in\cP(\R)} I_{2,\lambda}(\mu, \nu) &= -\lambda h(\mu) + \tfrac{1}{2}\log(2\pi e) + \inf_{{\nu}\in\mathcal{P}(\R)}\left \{  -(1-\lambda) h(\nu)  : (1-\lambda)\,M_2({\nu}) \le 1- \lambda \,M_2({\mu}) \right\} \\
&= -\lambda h(\mu) + \tfrac{1}{2}\log(2\pi e) + (1-\lambda) \inf_{\nu\in\mathcal{P}(\R)}\left\{   -h(\nu)  : M_2(\nu) \le \frac{1- \lambda\,M_2(\mu)}{1-\lambda} \right\}. 
\end{align*}
Since $M_2(\nu) \geq 0$, the right-hand side above is equal to  infinity if ${1 < \lambda M_2(\mu)}$.  On the other hand, if
$\lambda M_2(\mu) \leq 1$, then  
recalling that the maximum entropy probability measure under a second moment upper bound of $z$ is the Gaussian measure with mean zero and variance $z$ (see, e.g., Section 12   of \cite{CovTho01}), we have 
\begin{align*}
\inf_{{\nu}\in\cP(\R)} I_{2,\lambda}(\mu,\nu) &= -\lambda h(\mu) + \tfrac{1}{2}\log(2\pi e) - \tfrac{1-\lambda}{2}\log\left( 2\pi e  \tfrac{1- \lambda\,M_2({\mu})}{1-\lambda}  \right) \\ 
   &= -\lambda h(\mu)   + \tfrac{\lambda}{2}\log(2\pi e) + \tfrac{1-\lambda}{2}  \log\left(  \tfrac{1-\lambda}{1- \lambda\, M_2({\mu})}  \right) \\
   &= \mathcal{H}_\lambda({\mu}).
\end{align*}
Thus,  we have shown that the rate function is $\mathcal{H}_\lambda({\mu})$, as desired.  

{To strengthen the LDP on $\cP(\R)$ 
to an LDP on  $\cP_q(\R)$ (i.e., with respect to the $q$-Wasserstein topology),  via an appeal to 
\cite[Corollary 4.2.6]{DemZeiBook} it suffices to show that $\{\hat{\mu}^n_{\mathbf{A}}\}_{n\in\N}$ is exponentially tight in $\cP_q(\R)$. 
 Since for each  $j>0$, the set $K_{2,j}$  defined in \eqref{k2} is compact in the 
 $q$-Wasserstein topology by Lemma \ref{lem-compcat}.  By the definition of $K_{2,j}$, the definition of $M_2$  in \eqref{secmom},  
 and the identity {$\hat{\mu}^n_A \eqdist \hat{\nu}_n$ established in  \eqref{eq-muna} and Lemma \ref{lem-row1}}, and the
 definition of $\hat{\nu}_n$ in \eqref{equivzeta}, 
 \begin{equation}
   \label{eq-obs}
   \{ \hat{\mu}^n_{\mathbf{A}} \in K_{2,M}^c \} = \{ \hat{\nu}_n \in K_{2,M}^c\} = \{ M_2(\hat{\nu}_n) > M \}
=\left\{  \sqrt{\frac{n}{k_n}} \frac{\|\zeta^{(k_n)}\|_2}{ \|\zeta^{(n)}\|_2}  > \sqrt{M} \right\}. 
 \end{equation}
 When $k_n/n \rightarrow \lambda \in (0,1]$, by~\cite[Lemma 4.2]{AloGutProTha18}, 
$\{\|\zeta^{(k_n)}\|_2/\|\zeta^{(n)}\|_2\}_{n\in\N}$ satisfies an LDP at speed $n$ with GRF $\mathbf{J}_{2,\lambda}$ defined in \eqref{j2},
 and $\{\sqrt{M_2(\hat{\nu}_n})\}_{n\in\N}$ is exponentially equivalent to $\{\|\zeta^{(k_n)}\|_2/(\sqrt{\lambda}\|\zeta^{(n)}\|_2)\}_{n \in \N}$ at speed $n$
 by Remark \ref{rm-eq-conv}.     Combining this with \eqref{eq-obs} and  Remark \ref{rem-expeq},  when  $M=4/\lambda$,  we obtain
\begin{align*}
  \lim_{n\to\infty} \frac{1}{n}\log \mathbb{P}(\hat{\mu}^n_{\mathbf{A}} \in K_{2,M}^c)
  =\lim_{n\to\infty} \frac{1}{n} \log \P \left(\frac{\|\zeta^{(k_n)}\|_2}{\sqrt{\lambda}\|\zeta^{(n)}\|_2} > \sqrt{M} \right)  
\leq-\mathbf{J}_{2,\lambda}(2) 
=-\infty.
\end{align*}
This proves the desired exponential tightness of $\{\hat{\mu}^n_{\mathbf{A}}\}_{n\in\N}$ in $\cP_q(\R)$.
 }
\end{proof}

\section{Proofs of $q$-norm LDPs  in the sublinear regime, $1 \ll k_n \ll n$} 
\label{sec-sub-norm}
In this section we prove Theorems~\ref{sub-ldp} and~\ref{sub-mdp}.
{Recall from  \eqref{normY2} that $Y^n_{2,k_n} = \|\mathbf{A}_{n,k_n}^{\top}X^{(n)}\|_2$.}

\begin{proof}[Proof of Theorem~\ref{sub-ldp}]
Fix $\{k_n\}_{n\in\N}$ that grows sublinearly. From Lemma~\ref{lem-toprow} and \eqref{eq-eqd}, we have the following distributional identity,
\begin{equation}\label{n-scaling}
n^{-1/2}Y^n_{2,k_n} \buildrel (d) \over =\frac{\norm{\zeta^{(k_n)}}_2}{\norm{\zeta^{(n)}}_2}\frac{\norm{X^{(n)}}_2}{\sqrt{n}}
= \frac{\norm{\zeta^{(k_n)}}_2/\sqrt{k_n}}{\norm{\zeta^{(n)}}_2/\sqrt{n}}\frac{\sqrt{k_n}\norm{X^{(n)}}_2}{n}.
\end{equation}

Suppose Assumption~\ref{ass-normldp}* holds, in which case  $\{\norm{X^{(n)}}_2/\sqrt{n}\}_{n \in \N}$ satisfies an LDP at speed $n$ with GRF $J_X$.  On the other hand,  by~\cite[Lemma 4.2]{AloGutProTha18},  $\{\|\zeta^{(k_n)}\|_2/\|\zeta^{(n)}\|_2\}_{n\in\N}$ satisfies an LDP at speed $n$ with GRF 
\begin{equation}
J_{\zeta}(x) :=
\begin{cases}
-\frac{1}{2}\log (1-x^2),\quad &x\in[0,1),\\
+\infty,\quad & \text{otherwise}.
\end{cases}
\end{equation}
Due to the independence of $\zeta^{(n)}$ and $X^{(n)}$, the result follows by  Lemma~\ref{ldp-prod}.

Now, suppose Assumption~\ref{ass-sublinear} holds with sequence $\{s_n\}_{n\in\N}$, $r\in[0,\infty]$ and GRF $J_X^{(r)}$. We will make repeated use of the following simple observation: by Remark~\ref{lem-chi-2} and the contraction principle applied to the map $x \mapsto \sqrt{x}$, $\{\|\zeta^{(k_n)}\|_2/\sqrt{k_n}\}_{n\in\N}$ and $\{\|\zeta^{(n)}\|_2/\sqrt{n}\}_{n\in\N}$ satisfy LDPs at speed $k_n$ and $n$, respectively, with the same GRF $J_{\zeta^2}(x^2)$, 
and hence, $\{\sqrt{n}/\|\zeta^{(n)}\|_2\}_{n\in\N}$ also satisfies an LDP at speed $n$, 
 and all three sequences converge almost surely  to $1$ by the strong law of large numbers. We now consider three cases.
\begin{case}
Suppose $r=0$. Then $s_n\ll k_n\ll n$. By the case assumption, $\{\sqrt{k_n}\|X^{(n)}\|_2/n\}_{n\in\N}$
satisfies an LDP at speed $s_n $ with GRF $J_X^{(0)}$. 
Hence, the result in this case follows by~\eqref{n-scaling}, the observation above and {a dual application of Lemma~\ref{ldp-prod}  with $(V_n,W_n)$ therein first being $(\sqrt{k_n}\|X^{(n)}\|_2/n,\|\zeta^{(k_n)}\|_2/\sqrt{k_n})$ and then $(\|\zeta^{(k_n)}\|_2\|X^{(n)}\|_2/n,\sqrt{n}/\|\zeta^{(n)}\|_2)$.}
\end{case}

\begin{case}
Suppose $ r\in(0,\infty)$. By Remark~\ref{req1c}, $\{\sqrt{k_n}\|X^{(n)}\|_2/n\}_{n\in\N}$ satisfies an LDP at speed $k_n$ and with GRF $rJ_X^{(r)}(\sqrt{r}x)$. Then~\eqref{n-scaling}, the independence of $X^{(n)}$ and $\zeta^{(n)}$,  the observation above and Lemma~\ref{ldp-prod} together show that $\{n^{-1/2}Y^n_{2,k_n}\}_{n\in\N}$ satisfies an LDP at speed $k_n$ with GRF 
\[
\mathbb{J}_2^{\mathsf{sub}}(x):= \inf_{c>0}\left\{\frac{c^2-1}{2}-\log c+rJ_X^{(r)}\left(\frac{\sqrt{r}x}{c}\right) \right\}.
\]
\end{case}
\begin{case}
Suppose $r=\infty$. Again from the reformulation~\eqref{n-scaling}, we have
\begin{equation}\label{scale-reform}
n^{-1/2}Y^n_{2,k_n} \buildrel (d) \over =\frac{\norm{\zeta^{(k_n)}}_2/\sqrt{s_n}}{\norm{\zeta^{(n)}}_2/\sqrt{n}}\frac{\sqrt{s_n}\norm{X^{(n)}}_2}{n}.
\end{equation}
Since
\[
\frac{\norm{\zeta^{(k_n)}}_2}{\sqrt{s_n}}=\left(\frac{1}{s_n}\sum_{i=1}^{k_n}\zeta_i^2 \right)^{1/2},
\]
and (because $r = \infty$) $k_n/s_n\to0$ as $n\to\infty$, by Lemma~\ref{LDP-Y2} (with $p=2$) and the contraction principle applied to $t\mapsto \sqrt{t}$, $\{\|\zeta^{(k_n)}\|_2/\sqrt{s_n}\}_{n\in\N}$ satisfies an LDP at speed $s_n$ with GRF $\mathcal{J}_{\zeta}(t)=t^2/2$ if $t\geq0$, and $\mathcal{J}_{\zeta}(t)= \infty$ otherwise.  By the case assumption, $\{\sqrt{s_n}\|X^{(n)}\|_2/n\}_{n\in\N}$ satisfies an LDP at speed $s_n$ with GRF $J_X^{(\infty)}$.  The independence of $\{\zeta_i\}_{i\in\mathbb{N}}$ and $X^{(n)}$, together with the contraction principle applied to the product mapping $(x,y) \mapsto xy$, then implies that $\{V_n:=\|\zeta^{(k_n)}\|_2\|X^{(n)}\|_2/n\}_{n\in\N}$ satisfies an LDP at speed $s_n$  with GRF $\inf_{c>0}\{c^2/2+J_X^{\infty}(x/c)\}$.
The lemma follows on applying  Lemma~\ref{ldp-prod} with $\{V_n\}_{n\in\N}$ defined above,  $W_n:=(\|\zeta^{(n)}\|_2/\sqrt{n})^{-1}$, $n \in \N$, and $m = 1$. 
\end{case}
\end{proof}

\begin{proof}[Proof of Theorem~\ref{sub-mdp}]
Fix $q\in[1,2]$ and suppose $k_n$ grows sublinearly. From Lemma~\ref{lem-toprow} and \eqref{eq-eqd}, we have the following reformulation
\begin{equation}\label{kn-scaling}
k_n^{-1/q}Y^n_{q,k_n} \buildrel (d) \over = \frac{\norm{\zeta^{(k_n)}}_q/k_n^{1/q}}{\norm{\zeta^{(n)}}_2/\sqrt{n}}\frac{\norm{X^{(n)}}_2}{\sqrt{n}},
\end{equation}
where $\zeta^{(n)}:=(\zeta_1,\ldots,\zeta_n)$ with $\{\zeta_i\}_{i\in\mathbb{N}}$ being i.i.d. $\mathcal{N}(0,1)$ random variables.
Consider the following sequence of random vectors
\[
R_n:=\left(\frac{\norm{\zeta^{(k_n)}}_q}{k_n^{1/q}},\frac{\norm{\zeta^{(n)}}_2}{\sqrt{n}},\frac{\norm{X^{(n)}}_2}{\sqrt{n}}\right),\quad n\in\N.
\]
By Assumption~\ref{ass-normldp}, $\{\|X^{(n)}\|_2/\sqrt{n}\}_{n\in\N}$ satisfies an LDP at speed $s_n$ with GRF $J_X$. By Cram\'er's theorem and the contraction principle, $\{\|\zeta^{(k_n)}\|_q/k_n^{1/q}\}_{n\in\N}$ satisfies an LDP at speed $k_n$ with
GRF $J_{\zeta,q}(x):= \Lambda^*_q(x^q)$ if $x\geq 0$, with $\Lambda_q$ defined in~\eqref{lambda_q},
and $J_{\zeta,q}(x) = \infty$ if $x < 0$. Note that $\Lambda^*_q$ is strictly convex with unique minimizer $\mathcal{M}_q^{1/q}$. Similarly, $\{\|\zeta^{(n)}\|_2/\sqrt{n}\}_{n\in\N}$ satisfies an LDP at speed $n$ with GRF $J_{\zeta,2}(x):= \Lambda^*_2(x^2)$ if $x\geq 0$ (and is equal to  infinity otherwise) with unique minimizer $1$.

To prove the theorem, we will use Lemma~\ref{ldp-prod} to show that in each of the three regimes, $\{R_n\}_{n\in\N}$ satisfies an LDP at speed $s_n\wedge k_n$ with a GRF {$J_R$ that we identify in each case.}  In view of \eqref{kn-scaling} {and the contraction principle}, this would imply that $\{k_n^{-1/q}Y^n_{q,k_n}\}_{n\in\mathbb{N}}$ satisfies an LDP at speed $s_n\wedge k_n$ with GRF 
\begin{equation}\label{rate-R}
\widehat{\mathbb{J}}_q^{\mathsf{sub}}(u):=\inf\left\{J_R(x,y,z):u = \frac{xz}{y},x,y,z>0 \right\}.
\end{equation}

\begin{case}
  Suppose $k_n\ll s_n$.  Then we  also have $k_n\ll n$, and by Lemma~\ref{ldp-prod}
  {and the additional assumption that $J_X$ has a unique minimizer $m$}, $\{R_n\}_{n\in\N}$ is exponentially equivalent at speed $k_n$ to 
  $(\|\zeta^{(k_n)}\|_q/k_n^{1/q},1,m)$,
which  satisfies an LDP at speed $k_n$ with GRF $J_R(x,y,z)=J_{\zeta,q}(x)$ 
when $y=1$ and $z=m$, and $J_R(x,y,z)=+\infty$ otherwise. {Then \eqref{rate-R} shows that
$\widehat{\mathbb{J}}_q^{\mathsf{sub}}(u) = J_{\zeta,q} (u/m) = \Lambda_q^*(u^q/m^q)$ for $u \geq 0$, and is equal to positive infinity otherwise.}  
\end{case}
\begin{case}
Suppose $s_n=k_n$. {Since $k_n \ll n$,} again invoking Lemma~\ref{ldp-prod} {and the additional assumption that $J_X$ has a unique minimizer $m$,} we have the exponential equivalence at speed $k_n$ between 
$\{R_n\}_{n\in\N}$ and $\{(\|\zeta^{(k_n)}\|_q/k_n^{1/q},1,\|X^{(n)}\|_2/\sqrt{n})\}_{n\in\N}$. Hence, $\{R_n\}_{n\in\N}$ satisfies an LDP at speed $k_n$ with GRF $J_R(x,y,z)=J_{\zeta,q}(x)+J_X(z)$ when $y=1$, and $J_R(x,y,z)=+\infty$ otherwise.
{Moreover, \eqref{rate-R} shows that $\widehat{\mathbb{J}}_q^{\mathsf{sub}}(u) = \inf_{x > 0} \{\Lambda^*_q(x^q) + J_X(u/x)\}$ if  $u > 0$, (and is equal to $+ \infty$ otherwise).} 
\end{case}
\begin{case}
  Suppose $s_n\ll k_n$. Then  {$k_n \ll n$ implies $s_n \ll n$, and} so Lemma~\ref{ldp-prod} {and
  the observation that the GRF $J_{\zeta,q}$ has $M_q^{1/q}$ as its unique minimizer} imply that $\{R_n\}_{n\in\N}$ is exponentially equivalent at speed $s_n$ to the sequence
$
\{(\mathcal{M}_q^{1/q},1,\|X^{(n)}\|_2/\sqrt{n})\}_{n\in\N},
$
and therefore satisfies an LDP at speed $s_n$ with GRF $J_R(x,y,z)=J_{X}(z)$ when $x=\mathcal{M}_q^{1/q}$ and $y=1$, and $J_R(x,y,z)=+\infty$ otherwise. {When combined with \eqref{rate-R} this implies that $\widehat{\mathbb{J}}_q^{\mathsf{sub}}(u) = J_X(u/\mathcal{M}_q^{1/q})$ if $u \geq 0$ (and is equal to $+\infty$ otherwise).} 
\end{case}
\end{proof}

\section{Proofs of $q$-norm LDPs  in the linear regime} 
\label{sec-norm}

The goal of this section is to prove Theorem \ref{th-normlinear}.
Throughout, fix $\lambda \in (0,1]$, and assume $k_n \sim \lambda n$.
  Also, for  $q \in [1,2]$ and  $n,k \in \N, k \leq n$, recall the definition 
  $n^{-1/q}Y^n_{q,k_n} = n^{- 1/q}\|\mathbf{A}_{n,k_n}^T X^{(n)}\|_{q}$ given  in \eqref{normY2}.
Section \ref{subs-qnorm} contains a simple proof that is valid when $q\in[1,2)$.
 Section \ref{subs-2norm} is devoted to 
 the more involved case of  $q=2$ in the linear regime, which also then provides
 an alternative proof and alternative form of the rate function in the case $q\in[1,2)$. 
  
\subsection{The case $q\in[1,2)$}
\label{subs-qnorm}

\begin{proof}[Proof of Theorem \ref{th-normlinear} when $q \in [1,2)$] 
    Fix  $q \in [1,2)$, and 
      observe that with  $M_q$,  $L^n$, $Y_{q,k_n}^n$ and $\mathbf{A}_{n,k_n}$  defined as in \eqref{secmom}, \eqref{eq-empproj}, \eqref{normY2}
      and Section  \ref{sec-mainres}, respectively, it follows that  
      $$\left(\frac{k_n}{n} M_q(L^n)\right)^{1/q} = n^{-1/q} \|\mathbf{A}_{n,k_n}^T X^{(n)}\|_q = n^{-1/q}Y^n_{q,k_n}.$$
      {Then, the LDP for $\{L^n\}_{n \in \N}$ in 
      Theorem \ref{th-simn} and the contraction principle applied to the continuous
        map $\cP_q(\R) \ni \nu \mapsto (\lambda M_q)^{1/q}(\nu) \in \R$ 
       imply that 
      $\{\lambda M_q(\ln)\}_{n\in\N}$
      satisfies an LDP with GRF   
\begin{eqnarray*}
  \mathbb{J}_{q,\lambda}^{\mathsf{lin}}(x) & := &  \inf_{\mu \in \cP(\R)} \{  \mathbb{I}_{L,\lambda}(\mu): (\lambda M_q(\mu))^{1/q} = x \}.
  \end{eqnarray*}
Since $\tfrac{k_n}{n}M_q(\ln)$ is exponentially equivalent to $\lambda M_q(\ln)$ at speed $n$ by Remark~\ref{rm-eq-conv},
this implies (by Remark \ref{rem-expeq}) that
$\{n^{-1/q}Y^n_{q,k_n}\}_{n\in\N}$ also satisfies an LDP at speed $n$ with GRF $\mathbb{J}_{q,\lambda}^{\mathsf{lin}}$.}

      If  $x < 0$, we infimize over an empty set, and so
     $\mathbb{J}_{q,\lambda}^{\mathsf{lin}}(x)=-\infty$.
Now, if $s_n=n$, {using the expression for $\mathbb{I}_{L, \lambda}$ given in Remark \ref{rem-linsnn}}, 
we see that for $x \geq 0$, 
  \begin{eqnarray*} 
 \mathbb{J}_{q,\lambda}^{\mathsf{lin}}(x)& = & 
  \inf_{c \in \R_+, \, \mu\in\cP(\R)}  \left\{ \mathcal{H}_\lambda(\mu (\cdot \times c)) + J_X(c) : \, [\lambda M_q(\mu)]^{1/q} = x \right\},\\
  & = & 
  \inf_{c \in \R_+, \, \nu\in\cP(\R)}  \left\{ \mathcal{H}_\lambda(\nu) + J_X(c) : \, [\lambda M_q(\nu(\cdot \times c^{-1}))]^{1/q} = x \right\},\\
  & = &  \inf_{c \in \R_+, \, \nu\in\cP(\R)}   \left\{ \mathcal{H}_\lambda(\nu) + J_X(c) : \, [\lambda M_q(\nu)]^{1/q} = \frac{x}{c} \right\},
\end{eqnarray*}
  which coincides with \eqref{normgrf1}.

On the other hand, if $s_n \ll n$, using the identity $\mathbb{I}_{L, \lambda}(\mu) = J_X(c)$ if $\mu = \gamma_c$
  (and infinity, otherwise) established in Theorem \ref{th-simn},   we have for $x \geq 0$,
\begin{align*}
\mathbb{J}_{q,\lambda}^{\mathsf{lin}}(x) & = \inf_{c\in\R_+}\{J_X(c):(\lambda M_q(\gamma_c))^{1/q} = x \}\\
&= \inf_{c\in\R_+}\{J_X(c):(\lambda c^{q}\mathcal{M}_q)^{1/q} = x \}\\
&=J_X\left(\frac{x}{(\lambda \mathcal{M}_q)^{1/q}}  \right),
\end{align*}
where $\mathcal{M}_q$ is as defined in~\eqref{q-abs-moment}.  This proves   \eqref{normgrf1}. 
\end{proof}

\subsection{The case $q = 2$ and alternative proof for $q\in[1,2)$}
  \label{subs-2norm}

  We now provide an alternative proof of Theorem \ref{th-normlinear} for $q \in [1, 2)$, which also extends to the
  case $q = 2$.  
   This yields an alternative representation for the rate function 
  $\mathbb{J}_{q,\lambda}^{\mathsf{lin}}$ of \eqref{normgrf1}. 
   To introduce this representation, fix $q \in [1,2]$ and  define the following functions. 
Let \begin{equation}\label{amgf}
  \Lambda_{\mathsf{A},q}(t_1, t_2) := \log \int_{\R}\frac{1}{\sqrt{2\pi}} \exp\left( t_1|x|^q + (t_2 -\tfrac{1}{2}) x^2 \right)  dx, \quad (t_1,t_2) \in \R^2.  
  \end{equation}
Note that for $q \in [1, 2)$, we have $\Lambda_{\mathsf{A},q}(t_1, t_2) < \infty$ when $t_1\in \R$ and $t_2 < \frac{1}{2}$. On the other hand, for $q =2$, we have $\Lambda_{\mathsf{A},q}(t_1,t_2) < \infty$ when $t_1 + t_2 < \tfrac{1}{2}$. We also define\begin{equation}\label{bmgf}
 \Lambda_{\mathsf{B}}(t_3) := \log \int_{\R}\frac{1}{\sqrt{2\pi}} \exp\left(  (t_3 -\tfrac{1}{2}) x^2 \right)  dx, \quad t_3 \in \R.
\end{equation}
  Note that $\Lambda_{\mathsf{B}}(t_3) < \infty$ for $t_3 < \frac{1}{2}$.  Let $\Lambda_{\mathsf{A},q}^*$ and $\Lambda_{\mathsf{B}}^*$ denote the Legendre-Fenchel transforms  of $\Lambda_{\mathsf{A},q}$ and $\Lambda_{\mathsf{B}}$, respectively. For $q\in [1, 2)$ and
    $\lambda \in  (0,1)$, define
\begin{equation}\label{jq} \mathbf{J}_{q,\lambda}(z):=  \inf_{(x_1,x_2,x_3) \in \R^3} \left\{  \lambda \Lambda_{\mathsf{A},q}^*\left(\frac{(x_1,x_2)}{\lambda}\right) + (1-\lambda)\Lambda_{\mathsf{B}}^*\left(\frac{x_3}{1-\lambda}\right )  \,: \, z = \frac{x_1^{1/q}}{(x_2+ x_3)^{1/2}} \right\}, 
\end{equation}
{and define for $\lambda=1$, 
\begin{equation}\label{jq-1} \mathbf{J}_{q,1}(z):=  \inf_{(x_1,x_2) \in \R^3} \left\{   \Lambda_{\mathsf{A},q}^*\left((x_1,x_2)\right)   \,: \, z = \frac{x_1^{1/q}}{(x_2)^{1/2}} \right\}.  
\end{equation} 
Also, recall the definition of $\mathbf{J}_{2,\lambda}$, $\lambda \in (0,1],$ from \eqref{j2}. }

We then have the following result. 

\begin{proposition}\label{prop-norm}
  Fix $q\in [1,2]$. Suppose $\{k_n\}_{n\in\N}$ grows linearly with rate $\lambda \in (0,1]$,
and that Assumption \ref{ass-normldp} holds with associated GRF $J_X$.
Then, the sequence of scaled $\ell_q^n$ norms of random projections, $\{n^{-1/q}Y^n_{q,k_n} = n^{-1/q}\|\mathbf{A}_{n,k_n}^T X^{(n)}\|_q\}_{n\in\N}$, satisfies an LDP at speed $n$ with GRF 
\begin{equation}\label{eq-iaq} 
\bar{\mathbb{J}}_{q,\lambda}^{\mathsf{lin}}(x) := \inf_{y,z\in \R }  \left\{\mathbf{J}_{q,\lambda}(z) + J_X(y) \, : \, x = yz \right\} , \quad x\in \R_+.
\end{equation}
\end{proposition}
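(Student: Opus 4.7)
The plan is to bypass the empirical-measure approach (whose $q=2$ case is obstructed by the discontinuity of $M_2$ in the $q$-Wasserstein topology for $q<2$) and work directly with the $q$-norm, using the representations in Lemmas \ref{lem-toprow} and \ref{lem-row1}. Combining those two lemmas yields, for i.i.d.\ standard Gaussians $\{\zeta_j\}_{j\in\N}$ independent of $X^{(n)}$,
\[
n^{-1/q}Y^n_{q,k_n} \eqdist \frac{\|X^{(n)}\|_2}{\sqrt{n}} \cdot Z_n,\qquad
Z_n := \frac{\bigl(n^{-1}\sum_{j=1}^{k_n}|\zeta_j|^q\bigr)^{1/q}}{\bigl(n^{-1}\sum_{j=1}^{k_n}\zeta_j^2 + n^{-1}\sum_{j=k_n+1}^{n}\zeta_j^2\bigr)^{1/2}}.
\]
The task then reduces to establishing an LDP at speed $n$ for $\{Z_n\}_{n\in\N}$ with GRF $\mathbf{J}_{q,\lambda}$ of \eqref{jq}--\eqref{jq-1}, and then combining this with the LDP for $\|X^{(n)}\|_2/\sqrt{n}$ supplied by Assumption \ref{ass-normldp} (taken with $s_n=n$).

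The central step is the LDP for $\{Z_n\}$. Observe that $\Lambda_{\mathsf{A},q}$ of \eqref{amgf} is exactly the joint log-moment generating function of the pair $(|\zeta_1|^q,\zeta_1^2)$ under the standard Gaussian, and $\Lambda_{\mathsf{B}}$ of \eqref{bmgf} is the log-MGF of $\zeta_1^2$. By the multidimensional Cram\'er theorem, the empirical means $\frac{1}{k_n}\sum_{j=1}^{k_n}(|\zeta_j|^q,\zeta_j^2)$ and (when $\lambda<1$) $\frac{1}{n-k_n}\sum_{j=k_n+1}^{n}\zeta_j^2$ satisfy LDPs at speeds $k_n$ and $n-k_n$ with GRFs $\Lambda^*_{\mathsf{A},q}$ and $\Lambda^*_{\mathsf{B}}$, respectively. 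Using $k_n/n\to\lambda$, the rescaling observations of Remarks \ref{rem-ldpscale} and \ref{rm-eq-conv}, and independence of the two disjoint Gaussian blocks, one obtains a joint LDP at speed $n$ for the triple
\[
\Bigl(\tfrac{1}{n}\sum_{j=1}^{k_n}|\zeta_j|^q,\;\tfrac{1}{n}\sum_{j=1}^{k_n}\zeta_j^2,\;\tfrac{1}{n}\sum_{j=k_n+1}^{n}\zeta_j^2\Bigr)
\]
with additive GRF $\lambda\Lambda^*_{\mathsf{A},q}(\cdot/\lambda) + (1-\lambda)\Lambda^*_{\mathsf{B}}(\cdot/(1-\lambda))$. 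Applying the contraction principle to the continuous map $(x_1,x_2,x_3)\mapsto x_1^{1/q}/(x_2+x_3)^{1/2}$ (well-defined on the effective support $\{x_1\ge 0,\,x_2+x_3>0\}$) delivers the LDP for $\{Z_n\}$ with GRF $\mathbf{J}_{q,\lambda}$ given in \eqref{jq}. The case $\lambda=1$ is handled identically, omitting the third sum, producing \eqref{jq-1}.

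To conclude, the independence of $X^{(n)}$ from $\{\zeta_j\}_{j\in\N}$ together with the LDP for $\|X^{(n)}\|_2/\sqrt{n}$ and the LDP for $\{Z_n\}$ just established gives a joint LDP at speed $n$ for $(\|X^{(n)}\|_2/\sqrt{n},Z_n)$ with rate $J_X(y)+\mathbf{J}_{q,\lambda}(z)$. A final contraction through the continuous multiplication map $(y,z)\mapsto yz$ on $\R_+\times\R_+$ yields the LDP with GRF $\bar{\mathbb{J}}_{q,\lambda}^{\mathsf{lin}}$ in \eqref{eq-iaq}. The main technical subtlety lies in the $q=2$ case: here $(|\zeta_1|^2,\zeta_1^2)$ is concentrated on the diagonal, so $\Lambda^*_{\mathsf{A},2}$ is supported on $\{x_1=x_2\}$, and one must verify that the resulting form of \eqref{jq} still captures the correct rate. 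This is automatic, since the infimum in \eqref{jq} then simply collapses onto $x_1=x_2$, and the formula reduces to an expression consistent with the $q<2$ result obtained via Theorem \ref{th-simn}.
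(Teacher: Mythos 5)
Your proposal is correct and follows essentially the same route as the paper's proof: the same distributional reduction via Lemmas \ref{lem-toprow} and \ref{lem-row1} to the triple of scaled i.i.d.\ Gaussian sums, Cram\'er's theorem plus rescaling and block independence for that triple, contraction through $(x_1,x_2,x_3)\mapsto x_1^{1/q}/(x_2+x_3)^{1/2}$ to get Lemma \ref{lem-zzs}, and a final contraction through the product map. Your closing observation that for $q=2$ the rate $\Lambda^*_{\mathsf{A},2}$ collapses onto the diagonal $\{x_1=x_2\}$ is exactly how the paper reconciles \eqref{jq} with \eqref{j2}.
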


The proof of Proposition \ref{prop-norm} relies on an auxiliary result stated in Lemma \ref{lem-zzs}, which concerns an LDP related to the top row of the matrix $\mathbf{A}_{n,k_n}$. As in Section \ref{sec-anpr}, let $\zeta_1,\zeta_2,\dots$ denote a sequence of i.i.d.\ standard Gaussian random  variables, independent of $X^{(n)}$, and let $\zeta^{(n)}:= (\zeta_1,\dots, \zeta_n) \in \R^n$. Due to Lemmas \ref{lem-toprow} and \ref{lem-row1}, we have 
\begin{equation*}
\mathbf{A}_{n,k_n}^T X^{(n)} = \mathbf{A}_{n,k_n}^T \tfrac{X^{(n)}}{\|X^{(n)}\|_2}\, \|X^{(n)}\|_2 \eqdist   \mathbf{A}_{n,k_n}^T e_1 \, \|X^{(n)}\|_2 \eqdist \frac{(\zeta_1,\dots,\zeta_{k_n})}{\|\zeta^{(n)}\|_2 } \|X^{(n)}\|_2 \in \R^{k_n}. 
\end{equation*}
Therefore, for $n\in \N$ and all $q\in [1,\infty]$, we have
\begin{equation}\label{eq-dist} n^{-1/q} Y_{q,k_n}^n =  n^{-1/q}\|\mathbf{A}_{n,k_n}^T X^{(n)}\|_q \eqdist n^{1/2-1/q}
\frac{\|\zeta^{(k_n)}\|_q}{\|\zeta^{(n)}\|_2}  \frac{\|X^{(n)}\|_2}{\sqrt{n}}.   
\end{equation}

Given \eqref{eq-dist} and Assumption \ref{ass-normldp}, a natural step to proving Proposition \ref{prop-norm} is to establish an LDP for the sequence $\{n^{1/2-1/q} \|\zeta^{(k_n)}\|_q/\|\zeta^{(n)}\|_2\}_{n \in \N}$.

\begin{lemma}\label{lem-zzs}
Suppose $\{k_n\}_{n\in\N}$ grows linearly with rate $\lambda\in[0,1]$. For $q \in [1,2]$, the sequence 
\begin{equation}\label{eq-zk}  n^{1/2-1/q} \frac{\|\zeta^{(k_n)}\|_q}{\|\zeta^{(n)}\|_2} = {n^{1/2-1/q}}  \frac{\|(\zeta_1,\dots,\zeta_{k_n})\|_q}{\|(\zeta_1, \ldots, \zeta_n)\|_2 }, \quad  n\in \N,
\end{equation}
satisfies an LDP in $\R$ at speed $n$ with GRF $\mathbf{J}_{q,\lambda}$ of \eqref{jq} and \eqref{jq-1} for
$q \in [1,2)$ and \eqref{j2} for $q = 2$. 
\end{lemma}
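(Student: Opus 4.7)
The plan is to apply the contraction principle after expressing $Z_n := n^{1/2-1/q}\norm{\zeta^{(k_n)}}_q/\norm{\zeta^{(n)}}_2$ as a continuous function of three empirical averages to which Cram\'er's theorem applies. Splitting $\norm{\zeta^{(n)}}_2^2 = \norm{\zeta^{(k_n)}}_2^2 + \sum_{i=k_n+1}^n \zeta_i^2$, I write
\[
Z_n = \frac{X_{1,n}^{1/q}}{(X_{2,n}+X_{3,n})^{1/2}}, \qquad X_{1,n}:=\tfrac{1}{n}\sum_{i=1}^{k_n}\abs{\zeta_i}^q,\ X_{2,n}:=\tfrac{1}{n}\sum_{i=1}^{k_n}\zeta_i^2,\ X_{3,n}:=\tfrac{1}{n}\sum_{i=k_n+1}^n\zeta_i^2,
\]
where $(X_{1,n},X_{2,n})$ is independent of $X_{3,n}$ by disjointness of the underlying Gaussian blocks.

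Next, I establish joint LDPs for the triple at speed $n$. The functions $\Lambda_{\mathsf{A},q}$ and $\Lambda_{\mathsf{B}}$ of \eqref{amgf}--\eqref{bmgf} are exactly the log moment generating functions of $(\abs{\zeta_1}^q,\zeta_1^2)$ and $\zeta_1^2$ respectively, each finite on an open neighborhood of the origin for $q\in[1,2]$. By (multivariate) Cram\'er's theorem \cite{DemZeiBook}, $\{k_n^{-1}\sum_{i=1}^{k_n}(\abs{\zeta_i}^q,\zeta_i^2)\}_{n\in\N}$ satisfies an LDP at speed $k_n$ with GRF $\Lambda_{\mathsf{A},q}^*$ and, when $\lambda<1$, $\{(n-k_n)^{-1}\sum_{i=k_n+1}^n\zeta_i^2\}_{n\in\N}$ satisfies one at speed $n-k_n$ with GRF $\Lambda_{\mathsf{B}}^*$. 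Since $k_n/n\to\lambda$ and $(n-k_n)/n\to 1-\lambda$, Remark~\ref{rem-ldpscale} re-expresses both LDPs at speed $n$ (with GRFs scaled by $\lambda$ and $1-\lambda$), while Remark~\ref{rm-eq-conv} combined with the contraction by the linear map $y\mapsto\lambda y$ (resp.\ $y\mapsto(1-\lambda)y$) absorbs the deterministic prefactors $k_n/n$ and $(n-k_n)/n$ into the rate function. This yields LDPs at speed $n$ for $(X_{1,n},X_{2,n})$ with GRF $(x_1,x_2)\mapsto\lambda\Lambda_{\mathsf{A},q}^*((x_1,x_2)/\lambda)$ and for $X_{3,n}$ with GRF $x_3\mapsto(1-\lambda)\Lambda_{\mathsf{B}}^*(x_3/(1-\lambda))$, which combine by independence into a joint LDP for $(X_{1,n},X_{2,n},X_{3,n})$ at speed $n$ with GRF equal to the sum.

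For $q\in[1,2)$, the map $(x_1,x_2,x_3)\mapsto x_1^{1/q}/(x_2+x_3)^{1/2}$ is continuous on $\{x_1\geq 0,\ x_2+x_3>0\}$, which contains the effective domain of the joint rate function (the complement having infinite rate, since $\norm{\zeta^{(n)}}_2^2/n\to 1$ a.s.\ with exponential concentration). The contraction principle (Lemma~\ref{lem-con}), combined with a standard truncation argument handling the degenerate boundary, then delivers the LDP for $Z_n$ at speed $n$ with the GRF $\mathbf{J}_{q,\lambda}$ of \eqref{jq}. The boundary case $\lambda=1$ is treated by observing that $X_{3,n}$, built from at most a sublinear block, is exponentially negligible at speed $n$ (it satisfies an LDP with the degenerate GRF equal to $0$ at $0$ and $+\infty$ elsewhere, by Remark~\ref{ref-speeds}); the contraction thus collapses to the two variables $(X_{1,n},X_{2,n})$ and produces the form \eqref{jq-1}. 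For $q=2$ we have $\abs{\zeta_i}^q=\zeta_i^2$, so $X_{1,n}\equiv X_{2,n}$, and a direct computation from \eqref{amgf} gives $\Lambda_{\mathsf{A},2}^*(x_1,x_2)=\Lambda_{\mathsf{B}}^*(x_1)$ if $x_1=x_2\geq 0$ and $+\infty$ otherwise; the general scheme then collapses to a ratio of two independent scaled chi-squares, recovering the LDP of \cite[Lemma~4.2]{AloGutProTha18} with GRF $\mathbf{J}_{2,\lambda}$ of \eqref{j2}.

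The main technical obstacle is the simultaneous change of speed (from $k_n$ and $n-k_n$ to the common $n$) and the absorption of the deterministic $n$-dependent prefactors $k_n/n$ and $(n-k_n)/n$ into the rate function via Remarks~\ref{rem-ldpscale} and \ref{rm-eq-conv}; this is routine but must be performed with care, especially at the endpoint $\lambda=1$ where the tail block may be vacuous or sublinear. A secondary point is the verification that in the case $q=2$ the general contracted formula collapses to the explicit closed form \eqref{j2}, which can be carried out either via a one-parameter Lagrange-multiplier calculation under the constraint $z^2=x_2/(x_2+x_3)$ or by direct appeal to \cite[Lemma~4.2]{AloGutProTha18}.
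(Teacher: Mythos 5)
For $\lambda\in(0,1)$ your argument is correct and is essentially the paper's own proof: the same decomposition of \eqref{eq-zk} into the triple $(X_{1,n},X_{2,n},X_{3,n})$, the same application of Cram\'er's theorem at speeds $k_n$ and $n-k_n$ followed by re-speeding to $n$ and absorption of the deterministic prefactors, and the same contraction through $(x_1,x_2,x_3)\mapsto x_1^{1/q}/(x_2+x_3)^{1/2}$ (modulo the routine care needed at the singularity $x_2+x_3=0$, where the joint rate function is $+\infty$).

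The genuine gap is your treatment of $\lambda=1$. You assert that $X_{3,n}=\frac1n\sum_{j=k_n+1}^n\zeta_j^2$ "is exponentially negligible at speed $n$" and satisfies an LDP with the degenerate rate function that is $0$ at $0$ and $+\infty$ elsewhere, citing Remark~\ref{ref-speeds}. This is false, and the cited remark goes in the wrong direction: it lets you pass from a faster speed to a \emph{slower} one, whereas here $\frac{1}{n-k_n}\sum_{j=k_n+1}^n\zeta_j^2$ satisfies an LDP at speed $n-k_n\ll n$ and you need a statement at the \emph{faster} speed $n$. Concretely, for any $\delta>0$ and $n-k_n\ge 1$ one has $\P(X_{3,n}>\delta)\ge\P(\zeta_1^2>n\delta)$, so $\liminf_n \frac1n\log\P(X_{3,n}>\delta)\ge-\delta/2>-\infty$; a matching Chernoff upper bound shows that the correct GRF of $\{X_{3,n}\}_{n\in\N}$ at speed $n$ is $x_3\mapsto x_3/2$ on $\R_+$ (the $\lambda\to1$ limit of $(1-\lambda)\Lambda_{\mathsf{B}}^*(x_3/(1-\lambda))$), not the degenerate one. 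This error is not cosmetic: carrying your degenerate rate function through the contraction in the case $q=2$, $\lambda=1$ forces $x_3=0$ and hence $z=x_2^{1/2}/x_2^{1/2}=1$, producing a rate function that is $+\infty$ for every $z<1$, whereas the true answer from \eqref{j2} is $\mathbf{J}_{2,1}(z)=-\log z$ on $(0,1)$. For $q\in[1,2)$ your final formula \eqref{jq-1} does turn out to be correct, but only because $\partial_{x_2}\Lambda^*_{\mathsf{A},q}(x_1,x_2)<\tfrac12$ everywhere (the tilt $t_2$ must stay below $\tfrac12$ for $\Lambda_{\mathsf{A},q}$ to be finite), so that $\Lambda^*_{\mathsf{A},q}(x_1,x_2+x_3)\le\Lambda^*_{\mathsf{A},q}(x_1,x_2)+x_3/2$ and the extra $x_3/2$ term can be absorbed into $x_2$ without changing the infimum; your proof contains no such verification. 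To repair the argument, either keep the correct rate $x_3/2$ for $X_{3,n}$ and show the infimum collapses to \eqref{jq-1} as above, or follow the paper and work directly with the pair $\left(\frac1n\sum_{i=1}^{k_n}\abs{\zeta_i}^q,\,\frac1n\sum_{i=1}^{n}\zeta_i^2\right)$ so that the denominator is a single full-sample average.
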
 
\begin{proof}
   Fix $q \in [1,2)$. For $n\in \N$. We start with the observation that the quantity in \eqref{eq-zk} 
  can be represented in terms of a continuous mapping of a vector of scaled i.i.d. sums:  for each
  $n \in \N$, 
  \begin{equation}
    \label{rep-Tmap}
    n^{1/2 - 1/q} \frac{\|\zeta^{(k_n)}\|_q}{\|\zeta^{(n)}\|_2} = T(Z_n),
      \quad
    Z_n := \left(\frac{1}{n} \sum_{i=1}^{k_n} \abs{\zeta_i}^q, \frac{1}{n} \sum_{i=1}^{k_n}\zeta_i^2, \,\, \frac{1}{n} \sum_{j=k_n+1}^{n} \zeta_j^2\right),
  \end{equation}
  where $T$ is the continuous map 
\begin{equation*} T:\R^3 \ni (x_1,x_2,x_3) \mapsto \frac{x_1^{1/q}}{(x_2 + x_3)^{1/2}} \in \R. 
\end{equation*}

{We now establish an LDP for the sequence $\{Z_n\}_{n\in\N}$. To this end, for fixed $\lambda \in (0,1)$, we first establish  LDPs of the related  sequences 
\begin{equation*}   \mathsf{A}_{n,q} := \frac{\lambda}{k_n} \sum_{i=1}^{k_n} (|\zeta_i|^q, \zeta_i^2), \quad \quad  \mathsf{B}_{n} := \frac{1-\lambda}{n-k_n} \sum_{j=k_n+1}^{n} \zeta_j^2, \quad n \in \N. 
\end{equation*}
Note that for $q\in [1, 2]$, the origin $(0,0)$ lies in the interior of the domain of $\Lambda_{\mathsf{A},q}$,
which is the logarithmic moment generating function of $(|\zeta_1|^q, \zeta_1^2)$. 
Since the $\{\zeta_i\}_{i \in \N}$ are i.i.d., by  Cram\'er's theorem \cite[Theorem 2.2.1]{DemZeiBook} 
the sequence $\{\mathsf{A}_{n,q}\}_{n\in \N}$ satisfies an LDP in $\R^2$ at
  speed $k_n$ with GRF $\Lambda_{\mathsf{A},q}^*(\cdot/\lambda)$, and hence (by  Remark \ref{rem-ldpscale} and the fact that
  $k_n/n \rightarrow \lambda$) also satisfies
an LDP in $\R^2$ at speed $n$ with GRF $\lambda \Lambda_{\mathsf{A},q}^*(\cdot/\lambda)$.  
Similarly, since $0$ belongs to the interior of the domain of $\Lambda_{\mathsf{B}}$, the logarithmic
moment generating function of $\zeta_1^2$, the sequence $\{\mathsf{B}_n\}_{n\in \N}$ satisfies an LDP in $\R$ at speed $n$  with GRF $(1-\lambda)\Lambda_{\mathsf{B}}^* (\cdot/(1-\lambda))$. 
Since  $k_n/n \rightarrow \lambda$ and $(n-k_n)/n \rightarrow 1 - \lambda$ deterministically, by Remarks \ref{rem-expeq} and \ref{rm-eq-conv},  $\{\frac{1}{n}\sum_{i=1}^{k_n} (|\zeta_i|^q, \zeta_i^2)\}_{n\in \N}$ and
$\{\frac{1}{n}  \sum_{j=k_n+1}^{n}\zeta_j^2\}_{n\in \N}$ satisfy LDPs at speed $n$ with the same GRFs as  
$\{\mathsf{A}_{n,q}\}_{n\in \N}$ and  $\{\mathsf{B}_n\}_{n \in \N}$, respectively. 
  Combined with the independence of $\mathsf{A}_{n,q}$ and $\mathsf{B}_n$,  and Lemma~\ref{ldp-prod},  it follows that the sequence $\{Z_n\}_{n \in \N}$ 
satisfies an LDP in $\R^3$ at speed $n$ with GRF
\begin{equation*}
(x_1,x_2,x_3) \mapsto \lambda \, \Lambda_{\mathsf{A},q}^*\left(\frac{(x_1,x_2)}{\lambda}\right) + (1-\lambda)\,\Lambda_{\mathsf{B}}^*\left(\frac{x_3}{1-\lambda}\right), \qquad x \in \R^3_+, 
\end{equation*}
and with the GRF equal to $-\infty$ otherwise.}
Finally, use \eqref{rep-Tmap}, the LDP for $\{Z_n\}_{n \in \N}$ and  the contraction principle for the map $T$ defined above
to show that the sequence in \eqref{eq-zk} satisfies an LDP with the  GRF $\mathbf{J}_{q,\lambda}$ of \eqref{jq}.

{Next, suppose $\lambda=1$. We see that the sequence $\{\mathsf{A}_{n,q}\}_{n\in\N}$ is exponentially equivalent to $\{(\frac{1}{n}\sum_{i=1}^{k_n}|\zeta_i|^q,\frac{1}{n} \sum_{i=1}^{n}\zeta^2)\}_{n\in\N}$ by Remark \ref{rm-eq-conv}. An application of the contraction principle to the map ($x_1,x_2)\mapsto x_1^{1/q}/x_2^{1/2}$ yields the LDP for the sequence in~\eqref{eq-zk} with speed $n$ and GRF~\eqref{jq-1}.
} 

{The case $q = 2$ was also  considered in \cite[Lemma 4.2]{AloGutProTha18}), but   
  can   be obtained via (a simpler form of) the  argument given  above.   To be self-contained,
  we provide some details. In this case, the quantity of interest is $\tilde{T} (\tilde{Z}_n)$, where 
  $\tilde{Z}_n$, is the $\R^2$-valued random vector that coincides with the   second and third components of $Z_n$,
  and $\tilde{T}$ maps  $(x_2,x_3) \mapsto x_2/(x_2+x_3)^{1/2}$, and the arguments above show 
  that $\{\tilde{Z}_n\}_{n \in \N}$ satisfies 
    an LDP at speed $n$ with GRF 
   $\lambda \Lambda_{\mathsf{B}}^*\left(\cdot/\lambda\right) + (1-\lambda)\Lambda_{\mathsf{B}}^*\left(\cdot/(1-\lambda)\right)$
  and thus,  by the contraction principle, $\{\|\zeta^{(k_n)}\|/\|\zeta^{(n)}\|\}_{n \in \N}$ satisfies an LDP with  GRF 
\begin{equation*}
  \mathbf{J}_{2,\lambda}(z) :=  \inf_{(x_2,x_3) \in \R^2} \left\{  \lambda \Lambda_{\mathsf{B}}^*\left(\frac{x_2}{\lambda}\right) + (1-\lambda)\Lambda_{\mathsf{B}}^*\left(\frac{x_3}{1-\lambda}\right )  \,: \, z^2 = \frac{x_2}{x_2+ x_3}  \right\}. 
\end{equation*}
Noting that $\Lambda_B$ is the log moment generating function of the chi-squared distribution, it follows from
Remark \ref{lem-chi-2} that $\Lambda_B^* (x) = J_{\zeta^2}(x) = \frac{1}{2} (x-1 - \log x)$, for $x>0$ and infinity, otherwise.  Substituting this into the expression
for $\mathbf{J}_{2,\lambda}(z)$ in the last display and solving the optimization problem  yields \eqref{j2}.} 
\end{proof}

\begin{proof}[Proof of Proposition \ref{prop-norm}] 
  Recall from Lemma \ref{lem-zzs} that the sequence $\{ n^{1/2 - 1/q} \|\zeta^{(k_n)}\|_q/\|\zeta^{(n)}\|_2\}_{n \in \N}$  
  satisfies an LDP with GRF $\mathbf{J}_{q,\lambda}$. In addition, Assumption \ref{ass-normldp} states that the sequence $\{\|X^{(n)}\|_2/\sqrt{n}\}_{n\in \N}$ satisfies an LDP with GRF $J_X$. Given the equality in distribution of \eqref{eq-dist}, the contraction principle applied to the continuous function $(y,z) \mapsto yz$ yields that  the sequence of scaled $\ell_q^n$ norms of random projections, $\{Y^n_{q,k_n}\}_{n\in\N}$, satisfies an LDP  at speed $n$ with GRF $\bar{\mathbb{J}}_{q,\lambda}^{\mathsf{lin}}$ of \eqref{eq-iaq}.
\end{proof}

\begin{remark}
Note that it follows from  Proposition \ref{prop-norm} and the proof of Theorem \ref{th-normlinear} for $q\in[1,2)$ in
Section \ref{subs-qnorm}  that
$\bar{\mathbb{J}}_{q,\lambda}^{\mathsf{lin}} = \mathbb{J}_{q,\lambda}^{\mathsf{lin}}$ for all $q\in[1,2)$.
\end{remark}

To complete the proof of Theorem \ref{th-normlinear} we show that this relation also holds for $q = 2$.

\begin{proof}[Proof of Theorem \ref{th-normlinear} in the case $q=2$]
First, consider the case  $s_n=n$.  {Fix $\lambda \in (0,1)$. The result for $\lambda=1$ follows on using the convention that $0\log 0 = 0 \log 0/0 =0$ everywhere in the derivation below.}  By  Proposition \ref{prop-norm}, it suffices to show that 
  $\bar{\mathbb{J}}_{2,\lambda}^{\mathsf{lin}} = \mathbb{J}_{2,\lambda}^{\mathsf{lin}}$. 
{In view of \eqref{normgrf1},  \eqref{eq-iaq},  and \eqref{j2},  
  it clearly suffices to show that for $z^2 \in (0, 1)$, 
\begin{equation}\label{eq-jform}
\mathbf{J}_{2,\lambda} (z) =   \frac{\lambda}{2} \log \left( \frac{\lambda}{z^2}\right) + \frac{1-\lambda}{2} \log \left( \frac{1-\lambda}{1-z^2}\right)
  = \inf_{\nu\in\cP(\R) : z^2 = \lambda M_2(\nu)} \mathcal{H}_\lambda(\nu).
\end{equation}
since when  $z^2 \notin (0,1)$ both
$\mathbf{J}_{2,\lambda} (z)$  and  the right-hand side above are infinite.  
For $z^2 \in (0,1)$, substituting the expression
 for $\mathcal{H}_\lambda$ from \eqref{eq-hrf}, we obtain 
\begin{align*} 
\inf_{\nu \in \cP(\R): z^2 = \lambda M_2(\nu)} \mathcal{H}_\lambda(\nu)   &= \inf_{\nu \in \cP(\R) : z^2 = \lambda M_2(\nu)} \left\{ -\lambda \, h(\nu) + \tfrac{\lambda}{2} \log (2\pi e) + \tfrac{1-\lambda}{2}\log\left(\tfrac{1-\lambda}{1-\lambda\, M_2(\nu)} \right) \right\} \\  &= -\sup_{\nu \in \cP(\R): z^2 = \lambda M_2(\nu)} \lambda \, h(\nu) + \tfrac{\lambda}{2} \log (2\pi e) + \tfrac{1-\lambda}{2}\log\left(\tfrac{1-\lambda}{1- z^2}\right) .
\end{align*}
Recall that the maximum entropy probability measure constrained to have second moment $z^2/\lambda$ is the Gaussian measure with mean zero and variance $z^2/\lambda $ (see, e.g., Ex. 12.2.1 of \cite{CovTho01}). Since the entropy of such a Gaussian equals $\frac{1}{2}\log (2\pi e z^2/\lambda)$, we have
\begin{align*}
\inf_{\nu \in \cP(\R): z^2 = \lambda M_2(\nu)} \mathcal{H}_\lambda(\nu) &= - \tfrac{\lambda}{2} \log(2\pi e z^2 / \lambda )	+\tfrac{\lambda}{2} \log (2\pi e)  +  \tfrac{1-\lambda}{2}\log\left(\tfrac{1-\lambda}{1- z^2}\right) = \mathbf{J}_{2,\lambda} (z),
\end{align*}
which proves \eqref{eq-jform}.
This completes the proof of  the equality $\bar{\mathbb{J}}_{2,\lambda}^{\mathsf{lin}} = \mathbb{J}_{2,\lambda}^{\mathsf{lin}} $ in this case.}

Now, suppose $s_n\ll n$. 
By~\eqref{eq-dist}, it follows that $n^{-1/2} Y^n_{2,k_n} \eqdist V_n W_n W'_n $, where $V_n = \|X^{(n)}\|_2/\sqrt{n}$, 
 $W_n := \sqrt{k_n/n}\|\zeta^{(k_n)}\|_2/\sqrt{k_n}$ and  $W'_n =  \sqrt{n}/\|\zeta^{(n)}\|_2$. 
Noting that $k_n/n \to \lambda$ as $n\to\infty$, 
we see that $\{W_n\}_{n \in \N}$ and $\{W'_n\}_{n \in \N}$ converge almost surely to $(\lambda {\mathcal M}_2)^{1/2}$ and $1$, respectively
(by the strong law of large numbers), and satisfy LDPs at speed $n$ ({by Remark~\ref{lem-chi-2}} and the contraction principle).   
Since by assumption, $\{V_n\}_{n\in\N}$ satisfies an LDP at speed $s_n\ll n$ with GRF $J_X$, by Lemma \ref{ldp-prod} the sequence $\{V'_n := V_nW_n'\}_{n\in\N}$ satisfies an LDP at speed $s_n$ with GRF $J_X$.
{Since the GRF of $\{W'_n\}_{n \in \N}$ has a unique minimum at $m := (\lambda {\mathcal M}_2)^{1/2}$, 
by  Lemma \ref{ldp-prod}  $\{V_n' W_n\}_{n\in\N}$ satisfies an LDP at speed $s_n$ with GRF $J_X (\cdot/m)$, which
coincides with the expression in \eqref{normgrf1}. }  
\end{proof}

\begin{remark}\label{rmk-technical}
  Recall that in the sublinear/linear regime, when $q =2$ the contraction principle cannot be applied because  
  the LDP of Theorem \ref{th-simn} holds with respect to the $q$-Wasserstein topology only for $q\in[1,2)$.
  Moreover, the LDP for $\{\hat{\mu}^n_{\mathbf{A}}\}_{n\in\N}$ of \eqref{eq-muna} as stated in Proposition \ref{lem-aux} also holds with respect to the  $q$-Wasserstein topology only for $q<2$. However, in the case $q=2$, \eqref{eq-jform} still establishes a variational problem that explicitly relates the rate function $\mathbf{J}_{2,\lambda}$ of \eqref{j2} to the rate function $\mathcal{H}_\lambda$ of \eqref{eq-hrf},  in the same manner as if the contraction principle were applicable. 
 We claim that this inconvenient gap is related  to a more fundamental obstacle. To illustrate this concretely, consider the case where $X_1,X_2,\dots$ are i.i.d.\ exponential random variables with mean 1. It is known that: 
\begin{enumerate}
\item Due to Sanov's theorem, the sequence of empirical measures $\{\frac{1}{n}\sum_{i=1}^n \delta_{X_i}\}_{n\in \N}$ satisfies an LDP in $\cP(\R)$ with GRF $H(\cdot\| \, \textnormal{Exp}(1))$, where we write $\text{Exp}(1)$ to denote the exponential distribution with mean 1.

\smallskip 

\item Due to Cram\'er's theorem \cite[Theorem 2.2.1]{DemZeiBook}, the sequence of empirical means $\{\frac{1}{n}\sum_{i=1}^n X_i\}_{n\in \N}$ satisfies an LDP in $\R$ with GRF	
\begin{equation*} L(\beta):= \beta - \log \beta  -1 .
\end{equation*}

\item An explicit calculation establishes the expression 
\begin{equation*} L(\beta) = \inf_{\mu  \in \cP(\R)} \left\{ H(\mu  \| \, \textnormal{Exp}(1)) : \int_{\R} x \, d\mu = \beta \right\}. 
\end{equation*}
\end{enumerate}
Note that if the map $\mu \mapsto \int_{\R} x\,d\mu$ were continuous, then the LDP of point 2.\ above would follow from point 1., point 3., and an application of the contraction principle. However, the map $\mu \mapsto \int_{\R} x\,d\mu$ is \emph{not} continuous with respect to the weak topology on probability measures. Moreover, the result of \cite[Theorem 1.1]{WanWanWu10} applied to the exponential distribution indicates that the LDP of point 1.\ \emph{does not} hold with respect to the $1$-Wasserstein topology. This suggests that the apparently cryptic  transition at $q = 2$ in the nature of the proof  of Theorem \ref{th-normlinear}
  and the result of Theorem \ref{th-simn} is in a sense a manifestation of a more common sticking point in large deviations theory.  In other words, even in the simple setting of i.i.d.\ random variables, the continuity required by the contraction principle fails to hold, but the consequences (i.e., a large deviation principle and a variational formula for the rate function) \emph{do} still hold in many instances.
\end{remark}

\vspace{20pt}
\noindent{\sc Acknowledgements:} 
The authors would like to thank Keith Ball for a question that led to Remark
\ref{rem-Keith}, and  thank Joscha Prochno for feedback on the first version
of this paper, which helped improve the exposition. This paper is comprised of some results of the PhD thesis of the first author~\cite{skim-thesis} under the supervision of the third author, namely    
the results on LDPs for the projection (or their empirical measures) in all regimes, and norms of the projections in the constant and linear regimes under Assumption A*.   All other results contained in the paper, 
including the formulation of Assumptions A, B and C, proofs of related theorems, as well as the verification of
these assumptions for $\ell_p^n$ balls, $p \in [1,2)$ and Orlicz balls,  will be a part of the second author's PhD thesis,
  under the supervision of the third author.
  We would also like to thank anonymous referees for many suggestions that greatly improved the exposition of the paper.  
\appendix
\section{An approximate contraction principle}
\label{app-a}

In this appendix, we recall the approximate contraction principle established in Section 6.2 of \cite{BenDemGui01},
and establish a corollary of it that we use in the proof of the LDP in the linear regime in Section
  \ref{sec-simn}. 
 Recall that given $a, b \in \R$, we will use $a \vee b$ and $a \wedge b$ to denote $\max(a,b)$ and $\min(a,b)$, respectively. 

Let $\Sigma$ be a Polish space. Let $\mathcal{X}$ be a separable Banach space with topological dual space $\mathcal{X}^*$, and let $\langle \cdot , \cdot \rangle:\mathcal{X}^*\times\mathcal{X} \ra \R$ denote the associated dual pairing.  Fix a continuous map $\mathbf{c}:\Sigma \ra \mathcal{X}$, and let $\{\mathscr{L}_n\}_{n\in\N}$ be a sequence of $\mathcal{P}(\Sigma)$-valued random elements. 
For $r \in  (0,\infty]$
   and a continuous function $W:\Sigma \ra \R$ such that $\P$-a.s., $\int_{\Sigma} (W (x)\vee 0) \mathscr{L}_n(dx) < \infty$ 
  for all $n \in \N$,   let 
\begin{equation}
  \label{def-lambdar}
  \Lambda_r(W) := \limsup_{n\ra\infty} \frac{1}{n} \log \E\left[  e^{n \left(\int_{\Sigma} W (x)\mathscr{L}_n (dx)\wedge r\right)} \right] ,
\end{equation}
and also let \begin{equation}
  \label{def-barlambda}
  \bar{\Lambda}(W) := \sup_{r > 0} \Lambda_r(W).
\end{equation}
We introduce the ``domain" of $\bar{\Lambda}$, defined in the following manner: let 
\begin{align}
\mathcal{D} &:= \{\alpha \in \mathcal{X}^* : \bar{\Lambda}(\langle \alpha, \mathbf{c}(\cdot)\rangle) <\infty\} \label{eq-dom},\\
\mathcal{D}_\circ &:= \{\alpha \in \mathcal{X}^* : \exists p > 1, p\alpha \in \mathcal{D} \}. \nonumber
\end{align}
Then, for $x \in \mathcal{X}$, let  
\begin{equation}\label{eq-f} F(x) := \sup_{\alpha \in \mathcal{D}_\circ} \langle \alpha, x \rangle.
\end{equation}
Lastly, for $n\in\N$, we define the $\mathcal{X}$-valued random variable $\mathscr{C}_n := \int_{\Sigma} \mathbf{c}(x
)\, \mathscr{L}_n(dx)$.

\begin{proposition}[Proposition 6.4 of \cite{BenDemGui01}] \label{prop-bdgrestate}
Suppose that:
\begin{enumerate}
	\item $\{\mathscr{L}_n\}_{n\in\N}$ satisfies an LDP in $\mathcal{P}(\Sigma)$ at speed $n$ with GRF $I_0$;
	\item $\{\mathscr{L}_n, \mathscr{C}_n\}_{n\in\N}$ satisfies an LDP in $\mathcal{P}(\Sigma)\times\mathcal{X}$ at speed $n$ with some convex GRF $\mathbb{I}$;
	\item for any sequence $\{W_n\}_{n\in\N}$, in the set
	\begin{equation} \label{wnset} \{ V + \langle \alpha, \mathbf{c}(\cdot)\rangle : V:\Sigma\ra\R \text{ continuous and bounded}, \alpha \in \mathcal{D}_\circ\}
\end{equation}
	 such that $W_n \downarrow W_\infty$  to a limit $W_\infty:\Sigma\ra \R$  that is continuous and bounded above, we have
\begin{equation}\label{eq-wcond} \limsup_{n\ra\infty} \bar\Lambda(W_n) \le \bar\Lambda(W_\infty). 
\end{equation}  
\end{enumerate}
Then,  we have the following representation for the GRF $\mathbb{I}$, for all $\mu\in\cP(\Sigma)$ and $s\in \mathcal{X}$:
\begin{equation}\label{eqdef-irf}   \mathbb{I}(\mu,s) :=  \left\{\begin{array}{ll}
I_0(\mu) + F\left( s - \int_\Sigma \mathbf{c} \,d\mu\right) & \textnormal{ if } I_0(\mu) < \infty,\\
+\infty & \textnormal{ else,}
 \end{array}\right.
\end{equation}
 with $F$ as defined in  \eqref{eq-f}. 
\end{proposition}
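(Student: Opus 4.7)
My plan follows the strategy of~\cite{BenDemGui01}: combine an exponential tilting argument with a truncated Varadhan lemma to handle the potential unboundedness of $\mathbf{c}$. Since assumption (2) already provides an LDP for $(\mathscr{L}_n,\mathscr{C}_n)$ with a convex rate function, the task is to identify that rate function as the right-hand side of \eqref{eqdef-irf}. I would therefore prove matching upper and lower bounds for $\mathbb{P}(\mathscr{L}_n \in \mathcal{F}_1,\,\mathscr{C}_n \in \mathcal{F}_2)$ in terms of $\mu \mapsto I_0(\mu) + F(s - \int \mathbf{c}\,d\mu)$ and then invoke uniqueness of the rate function.

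For the upper bound, fix $\alpha \in \mathcal{D}_\circ$ and apply the exponential Chebyshev inequality to write
\[
\mathbb{P}(\mathscr{L}_n \in \mathcal{F}_1,\,\mathscr{C}_n \in \mathcal{F}_2) \leq \exp\bigl(-n\inf_{s \in \mathcal{F}_2}\langle \alpha,s\rangle\bigr)\,\mathbb{E}\bigl[\mathbf{1}_{\{\mathscr{L}_n \in \mathcal{F}_1\}}\,e^{n\langle \alpha,\mathscr{C}_n\rangle}\bigr].
\]
Because $\mu \mapsto \langle \alpha,\int \mathbf{c}\,d\mu\rangle$ need not be bounded, Varadhan's lemma cannot be applied directly. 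Instead I truncate, replacing $\langle \alpha,\mathbf{c}(x)\rangle$ by $\langle \alpha,\mathbf{c}(x)\rangle \wedge r$, apply Varadhan to the resulting bounded continuous functional using the LDP in (1), and then use hypothesis (3) together with the existence of $p>1$ with $p\alpha \in \mathcal{D}$ (to get integrability beyond the truncation threshold) to pass $r \to \infty$, recovering
\[
\limsup_{n\to\infty} \tfrac{1}{n}\log \mathbb{E}\bigl[\mathbf{1}_{\{\mathscr{L}_n \in \mathcal{F}_1\}}\,e^{n\langle \alpha,\mathscr{C}_n\rangle}\bigr] \leq \sup_{\mu \in \mathcal{F}_1}\bigl\{\langle \alpha,\textstyle\int \mathbf{c}\,d\mu\rangle - I_0(\mu)\bigr\}.
\]
Optimizing over $\alpha \in \mathcal{D}_\circ$ and recalling the definition of $F$ in \eqref{eq-f} then gives $\limsup_n \tfrac{1}{n}\log \mathbb{P}(\cdots) \leq -\inf_{\mu \in \mathcal{F}_1,\,s \in \mathcal{F}_2}\{I_0(\mu) + F(s-\int \mathbf{c}\,d\mu)\}$.

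For the matching lower bound, fix $\mu_0$ with $I_0(\mu_0)<\infty$ and $s_0 \in \mathcal{X}$, and set $x_0 := s_0 - \int \mathbf{c}\,d\mu_0$. Given $\varepsilon>0$, I pick $\alpha_0 \in \mathcal{D}_\circ$ with $\langle \alpha_0,x_0\rangle \geq F(x_0) - \varepsilon$, introduce the tilted measure $d\tilde{\mathbb{P}}_n \propto e^{n\langle \alpha_0,\mathscr{C}_n\rangle}\,d\mathbb{P}$, and show, again via (3) applied to the normalization $\mathbb{E}[e^{n\langle \alpha_0,\mathscr{C}_n\rangle}]$, that $(\mathscr{L}_n,\mathscr{C}_n)$ concentrates under $\tilde{\mathbb{P}}_n$ near $(\mu_0,s_0)$ (with the maximizing $\mu$ shifted from the minimizer of $I_0$ by the $\alpha_0$-tilt). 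Reversing the change of measure yields the lower bound $\liminf_n \tfrac{1}{n}\log\mathbb{P}(\mathscr{L}_n \approx \mu_0,\,\mathscr{C}_n \approx s_0) \geq -I_0(\mu_0) - F(x_0)$; combining with the upper bound and the uniqueness of the GRF in (2) identifies $\mathbb{I}$ as in \eqref{eqdef-irf}. The principal technical obstacle is exactly the truncated Varadhan step and its tilted counterpart: hypothesis (3), by controlling $\bar{\Lambda}(W_n)$ under monotone limits of functionals in the class \eqref{wnset}, is designed precisely to close this gap and legitimize passing from truncated to untruncated logarithmic moment generating functions both for the Chebyshev bound above and for the tilted normalization in the lower bound.
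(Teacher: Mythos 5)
A preliminary remark: the paper does not prove this proposition at all --- it is quoted verbatim from Proposition 6.4 of \cite{BenDemGui01} and used as a black box in Appendix \ref{app-a} --- so there is no in-paper argument to compare yours against. Judged on its own terms, your upper-bound half is essentially sound and follows the standard route: exponential Chebyshev, truncation of the integrand $\langle\alpha,\mathbf{c}(\cdot)\rangle\wedge r$ (note it is the integrand, not the integral, that must be truncated to get upper semicontinuity of $\mu\mapsto\int(\langle\alpha,\mathbf{c}\rangle\wedge r)\,d\mu$), Varadhan's upper bound with the moment condition supplied by $p\alpha\in\mathcal{D}$ for some $p>1$, and removal of the truncation.

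The lower bound is where there is a genuine gap. Tilting by $e^{n\langle\alpha_0,\mathscr{C}_n\rangle}$ makes $(\mathscr{L}_n,\mathscr{C}_n)$ concentrate near the minimizer of the tilted rate function, not near an arbitrary prescribed pair $(\mu_0,s_0)$; your own parenthetical, that the maximizing $\mu$ is ``shifted from the minimizer of $I_0$ by the $\alpha_0$-tilt,'' concedes exactly this. To force concentration at $(\mu_0,s_0)$ one must tilt in both coordinates, exhibit $(\mu_0,s_0)$ as an exposed point of the candidate rate function with exposing functional in the interior of the domain of the limiting log-moment generating function, and then approximate general points by exposed ones --- the full G\"artner--Ellis machinery --- and your choice of $\alpha_0$ as a near-maximizer of $\langle\alpha,x_0\rangle$ over $\mathcal{D}_\circ$ typically pushes $\alpha_0$ to the boundary of the domain, precisely where that lower bound is known to fail absent steepness. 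A telling symptom is that your argument never uses the hypothesis that $\mathbb{I}$ is convex, which is the one assumption that cannot be idle: the natural (and, in the cited reference, actual) strategy is not to reprove the LDP but to \emph{identify} the already-given convex GRF $\mathbb{I}$ by computing $\sup_{\mu,s}\{\int V\,d\mu+\langle\alpha,s\rangle-\mathbb{I}(\mu,s)\}$ in two ways --- once via the truncated Varadhan lemma for the pair LDP of hypothesis (2), once via the approximate Laplace asymptotics for $\mathscr{L}_n$ alone using hypotheses (1) and (3) --- and then to invert by convex duality, for which lower semicontinuity and convexity of $\mathbb{I}$ are indispensable. Condition (3) enters to justify the Laplace asymptotics for the unbounded functionals $V+\langle\alpha,\mathbf{c}(\cdot)\rangle$, not merely the Chebyshev normalization as you suggest. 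Recasting your lower bound as this duality step would close the gap and explain where convexity and condition (3) are genuinely needed.
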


The following corollary considers a special case where the conditions of Proposition \ref{prop-bdgrestate} can be easily verified.

\begin{corollary}\label{cor-iid}
Let $\Sigma$ be a Polish space and ${\mathcal X}$ be a separable Banach space. 
Suppose that $\{k_n\}_{n\in\N}$ grows sublinearly with rate $\lambda\in(0,1]$,   each $\mathscr{L}_n$ is the empirical measure of 	$k_n$ i.i.d.\ $\Sigma$-valued random variables $\eta_1,\dots,\eta_{k_n}$ with common distribution $\mu$ (that does not depend on $n$), and for continuous $W:\Sigma \ra \R$, define 
\begin{equation}
\label{rel-tillam}
\widehat{\Lambda}(W):= \log \mathbb{E}[e^{\lambda^{-1} W(\eta_1)}]
\end{equation}
Also, let $\mathbf{c}: \Sigma \to {\mathcal X}$ be a continuous map such
that   $0$ lies in the interior $\mathscr{D}^\circ$ of the set 
 \begin{equation}
 \label{rel-calD}
  \mathscr{D} := \left\{\alpha\in \mathcal{X}^* : \widehat{\Lambda}(\langle \alpha, \mathbf{c}(\cdot) \rangle) < \infty
  \right\},  
 \end{equation}
and let $\mathscr{C}_n := \int_{\Sigma} \mathbf{c}(x)\mathscr{L}_n(dx)$.  Then $\{\mathscr{L}_n, \mathscr{C}_n\}$ satisfies an LDP with GRF
given by \eqref{eqdef-irf}, where ${I_0(\nu)} :=  \lambda H(\nu|\mu)$ and $F(x) = \sup_{ \alpha \in \mathscr{D}^\circ} \langle \alpha, x \rangle$.
\end{corollary}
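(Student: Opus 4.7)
The plan is to verify the three hypotheses of Proposition~\ref{prop-bdgrestate} with $I_0 := \lambda H(\cdot|\mu)$ and then to reconcile the resulting form of the rate function with the one claimed in the corollary.

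Condition~1 is an immediate consequence of Sanov's theorem: the empirical measures $\{\mathscr{L}_n\}_{n\in\N}$ of the i.i.d.\ sample satisfy an LDP in $\cP(\Sigma)$ at speed $k_n$ with GRF $H(\cdot|\mu)$, and since $k_n/n\to\lambda$, Remark~\ref{rem-ldpscale} upgrades this to an LDP at speed $n$ with GRF $\lambda H(\cdot|\mu)=I_0$. For Condition~3, the i.i.d.\ structure yields, for any continuous $W:\Sigma\to\R$ with sufficient exponential integrability,
\[
\frac{1}{n}\log\mathbb{E}\bigl[e^{n\int W\,d\mathscr{L}_n}\bigr]=\frac{k_n}{n}\log\mathbb{E}\bigl[e^{(n/k_n)W(\eta_1)}\bigr]\xrightarrow[n\to\infty]{}\lambda\widehat{\Lambda}(W).
\]
Taking the supremum over the truncation parameter $r$ in \eqref{def-lambdar} and using monotone convergence, together with the exponential integrability provided by the hypothesis $0\in\mathscr{D}^\circ$, one obtains $\bar{\Lambda}(W)=\lambda\widehat{\Lambda}(W)$ for $W$ in the set \eqref{wnset}; the inequality \eqref{eq-wcond} for monotone decreasing sequences then follows from monotone convergence applied inside the expectation defining $\widehat{\Lambda}$.

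The main obstacle is Condition~2, namely the joint LDP for $\{(\mathscr{L}_n,\mathscr{C}_n)\}_{n\in\N}$ at speed $n$ with a convex GRF. The plan is to establish it by applying Sanov's theorem to the $\Sigma\times\mathcal{X}$-valued i.i.d.\ variables $Y_i:=(\eta_i,\mathbf{c}(\eta_i))$ to obtain an LDP for the joint empirical measures $\frac{1}{k_n}\sum_{i=1}^{k_n}\delta_{Y_i}$ at speed $k_n$, and then to push this forward by the map $\mathbf{L}\mapsto(\pi_{\Sigma,\#}\mathbf{L},\int y\,\mathbf{L}(dx,dy))$. Projection onto $\Sigma$ is continuous, but integration against the second coordinate is not continuous for unbounded $\mathbf{c}$. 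The integrability afforded by $0\in\mathscr{D}^\circ$ gives exponential tightness of $\{\mathscr{C}_n\}_{n\in\N}$; truncating $\mathbf{c}$ to $\mathbf{c}_R:=\mathbf{c}\cdot\mathbf{1}_{\{\|\mathbf{c}\|\le R\}}$ one applies the standard contraction principle for each fixed $R$, and an extended contraction principle in the spirit of \cite[Theorem~4.2.23]{DemZeiBook} then allows one to pass to the limit $R\to\infty$. Convexity of the resulting rate function is inherited from the convexity of relative entropy and the linearity of $\nu\mapsto\int\mathbf{c}\,d\nu$, after rescaling from speed $k_n$ to speed $n$.

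Finally, to reconcile the formula for $F$ obtained from the proposition with the one stated in the corollary, note that the identity $\bar{\Lambda}(\langle\alpha,\mathbf{c}(\cdot)\rangle)=\lambda\widehat{\Lambda}(\langle\alpha,\mathbf{c}(\cdot)\rangle)$ shows that the set $\mathcal{D}$ of \eqref{eq-dom} coincides with $\mathscr{D}$ of \eqref{rel-calD}. Using convexity of $\mathscr{D}$ and the hypothesis $0\in\mathscr{D}^\circ$, one checks $\mathcal{D}_\circ=\mathscr{D}^\circ$: if $\alpha\in\mathscr{D}^\circ$ then an open ball about $\alpha$ lies in $\mathscr{D}$, so $(1+\varepsilon)\alpha\in\mathscr{D}=\mathcal{D}$ for small $\varepsilon>0$ and hence $\alpha\in\mathcal{D}_\circ$; conversely, if $p\alpha\in\mathcal{D}$ for some $p>1$, the identity $\alpha=(1/p)(p\alpha)+(1-1/p)\cdot 0$ exhibits $\alpha$ as a convex combination with positive weight on $0\in\mathscr{D}^\circ$, which places $\alpha$ in $\mathscr{D}^\circ$. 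Substituting back into \eqref{eqdef-irf} produces exactly the rate function stated in the corollary.
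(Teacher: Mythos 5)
Your overall skeleton follows the paper's proof: verify the three conditions of Proposition \ref{prop-bdgrestate} and then identify $\mathcal{D}_\circ=\mathscr{D}^\circ$. Your handling of Condition 1 (Sanov plus rescaling of the speed), of Condition 3 (the identity $\bar{\Lambda}=\lambda\widehat{\Lambda}$ via the i.i.d.\ computation and a dominated/monotone convergence argument along decreasing sequences in the set \eqref{wnset}), and your convexity argument for $\mathcal{D}_\circ=\mathscr{D}^\circ$ all match the paper and are correct. The problem is Condition 2. The paper obtains the joint LDP for $\{(\mathscr{L}_n,\mathscr{C}_n)\}_{n\in\N}$ by viewing $(\mathscr{L}_n,\mathscr{C}_n)=\frac{1}{k_n}\sum_{j\le k_n}(\delta_{\eta_j},\mathbf{c}(\eta_j))$ as an empirical mean of i.i.d.\ elements of the locally convex space $\cP(\Sigma)\times\mathcal{X}$ and invoking the abstract Cram\'er theorem (\cite[Theorem 6.1.3 and Corollary 6.1.6]{DemZeiBook}), which needs only $0\in\mathscr{D}^\circ$ and delivers the convex GRF directly. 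Your truncation route does not go through under the stated hypotheses.

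Concretely, to pass from the LDPs of $(\mathscr{L}_n,\mathscr{C}_n^R)$ to that of $(\mathscr{L}_n,\mathscr{C}_n)$ via \cite[Theorem 4.2.23]{DemZeiBook} you must show the approximations are exponentially good, i.e.
\[
\lim_{R\to\infty}\limsup_{n\to\infty}\frac{1}{n}\log\P\left(\norm{\mathscr{C}_n-\mathscr{C}_n^R}>\delta\right)=-\infty .
\]
Since $\norm{\mathscr{C}_n-\mathscr{C}_n^R}\le\frac{1}{k_n}\sum_{i\le k_n}\norm{\mathbf{c}(\eta_i)}\1_{\{\norm{\mathbf{c}(\eta_i)}>R\}}$, a Chernoff bound yields at best the value $-t\delta$ for the iterated limit, where $t$ ranges over those values with $\E[e^{t\norm{\mathbf{c}(\eta_1)}}]<\infty$; driving this to $-\infty$ requires \emph{all} exponential moments of $\norm{\mathbf{c}(\eta_1)}$. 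That is strictly stronger than $0\in\mathscr{D}^\circ$, which only gives finiteness of $\E[e^{\lambda^{-1}\langle\alpha,\mathbf{c}(\eta_1)\rangle}]$ for $\alpha$ in a neighborhood of the origin, and it fails in the very application of this corollary in Lemma \ref{lem-i1}, where $\mathbf{c}(x)=x^2$ and $\eta_1\sim\mathcal{N}(0,1)$, so that $\E[e^{t\zeta_1^2}]=\infty$ for $t\ge 1/2$. Two further, more minor, defects: $\mathbf{c}_R=\mathbf{c}\cdot\1_{\{\norm{\mathbf{c}}\le R\}}$ is not continuous, so the standard contraction principle does not apply at fixed $R$ as written (a continuous cutoff would be needed); and exponential tightness of $\{\mathscr{C}_n\}$ in the norm topology of an infinite-dimensional $\mathcal{X}$ does not follow from $0\in\mathscr{D}^\circ$ either, since weak exponential integrability does not control $\norm{\mathbf{c}(\eta_1)}$. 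You should replace this step by the abstract Cram\'er theorem in locally convex spaces, as the paper does.
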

\begin{proof}
We start by verifying the conditions of Proposition \ref{prop-bdgrestate}.
    
The fact that $\{\mathscr{L}_n\}_{n \in \N}$ satisfies an LDP in $\cP(\Sigma)$ at speed $k_n$ with GRF $H(\cdot|\mu)$ follows from Sanov's theorem on the  Polish space ${\mathcal P}(\Sigma)$ (see, e.g., Theorem 6.6.9 of \cite{KanLakbook01}). Since $k_n /n \ra \lambda$, this immediately implies that $\{\mathscr{L}_n\}_{n \in \N}$ satisfies an LDP at speed $n$ with GRF $I_0 (\cdot) := \lambda H(\cdot|\mu)$, so condition 1.\ of Proposition \ref{prop-bdgrestate} is satisfied.

Next, note that $({\mathscr L}_n, {\mathscr C}_n) = \frac{1}{k_n} \sum_{j=1}^{k_n} (\delta_{\eta_j}, \mathbf{c}(\eta_j)) \in {\mathcal P}(\Sigma) \times {\mathcal X}$.  Therefore, it follows from Cram\'er's theorem on any locally convex topological space (see, e.g., Theorem 6.1.3 and Corollary 6.16 of \cite{DemZeiBook}), with an appeal to the assumption that $0$ lies in  $\mathscr{D}^\circ$, the interior 
 of the set $\mathscr{D}$ of \eqref{rel-calD}, that
$\{({\mathscr L}_n, {\mathscr C}_n)\}_{n \in \N}$ satisfies an LDP in $\cP(\Sigma) \times \mathcal{X}$ at speed $k_n$ with a convex GRF. Since $k_n/n \rightarrow \lambda \in (0,1]$,  condition 2.\ of Proposition \ref{prop-bdgrestate} is  satisfied.

  As for  condition 3., first 
  consider any sequence $\{W_n\}_{n\in\N}$ such that  for each $n \in \N$, $W_n = V_n + \langle \alpha_n, \mathbf{c}(\cdot)\rangle$ for $V_n$ bounded and continuous,   
  and $\alpha_n \in \mathscr{D}$. 
   Due to the assumption that $\alpha_1 \in \mathscr{D}_\circ$ and the boundedness of $V_1$, we have $\widehat{\Lambda}(W_1) < \infty$, so  if $W_n \downarrow W_\infty$, then by the dominated convergence theorem,
 \begin{equation}\label{tillim}
\lim_{n\ra\infty} \widehat{\Lambda}(W_n) = \widehat{\Lambda}(W_\infty).	
 \end{equation}
     To complete the proof, it clearly suffices to show  that $\mathscr{D} = \mathcal{D}$, for the domains 
$\mathscr{D}$ and $\mathcal{D}$ defined in  \eqref{rel-calD} and \eqref{eq-dom}, respectively, 
      and that  relation \eqref{tillim} holds
      when $\widehat{\Lambda}$ is replaced with $\bar{\Lambda}$.  In turn, to show the latter, it suffices to prove that 
      $\bar{\Lambda} = \lambda \widehat{\Lambda}$.  Indeed, 
     note that for any continuous $W:\Sigma \ra \R$ such that $\P$-a.s., $\int_{\Sigma} (W (x)\vee 0) {\mathscr{L}_n(dx)} < \infty$ for all $n \in \N$,  
 by the definitions of $\Lambda_\infty$ and $\bar{\Lambda}$ from \eqref{def-lambdar} and \eqref{def-barlambda}, respectively, the i.i.d.\ assumption on  $\{\eta_i\}_{i=1}^{k_n}$ and the fact that $k_n/n \rightarrow \lambda$, we have 
\begin{align*}
  \widehat{\Lambda}(W) &= \lambda^{-1}\Lambda_\infty(W) & 
  \\
  &\ge   \lambda^{-1} \bar{\Lambda}(W) & 
    \\
 &\ge  \lambda^{-1}\lim_{ R\ra\infty} \bar{\Lambda}(W \wedge R)\\
 &=   \lambda^{-1}\lim_{R \ra\infty} \Lambda_\infty(W \wedge R)\\
 &=  \lim_{R\ra\infty} \widehat{\Lambda}(W \wedge R)\\
 &= \widehat{\Lambda}(W),
\end{align*}
 where the first inequality uses the elementary observation that $\Lambda_{r}\leq \Lambda_{\infty}$ for every $r$ implies 
$\bar{\Lambda} \leq \Lambda_\infty$, and the second equality uses this observation along with the fact that
the converse inequality also holds when both functions are evaluated at $W \wedge R$, since then   $\int_{\Sigma}(W(x) \wedge R){\mathscr{L}_n(dx)}  \leq R$ implies 
$\Lambda_{\infty} (W \wedge R) = \Lambda_R (W \wedge R) \leq \bar{\Lambda} (W \wedge R)$.  
Thus, we have shown $\widehat{\Lambda} =  \lambda^{-1}\Lambda_\infty$, which completes the verification of
condition 3.

The  corollary then  follows from Proposition \ref{prop-bdgrestate} and the identity $\mathscr{D}^\circ = \mathcal{D}_0$. 

 \end{proof}
 
\section{LDP for generalized normal random variables} \label{pf-LDP-Y2}

The  aim of this section is to prove Lemma~\ref{LDP-Y2}, which concerns 
  an LDP for  weighted sums of stretched exponential random variables.

\begin{proof}
Fix $p\in[1,2)$, $t>0$, and denote $\xi:=\xi_i^{(p)}$. Define $m:=\mathbb{E}[\xi_1^2]$ and for $n\in\N$ define
$S_n := \sum_{i=1}^{k_n}(\xi_i^2-m)$. Since $k_n/b_n\to 0$ as $n\to\infty$, by Theorem 3 in~\cite{Nagaev69}, with $n$, $x_n$ therein and $\varepsilon$ replaced by $k_n$, $b_nt$ and $1-p/2$, we see that
\[
\mathbb{P}\left(S_n>b_nt  \right)= k_n\mathbb{P}\left(\xi^2_1-m>b_nt \right)(1+o(1)).
\]
Hence, by~\eqref{est-Y2} and the convergence  $k_n/b_n\to 0$ as $n\to\infty$,
\begin{align}\label{est-p}
\lim_{n\to\infty}\frac{1}{b_n^{p/2}}\log\mathbb{P}\left(\frac{1}{b_n}S_n>t \right) 
= -\mathcal{J}_{\xi,p}(t).
\end{align}

For $p=2$, the sum $\sum_{i=1}^{k_n}\xi_i^2$ is distributed as a chi-squared distribution with $k_n$ {degrees} of freedom. Hence for $t>0$, we have the following tail probability estimate:
\begin{align*}
\mathbb{P}\left(S_n>b_nt\right) = \frac{1}{2^{k_n/2}\Gamma(k_n/2)}(b_nt+m)^{k_n/2-1}e^{-(b_nt+m)/2}.
\end{align*}
Since $k_n/b_n\to 0$ as $n\to\infty$, we again have
\begin{align}
&\lim_{n\to\infty}\frac{1}{b_n}\log \mathbb{P}\left(\frac{1}{b_n}S_n>t \right) \nonumber\\
&\quad=-\frac{t}{2}+\lim_{n\to\infty}\left[
\frac{k_n/2-1}{b_n}\log(b_nt+m) - \frac{k_n}{2b_n}\log 2-\frac{k_n/2+1/2}{b_n}\log (k_n/2) -\frac{k_n/2}{b_n}\right]\nonumber\\
&\quad=-\frac{t}{2}+\lim_{n\to\infty}\left[\frac{k_n}{2b_n}\log\frac{b_nt+m}{k_n}-\frac{\log(b_nt+m)}{b_n}-\frac{k_n}{2b_n}-\frac{\log k_n}{2b_n}\right]\nonumber\\
&\quad = -\frac{t}{2}\label{est-p2},
\end{align}
where the second equality follows from Stirling's approximation.

Let $T_n:=b_n^{-1}\sum_{i=1}^{k_n}\xi_i^2=S_n+mk_n/b_n$. We now combine the two estimates~\eqref{est-p} and~\eqref{est-p2} to show that 
$\{T_n\}_{n\in\N}$ satisfies an LDP at speed $b_n^{p/2}$ with GRF $\mathcal{J}_{\xi,p}$.   Fix a closed set $F\subset\R$. If $0\in F$, then $\inf_{x\in F}\mathcal{J}_{\xi,p}(x)=0$, and  the large deviation upper bound is automatic since $\mathbb{P}(T_n\in F)\leq 1$. Suppose $0\notin F$. Let $b :=\inf\{\beta>0:\beta\in F\}$. Then for $b>\tau>0$,
by the positivity of $T_n$,~\eqref{est-p} or~\eqref{est-p2} and the monotonicity of $\mathcal{J}_{\xi,p}$,
\begin{align*}
\limsup_{n\to\infty}\frac{1}{b_n^{p/2}}\log\mathbb{P}(T_n\in F)&\leq \limsup_{n\to\infty}\frac{1}{b_n^{p/2}}\log\mathbb{P}(T_n\in [b,\infty))\\
&\leq \limsup_{n\to\infty}\frac{1}{b_n^{p/2}}\log\mathbb{P}(T_n\in [b-\tau,\infty))\\
&=-\mathcal{J}_{\xi,p}(b-\tau).
\end{align*}
Letting $\tau\to0$ and appealing to the continuity and monotonicity of $\mathcal{J}_{\xi,p}$ we obtain $$\limsup_{n\to\infty}\frac{1}{b_n^{p/2}}\log\mathbb{P}(T_n\in F)=-\inf_{x\in F}\mathcal{J}_{\xi,p}(x),$$
which completes the proof of the upper bound. 

Next, fix an open set $U\subset\R$. If $0\in U$, then $\inf_{x\in U}\mathcal{J}_{\xi,p}(x)=0$. Since $U$ is open, there exists $\varepsilon >0$ such that $(-\varepsilon,\varepsilon)\subset U$. Since $k_n/b_n\to0$ as $n\to\infty$, the strong law of large numbers implies $\lim_{n\to\infty}\mathbb{P}(T_n\in(-\varepsilon,\varepsilon))=1$. Hence, the large deviation lower bound follows. Next suppose $0\notin U$.
 For $\delta>0$, there exists $\beta\in U$ such that $\mathcal{J}_{\xi,p}(\beta)< \inf_{x\in U}\mathcal{J}_{\xi,p}(x)+\delta$. Pick $\varepsilon >0$ such that $(\beta-\varepsilon,\beta+\varepsilon)\subset U$. If $\beta <0$, then the large deviation lower bound is trivial. Suppose $\beta>0$. Then for $\varepsilon>\tau>0$,
\begin{align*}
\liminf_{n\to\infty}\frac{1}{b_n^{p/2}}\log\mathbb{P}(T_n\in U)&\geq \liminf_{n\to\infty}\frac{1}{b_n^{p/2}}\log\mathbb{P}(S_n\in (\beta-\varepsilon,\beta+\varepsilon-\tau))\\
&= \liminf_{n\to\infty}\frac{1}{b_n^{p/2}}\log\left[\mathbb{P}(S_n\in (\beta-\varepsilon,\infty))-\mathbb{P}(S_n\in [\beta+\varepsilon-\tau,\infty))\right]\\
&=\liminf_{n\to\infty}\frac{1}{b_n^{p/2}}\log\mathbb{P}(S_n\in (\beta-\varepsilon,\infty))\\
&=-\mathcal{J}_{\xi,p}(\beta-\varepsilon)\\
&\geq-\inf_{x\in U}\mathcal{J}_{\xi,p}(x)-\delta,
\end{align*}
where the third equality follows by the monotonicity of $\mathcal{J}_{\xi,p}$ and the last inequality by the continuity of $\mathcal{J}_{\xi,p}$ on choosing $\varepsilon$ sufficiently small.
The large deviation lower bound then follows on sending $\delta$ to $0$.  This completes the proof of the lemma. 
\end{proof}

\section{Properties of the function $\mathcal{J}$ of~\eqref{J_uv}} 
\label{ap-orlicz}

{
We recall here the definition of the subdifferential of a convex function. Let $f:\R\to\R\cup \{\infty\}$ be a convex function. Then the subdifferential at a point $x\in\R^n$ is defined to be
\[
\partial f(x):=\{s\in\R^n:f(y)\geq f(x)+\langle s,y-x\rangle\quad\text{fo all}\quad y\in\R^n\}.
\]
Note that when $f$ is differentiable at $x$, the subdifferential $\partial f(x)$ has only one element, which
is  the gradient $\nabla f(x)$. For further discussion of subdifferentials, the reader is referred to Chapter X in~\cite{HirLem13}.
}

\begin{proof}[Proof of Lemma~\ref{lem-prop-J}]
Before proving the individual properties, we make a general observation.
 Recall that $D_V$ is the domain of $V$. For $u$, $v\in\R_+$, define $H:\R_-\times\R\to\R$ to be
 \begin{equation}\label{H_st}
 H(s,t):= su+tv-\log\int_{D_V} e^{sV(x)+tx^2}dx.
 \end{equation}
 By definition, $H$ is differentiable. We first show that $H$ is concave. For this, by \eqref{H_st} it suffices to show that $(s,t)\mapsto \log\int_{D_V} e^{sV(x)+tx^2}dx$ is convex. Indeed, by H\"older's inequality, for $\lambda\in(0,1)$ and $(s_1,t_1)$, $(s_2,t_2)\in\R_-\times\R$, we see that
 \begin{align*}
& \log\int_{D_V} e^{(s_1\lambda+s_2(1-\lambda))V(x)+(t_1\lambda+t_2(1-\lambda))x^2}dx\\
 &\qquad\leq \log\left(\left(\int_{D_V} e^{s_1\lambda V(x)+t_1\lambda x^2}dx\right)^{1/\lambda}\left(\int_{D_V} e^{s_2
 (1-\lambda) V(x)+t_2(1-\lambda) x^2}dx\right)^{1/(1-\lambda)}\right)\\
 &\qquad = \frac{1}{\lambda}\log\int_{D_V} e^{s_1\lambda V(x)+t_1\lambda x^2}dx+\frac{1}{1-\lambda}\log\int_{D_V} e^{s_2(1-\lambda) V(x)+t_2(1-\lambda) x^2}dx.
 \end{align*}
 In particular, $H$ is strictly concave if and only if $V\not\equiv 0$ in {the domain of $V$}.

We now turn to the proof of property 1.
Let {$(u,v)\in\R^2_+$ be} in the interior of the domain of $\mathcal{J}$.
Then by~\cite[Theorem 1.4.2]{HirLem13}, since $\mathcal{J}$ is convex, the subdifferential at $(u,v)$ is nonempty, $\partial\mathcal{J}(u,v)\neq\emptyset$. Therefore there exists $s\in\R_-$, $t\in\R$ such that $(s,t)\in\partial\mathcal{J}(u,v)$. Since $\mathcal{J}$ and $(s,t)\mapsto \log\int_{D_V} e^{sV(x)+tx^2}dx$ are both convex, both functions satisfy the condition in~\cite[Theorem 1.4.1]{HirLem13}. Hence, we see that there exist $s\in\R_-$ and $t\in\R$ such that $\mathcal{J}(u,v)=H(s,t).$
 If $H$ is strictly concave, then automatically $(s,t)$ is the unique maximizer in the supremum of $\mathcal{J}(u,v)$ in \eqref{J_uv}. If $H$ is not strictly concave, then 
 $V\equiv 0$ in its domain, and so  by \eqref{J_uv},
 ${\mathcal J}(v) = \sup_{s < 0, t \in \R_+} \left\{ s u  + t v - \log ( \int_{D_V} e^{tx^2} dx ) \right\}$,
from which it clearly follows that $(0,t)$ is the unique maximizer (since $v \geq 0$). 
Next, again by the last display and \cite[Theorem 1.4.1]{HirLem13}, we see that $(u,v)\in \partial \log\int_{D_V} e^{sV(x)+tx^2}dx$. By the definition of a subdifferential, we conclude that 
\begin{align}
&u \geq \frac{d}{ds} \log\left( \int_{D_V} e^{sV(x)+tx^2}dx\right)= \int_{D_V} V(x)\nu_{s,t}(dx) =M_V(\nu_{s,t}),\nonumber\\
&v = \frac{d}{dt} \log\left( \int_{D_V} e^{sV(x)+tx^2}dx\right)=\int_{D_V} x^2\nu_{s,t}(dx) =M_2(\nu_{s,t}),\label{inf-attain}
\end{align}
where \eqref{inf-attain}  holds due to the differentiability of $t\mapsto \log\int_{D_V} e^{sV(x)+tx^2}dx$. This proves property 1.

To see why property 2 holds, note that
by the duality of the Legendre transform~\cite[Equation (12)]{Zia09}, the minimizer of $\mathcal{J}(1,\cdot)$ is obtained at $m$ such that
\begin{align}\label{smin}
  \left.\frac{d}{dv}\mathcal{J}(1,v)\right|_{v=m}=t(m) = 0.
\end{align}
Substituting this relation back into~\eqref{inf-attain}, we obtain
$1 = M_V(\nu_{s(m),0})$ and $m=M_2(\nu_{s(m),0})$.  
Setting $b^* = -s(m)>0$
and observing that $\nu_{s,0} = \mu_{V,-s}$ for any $s$, 
we conclude that $1=M_V(\nu_{s(m),0})=M_V(\mu_{V,b^*})$ and 
$m=M_2(\nu_{s(m),0})=M_2(\mu_{V,b^*})$. 
Now,~\eqref{JV-min} follows since the supremum on the right-hand side of~\eqref{JV-min} is attained at $1 = M_V(\nu_{s(m),0})$ and~\eqref{inf-attain} is uniquely solvable.
This proves property 2. 

For the remaining properties, first note that
by the duality of the Legendre transform, the supremum in the definition~\eqref{J_uv} of $\mathcal{J}(u,v)$, when finite, is attained at $(\partial_u\mathcal{J}(u,v),\partial_v\mathcal{J}(u,v))\in\R_-\times\R$. This proves property 4. By the convexity of $v\mapsto\mathcal{J}(1,v)$ and the fact that
it uniquely attains its minimum at the value $m$ defined above, we see that $\partial_v\mathcal{J}(1,v)>0$ if $v>m$ and $\partial_v\mathcal{J}(1,v)<0$ if $0<v<m$.  This proves property 3, and completes the proof of the lemma.
\end{proof}

\bibliographystyle{siam}
\bibliography{multidim}

\vskip 8mm
\font\tenrm =cmr10  {\tenrm 
\parskip 0mm
 
\noindent Division of Applied Mathematics 

\noindent Brown University

\noindent 182 George St.

\noindent Providence, RI 02912

\bigskip 

\noindent steven\_kim@alumni.brown.edu

\noindent yin\_ting-liao@brown.edu

\noindent kavita\_ramanan@brown.edu

}

\end{document}